\documentclass[letterpaper, 10pt, dvipsnames]{article}

\usepackage{type1ec}
\usepackage[T1]{fontenc}
\usepackage[utf8]{inputenc}    
\usepackage[english]{babel}
\usepackage[normalem]{ulem}
\usepackage{dsfont, bm, pifont, mathrsfs}
\usepackage{stmaryrd}
\usepackage{bbm}
\usepackage{enumitem}
\usepackage{bold-extra}
\setcounter{tocdepth}{1}
\usepackage{float, graphicx, wrapfig, tikz}
\usetikzlibrary{positioning, fit}
\usepackage[margin=0.8in]{subcaption}
\usepackage[font=small, margin=0.6in]{caption}

\usepackage{amsthm, amsmath, amsfonts, amssymb, mathrsfs, mathtools}
\usepackage[alphabetic]{amsrefs}

\usepackage{hyperref, xcolor, titlesec} 
\hypersetup{colorlinks = true, urlcolor = RoyalBlue!70!Black,linkcolor=BrickRed, citecolor=ForestGreen, bookmarksopen = true}
\usepackage[nomarginpar]{geometry}
\geometry{verbose, lmargin=0.8in, rmargin=0.8in,bmargin=1.25in}
\numberwithin{equation}{section}
\usepackage[boxsize = .3em]{ytableau}
\usepackage{cleveref}

\DeclareMathOperator{\Id}{Id}

\DeclareMathOperator{\Var}{Var}
\renewcommand{\Im}{\operatorname{Im}}

\DeclareMathOperator{\Tr}{Tr}
\newcommand{\tr}{\Tr}

\newcommand{\cob}[1]{\textcolor{RoyalBlue}{#1}}
\newcommand{\cor}[1]{\textcolor{BrickRed}{#1}}

\newcommand{\N}{\mathbb{N}}

\newcommand{\Rbb}{\mathbb{R}}
\newcommand{\C}{\mathbb{C}}

\newcommand{\e}{\mathrm{e}}

\let\d\relax
\newcommand{\d}{\mathrm{d}}
\newcommand{\dif}{\operatorname{d}}

\newcommand{\A}{\mathcal{A}}
\newcommand{\Au}{\underline{\mathcal{A}}}
\newcommand{\B}{\mathcal{B}}
\newcommand{\D}{\mathcal{D}}

\newcommand{\Q}{\mathcal{Q}}
\newcommand{\R}{\mathcal{R}}

\newcommand{\T}{\mathcal{T}}

\newcommand{\ebf}{\mathbf{e}}

\renewcommand{\u}{\mathbf{u}}
\renewcommand{\v}{\mathbf{v}}
\newcommand{\w}{\mathbf{w}}
\newcommand{\x}{\mathbf{x}}
\newcommand{\y}{\mathbf{y}}
\newcommand{\z}{\mathbf{z}}


\renewcommand{\epsilon}{\varepsilon}
\renewcommand{\phi}{\varphi}
\renewcommand{\hat}{\widehat}
\renewcommand{\tilde}{\widetilde}
\newcommand{\wt}{\widetilde}


\let\O\relax
\newcommand{\O}[1]{\mathcal{O}\left(#1\right)}

\renewcommand{\P}{\mathds{P}}
\newcommand{\E}{\mathds{E}}
\newcommand{\Eb}{\mathbf{E}}

\newcommand{\bma}{\begin{bmatrix}}
\newcommand{\ema}{\end{bmatrix}}
\def\bet{\begin{thm}}
\def\eet{\end{thm}}
\def\bel{\begin{lem}}
\def\eel{\end{lem}}
\def\bas{\begin{ass}}
\def\eas{\end{ass}}
\def\bed{\begin{defn}}
\def\eed{\end{defn}}
\def\bep{\begin{prop}}
\def\eep{\end{prop}}
\def\beq{\begin{equation}}
\def\eeq{\end{equation}}
\def\bea{\begin{equation*}}
\def\eea{\end{equation*}}
\def\bex{\begin{ex}}
\def\eex{\end{ex}}
\def\bp{\begin{proof}}
\def\ep{\end{proof}}
\def\benr{\begin{enumerate}[label=(\roman*)]}
\def\eenr{\end{enumerate}}


\newcommand{\nt}{\tilde{n}}

\newcommand{\unn}[2]{[\![#1,#2]\!]}

\newtheorem{ccounter}{ccounter}[section]
\newtheorem{thm}[ccounter]{Theorem}
\newtheorem{lem}[ccounter]{Lemma}
\newtheorem{corollary}[ccounter]{Corollary}
\newtheorem{defn}[ccounter]{Definition}
\newtheorem{prop}[ccounter]{Proposition}
\newtheorem{ass}[ccounter]{Assumption}
\newtheorem{ex}[ccounter]{Example}
\theoremstyle{definition}
\newtheorem{rmk}[ccounter]{Remark}

\titleformat{\section}[block]{\normalfont\filcenter}{\Large\thesection .}{.7em}{\Large\scshape}
\titleformat{\subsection}[runin]{\normalfont}{\large \bf \thesubsection .}{.5em}{\large\bf}[.]
\titleformat{\subsubsection}[runin]{\normalfont}{\bf \thesubsubsection .}{.5em}{\bf}[.]
\titleformat*{\paragraph}{\itshape\mdseries}

\begin{document}
\tikzset{every node/.style={circle, minimum size=.1cm, inner sep = 2pt, scale=1}}

\title{\vspace{-5ex}\bfseries\scshape{Eigenvalue distribution of the Neural Tangent Kernel in the quadratic scaling}}
\author{L. \textsc{Benigni}\thanks{Supported in part by a Natural Sciences and Engineering Research Council of Canada (NSERC) RGPIN 2023-03882 \& DGECR 2023-00076.}\\\vspace{-0.15cm}\footnotesize{\it{Universit\'e de Montr\'eal}}\\\footnotesize{\it{lucas.benigni@umontreal.ca}}\and E. \textsc{Paquette}\thanks{Supported in part by a Natural Sciences and Engineering Research Council of Canada (NSERC) RGPIN 2025-04643.}\\\vspace{-0.15cm}\footnotesize{\it{McGill University}}\\\footnotesize{\it{elliot.paquette@mcgill.ca}}}
\date{}
\maketitle
\abstract{
  We compute the asymptotic eigenvalue distribution of the neural tangent kernel of a two-layer neural network under a specific scaling of dimension. Namely, if $X\in\Rbb^{n\times d}$ is an i.i.d random matrix, $W\in\Rbb^{d\times p}$ is an i.i.d $\mathcal{N}(0,1)$ matrix and $D\in\Rbb^{p\times p}$ is a diagonal matrix with i.i.d bounded entries, we consider the matrix 
  \[
  \mathrm{NTK} 
  = 
  \frac{1}{d}XX^\top 
  \odot 
  \frac{1}{p}
  \sigma'\left(
    \frac{1}{\sqrt{d}}XW
  \right)D^2
  \sigma'\left(
    \frac{1}{\sqrt{d}}XW
  \right)^\top
  \]
  where $\sigma'$ is a pseudo-Lipschitz function applied entrywise and under the scaling $\frac{n}{dp}\to \gamma_1$ and $\frac{p}{d}\to \gamma_2$. We describe the asymptotic distribution as the free multiplicative convolution of the Marchenko--Pastur distribution with a deterministic distribution depending on $\sigma$ and $D$.
}
\section{Introduction}

Understanding the theoretical foundations of neural network training remains one of the central challenges in machine learning. Despite the empirical success of deep learning \cites{imagenet, bert, diffusion}, the mechanisms underlying gradient-based optimization in high-dimensional parameter spaces are not well understood. Before delving into the theoretical framework, we first establish our notation for neural networks. Consider a dataset $X \in \mathbb{R}^{n \times d}$ where each row represents a $d$-dimensional input sample, with $n$ total samples. A $(L+1)$-layered neural network computes the function:
\[
\hat{\mathbf{y}}_L(X,\Theta) 
= 
 \sigma(X_{L-1} W^{(L)})\mathbf{a}\in\Rbb^{n}\quad \text{with} \quad X_\ell = \sigma(X_{\ell-1}W^{(\ell)})\in\Rbb^{n\times p_\ell}\quad\text{and}\quad X_0=X.
\]
where $W^{(\ell)} \in \mathbb{R}^{p_{\ell-1} \times p_\ell}$ (with $p_0=d$) represents the weights connecting the hidden layers, $\sigma: \mathbb{R} \to \mathbb{R}$ is a nonlinear activation function applied element-wise, and $\mathbf{a} \in \mathbb{R}^{p_{L}}$ is the output layer that collapses the hidden representations to produce outputs in $\mathbb{R}^n$. This is illustrated in Figure \ref{fig:nn}.

\begin{figure}[!ht]
  \centering 
  \begin{tikzpicture}
			\node[draw = ForestGreen, fill = ForestGreen!20!White, circle, minimum size = .5cm, ] (11) at (-.5,1) {};
			\node[draw = ForestGreen, fill = ForestGreen!20!White, circle, minimum size = .5cm, ] (12) at (-.5,2) {};
			\node[draw = ForestGreen, fill = ForestGreen!20!White, circle, minimum size = .5cm, ] (13) at (-.5,3) {};
			
			\node[draw=none] at (-.5, 0.25) {\textcolor{ForestGreen}{${X=X_0}$}};

			\node[draw = RoyalBlue, fill = RoyalBlue!20!White, circle, minimum size = .5cm, ] (21) at (1,0) {};
			\node[draw = RoyalBlue, fill = RoyalBlue!20!White, circle, minimum size = .5cm, ] (22) at (1,1) {};
			\node[draw = RoyalBlue, fill = RoyalBlue!20!White, circle, minimum size = .5cm, ] (23) at (1,2) {};
			\node[draw = RoyalBlue, fill = RoyalBlue!20!White, circle, minimum size = .5cm, ] (24) at (1,3) {};
			\node[draw = RoyalBlue, fill = RoyalBlue!20!White, circle, minimum size = .5cm, ] (25) at (1,4) {};

			\node[draw=none] at (1, -0.5) {\cob{${X_1}$}};
			\node[draw=none] at (0.15, 3.8) {\cob{$W_1$}};

			\node[draw = RoyalBlue, fill = RoyalBlue!20!White, circle, minimum size = .5cm, ] (31) at (2.5,0) {};
			\node[draw = RoyalBlue, fill = RoyalBlue!20!White, circle, minimum size = .5cm, ] (32) at (2.5,1) {};
			\node[draw = RoyalBlue, fill = RoyalBlue!20!White, circle, minimum size = .5cm, ] (33) at (2.5,2) {};
			\node[draw = RoyalBlue, fill = RoyalBlue!20!White, circle, minimum size = .5cm, ] (34) at (2.5,3) {};
			\node[draw = RoyalBlue, fill = RoyalBlue!20!White, circle, minimum size = .5cm, ] (35) at (2.5,4) {};

			\node[draw=none] at (1.75, 4.3) {\cob{$W_2$}};
			\node[draw=none] at (2.5, -0.5) {\cob{${X_2}$}};

			\node[draw= none] at (3.5,2) {\Large$\cdots$};

			\node[draw = RoyalBlue, fill = RoyalBlue!20!White, circle, minimum size = .5cm, ] (41) at (4.5,0) {};
			\node[draw = RoyalBlue, fill = RoyalBlue!20!White, circle, minimum size = .5cm, ] (42) at (4.5,1) {};
			\node[draw = RoyalBlue, fill = RoyalBlue!20!White, circle, minimum size = .5cm, ] (43) at (4.5,2) {};
			\node[draw = RoyalBlue, fill = RoyalBlue!20!White, circle, minimum size = .5cm, ] (44) at (4.5,3) {};
			\node[draw = RoyalBlue, fill = RoyalBlue!20!White, circle, minimum size = .5cm, ] (45) at (4.5,4) {};

			\node[draw=none] at (4.5, -0.5) {\cob{${X_L}$}};
			\node[draw=none] at (5.4, 3.2) {\cob{$\mathbf{a}$}};

			\node[draw = BrickRed, fill = BrickRed!20!White, circle, minimum size = .5cm, ] (51) at (6,2) {};

			\node[draw=none] at (6.75, 1.5) {\cor{$\hat{\mathbf{y}}_L(X,\Theta)$}};

			\draw[->] (11) edge (21); 
			\draw[->] (11) edge (22);
			\draw[->] (11) edge (23);
			\draw[->] (11) edge (24);
			\draw[->] (11) edge (25);

			\draw[->] (12) edge (21);
			\draw[->] (12) edge (22);
			\draw[->] (12) edge (23);
			\draw[->] (12) edge (24);
			\draw[->] (12) edge (25);

			\draw[->] (13) edge (21);
			\draw[->] (13) edge (22);
			\draw[->] (13) edge (23);
			\draw[->] (13) edge (24);
			\draw[->] (13) edge (25);

			\draw[->] (21) edge (31); 
			\draw[->] (21) edge (32);
			\draw[->] (21) edge (33);
			\draw[->] (21) edge (34);
			\draw[->] (21) edge (35);

			\draw[->] (22) edge (31);
			\draw[->] (22) edge (32);
			\draw[->] (22) edge (33);
			\draw[->] (22) edge (34);
			\draw[->] (22) edge (35);

			\draw[->] (23) edge (31);
			\draw[->] (23) edge (32);
			\draw[->] (23) edge (33);
			\draw[->] (23) edge (34);
			\draw[->] (23) edge (35);
			
			\draw[->] (24) edge (31); 
			\draw[->] (24) edge (32);
			\draw[->] (24) edge (33);
			\draw[->] (24) edge (34);
			\draw[->] (24) edge (35);

			\draw[->] (25) edge (31);
			\draw[->] (25) edge (32);
			\draw[->] (25) edge (33);
			\draw[->] (25) edge (34);
			\draw[->] (25) edge (35);

			\draw[->] (41) edge (51); 
			\draw[->] (42) edge (51);
			\draw[->] (43) edge (51);
			\draw[->] (44) edge (51); 
			\draw[->] (45) edge (51);
			
		\end{tikzpicture}
    \caption{Illustration of a multi-layered feed-forward neural network}
    \label{fig:nn}
  \end{figure}
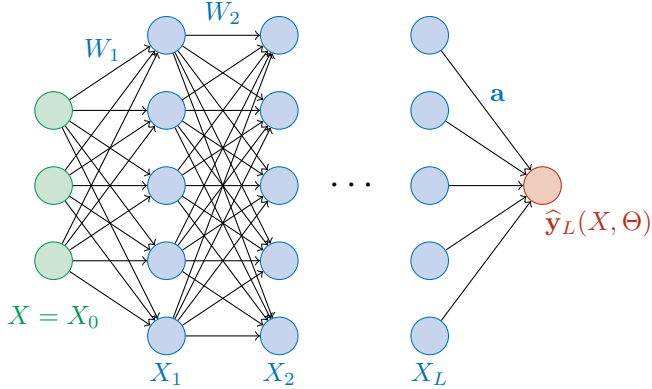

  The parameters $\theta = \{W^{(1)},\dots,W^{(L)}, \mathbf{a}\}$ are typically initialized randomly and updated through gradient-based optimization to minimize a loss function on the training data. Indeed, in supervised learning, the goal is to learn a mapping from inputs to target outputs. Given a training dataset $(X, \mathbf{y})$ where $\mathbf{y} \in \mathbb{R}^n$ contains the corresponding target values for each input sample, we seek to minimize a loss function $\mathcal{L}(\hat{\y}_L(X; \Theta), \y)$ that measures the discrepancy between the network's predictions and the true targets. Common choices include the $\ell^2$-loss $\mathcal{L}(\x, \y) = \frac{1}{2}\Vert \x - \y\Vert_2^2$ for regression tasks, but also its regularized versions, cross-entropy loss for classification tasks or plenty of others. Training proceeds by iteratively updating the parameters $\Theta$ using some optimization algorithm. This optimization process aims to find parameters that generalize well to unseen data, though understanding the dynamics of this high-dimensional, non-convex optimization problem has remained challenging.

  For simplicity, consider the $\ell^2$ training loss and optimize under gradient flow. Then we have 
  \[
  \frac{\d \Theta_t}{\d t} = -\nabla_\Theta \mathcal{L}(\hat{\y}(X,\Theta_t),\y) 
  =
  -\nabla_\Theta \hat{\y}_L(X,\Theta_t)^\top
  \left(
    \hat{\y}_L(X,\Theta_t)-\y
  \right)
  \] 
  where the Jacobian matrix of the predictions with respect to the parameters $\Theta$ is given by 
  \[
  \nabla_\Theta \hat{\y}_L(X,\Theta)^\top
  =
  \left(
    \nabla_\Theta \hat{\y}_L(\x_1,\Theta)\vert \dots\vert \nabla_\Theta\hat{\y}_L(\x_n,\Theta)
  \right)
  \]
with $\x_i$ the $i$-th row of $X$. In particular, when one considers the time evolution of the output of the networks, one gets 
\[
\frac{\d}{\d t}(\hat{\y}_L(X,\Theta_t)-\y)
=
\nabla_\Theta \hat{\y}_L(X,\Theta_t)\frac{\d \Theta_t}{\d t}
=
-\nabla_\Theta \hat{\y}_L(X,\Theta_t)\nabla_\Theta \hat{\y}_L(X,\Theta_t)^\top 
\left(
  \hat{\y}_L(X,\Theta_t)-\y
\right).
\]
Thus, we see that the dynamics of the output is completely described by the so-called Neural Tangent Kernel \cites{jacot2018, zhu2019,du2018gradient},
\[
K^{\mathrm{NTK}}_t = \nabla_\Theta \hat{\y}_L(X,\Theta_t)\nabla_\Theta \hat{\y}_L(X,\Theta_t)^\top.
\]
In particular, we see that if $(\lambda_{i,t},\u_{i,t})$ are the eigenvalue-eigenvector pairs of $K^{\mathrm{NTK}}_t$, we can observe that 
\[
\u_{i,t}^\top \frac{\d}{\d t}\left(\hat{\y}_L(X,\Theta_t)-\y\right) = -\lambda_{i,t}\u_{i,t}^\top\left(\hat{\y}_L(X,\Theta_t)-\y\right).
\]  
The eigenvalues,and eigenvectors of the NTK matrix thus play a critical role in determining the learning dynamics, as they directly influence the rate of convergence for different components of the function being learned. Components corresponding to larger eigenvalues are learned faster during training, while those associated with smaller eigenvalues require more iterations to converge. Understanding this spectral structure is therefore essential for predicting which functions can be efficiently learned by neural networks and for designing architectures that optimize learning efficiency. In the infinite-width limit, or with small learning rate, the NTK remains constant during training \cites{jacot2018,du2018gradient,du2019gradient,zhu2019,chizat2019lazy}, transforming the complex neural network optimization into a linear dynamical system governed by the kernel matrix. In particular, this means that in the NTK framework, for sufficiently wide networks, when trained with gradient descent, the output of a neural network behave equivalently to a kernel method with a fixed, deterministic kernel that depends only on the network architecture and initialization. Thus, if the parameters are initialized randomly, the kernel matrix becomes a fixed large random matrix which governs the full dynamics of the output.

In this paper, we focus on the case of a two-layer neural network, the simplest case of a nonlinear neural network, such that for $W\in \Rbb^{d\times p}$ and $\mathbf{a}\in\Rbb^p$ we have,
\[
\hat{\y}(X,W,\mathbf{a}) = \frac{1}{\sqrt{p}}\sigma\left(\frac{1}{\sqrt{d}}XW\right)\mathbf{a}\in\Rbb^n.
\]
In this case, the NTK matrix at initialization can be computed easily and one gets the sum of two terms, coming from differentiating with respect to $W$ or to $\mathbf{a},$
\begin{equation}\label{eq:ntk}
K^{\mathrm{NTK}} = \frac{1}{p}\sigma\left(\frac{1}{\sqrt{d}}XW\right)\sigma\left(\frac{1}{\sqrt{d}}XW\right)^\top
 +
  \frac{1}{d}XX^\top\odot \frac{1}{p}\sigma'\left(\frac{1}{\sqrt{d}}XW\right)\mathrm{Diag}(\mathbf{a})^2\sigma'\left(\frac{1}{\sqrt{d}}XW\right)^\top \in\Rbb^{n\times n}
\end{equation}
where $\odot$ corresponds to the Hadamard, or entrywise, product $(A\odot B)=A_{ij}B_{ij}$ and $\mathrm{Diag}(\mathbf{a})\in\Rbb^{p\times p}$ is the diagonal matrix whose diagonal is given by the vector $\mathbf{a}.$ In order to understand the eigenvalue distribution, the regime of dimension is greatly important. Indeed, if one consider the classical random matrix theory regime, or \emph{linear} regine $\frac{n}{d}\to \gamma_1$ and $\frac{p}{d}\to \gamma_2$ then one can see the the diagonal of the Hadamard product part of \eqref{eq:ntk} dominates and this part actually concentrates to a scalar matrix. Thus, the limiting eigenvalue distribution of this model goes to a deterministic shift of the eigenvalue distribution of the \emph{conjugate kernel} \cites{adlam2020neural, fan2020spectra}
\begin{equation}\label{eq:ck}
K^{\mathrm{CK}}=\frac{1}{p}\sigma\left(\frac{1}{\sqrt{d}}XW\right)\sigma\left(\frac{1}{\sqrt{d}}XW\right)^\top.
\end{equation}
However, the part involving the Hadamard product gets a nontrivial eigenvalue distribution in the \emph{quadratic} scaling $\frac{n}{pd}\to \gamma_1$ and $\frac{p}{d}\to\gamma_2$ which is the scaling chosen in this paper. An easier model involving the Hadamard product of two independent sample covariance matrices have been studied in this regime in \cite{abou2025eigenvalue}. The first part involving the conjugate kernel is a low-rank matrix, with a rank of order $\O{\sqrt{n}}$, so that it will not impact the bulk distribution. However, the \emph{mini-bulk} coming from the diverging number of outliers can be interesting in their own right but their study is outside of the scope of this article. 

This quadratic scaling coincides with the interpolation threshold where the number of parameters, here $p(d+1)$ is of the same order as the number of samples $n$. Around this threshold, the training error, coming from optimization with respect to the training data, goes to zero but an interesting phenomenon arises for the generalization error or test error. It exhibits a non-monotonic relationship with model complexity, also called a \emph{double descent} \cites{neyshabur2015, zhang2021, belkin2019} or \emph{multiple descent} \cites{liang2020multiple,canatar2021spectral}. As the number of parameters increases, test error initially decreases, then increases, coming from the classical bias-variance tradeoff, but then surprisingly decreases again as the model becomes heavily overparameterized. This second descent occurs around the point where the model has just enough capacity to perfectly fit the training data i.e the interpolation threshold. Beyond this threshold, further increasing model width continues to improve generalization, contradicting classical statistical learning theory. This phenomenon is now theoretically understood for several related models \cites{belkin2020two, hastie2022surprises,mei2022generalization,montanari2022interpolation,schroder2023deterministic, latourelle2023dyson,montanari2025} but still open for general multi-layered neural networks. The authors direct the interested reader to a recent review \cite{misiakiewicz2024six}.

This paper falls within the scope of nonlinear random matrix theory, which focuses on random matrix models where a nonlinearity is applied entrywise. This area has attracted considerable attention in recent years, particularly due to its applications in statistics and machine learning. The first eigenvalue distribution of such a model was studied in \cites{elkaroui, chengsinger, do2013spectrum} and was of the form $\sigma(XX^\top)$ with $X$ being i.i.d, $\sigma$ applied entrywise, and in a linear scaling of dimension. Further work includes the analysis of the largest eigenvalue \cites{fan2019spectral} or the study of polynomial scalings of dimension in \cites{misiakiewicz2022spectrum,lu2022equivalence,dubova2023universality, pandit2024universality, kogan2024extremal}. The conjugate kernel as in \eqref{eq:ck} has also seen substantial study both in linear scaling of dimension for the eigenvalue distribution \cites{louart2018random, pennington2017nonlinear,benigni2021eigenvalue,fan2020spectra, piccolo2021analysis,chouard2023deterministic,latourelle2023dyson,guionnet2025global}, analysis of the largest eigenvalue \cites{benigni2022largest,wang2024nonlinear} or in the polynomial scaling \cite{hu2024asymptotics} and overparametrized scaling \cite{wang2024deformed}. The eigenvalue distribution of the neural tangent kernel as in \eqref{eq:ntk} has been studied in the linear scaling of dimension in \cite{fan2020spectra} or in the overparametrized case in \cite{wang2024deformed}. While these three models, of somewhat increasing complexity, form the bulk of the literature, other interesting nonlinear models have been studied \cites{liao2021,guionnet2023spectral, feldman2025spectral}.

\section{Main model and results}

Since we only consider the Hadamard product part of \eqref{eq:ntk} which only involves $\sigma'=\phi$, for ease of notation, we consider the following model, 
\[
K \coloneqq \frac{1}{d}XX^\top \odot \frac{1}{p}Y_\phi D^2 Y_\phi^\top \quad\text{with}\quad Y_\phi\coloneqq \phi\left(\frac{1}{\sqrt{d}}XW\right)   
\]
where $X\in\Rbb^{n\times d}$, $W\in\Rbb^{d\times p}$, $\phi:\Rbb\to\Rbb$ is a function applied entrywise, $D=\mathrm{Diag}(\mathbf{a})$ where $\mathbf{a}\in\Rbb^p$ and the Hadamard product is defined by 
\[
(A\odot B)_{ij}=a_{ij}b_{ij}.    
\] 
We work in the setting of the following \emph{quadratic scaling}.
\begin{ass}[Quadratic scaling]
We suppose that there exists $\gamma_1,\gamma_2>0$ such that 
\[
\frac{n}{dp}\xrightarrow[n\to\infty]{}\gamma_1\quad \text{and} \quad\frac{p}{d}\xrightarrow[n\to\infty]{}\gamma_2.
\]
\end{ass}
We suppose that the data is unstructured and consider i.i.d.\,random variables for each independent samples. Deterministic data, or structured data, may be more realistic but is outside of the scope of this paper but note that we do not need the data to be Gaussian but more generally distributed.
\begin{ass}[Data assumptions]
The entries of $X$ are i.i.d.\,centered random variables of variance $1$\footnote{The result can be easily generalized to a general variance, for simplicity we develop the proof with unit variance} such that $\mu_{4,x}\coloneqq\E[x_{11}^4]$ exists and we suppose that for any $\varepsilon,\theta>0$,
\begin{equation}\label{eq:conchi2}
      \P\left(  
        \left\vert \frac{1}{d}\Vert \x_i\Vert_2^2-1\right\vert
        \geqslant \frac{d^\varepsilon}{\sqrt{d}}
        \right)
        \leqslant N^{-\theta}.
    \end{equation} 
\end{ass} 

We now give our assumption on our network weights. We note that we do need the weights of $W$ to be Gaussian.
\begin{ass}[Weight assumptions] 
  We suppose that $X$, $W$, and $D$ are independent random matrices such that the entries of $W$ are i.i.d.\ $\mathcal{N}(0,1)$ and the entries of $\mathbf{a}$ are i.i.d.\,centered, bounded random variables.  We denote by $\nu$ the law of $a_{11}^2$.
\end{ass}
Finally, we give the assumption on $\phi$ corresponding to the derivative of the activation function $\sigma$.
\begin{ass}[Activation function assumptions]
    We suppose that $\phi$ is a pseudo-Lipschitz function: there exists $\alpha>0$ and $L>0$ such that for any $x,y\in\Rbb$
    \[
      \left\vert
        \phi(x)-\phi(y)
      \right\vert
      \leqslant 
      L\vert x-y\vert \left(
        1+\vert x\vert^\alpha+\vert y\vert^\alpha
      \right).
    \]
\end{ass}

To formulate our main result, we need to introduce the notion of the Marchenko--Pastur map.
\begin{defn}
  For a probability measure $\nu$ on $[0,\infty)$ and a $\gamma >0$, the \emph{Marchenko--Pastur map $\mu_{\mathrm{MP}}^{\gamma}\boxtimes \nu$} is the free multiplicative convolution of the Marchenko--Pastur distribution of shape $\gamma$ with $\nu$.  Its Stieltjes transform is given for any $z\in\C_+$ by the unique solution in $\C_+$ to the equation
  \begin{equation}\label{eq:mp_map}
  s(z) = \int_\Rbb \frac{1}{t(1-\gamma(1+zs(z)))-z}\d \nu(t).
  \end{equation}
\end{defn}
\noindent The Marchenko--Pastur map is  (informally) the mapping from the spectrum of the population covariance matrix to the spectrum of the empirical covariance matrix of i.i.d samples, where the ratio of features to samples is $\gamma$.  More precisely:
\begin{thm}[\cite{baisilverstein}*{Theorem 4.3}]\label{theo:baisilverstein}
  Suppose that $A \succeq 0$ and $B \succeq$ are $a \times a$ and $b \times b$ matrices, and that $Y$ is an $a \times b$ random matrix with i.i.d standard variables having all moments.  Then if $a \to \infty$ and $b \to \infty$ with $a/b \to \gamma \in (0,\infty)$, and if the e.e.d of $A$ and of $B$ both converge to $\nu$, then the e.e.d of the sample covariance matrix\footnote{Here $b$ represents the number of samples, and $a$ represents the number of features.} $\frac{1}{b}A^{1/2} YY^{\top} A^{1/2}$ converges weakly almost surely to $\mu_{\mathrm{MP}}^{\gamma}\boxtimes \nu$.   The e.e.d of the Gram matrix $\frac{1}{a} Y B Y^{\top}$ converges weakly almost surely to $\mu_{\mathrm{MP}}^{1/\gamma}\boxtimes \nu$.
\end{thm}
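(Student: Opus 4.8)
The plan is to establish this by the Stieltjes transform (resolvent) method, the standard route to Marchenko--Pastur type laws; the free-probabilistic reformulation in terms of $\boxtimes$ is then automatic from the defining relation \eqref{eq:mp_map}. (A moment-method proof, computing $\E\tfrac1a\tr S^k$ and matching against the free cumulants of $\mu_{\mathrm{MP}}^\gamma\boxtimes\nu$, also works since $Y$ has all moments, but is messier.) It suffices to treat the sample covariance matrix $S\coloneqq\tfrac1b A^{1/2}YY^\top A^{1/2}$: the Gram matrix $\tfrac1a YBY^\top$ shares its nonzero spectrum with $\tfrac1a B^{1/2}Y^\top YB^{1/2}$, which is itself a sample covariance matrix in the variables $(Y^\top,B)$ with aspect ratio tending to $1/\gamma$, so the first statement applied with $\gamma$ replaced by $1/\gamma$ and modulo a point mass at the origin of weight dictated by $\gamma$ yields the second. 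After a routine truncation-and-interlacing step we may further assume $\|A\|=O(1)$ with the e.e.d. of $A$ converging to a compactly supported $\nu$. Since Stieltjes transforms determine weak convergence, the goal reduces to showing that $m(z)\coloneqq\tfrac1a\tr(S-z)^{-1}$ converges almost surely, for each fixed $z\in\C_+$, to the unique solution $s(z)\in\C_+$ of \eqref{eq:mp_map}.

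Concentration comes first: expanding $m(z)$ as a sum of martingale differences along the filtration generated by the columns $y_1,\dots,y_b$ of $Y$, each increment is $O(1/a)$ because replacing a single column perturbs $(S-z)^{-1}$ by a matrix of rank at most two; Burkholder's inequality together with the moment assumptions on the entries of $Y$ then gives $m(z)-\E m(z)\to 0$ almost surely (and likewise for $\tfrac1a\tr A(S-z)^{-1}$), so it only remains to pin down the deterministic limit.

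For the self-consistent equation, introduce the companion transform $\underline m(z)=\tfrac1b\tr(\tfrac1b Y^\top A Y-z)^{-1}$, which is tied to $m(z)$ by an explicit linear identity since the two matrices share nonzero eigenvalues. Peeling off one column at a time and applying the Sherman--Morrison rank-one update expresses the diagonal blocks of $(S-z)^{-1}$ through quadratic forms $\tfrac1b y_j^\top A^{1/2}(S^{(j)}-z)^{-1}A^{1/2}y_j$, where $S^{(j)}$ omits column $j$. The trace lemma (a Hanson--Wright-type concentration, again using moments of the entries of $Y$) replaces each such form by $\tfrac1b\tr A^{1/2}(S^{(j)}-z)^{-1}A^{1/2}$ up to a negligible error, and $S^{(j)}$ may be swapped for $S$ at negligible cost; averaging over $j$ and invoking the convergence of the e.e.d. of $A$ to $\nu$ produces, in the limit, a closed pair of equations for $(s,\underline s)$ that, after eliminating the companion variable, is exactly \eqref{eq:mp_map} with $\gamma=\lim a/b$.

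Finally, one verifies that $F_z:s\mapsto\int (t(1-\gamma(1+zs))-z)^{-1}\,\d\nu(t)$ maps $\C_+$ to $\C_+$ and has a unique fixed point there --- either by the Bai--Silverstein estimate showing $F_z$ strictly contracts $|s_1-s_2|$, or via the Earle--Hamilton theorem for the Carath\'eodory metric --- which forces the limit from the previous step to equal $s(z)$. Running this for $z$ in a countable dense subset of $\C_+$ and extending by a normal-families argument yields $m(z)\to s(z)$ on all of $\C_+$, hence weak almost sure convergence of the e.e.d. of $S$ to the measure with Stieltjes transform $s$, i.e. to $\mu_{\mathrm{MP}}^\gamma\boxtimes\nu$. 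I expect the third paragraph to be the crux: closing the self-consistent equation needs uniform-in-$z$ control of the quadratic-form fluctuations and of all the rank-one corrections simultaneously, and the truncation of $A$ must be arranged so as not to move the limiting measure $\nu$; the remaining steps are comparatively soft.
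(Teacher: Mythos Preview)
The paper does not prove this theorem: it is quoted verbatim as \cite{baisilverstein}*{Theorem 4.3} and used as a black box, with no argument given in the paper itself. Your sketch is the standard Stieltjes-transform/leave-one-out proof that Bai--Silverstein carry out in their book, so in that sense you are aligned with the ultimate source; there is nothing in the present paper to compare against.
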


We are now ready to give our main result giving a description of the eigenvalue distribution of $K$.
\begin{thm}\label{theo:main}
  There exists a deterministic measure $\mu_{\nu,\varphi}$ such that the empirical spectral distributions of $K$ and $K^{\mathrm{NTK}}$ converge weakly in probability to 
  \(
  \mu_{\mathrm{MP}}^{\gamma_1}\boxtimes \mu_{\nu,\varphi},
  \)
  the Marchenko--Pastur map with shape $\gamma_1$ applied to $\mu_{\nu,\varphi}$.
\end{thm}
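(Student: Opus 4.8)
\emph{Reformulation.} The plan is to exhibit $K$ as a generalized sample covariance matrix, after conditioning on $W$ and $D$, and then to compute its limiting spectrum by a Marchenko--Pastur universality argument in which $\phi$ and $D$ enter only through the covariance of the rows. Writing $w_1,\dots,w_p$ for the columns of $W$ and $\Delta_k\coloneqq\mathrm{Diag}\big(\phi(\tfrac1{\sqrt d}\scp{\x_i}{w_k})_{i\le n}\big)$, one has the exact identity
\[
K=\frac1{dp}\sum_{k=1}^p a_k^2\,\Delta_k(XX^\top)\Delta_k=\frac1{dp}\,\mathcal Z\,\tilde D^2\mathcal Z^\top,
\qquad
\tilde D^2\coloneqq\mathrm{Diag}(a_k^2)\otimes I_d,
\]
where $\mathcal Z\in\Rbb^{n\times dp}$ has $i$-th row $\mathcal X_i\coloneqq v_i\otimes\x_i$ with $v_i\coloneqq\big(\phi(\tfrac1{\sqrt d}\scp{\x_i}{w_k})\big)_{k\le p}$. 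The crucial point is that, conditionally on $(W,D)$, the rows $\mathcal X_i$ are i.i.d., so $K$ is a Gram matrix built from i.i.d.\ rows of covariance $\Sigma\coloneqq\mathrm{Cov}(\mathcal X_1\mid W)$; the data law, the nonlinearity and $D$ will survive in the bulk only through $\tilde\Sigma\coloneqq\tilde D\Sigma\tilde D$. Since $K^{\mathrm{NTK}}=K+K^{\mathrm{CK}}$ with $\mathrm{rank}\,K^{\mathrm{CK}}=O(\sqrt n)$, the rank inequality for empirical spectral distributions reduces the claim for $K^{\mathrm{NTK}}$ to the one for $K$.

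\emph{Marchenko--Pastur universality.} Working conditionally on $(W,D)$, I would show that the empirical spectral distribution of $K$ has the same limit as that of $\tfrac1{dp}A^{1/2}GG^\top A^{1/2}$, with $G$ an $n\times dp$ i.i.d.\ matrix and $A$ an $n\times n$ auxiliary matrix with the same limiting spectrum as $\tilde\Sigma$; Theorem~\ref{theo:baisilverstein}, in the form matching $a=n$, $b=dp$, $a/b\to\gamma_1$, then identifies the limit as $\mu_{\mathrm{MP}}^{\gamma_1}\boxtimes(\lim\mathrm{ESD}\,\tilde\Sigma)$. To make this rigorous with i.i.d.\ \emph{rows} rather than i.i.d.\ entries one must verify a ``good vectors'' condition: for deterministic $B$ with $\norm{B}\le 1$, $\tfrac1{dp}\big(\mathcal X_1^\top B\mathcal X_1-\Tr(B\Sigma)\big)\to 0$ in probability, suitably uniformly (and with $B$ a resolvent matrix, handled by leave-one-out and the deterministic resolvent bound). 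Since $\mathcal X_1=v_1\otimes\x_1$ is a function of the single i.i.d.\ vector $\x_1$, and since pseudo-Lipschitzness of $\phi$ gives, on the event of \eqref{eq:conchi2}, the bound $\norm{\nabla_{\x}(v\otimes\x)}\lesssim\sqrt{\max(p,d)}\,(\log p)^{\alpha/2}$ while $\norm{\mathcal X_1}\lesssim\sqrt{dp}$, a bounded-differences/Efron--Stein estimate (after truncation to that event) gives $\Var(\mathcal X_1^\top B\mathcal X_1\mid W)\lesssim\max(p,d)\cdot dp\cdot(\log p)^{\alpha}=o\big((dp)^2\big)$, as required.

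\emph{Identification of $\mu_{\nu,\phi}$.} It remains to show that $\mathrm{ESD}\,\tilde\Sigma$ converges in probability to a deterministic measure, which is then $\mu_{\nu,\phi}$. Conditioning on $W$, and using that $\tfrac1{\sqrt d}\scp{\x_1}{w_k}$ is (exactly, on the sphere) $\mathcal N(0,1)$, the Hermite expansion $\phi=\sum_r c_r h_r$ and Wick's formula give $\Sigma=\Psi(\widehat W^\top\widehat W)\otimes I_d$ plus terms of type $\widehat w_k\widehat w_{k'}^\top$, plus a term of operator norm $o(1)$ and one of rank $O(\sqrt n)$, where $\widehat w_k\coloneqq w_k/\norm{w_k}$ and $\Psi$ is applied entrywise and built from the $c_r^2$ (e.g.\ $\Psi(1)=\E\phi(g)^2$); in particular the $\mu_{4,x}$-correction is $O(1/d)$ in operator norm and drops out. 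Since $\widehat W^\top\widehat W$ is an operator-norm-small perturbation of $\tfrac1d W^\top W$, the joint limiting spectral behaviour of these blocks together with $\tilde D$ is governed by Theorem~\ref{theo:baisilverstein} applied to $\tfrac1d W\,\mathrm{Diag}(a_k^2)\,W^\top$ and its companion (where $p/d\to\gamma_2$ and $\nu$ enter), which — via an operator-valued free-probability computation or a direct self-consistent equation for the Stieltjes transform — yields a deterministic $\mu_{\nu,\phi}$ depending only on the law of $\phi(g)$, on $\gamma_2$ and on $\nu$. Continuity of $\boxtimes$ under weak convergence then gives $\mathrm{ESD}\,K\to\mu_{\mathrm{MP}}^{\gamma_1}\boxtimes\mu_{\nu,\phi}$ in probability, and hence the same for $K^{\mathrm{NTK}}$.

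\emph{Main obstacle.} The technical heart is the quadratic-form concentration above under these hypotheses: $\phi$ is merely pseudo-Lipschitz (not bounded, not differentiable) and $X$ only has a fourth moment, so Gaussian concentration is unavailable, and the coefficients of $\mathcal X_1^\top B\mathcal X_1$ in $\x_1$ themselves depend nonlinearly on $\x_1$; one must combine truncation via \eqref{eq:conchi2}, mollification of $\phi$, and moment bounds from the pseudo-Lipschitz growth. A secondary difficulty is the bookkeeping, in the last step, of exactly those pieces of $\Sigma$ which are neither $o(1)$ in operator norm nor of vanishing rank (the $\widehat w_k\widehat w_{k'}^\top$-type terms and $\widehat W^\top\widehat W\otimes I_d$) --- their contribution is precisely what makes $\mu_{\nu,\phi}$ differ from $\E[\phi(g)^2]\,\nu$ and depend genuinely on $\phi$.
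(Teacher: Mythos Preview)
Your high-level strategy is the paper's: exhibit $K$ as the Gram matrix of the i.i.d.\ (conditionally on $W,D$) rows $\x_i\otimes Dv_i$, apply a Bai--Zhou theorem, and then identify the limiting spectrum of the conditional covariance tensor; the rank-$O(p)$ reduction from $K^{\mathrm{NTK}}$ to $K$ is also what the paper does. The genuine gap is in your concentration step. The inequality you are using,
\[
\Var\big(\mathcal X_1^\top B\,\mathcal X_1\,\big|\,W\big)\ \lesssim\ \|\nabla_\x(v\otimes\x)\|_{\mathrm{op}}^2\,\|\mathcal X_1\|^2\ \asymp\ \max(p,d)\cdot dp,
\]
is a Poincar\'e inequality, not an Efron--Stein or bounded-differences one. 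Under the data assumption (i.i.d.\ entries with only a finite fourth moment and the norm concentration \eqref{eq:conchi2}) no Poincar\'e inequality is available for the law of $\x_1$. The honest Efron--Stein bound controls $\Var(f)$ by $\sum_j\E|\Delta_j f|^2$, i.e.\ by the \emph{Frobenius} norm of the Jacobian: since $\|\partial_j(v\otimes\x)\|^2\asymp p$ for each of the $d$ coordinates, this gives $\sum_j\E|\Delta_j f|^2\asymp dp\cdot dp=(dp)^2$, which is not $o((dp)^2)$. Your claimed bound is too small by a factor $\min(p,d)$; truncation to the event of \eqref{eq:conchi2} does not help, because that event concerns only $\|\x_i\|$, not a spectral gap for the coordinates.

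This is precisely the obstacle that drives the paper's two main technical detours. First (Proposition~\ref{prop:replace}), after writing $\phi=c+\alpha_\phi\,\mathrm{id}+\psi$, the paper replaces $\psi(\tfrac1{\sqrt d}XW)$ by $\psi(\tilde X)$ with $\tilde X$ an \emph{independent} Gaussian matrix, via a row-by-row Sherman--Morrison swap; Lemma~\ref{lem:x4pl} is exactly the device that handles the coupling between $\x_i$ and $\psi(z_{i,\cdot})$ that you flag in your last paragraph. Second (Proposition~\ref{prop:baizhouquadratic}), the Bai--Zhou quadratic-form hypothesis is verified not by any gradient bound but by expanding the fourth moment $\E[(\underline{\mathcal A}_{\cdot i}^\top B\,\underline{\mathcal A}_{\cdot i})^2]$ directly, conditioning on the four distinguished $x$-entries, and tracking the cancellation of the Wick terms against the centering $(\Tr B\mathcal T)^2$ through the diagrammatic Lemmas~\ref{lem:baizhou-0} and~\ref{lem:baizhou-1}. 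As a secondary point, your identification of $\mu_{\nu,\phi}$ is also too optimistic: the covariance tensor is not $\Psi(\hat W^\top\hat W)\otimes I_d$ plus low rank but the $\mathcal Q$ of Lemma~\ref{lem:covcomp}, whose spectrum (Lemma~\ref{lem:spectrumQhat}) is governed by $2\times 2$ blocks in the singular-vector basis of $W$; this is what produces the classical-convolution structure of Corollary~\ref{cor:main}, and for general $D$ requires the operator-valued formulation of Proposition~\ref{prop:stieltjes}.
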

\noindent We note that $\gamma_1$ is expressed as a limiting ratio of samples to features (hence opposite the convention in Theorem \ref{theo:baisilverstein}), but the NTK naturally takes the form of a Gram matrix, for which reason we arrive at $\mu_{\mathrm{MP}}^{\gamma_1} \boxtimes \nu$. 

The description of $\mu_{\nu,\varphi}$ in full generality can be found in Proposition \ref{prop:stieltjes}. However, there are specific cases where we can fully describe the measure in terms of elementary operations on measures. We give now a corollary describing the limiting measure in the case where $D^2=\mathrm{Id}_p$ or when $\varphi(x)$ is linear.



\begin{figure}[t!]
  \centering
  \begin{subfigure}{0.45\textwidth}
    \centering
    \includegraphics[width=0.9\textwidth]{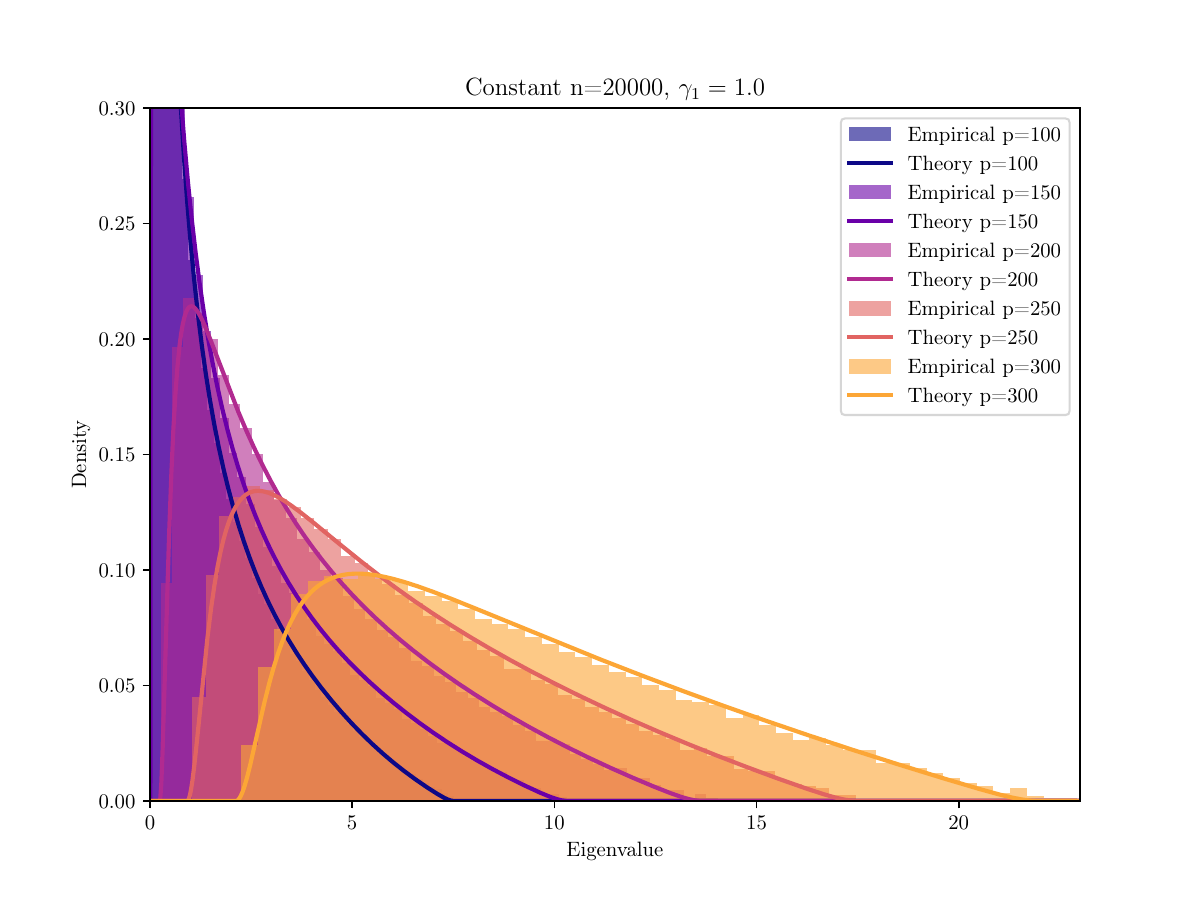}
    \caption{$\varphi(x)=x, \nu=\delta_1.$}
    \label{fig:ntk_gamma1_sweep}
  \end{subfigure}
  \begin{subfigure}{0.45\textwidth}
    \centering
    \includegraphics[width=0.9\textwidth]{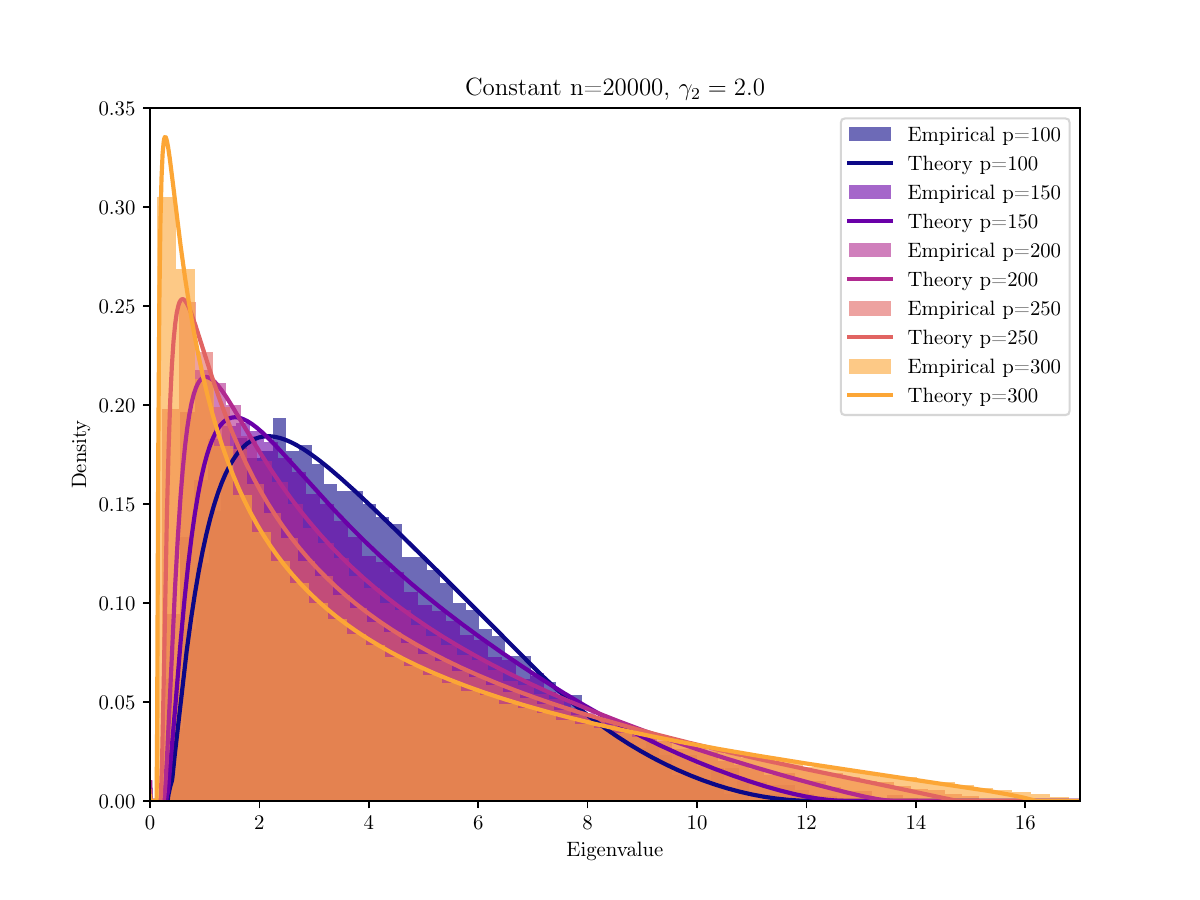}
    \caption{$\varphi(x)=x, \nu=\delta_1.$}
    \label{fig:ntk_gamma2_sweep}
  \end{subfigure}
  \begin{subfigure}{0.45\textwidth}
    \centering
    \includegraphics[width=0.9\textwidth]{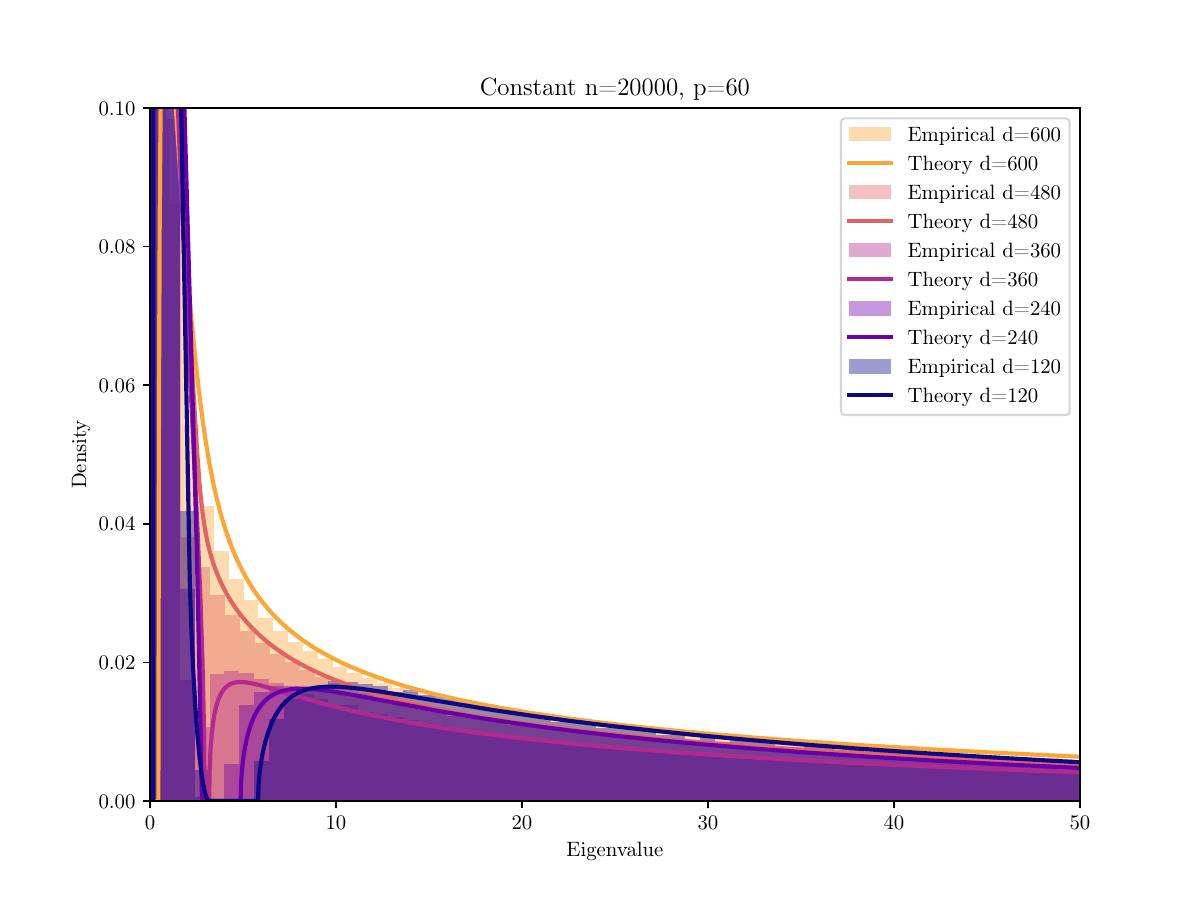}
    \caption{$\varphi(x)=x, \nu=\frac{1}{2}\delta_1 + \frac{1}{2}\delta_{30}.$}
    \label{fig:ntk_HH}
  \end{subfigure}
  \begin{subfigure}{0.45\textwidth}
    \centering
    \includegraphics[width=0.9\textwidth]{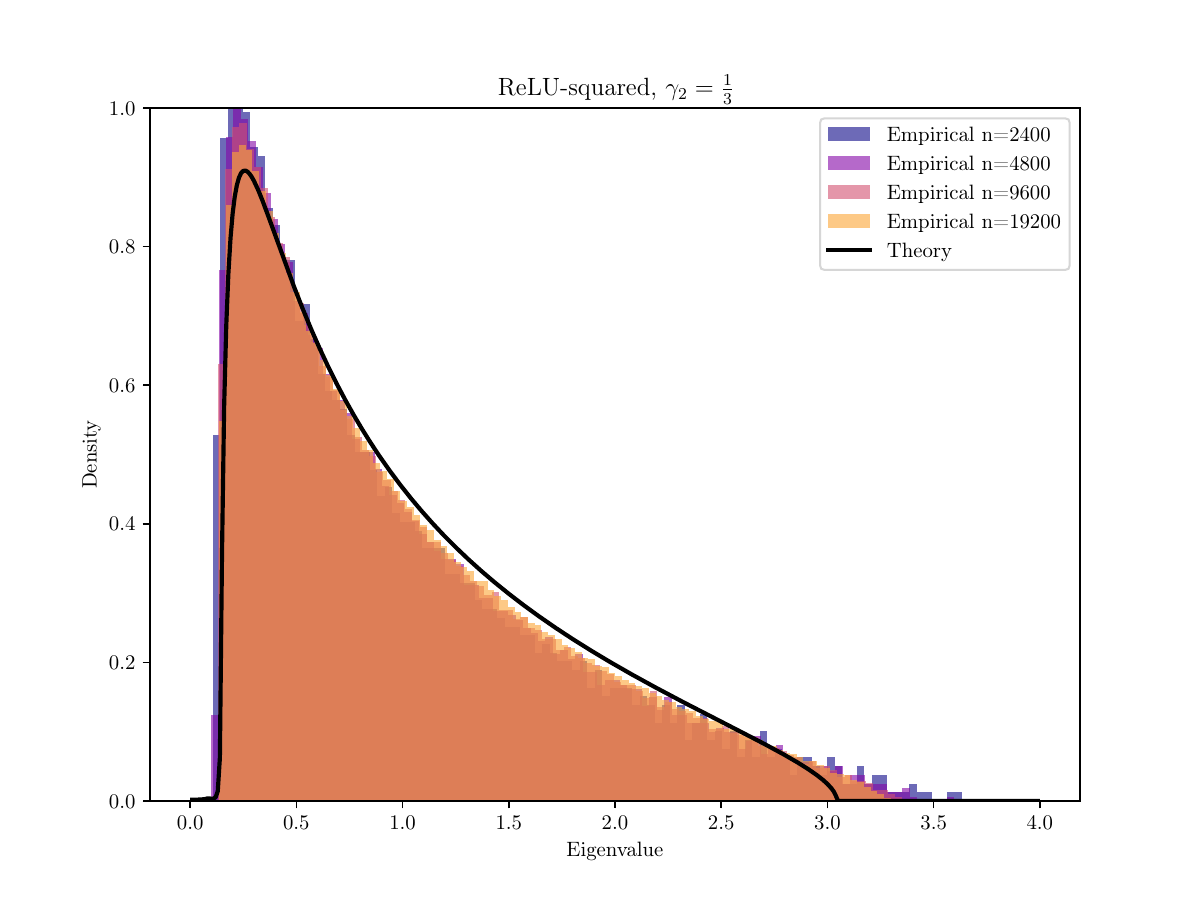}
    \caption{$\varphi(x)=\min\{x,0\}, \nu=\delta_1.$}
    \label{fig:ntk_relu_squared}
  \end{subfigure}
  \caption{Eigenvalue density comparisons for the model \eqref{eq:ntk} over different parameter sweeps.  The first two figures show varying aspect ratios for the quadratic activation $\sigma(x)=\frac{1}{2}x^2$ with $\nu=\delta_1$.  The third figure shows the effect of nontrivial $\nu$ -- the nontrivial disconnected support of the spectrum is inherited for $n/dp$ and $d/p$ sufficiently large.  The fourth shows the effect of nontrivial activation, here $\sigma(x)=\frac{1}{2}(\min\{x,0\})^2$ with $\nu=\delta_1$.}
  \label{fig:ntk_eigenvalue_density_subfigs}
\end{figure}

\begin{corollary}\label{cor:main}
 Suppose that $\nu=\delta_1$ or that $\varphi(x)= a x+b$ then we have that the limiting spectral distribution of $K$ is given by 
 \[
  \mu_{\mathrm{MP}}^{\gamma_1}
  \boxtimes
  \mathrm{Law}\left( \alpha_\phi^2 \chi + \beta_\psi^2\right)
  \quad\text{where}\quad
  \mathrm{Law}(\chi) = \frac{\gamma_2}{2} (\mu_{\mathrm{MP}}^{\gamma_2} \boxtimes \nu) * (\mu_{\mathrm{MP}}^{\gamma_2} \boxtimes \nu) + (1-\gamma_2) (\mu_{\mathrm{MP}}^{\gamma_2} \boxtimes \nu) + \frac{\gamma_2}{2} \delta_0.
 \]
 where $\ast$ denotes the classical convolution of measures, and we define 
 \[
 \alpha_\phi \coloneqq \E_{\mathcal{N}(0,1)}\left[z\varphi(z)\right],
 \quad\text{and}\quad
 \beta_\psi^2 \coloneqq \E_{\mathcal{N}(0,1)}\left[\psi(z)^2\right],
 \quad \text{where} \quad \psi(x) \coloneqq \varphi(x) - \alpha_\phi x - \mathbb{E}_{\mathcal{N}(0,1)}\left[\varphi(z)\right].
 \]
\end{corollary}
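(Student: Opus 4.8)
The plan is to identify $\mu_{\nu,\varphi}$ explicitly in each of the two cases; since $\mu_{\mathrm{MP}}^{\gamma_1}\boxtimes(\cdot)$ is injective on probability measures on $[0,\infty)$ (at the level of $S$-transforms it is multiplication by the nonvanishing $S_{\mu_{\mathrm{MP}}^{\gamma_1}}$), this suffices by Theorem \ref{theo:main}, and amounts to rerunning the relevant part of its proof after a Hermite reduction. Write $\varphi(z)=c_\varphi+\alpha_\phi z+\psi(z)$ with $c_\varphi:=\E_{\mathcal N(0,1)}[\varphi]$, so that $Y_\phi=c_\varphi\mathbf 1\mathbf 1^\top+\tfrac{\alpha_\phi}{\sqrt d}XW+\Psi$ with $\Psi_{ik}=\psi\bigl(\tfrac1{\sqrt d}(XW)_{ik}\bigr)$ and $\psi$ orthogonal to $\{1,z\}$ in $L^2(\mathcal N(0,1))$, in particular $\E[z\psi(z)]=0$. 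The rank-one term $c_\varphi\mathbf 1\mathbf 1^\top$ perturbs $\tfrac1p Y_\phi D^2 Y_\phi^\top$, and hence $K$ — Hadamard multiplication by $\tfrac1d XX^\top$ does not raise the rank — by an $\O{1}$-rank matrix, which does not affect the limiting ESD, so I may drop it. Next, combining the Hadamard--spectral identity $\tfrac1d XX^\top\odot M=\tfrac1d\sum_{a=1}^d\mathrm{Diag}(X_{\cdot a})\,M\,\mathrm{Diag}(X_{\cdot a})$ (with $X_{\cdot a}$ the columns of $X$) with $M=\tfrac1p\sum_{k=1}^p a_k^2(Y_\phi)_{\cdot k}(Y_\phi)_{\cdot k}^\top$ yields the exact identity $K=\tfrac1{dp}ZD'^2Z^\top$, where $Z$ has columns $Z_{\cdot,(a,k)}:=X_{\cdot a}\odot(Y_\phi)_{\cdot k}\in\Rbb^n$ and $D'_{(a,k)}:=a_k$. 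Thus $K$ is a sample covariance matrix with $dp$ (dependent) samples and aspect ratio $n/(dp)\to\gamma_1$, and the deterministic-equivalent analysis underlying Theorem \ref{theo:main} identifies $\mu_{\nu,\varphi}$ with the limiting ESD of an explicit deterministic matrix $\bar S$ (the deterministic equivalent of the $dp\times dp$ shape matrix $S:=\tfrac1n D'Z^\top ZD'$); equivalently, $\mu_{\nu,\varphi}$ solves the self-consistent equation of Proposition \ref{prop:stieltjes}, into which one can also substitute directly. It remains to compute $\bar S$ and diagonalize it.

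\emph{Case $\varphi(x)=ax+b$.} Here $\psi\equiv 0$, $Z_{\cdot,(a,k)}=\tfrac{\alpha_\phi}{\sqrt d}X_{\cdot a}\odot(Xw_k)$ up to the low-rank $b$-term, and a fourth-moment computation in $X$ (Wick pairing of $X_{ia}X_{ia'}X_{ib}X_{ic}$, the coincidence corrections being $\O{1/d}$) gives, with $\widetilde W:=WD$,
\[
\bar S=\alpha_\phi^2\Bigl[\tfrac1d(\widetilde W^\top\widetilde W)\otimes\mathrm{Id}_d+\tfrac1d\,\mathcal{T}\Bigr]+(\text{operator-norm negligible}),
\]
where $\mathcal{T}$ is the operator on $\Rbb^p\otimes\Rbb^d$ defined by $\mathcal{T}(u\otimes v)=(\widetilde W^\top v)\otimes(\widetilde W u)$ — it arises because the mixed vector $X_{\cdot a}\odot X_{\cdot b}$ is common to the columns $(a,k)$ and $(b,k)$ with coefficients $\tfrac{(w_k)_b}{\sqrt d}$ and $\tfrac{(w_k)_a}{\sqrt d}$ — and the negligible part collects a rank-$\O{p}$ piece and the $\O{1/d}$ diagonal corrections. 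In the singular basis of $\widetilde W=P\Lambda Q^\top$ (singular values $\widetilde\sigma_b$), on each $2$-plane pairing the $(b,\ell)$ and $(\ell,b)$ coordinates with $b\ne\ell\le p$ the bracketed operator acts as the matrix $\tfrac1d\left(\begin{smallmatrix}\widetilde\sigma_b^2&\widetilde\sigma_b\widetilde\sigma_\ell\\\widetilde\sigma_b\widetilde\sigma_\ell&\widetilde\sigma_\ell^2\end{smallmatrix}\right)$, which has \emph{zero determinant}, hence eigenvalues $0$ and $\tfrac1d(\widetilde\sigma_b^2+\widetilde\sigma_\ell^2)$; on a coordinate with $\ell>p$ it acts as $\tfrac1d\widetilde\sigma_b^2$; the diagonal $b=\ell\le p$ has multiplicity $p=o(dp)$. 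By Theorem \ref{theo:baisilverstein} (applied with $Y=W^\top$, $A=D^2$, ratio $p/d\to\gamma_2$) the ESD of $\tfrac1d\widetilde W^\top\widetilde W=\tfrac1d DW^\top WD$ converges to $\mu:=\mu_{\mathrm{MP}}^{\gamma_2}\boxtimes\nu$, and two distinct singular values give asymptotically independent $\mu$-samples; collecting the three families above with their asymptotic weights $1-\gamma_2$, $\tfrac{\gamma_2}2$, $\tfrac{\gamma_2}2$ we obtain that the limiting ESD of $\bar S$ is $\alpha_\phi^2\bigl[(1-\gamma_2)\mu+\tfrac{\gamma_2}2\mu^{*2}+\tfrac{\gamma_2}2\delta_0\bigr]=\mathrm{Law}(\alpha_\phi^2\chi)$. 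This is the stated measure when $\beta_\psi=0$.

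\emph{Case $\nu=\delta_1$.} Now $D'=\mathrm{Id}$, $\widetilde W=W$, and $Z_{\cdot,(a,k)}=\alpha_\phi v_{a,k}+\zeta_{a,k}$ with $v_{a,k}=\tfrac1{\sqrt d}X_{\cdot a}\odot(Xw_k)$ and $\zeta_{a,k}=X_{\cdot a}\odot\Psi_{\cdot k}$. The key is a Gaussian-equivalence statement for $\bar S$: because $\E[z\psi(z)]=0$, Stein's identity in $w_k$ shows the column means $\E[(\zeta_{a,k})_i\mid W]$ and the cross-covariances $\E[\tfrac1n\langle v_{a,k},\zeta_{a',k'}\rangle\mid W]$ are $\O{1/d}$ and, once a rank-$\O{p}$ piece is removed, operator-norm negligible; and $\E[\tfrac1n\langle\zeta_{a,k},\zeta_{a',k'}\rangle\mid W]=\beta_\psi^2\,\delta_{(a,k),(a',k')}+\O{1/d}$, the error again negligible after removing low-rank parts — here one uses that for $k\ne k'$ the nonlinear correlation $\E[\Psi_{ik}\Psi_{ik'}\mid W]=\O{(\tfrac1d\langle w_k,w_{k'}\rangle)^2}$ is of order $1/d$, so the higher Hermite coefficients of $\psi$ leave no trace on $\bar S$ after the Hadamard product. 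Hence $\bar S=\alpha_\phi^2\bigl[\tfrac1d(W^\top W)\otimes\mathrm{Id}_d+\tfrac1d\,\mathcal{T}\bigr]+\beta_\psi^2\,\mathrm{Id}_{dp}+(\text{negligible})$, whose limiting ESD is the one of the previous case (now $\mu=\mu_{\mathrm{MP}}^{\gamma_2}$, since $\mu_{\mathrm{MP}}^{\gamma_2}\boxtimes\delta_1=\mu_{\mathrm{MP}}^{\gamma_2}$) shifted by $\beta_\psi^2$, i.e. $\mathrm{Law}(\alpha_\phi^2\chi+\beta_\psi^2)$. In both cases Theorem \ref{theo:main} then gives the ESD of $K$ converging to $\mu_{\mathrm{MP}}^{\gamma_1}\boxtimes\mathrm{Law}(\alpha_\phi^2\chi+\beta_\psi^2)$.

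I expect the main obstacle to be the Gaussian-equivalence step in the $\nu=\delta_1$ case: proving that the nonlinear block $\tfrac1{dp}\sum_{a,k}\zeta_{a,k}\zeta_{a,k}^\top$ genuinely reduces, after the Hadamard product, to a variance-$\beta_\psi^2$ isotropic shape requires the $\O{1/d}$ bounds on the nonlinear correlation kernel together with $\psi\perp\{1,z\}$ to kill both the structured column means and the cross-correlations with the linear part — estimates of the same flavour as, but somewhat finer than, those needed for Theorem \ref{theo:main} itself. A secondary point to isolate is the zero-determinant identity $\det\left(\begin{smallmatrix}\widetilde\sigma_b^2&\widetilde\sigma_b\widetilde\sigma_\ell\\\widetilde\sigma_b\widetilde\sigma_\ell&\widetilde\sigma_\ell^2\end{smallmatrix}\right)=0$, which is precisely what produces a \emph{classical} self-convolution $\mu^{*2}$ (rather than a free one) together with the compensating atom $\tfrac{\gamma_2}2\delta_0$ in $\chi$; one should check it is unaffected by general $\nu$ and by the $D'$-weights, which holds because these only replace $W$ by $\widetilde W=WD$.
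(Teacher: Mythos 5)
Your proposal is correct and follows essentially the same route as the paper: you tensorize $K$ into a Gram/sample-covariance matrix with shape matrix equal (up to negligible terms) to the conditional covariance tensor, and your $\bar S=\alpha_\phi^2\bigl[\tfrac1d(\widetilde W^\top\widetilde W)\otimes\mathrm{Id}_d+\tfrac1d\mathcal{T}\bigr]+\beta_\psi^2\,\mathrm{Id}$ is exactly the paper's tensor $\Q$, whose spectrum is computed by the same $2\times 2$-block diagonalization in the singular basis of $WD$ (Lemma \ref{lem:spectrumQhat}), with the same weight count $\tfrac{\gamma_2}{2},\,1-\gamma_2,\,\tfrac{\gamma_2}{2}$ and the same observation that either $D=\mathrm{Id}$ or $\beta_\psi=0$ makes the $D$-conjugation harmless. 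Your ``Gaussian-equivalence'' step is the content of the paper's Section \ref{sec:nonlinearity} and Lemma \ref{lem:covcomp}, and the injectivity remark about $\mu_{\mathrm{MP}}^{\gamma_1}\boxtimes(\cdot)$ is unnecessary since one only needs to identify $\mu_{\nu,\varphi}$ and then apply Theorem \ref{theo:main} directly.
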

Note that while this compact form allows us to give one formula for the two distinct cases, for $\nu=\delta_1$, the law of $\chi$ simplifies to  
\[
  \mathrm{Law}(\chi) = \frac{\gamma_2}{2} (\mu_{\mathrm{MP}}^{\gamma_2} * \mu_{\mathrm{MP}}^{\gamma_2}) + (1-\gamma_2) \mu_{\mathrm{MP}}^{\gamma_2} + \frac{\gamma_2}{2} \delta_0,
\]
and in the case where $\varphi(x)=a x$ then we have $\alpha_\phi^2=a^2$ and $\beta_\psi^2=0$ and we can dispense with the shift, i.e.\ 
\[
  \mu_{\mathrm{MP}}^{\gamma_1}
  \boxtimes
  \mathrm{Law}\left( \alpha_\phi^2 \chi\right)
  \quad\text{where}\quad
  \mathrm{Law}(\chi) = \frac{\gamma_2}{2} (\mu_{\mathrm{MP}}^{\gamma_2} \boxtimes \nu) * (\mu_{\mathrm{MP}}^{\gamma_2} \boxtimes \nu) + (1-\gamma_2) (\mu_{\mathrm{MP}}^{\gamma_2} \boxtimes \nu) + \frac{\gamma_2}{2} \delta_0.
\]
In the general case, the shift noncommutative analogue of $a_\phi^2 \chi + \beta_\psi^2$ has a shift term that depends $D$ which no longer commutes with the term represented by $\chi$; this leads to a nontrivial coupling which distorts the free convolution $\boxtimes$, and so the formula is false in general. Nevertheless, we provide a free probabilistic characterization of the measure by computing its moments in Subsection \ref{subsec:freeproba}.

We note that $\psi$ contains the contributions of all the Gaussian chaoses beyond the first order, with the first order chaos of $\phi$ leading to the special convolution structure, and the zero-th order producing a low-rank `minibulk' term, much like the conjugate kernel.  Moreover, all contributions to the NTK from chaoses of higher than the first order produce the same effect, mirroring what is seen in the conjugate kernel and for kernel-inner product matrices \cites{hastie2022surprises,mei2022generalization,dubova2023universality}.  The convolutional structure that appears in the first-order term, however, does not appear in the conjugate kernel or kernel-inner product matrices.  It can be seen to arise from a fundamental algebraic structure built into the covariance of the NTK (see especially Lemma \ref{lem:spectrumQhat}).  It can also be seen to appear in other statistical problems, such as the matrix least squares problem setup of \cite{fan2024kronecker}.  We also note that the structure of the measure $\mu_{\nu,\varphi}$ in the general case bears strong similarities to the general form of the solution in \cite{fan2024kronecker} (see \eqref{eq:frepresentation}).

We now give a sketch of the proof of Theorem \ref{theo:main}.
\paragraph{Step 1: Handling the nonlinearity.} In virtually every model of nonlinear random matrix theory, at the level of global eigenvalue distribution, the nonlinearity should be considered as ``linear + noise''. To make this idea formal we start by decomposing the function as follows,
\[
\phi(x)  =  \mathbb{E}_{\mathcal{N}(0,1)}\left[\phi(z)\right] + \alpha_\phi x + \psi(x)
\quad\text{where}\quad 
\alpha_\phi \coloneqq 
\E_{\mathcal{N}(0,1)}\left[
  z\phi(z)
  \right]
\]
In particular we have that $\psi$ is orthogonal to the identity and constant map with respect to the Gaussian measure,
\[
  \E_{\mathcal{N}(0,1)}\left[\psi(z)\right]=\E_{\mathcal{N}(0,1)}\left[z\psi(z)\right] =0
  \quad\text{and}\quad
  \beta_\psi^2 \coloneqq \E_{\mathcal{N}(0,1)}\left[
    \psi(z)^2
    \right].
\]
Our original model can thus be rewritten as 
\[
K = \frac{1}{d}XX^\top \odot 
\frac{1}{p}
\left(
  \left(\frac{\alpha_\phi}{\sqrt{d}} XW + Y_\psi + c\right)
  D^2
  \left(\frac{\alpha_\phi}{\sqrt{d}} XW + Y_\psi + c\right)^{\top}
  \right).    
\]
We then show in Section \ref{sec:nonlinearity} that the eigenvalue distribution of $K$ is asymptotically the same as the eigenvalue distribution of 
\[
\widetilde{K} = \frac{1}{d}XX^\top \odot 
\frac{1}{p}
\left(
  \left(\frac{\alpha_\phi}{\sqrt{d}} XW + \psi(\widetilde{X})\right)
  D^2
  \left(\frac{\alpha_\phi}{\sqrt{d}} XW + \psi(\widetilde{X})\right)^{\top}
  \right)
\]
where $\widetilde{X}\in\Rbb^{n\times p}$ is an independent i.i.d $\mathcal{N}(0,1)$ matrix. The model thus reduces to a linear model with added independent noise. However, this transformation does not eliminate the dependence between the two matrices in the Hadamard product, as both involve the data matrix $X$.
\paragraph{Step 2: The NTK is a Gram matrix.}
We rewrite the entries of this Hadamard product as an inner product  on $\Rbb^d\otimes \Rbb^p \simeq \Rbb^{dp}$, indeed we have 
\[
\tilde{k}_{ij} = 
\frac{1}{pd}\langle \x_i,\x_j\rangle_{\Rbb^d} 
\langle D \tilde{\y}_{\psi,i},D\tilde{\y}_{\psi,j}\rangle_{\Rbb^p}  
\eqqcolon 
\frac{1}{pd}\langle \x_i \otimes D\tilde{y}_{\psi,i},\, \x_j\otimes D\tilde{y}_{\psi,j}\rangle_{\Rbb^d\otimes \Rbb^p}.     
\]
where $\x_i$ and $\tilde{\y}_{\psi,i}$ are respectively the $i$-th rows of $X$ and $\widetilde{Y}_\psi = \frac{\alpha_\phi}{\sqrt{d}}XW+\psi(\widetilde{X})$. Thus, if we denote 
\begin{equation}\label{eq:defomega}
\tilde{\omega}^i = \x_i \otimes D\tilde{\y}_{\psi,i} \in \Rbb^{d}\otimes \Rbb^p    
\end{equation}
our model becomes a Gram matrix of the family $(\tilde{\omega}^i)_{1\leqslant i\leqslant n}$ i.e. 
\[
\tilde{k}_{ij} = \frac{1}{pd}\langle \tilde{\omega}^i,\tilde{\omega}^j\rangle.    
\]
Using this formalization, we can apply the result of Bai--Zhou \cite{baizhou} on the limiting eigenvalue distribution of sample covariance matrices with independent rows. This yields that the limiting e.e.d of $\widetilde{K}$ (and thus of $K$) conditionally on $W$ and $D$ is given by the Marchenko--Pastur map $\mu_{\mathrm{MP}}^{\gamma_1}\boxtimes \pi$ where $\pi$ is the limiting eigenvalue distribution of the conditional covariance tensor 
\[
\mathrm{Cov}(\tilde{\omega}^i,\tilde{\omega}^j)_{qrst}
=
\E\left[
  \left(
    \tilde{\omega}^i_{qr}-\E\left[\tilde{\omega}^i_{qr}\middle\vert W,D\right]
  \right)
  \left(
    \tilde{\omega}^j_{st}-\E\left[\tilde{\omega}^j_{st}\middle\vert W,D\right]
  \right)
  \middle\vert W,D
\right].
\] 
To apply this theorem, we must verify several assumptions from the main result of \cite{baizhou}, primarily a concentration property for quadratic forms. This verification is carried out in Section \ref{sec:baizhou}.
\paragraph{Step 3: Analysis of the covariance tensor.} Having established the Marchenko--Pastur map, we now seek to understand the limiting measure $\pi$. In the special cases from Corollary \ref{cor:main}, the full spectrum and eigenvectors for finite $n$ can be computed in terms of the singular values of $WD$, which explains why we obtain the explicit limit given by the classical convolution of two Marchenko--Pastur measures. In the general case, this direct computation is no longer feasible, so we characterize the distribution through moment calculations. We show that the special cases involving classical convolution can be recovered from this moment formula, and we provide a free probabilistic characterization based on the moment computation. This analysis is presented in Section \ref{sec:analysisQ}.

\subsection*{Acknowledgements}

The authors would like to thank Yue Lu for helpful conversations related to the problem and to Jeffrey Pennington for suggesting the problem, which corresponds to the ``linear scaling regime'' in \cite{adlam2020neural}.

\section{Screening low-rank factors and decoupling the nonlinearity}\label{sec:nonlinearity}
Our first goal is to remove unwanted terms in $K$, and the higher order terms contributed from the nonlinearity from the data matrix $X$.
In short, we would like to reduce the problem to studying a new random matrix $\widetilde{K}$, defined as follows. Let $\widetilde{X}\in\Rbb^{n\times d}$ be a matrix with i.i.d $\mathcal{N}(0,1)$ entries independent of $X$, $D$, and $W$ and consider the model 
  \begin{equation}\label{eq:defwidetildeK}
        \widetilde{K} =
        \frac{1}{d}XX^\top \odot 
\frac{1}{p}
\left(
  \left(\frac{\alpha_\phi}{\sqrt{d}} XW + \psi(\widetilde{X})\right)
  D^2
  \left(\frac{\alpha_\phi}{\sqrt{d}} XW + \psi(\widetilde{X})\right)^{\top}
  \right)
  \end{equation}

\begin{prop}\label{prop:replace}
    Let $\widetilde{K}$ be as in \eqref{eq:defwidetildeK} then we have that the eigenvalue distribution of $K$ is asymptotically the same as the eigenvalue distribution of $\widetilde{K}$.
    Let $z\in\C_+$ then for any $\varepsilon>0$, we have 
    \[
    \P\left(
      \left\vert 
        \frac{1}{n}\tr (K-z)^{-1}-\frac{1}{n}\tr(\widetilde{K}-z)^{-1}
      \right\vert
      \geqslant \frac{d^\varepsilon}{\sqrt{d}}
      \right)
    \xrightarrow[n\to\infty]{}0.    
    \]
    The same holds for $K^{\mathrm{NTK}}$.
\end{prop}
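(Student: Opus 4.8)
The plan is to pass from $K$ to $\widetilde{K}$ in two stages, controlling the resolvent difference at each stage by a rank-plus-norm perturbation argument together with the standard resolvent identity $\tr(A-z)^{-1} - \tr(B-z)^{-1} = \tr\big((A-z)^{-1}(B-A)(B-z)^{-1}\big)$. The first stage removes the constant term $c = \E_{\mathcal N(0,1)}[\phi(z)]$ from $Y_\phi = \frac{\alpha_\phi}{\sqrt d}XW + Y_\psi + c$, where the matrix $c\mathbf 1\mathbf 1^\top$ is rank one; after the Hadamard product with $\frac1d XX^\top$ and the low-rank stability of the Stieltjes transform under Hadamard products with a bounded-rank matrix (the column $c\mathbf 1$ contributes a rank-one perturbation to $Y_\phi$, hence at most rank $2$ to $Y_\phi D^2 Y_\phi^\top$, hence at most rank $2n/ \cdot$... — more precisely the Hadamard product $\frac1d XX^\top \odot \frac1p(\text{rank-}k)$ is itself low rank only after we also use that $XX^\top$ has rank $d \ll n$), the correction to $\frac1n\tr(\cdot-z)^{-1}$ is $O(d/n) = O(1/p) \to 0$. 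One should also track cross terms between $c\mathbf 1$ and $\frac{\alpha_\phi}{\sqrt d}XW + Y_\psi$, but these are likewise rank $O(1)$ in the relevant sense and handled the same way.

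The second, and substantive, stage replaces the genuinely nonlinear matrix $Y_\psi = \psi(\frac1{\sqrt d}XW)$ by the independent Gaussian-chaos surrogate $\psi(\widetilde X)$. Here I would condition on $X$ (and $D$, $W$), write $\frac1d XX^\top \odot \frac1p Y_\psi D^2 Y_\psi^\top$ and its surrogate, and interpolate: the key input is that, conditionally on a typical $X$ satisfying \eqref{eq:conchi2}, the rows of $\frac1{\sqrt d}XW$ are jointly Gaussian with covariance $\frac1d XX^\top \approx \Id_n$ off a negligible event, so $\psi$ applied entrywise produces a matrix whose Frobenius-normalized statistics (first and second moments of entries, trace moments of $\frac1p Y_\psi D^2 Y_\psi^\top$) match those of $\psi(\widetilde X)$ up to $O(d^{\varepsilon - 1/2})$ errors, using the pseudo-Lipschitz bound on $\psi$ and orthogonality $\E[\psi(z)] = \E[z\psi(z)] = 0$. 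The cleanest route is a Lindeberg/Gaussian-interpolation swap over the $n$ rows (or over the $d$ columns of $W$), bounding the derivative of $z \mapsto \frac1n\tr(\cdot - z)^{-1}$ in each swapped coordinate by the operator-norm-weighted resolvent bound $\|(\cdot-z)^{-1}\| \le 1/\Im z$ and the boundedness of $D$; the Hadamard factor $\frac1d XX^\top$ is frozen throughout and only its entry bounds (diagonal $\approx 1$, off-diagonal $O(d^{\varepsilon-1/2})$) enter.

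The main obstacle is the second stage: the Hadamard product destroys the clean low-rank / independence structure, so one cannot simply quote a universality theorem for sample covariance matrices. Two technical points need care. First, a single entry $x_{ij}$ of $X$ appears in both Hadamard factors and in every row of $Y_\psi$, so the naive row-by-row swap is not an independence swap; one must instead swap $X$ for an independent Gaussian $X^{\mathrm G}$ inside $Y_\psi$ only (keeping the outer $\frac1d XX^\top$ and the linear term $\frac{\alpha_\phi}{\sqrt d}XW$ tied to the original $X$), which is legitimate because after conditioning on $X$ the matrix $Y_\psi$ depends on $X$ only through the Gram matrix $\frac1d XX^\top$, and that Gram matrix concentrates at $\Id_n$; thus $\psi(\frac1{\sqrt d}XW) \mid X \stackrel{d}{\approx} \psi(\widetilde X)$ with an explicit coupling error governed by $\|\frac1d XX^\top - \Id_n\|$ on the bad-entry off-diagonal and by \eqref{eq:conchi2} on the diagonal. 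Second, one must check that the omitted higher-chaos cross term $\frac1{\sqrt d}XW$ against $\psi(\widetilde X)$ — i.e.\ the off-diagonal block $\frac{\alpha_\phi}{\sqrt d}(XW)D^2\psi(\widetilde X)^\top + \mathrm{h.c.}$ inside the Hadamard product — behaves identically before and after the swap; since $\psi$ is orthogonal to linear functions under $\mathcal N(0,1)$, this cross term is mean zero and contributes only at the fluctuation scale, again $O(d^{\varepsilon-1/2})$ after Hadamard-multiplying by $\frac1d XX^\top$ and normalizing the trace by $1/n$. Assembling the two stages and a union bound over the negligible events gives the claimed $d^{\varepsilon}/\sqrt d$ rate; the statement for $K^{\mathrm{NTK}}$ follows because the extra conjugate-kernel summand $\frac1p\sigma(\frac1{\sqrt d}XW)\sigma(\frac1{\sqrt d}XW)^\top$ is the same in both and has rank $O(\sqrt n) = o(n)$, so it drops out of $\frac1n\tr(\cdot-z)^{-1}$ up to $o(1)$.
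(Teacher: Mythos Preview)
Your two-stage strategy (low-rank removal, then a Lindeberg-type row swap) is exactly the paper's, and your identification of the main obstacle---that the Hadamard structure ties both factors to $X$---is correct. But Stage~2 as written has a genuine gap. You justify the swap by saying that, conditional on $X$, the law of $Y_\psi$ depends on $X$ only through $\tfrac{1}{d}XX^\top$, and that this Gram matrix ``concentrates at $\Id_n$''. In the quadratic scaling this is false: $n\asymp d^2$, so $\tfrac{1}{d}XX^\top$ is an $n\times n$ matrix of rank $d$ with $n-d$ zero eigenvalues; it is not close to $\Id_n$ in any norm, and there is no useful coupling of $\tfrac{1}{\sqrt d}XW$ to an i.i.d.\ Gaussian array at the level of the full $n\times p$ matrix. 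Only the \emph{diagonal} of the Gram matrix concentrates (this is \eqref{eq:conchi2}), i.e.\ each single row $\tfrac{1}{\sqrt d}W^\top\x_i$ is marginally close to $\mathcal N(0,I_p)$, while distinct rows remain strongly dependent. Your ``coupling error governed by $\|\tfrac1d XX^\top-\Id_n\|$'' therefore does not give anything.

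The paper resolves this by first rewriting $K$ as a Gram matrix in $\Rbb^d\otimes\Rbb^p$: with $\omega^i=\x_i\otimes D\y_{\phi,i}$ one has $K_{ij}=\tfrac{1}{pd}\langle\omega^i,\omega^j\rangle$, so $K$ and the adjoint $\mathcal L=\tfrac{1}{n}\sum_i\omega^i\otimes\omega^i$ share nonzero spectrum. The Lindeberg swap is then performed on $\mathcal L$ one rank-one summand at a time via Sherman--Morrison, and each step reduces to controlling the quadratic form $\tfrac{1}{n}\langle\omega^i\otimes\omega^i-\widetilde\omega^i\otimes\widetilde\omega^i,\mathcal A\rangle$ for a bounded tensor $\mathcal A$. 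Crucially, this involves only the single row $\x_i$ (together with $W,D,\widetilde\x_i$), so the only concentration input needed is $\|\x_i\|^2/d\approx1$, never the full Gram matrix. The paper then supplies the two estimates your sketch leaves implicit: a Frobenius bound $\|\E[\omega^i\otimes\omega^i]-\E[\widetilde\omega^i\otimes\widetilde\omega^i]\|_F^2=O(pd^\varepsilon)$ (using $\E[\psi]=\E[z\psi]=0$ and the pseudo-Lipschitz property) and a variance bound $\operatorname{Var}(\tfrac1n\langle\omega^i\otimes\omega^i,\mathcal A\rangle)=O(1/d)$. Without this tensor reformulation, your outline does not isolate a quantity depending on a single $\x_i$, and the rank-deficient Gram matrix blocks the argument.
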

We first consider the adjoint tensor $\widehat{\mathcal{L}}$ of $K$ defined by
\[
\widehat{\mathcal{L}} = \frac{1}{n} \sum_{i=1}^n \widehat{\omega}^i\otimes\widehat{\omega}^i
\quad\text{with}\quad 
\widehat{\omega}^i \coloneqq \x_i\otimes D\y_{\phi,i} = \x_i\otimes D\phi\left(\frac{1}{\sqrt{d}}W^\top \x_i\right).
\]
Now we have two steps to prove the proposition.  First, we should discard the contribution of the mean term of $\varphi$, which will leverage that it is low-rank; this is quick and simple.  Second, we employ a replacement strategy using the Sherman--Morrison formula to swap summands of a $\widehat{\mathcal{L}}$--like matrix one at a time; this constitutes the bulk of the work.

\subsection*{Discarding low rank factors}

For the low rank factors, we need start with two simple deterministic Stieltjes transform estimates.
\begin{lem}\label{lem:stieltjes}
  Suppose $A,B$ are symmetric matrices $n \times n$ matrices.  Then for any $z \in \C_+$ we have 
  \[
  \left|\frac{1}{n}\tr(A-z)^{-1} - \frac{1}{n}\tr(B-z)^{-1}\right| 
  \leq  \frac{\pi}{n \Im z} \operatorname{rank}(A-B).
  \]
  Similarly if $A,B$ are $p \times n$ matrices, then we have 
  \[
  \left|\frac{1}{p}\tr(AA^{\top}-z)^{-1} - \frac{1}{p}\tr(BB^{\top}-z)^{-1}\right| 
  \leq  \frac{\pi}{p \Im z} \operatorname{rank}(A-B).
  \]
\end{lem}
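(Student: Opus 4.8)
The plan is to exploit the fact that the difference of resolvents is a low-rank perturbation, and that the Stieltjes transform of a Hermitian matrix is a Cauchy–Stieltjes transform of a probability measure, so the trace of the resolvent is a Lipschitz functional of the empirical spectral measure in an appropriate sense. The cleanest route is the resolvent identity followed by a rank bound. First I would write, for symmetric $A,B$ and $z\in\C_+$,
\[
(A-z)^{-1}-(B-z)^{-1} = (A-z)^{-1}(B-A)(B-z)^{-1},
\]
so that $\operatorname{rank}\bigl((A-z)^{-1}-(B-z)^{-1}\bigr)\le\operatorname{rank}(B-A)=:r$. Taking traces, $\tfrac1n\tr(A-z)^{-1}-\tfrac1n\tr(B-z)^{-1}$ is the average of at most $r$ eigenvalues of a matrix each of whose eigenvalues $\mu$ satisfies $|\mu|\le\|(A-z)^{-1}\|\,\|B-A\|\,\|(B-z)^{-1}\|$; this bound, however, does not see the cancellation and is not $z$-uniform in the right way. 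So instead I would use the interlacing/telescoping viewpoint: write $B-A$ as a sum of $r$ rank-one symmetric matrices $t_k v_kv_k^\top$ and interpolate, reducing to the rank-one case. For a rank-one symmetric perturbation, the eigenvalues of $A$ and of $A+tvv^\top$ interlace, hence the two empirical spectral distributions $F_A,F_{A+tvv^\top}$ satisfy $\|F_A-F_{A+tvv^\top}\|_\infty\le \tfrac1n$. Then for any function $f$ of bounded variation (here $f(x)=(x-z)^{-1}$, with total variation $\int_\Rbb|f'(x)|\,dx = \int_\Rbb \tfrac{dx}{|x-z|^2} = \tfrac{\pi}{\Im z}$), integration by parts gives
\[
\left|\int f\,dF_A - \int f\,dF_{A+tvv^\top}\right|
= \left|\int (F_A-F_{A+tvv^\top})\,df\right|
\le \frac1n\cdot\frac{\pi}{\Im z}.
\]
Summing the $r$ interpolation steps yields the claimed bound $\tfrac{\pi r}{n\Im z}$.

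For the second inequality, the matrices $AA^\top$ and $BB^\top$ are $p\times p$ and positive semidefinite, and $\operatorname{rank}(AA^\top-BB^\top)\le\operatorname{rank}(A-B)\cdot 2$ naively, which would cost a factor of $2$; to avoid that I would instead note $AA^\top - BB^\top = (A-B)A^\top + B(A-B)^\top$, whence $\operatorname{rank}(AA^\top-BB^\top)\le 2\operatorname{rank}(A-B)$, or more carefully use the singular value interlacing: if $A-B$ has rank $r$ then the singular values of $A$ and $B$ interlace up to $r$ steps, hence so do the eigenvalues of $AA^\top$ and $BB^\top$, giving $\|F_{AA^\top}-F_{BB^\top}\|_\infty\le \tfrac rp$ directly with no lost constant. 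With that in hand the same bounded-variation integration-by-parts argument against $f(x)=(x-z)^{-1}$ delivers the bound $\tfrac{\pi r}{p\,\Im z}$.

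The only mild subtlety — and the thing I would be careful about rather than a genuine obstacle — is making the interlacing step rigorous when $A-B$ is not literally rank one: one should either invoke Weyl/Cauchy interlacing for finite-rank perturbations directly (the number of eigenvalues below any threshold changes by at most $r$), which immediately gives $\|F_A-F_B\|_\infty\le r/n$, or do the rank-one telescoping as above. Either way the key inputs are (i) the Kolmogorov-distance bound $\|F_A-F_B\|_\infty\le \operatorname{rank}(A-B)/n$ from eigenvalue interlacing, and (ii) the elementary computation $\int_\Rbb |x-z|^{-2}\,dx = \pi/\Im z$. Everything else is bookkeeping.
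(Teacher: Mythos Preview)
Your argument is correct and follows essentially the same route as the paper: the paper also invokes the Kolmogorov-distance bound $\|F^A-F^B\|_\infty\le \operatorname{rank}(A-B)/n$ (citing \cite{baisilverstein}*{Theorem A.43,44} rather than deriving it from interlacing) and then integrates by parts against $(x-z)^{-2}$, using $\int_\Rbb |x-z|^{-2}\,dx=\pi/\Im z$. Your more explicit discussion of interlacing and the rank-one telescoping is a welcome elaboration, but the structure and the key inputs are the same.
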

\begin{proof}
  See \cite{baisilverstein}*{Theorem A.43,44} which give bounds on the distribution functions $F^A$ and $F^B$ of the e.e.d of $A$ and $B$ in either case.  In particular (in the symmetric case), they show that 
  \[
  \| F^A - F^B \|_\infty \leq \frac{1}{n} \operatorname{rank}(A-B).
  \]
  Note that by Lebesgue-Stieltjes integration by parts, we can represent the Stieltjes transform of $A$ as 
  \[
  \frac{1}{n}\tr(A-z)^{-1} = \int_\R \frac{1}{x-z} \d F^A(x) = \int_\R \frac{1}{(x-z)^2} F^A(x) \d x.
  \]
  Hence taking the difference of the Stieltjes transforms and bounding produces the claimed result.
\end{proof}

Now, the difference of $K^{\mathrm{NTK}}$ and $K$ is given by $K^{\mathrm{CK}}$, the conjugate kernel \eqref{eq:ck}, which is rank $p$.  Hence we have that
\[
\left|\frac{1}{n}\tr(K^{\mathrm{NTK}}-z)^{-1} - \frac{1}{n}\tr(K-z)^{-1}\right| 
\leq  \frac{p \pi}{n \Im z}.
\]
Similarly, we can remove the contribution of the mean term of $\varphi$ by introducing 
\[
{\omega}^i \coloneqq \x_i \otimes D\left(\frac{\alpha_\phi}{\sqrt{d}}W^\top \x_i + \psi\left( \frac{1}{\sqrt{d}}W^\top \x_i\right)\right),
\quad\text{so that}\quad
\widehat{\omega}^i - {\omega}^i = c \x_i \otimes (D \mathbf{1}),
\]
with $c$ the mean of $\varphi$.
We introduce a matrix 
\[
\mathcal{L} = \frac{1}{n} \sum_{i=1}^n {\omega}^i\otimes{\omega}^i.
\]
Then $\widehat{\mathcal{L}}$ and $\mathcal{L}$ can be expressed as 
\[
\widehat{\mathcal{L}} = \widehat{\Omega}\widehat{\Omega}^\top, \quad \mathcal{L} = {\Omega}{\Omega}^\top,
\]
where ${\Omega}$ is the matrix with rows ${\omega}^i$ and $\widehat{\Omega}$ is the matrix with rows $\widehat{\omega}^i$.  The difference of ${\Omega}$ and ${\omega}^i$ has rank at most the rank of $X$, which is $d$. Thus from Lemma \ref{lem:stieltjes}, we have that 
\[
\left|\frac{1}{n}\tr(\widehat{\mathcal{L}}-z)^{-1} - \frac{1}{n}\tr(\mathcal{L}-z)^{-1}\right| 
\leq  \frac{d \pi}{n \Im z}.
\]
Hence, it suffices to show Proposition \ref{prop:replace} with $K$ and $\widetilde{K}$ replaced by $\mathcal{L}$ and $\widetilde{\mathcal{L}}$, where 
\[
\widetilde{\mathcal{L}} \coloneqq \frac{1}{n} \sum_{i=1}^n \widetilde{\omega}^i\otimes\widetilde{\omega}^i
\quad\text{with}\quad
\wt{\omega}^i \coloneqq \x_i \otimes D\wt{\y}_{\psi,i}
=
 \x_i \otimes D\left(
    \frac{\alpha_\phi}{\sqrt{d}}W^\top \x_i + \psi\left(\wt{\x}_i\right)
    \right).
\]

\subsection*{Performing the row replacement}

Using the Sherman--Morrison formula we obtain the following lemma.
\begin{lem}\label{lem:sherman}
  Let $i\in\unn{1}{n}$, and define the matrices
  \[
  \mathcal{L}^{i,0} = \mathcal{L} - \frac{1}{n}\omega^i\otimes\omega^i 
  \quad\text{and}\quad 
  \mathcal{L}^{i,\wt{\omega}} = \mathcal{L}^{i,0}+\frac{1}{n}\wt{\omega}^i\otimes\wt{\omega}^i
  \]
  then we have 
  \begin{align*}
  (\mathcal{L}-z)^{-1}& = (\mathcal{L}^{i,0}-z)^{-1} - \frac{1}{n} \frac{(\mathcal{L}^{i,0}-z)^{-1}\omega^i\otimes \omega^i (\mathcal{L}^{i,0}-z)^{-1}}{1+\frac{1}{n}\langle \omega^i\otimes \omega^i,(\mathcal{L}^{i,0}-z)^{-1}\rangle}  
  \end{align*}
  and thus 
  \begin{multline*}
    (\mathcal{L}-z)^{-1}-(\mathcal{L}^{i,\tilde{\omega}}-z)^{-1} \\
    =  \frac{1}{n}(\mathcal{L}^{i,0}-z)^{-1}
    \left( 
      \frac{\omega^i\otimes\omega^i}{1+\frac{1}{n}\langle \omega^i\otimes \omega^i,(\mathcal{L}^{i,0}-z)^{-1}\rangle}- \frac{\wt{\omega}^i\otimes \wt{\omega}^i}{1+\frac{1}{n}\langle \wt{\omega}^i\otimes \wt{\omega}^i,(\mathcal{L}^{i,0}-z)^{-1}\rangle} 
    \right)
    (\mathcal{L}^{i,0}-z)^{-1} 
  \end{multline*}
\end{lem}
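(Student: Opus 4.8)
The plan is to derive both displayed identities from the Sherman--Morrison rank-one update formula applied to the resolvent of $\mathcal{L}^{i,0}$, using that each of $\mathcal{L}$ and $\mathcal{L}^{i,\wt{\omega}}$ differs from $\mathcal{L}^{i,0}$ by a single rank-one operator on $\Rbb^{d}\otimes\Rbb^{p}\simeq\Rbb^{dp}$. Here $\omega^i\otimes\omega^i$ denotes the matrix $\omega^i(\omega^i)^\top$, and by definition $\mathcal{L}=\frac{1}{n}\sum_{j=1}^n\omega^j\otimes\omega^j$, hence $\mathcal{L}^{i,0}=\frac{1}{n}\sum_{j\neq i}\omega^j\otimes\omega^j$.

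First I would check that the formula legitimately applies. Since $\mathcal{L}^{i,0}\succeq 0$, the resolvent $(\mathcal{L}^{i,0}-z)^{-1}$ exists for every $z\in\C_+$, and diagonalizing $\mathcal{L}^{i,0}=\sum_k\mu_k P_k$ gives
\[
\Im\big\langle\omega^i\otimes\omega^i,(\mathcal{L}^{i,0}-z)^{-1}\big\rangle
=\Im\big((\omega^i)^\top(\mathcal{L}^{i,0}-z)^{-1}\omega^i\big)
=(\Im z)\sum_k\frac{\norm{P_k\omega^i}^2}{|\mu_k-z|^2}\ge 0,
\]
so the Sherman--Morrison denominator $1+\frac{1}{n}\langle\omega^i\otimes\omega^i,(\mathcal{L}^{i,0}-z)^{-1}\rangle$ has strictly positive imaginary part, hence is invertible, unless $\omega^i=0$ --- a case in which all three resolvents coincide and both asserted identities are trivial. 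The same remarks hold verbatim with $\wt{\omega}^i$ in place of $\omega^i$.

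Then I would invoke the Sherman--Morrison identity
\[
(A+uv^\top)^{-1}=A^{-1}-\frac{A^{-1}uv^\top A^{-1}}{1+v^\top A^{-1}u}
\]
with $A=\mathcal{L}^{i,0}-z$, $u=\frac{1}{n}\omega^i$, $v=\omega^i$, so that $A+uv^\top=\mathcal{L}-z$. Since $v^\top A^{-1}u=\frac{1}{n}\langle\omega^i\otimes\omega^i,(\mathcal{L}^{i,0}-z)^{-1}\rangle$ and $A^{-1}uv^\top A^{-1}=\frac{1}{n}(\mathcal{L}^{i,0}-z)^{-1}(\omega^i\otimes\omega^i)(\mathcal{L}^{i,0}-z)^{-1}$, this is exactly the first displayed identity. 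Running the same computation on $\mathcal{L}^{i,\wt{\omega}}-z=(\mathcal{L}^{i,0}-z)+\frac{1}{n}\wt{\omega}^i\otimes\wt{\omega}^i$ yields the analogous expansion with $\wt{\omega}^i$ throughout; subtracting the two expansions cancels the common term $(\mathcal{L}^{i,0}-z)^{-1}$, and factoring $(\mathcal{L}^{i,0}-z)^{-1}$ out on the left and on the right leaves exactly the bracketed difference of the two normalized rank-one operators --- a short sign check settling which of $\omega^i,\wt{\omega}^i$ carries which sign --- which is the second identity.

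I do not expect any real obstacle within this lemma: it is a purely algebraic identity, and the only point requiring care is the well-definedness of the resolvent and of the Sherman--Morrison denominator, handled above. The substantive effort comes afterward, in deploying these formulas to prove Proposition \ref{prop:replace}: one must keep $\frac{1}{n}\langle\wt{\omega}^i\otimes\wt{\omega}^i,(\mathcal{L}^{i,0}-z)^{-1}\rangle$ bounded away from $-1$, control the operator $(\mathcal{L}^{i,0}-z)^{-1}(\wt{\omega}^i\otimes\wt{\omega}^i)(\mathcal{L}^{i,0}-z)^{-1}$, and then telescope the resulting per-index estimate over the $n$ successive replacements of $\omega^i$ by $\wt{\omega}^i$ --- which is where the concentration estimates for the quadratic forms built from $\omega^i$ and $\wt{\omega}^i$ actually enter.
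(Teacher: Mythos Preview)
Your proposal is correct and matches the paper's approach: the paper simply asserts that the lemma follows from the Sherman--Morrison formula and gives no further proof, so your argument is in fact more detailed than what appears there, including the verification that the denominators do not vanish for $z\in\C_+$.
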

  We see that we need to understand quadratic forms in $\omega^i \otimes \omega^i$ and $\wt{\omega}^i\otimes \wt{\omega}^i$. We can write the following lemma 
  \begin{lem}\label{lem:comparewwtilde}
    Let $i\in\unn{1}{n}$ and $\mathcal{A}$ be a $\Rbb^{d}\otimes \Rbb^p\otimes \Rbb^d\otimes  \Rbb^{p}$ tensor with bounded norm $\Vert \mathcal{A}\Vert_{\mathrm{op}}\lesssim 1$, for any $\varepsilon>0$,
    \[
    \P\left(
      \left\vert
        \frac{1}{n}\langle \omega^i\otimes \omega^i-\wt{\omega}^i\otimes \wt{\omega}^i,\mathcal{A}\rangle 
      \right\vert
      \geqslant \frac{d^\varepsilon}{\sqrt{d}}
    \right)  \xrightarrow[n\to\infty]{}0.
    \]
  \end{lem}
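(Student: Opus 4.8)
The plan is to exploit that, paired against a bounded self-adjoint operator, substituting the noise vector $\psi_i := \psi(\tfrac1{\sqrt d}W^\top\x_i)$ by $\wt\psi_i := \psi(\wt\x_i)$ is a Lindeberg-type move: it cannot be felt beyond the first two Gaussian moments, and these match --- both vectors are centered, have covariance $\beta_\psi^2\mathrm{Id}_p$ up to an $O(d^{-1})$ off-diagonal error, and (conditionally) are uncorrelated with the linear part $v_i := \tfrac1{\sqrt d}W^\top\x_i$, by virtue of $\E[\psi(z)] = \E[z\psi(z)] = 0$. First I would condition on $W$, $D$ and on $\mathcal{A}$ (in the application of Lemma \ref{lem:sherman}, $\mathcal{A} = (\mathcal{L}^{i,0}-z)^{-1}$ is independent of $\x_i$ and $\wt\x_i$), view $\mathcal{A}$ as a self-adjoint operator on $\Rbb^d\otimes\Rbb^p$, and work on the high-probability event of \eqref{eq:conchi2} together with the usual bounds $\|\tfrac1dW^\top W\|_{\mathrm{op}} = O(1)$ and, away from $\gamma_2 = 1$, $\sigma_{\min}(W)^{-1} = O(d^{-1/2})$. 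Writing $\omega^i = \tfrac{\alpha_\phi}{\sqrt d}\,\x_i\otimes DW^\top\x_i + \x_i\otimes D\psi_i =: g^i + \x_i\otimes D\psi_i$ and $\wt\omega^i = g^i + \x_i\otimes D\wt\psi_i$ and expanding the difference of rank-one tensors, $\langle\omega^i\otimes\omega^i - \wt\omega^i\otimes\wt\omega^i,\mathcal{A}\rangle$ splits into two cross terms $(g^i)^\top\mathcal{A}(\x_i\otimes D\psi_i)$, $(g^i)^\top\mathcal{A}(\x_i\otimes D\wt\psi_i)$ and two ``noise--noise'' terms $(\x_i\otimes D\psi_i)^\top\mathcal{A}(\x_i\otimes D\psi_i)$, $(\x_i\otimes D\wt\psi_i)^\top\mathcal{A}(\x_i\otimes D\wt\psi_i)$; it suffices to bound each by $O(n\,d^{-1/2+\varepsilon})$ with high probability and to match the cross pair, resp.\ the noise pair, to the same precision.

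The decoupling device is the orthogonal split $\x_i = \x_i^\parallel + \x_i^\perp$ relative to the column space of $W$: then $v_i$ (hence $\psi_i$) is a function of $\x_i^\parallel$ alone, one has $\x_i^\parallel = S v_i$ with $S := \tfrac1{\sqrt d}W(\tfrac1dW^\top W)^{-1}$ and $\|S\|_{\mathrm{op}} = O(1)$, and $\x_i^\perp$ is a centered Gaussian in $(\mathrm{col}\,W)^\perp$ independent of $v_i$; for $\gamma_2 > 1$ one has $\x_i^\perp = 0$ and $v_i$ is a nondegenerate reparametrisation of $\x_i$, with the boundary $\gamma_2 = 1$ handled by regularising $\tfrac1dW^\top W$ before inverting. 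Substituting this split into each of the four terms and expanding $\x_i$ in every slot, every resulting piece that is at least linear in the fresh Gaussian $\x_i^\perp$ (or, in the $\wt\psi$ terms, in $\wt\x_i$) is controlled by conditioning on the rest and applying the Hanson--Wright inequality and Gaussian concentration for linear and quadratic forms. The point of the split is that, once the fresh randomness is moved outside, the relevant coefficient vectors and matrices are partial contractions of $\mathcal{A}$ against vectors of norm $O(\sqrt d)$ or $O(\sqrt p)$, whose Frobenius norms the rank constraints pin at $O(d\sqrt p)$, $O(p^{3/2})$ or $O(\sqrt d\,p)$; divided by $n\asymp dp$ this gives fluctuations $O(d^{-1/2+\varepsilon})$. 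Their conditional means either vanish (the cross terms: $\E[\wt\psi_i] = 0$, and $\E[z\psi(z)] = 0$ up to an $O(d^{-1/2})$ correction from $(\tfrac1dW^\top W)_{rr}\neq 1$) or reduce to a further, purely $v_i$-dependent quadratic form of the shape treated next.

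The one genuinely coupled piece is the ``fully parallel'' part of the noise terms, $(v_i\otimes D\psi_i)^\top\mathcal{A}'(v_i\otimes D\psi_i)$ and its $\wt\psi$-analogue, where $\mathcal{A}' := (S^\top\otimes\mathrm{Id}_p)\mathcal{A}(S\otimes\mathrm{Id}_p)$ still satisfies $\|\mathcal{A}'\|_{\mathrm{op}} = O(1)$ (this is exactly where $\|S\|_{\mathrm{op}} = O(1)$ is essential: plugging $\x_i$ itself into $\mathcal{A}$ would produce an operator of norm $\asymp d$, which is too large). For the $\wt\psi$-version, $\wt\psi_i$ is independent of $v_i$, so conditioning on $v_i$ and applying Hanson--Wright in $\wt\psi_i$ gives conditional mean $\beta_\psi^2\langle v_iv_i^\top\otimes D^2,\mathcal{A}'\rangle$ and a fluctuation $O(p^{3/2}d^\varepsilon)$. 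For the $v_i$-version, $v_i$ and $\psi_i = \psi(v_i)$ are both functions of the single Gaussian $v_i$; after truncating $\psi$ to a Lipschitz function on a high-probability event (legitimate by the pseudo-Lipschitz hypothesis and the Gaussian moment control), Gaussian concentration applies to the polynomial-growth map $v\mapsto(v\otimes D\psi(v))^\top\mathcal{A}'(v\otimes D\psi(v))$, whose gradient has norm $O(p^{3/2})$, again yielding a fluctuation $O(p^{3/2}d^\varepsilon) = O(n\,d^{-1/2+\varepsilon})$. It then remains to compute the two conditional means and see that they cancel: by Gaussian integration by parts (Wick / Hermite--Mehler expansion), the orthogonality relations $\E[\psi] = \E[z\psi] = 0$ force both to equal $\beta_\psi^2\langle v_iv_i^\top\otimes D^2,\mathcal{A}'\rangle$ up to (i) a diagonal correction $(\E[z^2\psi(z)^2]-\beta_\psi^2)\sum_r a_r^2(\mathcal{A}')_{(rr)(rr)} = O(p)$, and (ii) ``connected'' corrections carrying either two off-diagonal entries of $\tfrac1dW^\top W$ or one such entry together with a cross-correlation $\E[\psi((v_i)_s)\psi((v_i)_u)] = O(d^{-1})$ ($s\neq u$), which contract against $\mathcal{A}'$ to $O(p)$. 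The cross-term conditional means are handled in the same way, again using $\E[z\psi(z)] = \E[\psi] = 0$.

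I expect the main obstacle to be precisely this mean computation: carrying the integration-by-parts bookkeeping far enough to exhibit that, after the leading $\beta_\psi^2\langle v_iv_i^\top\otimes D^2,\mathcal{A}'\rangle$ terms cancel between the $\psi_i$- and $\wt\psi_i$-versions, every surviving term contracts against $\mathcal{A}'$ to size $O(p)$ rather than the naive $O(p^2)$ --- this uses simultaneously both orthogonality relations, the bound $\|S\|_{\mathrm{op}} = O(1)$ (i.e.\ control of the smallest singular value of $W$), and the rank constraints governing the relevant Frobenius norms. The remaining technical burdens are the truncation reducing the pseudo-Lipschitz $\psi$ to a Lipschitz one, and the degenerate regimes $\gamma_2 \geq 1$ (where the $\x_i^\perp$-split is unavailable, replaced by treating $v_i$ as a reparametrisation of $\x_i$) and $\gamma_2 = 1$ (where $\sigma_{\min}(W)$ is only polynomially bounded below).
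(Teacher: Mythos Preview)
Your proposal rests on the entries of $X$ being Gaussian, but the paper does not assume this: the data assumption is only that the entries of $X$ are i.i.d.\ centered with unit variance, finite fourth moment, and the norm concentration \eqref{eq:conchi2}. Your decoupling device --- the orthogonal split $\x_i=\x_i^\parallel+\x_i^\perp$ with $\x_i^\perp$ \emph{independent} of $v_i$, and later the appeal to ``Gaussian concentration'' for $v\mapsto (v\otimes D\psi(v))^\top\mathcal{A}'(v\otimes D\psi(v))$ --- both require $\x_i$ Gaussian. For a generic product measure the projection $\x_i^\perp$ is merely uncorrelated with $\x_i^\parallel$, not independent, so the conditioning step collapses; and without a log-Sobolev inequality (which a fourth-moment assumption does not give) the Lipschitz concentration is unavailable. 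The Hanson--Wright bounds likewise need sub-Gaussian tails. So as written the argument does not prove the lemma under the stated hypotheses.

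The paper takes a more pedestrian second-moment route that avoids all of this. It writes
\[
\tfrac{1}{n}\langle \omega^i\otimes\omega^i-\wt\omega^i\otimes\wt\omega^i,\mathcal{A}\rangle
=\tfrac{1}{n}\langle \omega^i\otimes\omega^i-\mathcal{K},\mathcal{A}\rangle
-\tfrac{1}{n}\langle \wt\omega^i\otimes\wt\omega^i-\wt{\mathcal{K}},\mathcal{A}\rangle
+\tfrac{1}{n}\langle \mathcal{K}-\wt{\mathcal{K}},\mathcal{A}\rangle,
\]
bounds $\|\mathcal{K}-\wt{\mathcal{K}}\|_F^2\le Cp\,d^\varepsilon$ by a direct entrywise calculation (this is Lemma~\ref{lem:compareexpect}, and is where the orthogonality $\E[\psi]=\E[z\psi(z)]=0$ is used), and then shows $\mathrm{Var}(\tfrac{1}{n}\langle\omega^i\otimes\omega^i,\mathcal{A}\rangle)\le C/d$ by expanding the fourth moment and applying Chebyshev. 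The key device for the variance is Lemma~\ref{lem:x4pl}, which decouples $\E[x_ax_cx_ix_k\,h(\tfrac{1}{\sqrt d}W^\top\x)]$ by Taylor-expanding the conditional expectation in $\|\x\|^2/d$ around $1$; this uses only independence of the entries of $\x$ and finite fourth moments, not Gaussianity. If you additionally assumed $X$ Gaussian your route would likely go through (modulo the $\gamma_2\ge 1$ patching you flag), and would be conceptually cleaner --- but it proves a weaker statement than the paper's.
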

  To prove this lemma, we show concentration and convergence of the expectations. 

  \begin{lem}\label{lem:compareexpect}
    There exists $C>0$ such that for all $i\in\unn{1}{n}$ and $\varepsilon>0$, we have 
    \begin{equation}
    \left\Vert
      \E\left[\omega^i\otimes \omega^i\right]-\E\left[\wt{\omega}^i\otimes \wt{\omega}^i\right]
    \right\Vert_{\mathrm{F}}^2\leqslant Cp d^{\epsilon}.  
    \end{equation}
  \end{lem}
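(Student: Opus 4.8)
The plan is to compute both expectations explicitly and bound their difference termwise. Recall that $\omega^i = \x_i \otimes D\left(\frac{\alpha_\phi}{\sqrt{d}}W^\top\x_i + \psi\left(\frac{1}{\sqrt d}W^\top\x_i\right)\right)$ while $\wt\omega^i = \x_i \otimes D\left(\frac{\alpha_\phi}{\sqrt d}W^\top\x_i + \psi(\wt\x_i)\right)$, so the two vectors differ only in that the nonlinear chaos term $\psi\left(\frac{1}{\sqrt d}W^\top\x_i\right)$ (a genuinely nonlinear function of $\x_i$, hence correlated with the linear part) is replaced by the independent Gaussian surrogate $\psi(\wt\x_i)$. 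Since $\omega^i\otimes\omega^i$ is a rank-one tensor of the form $(\x_i\x_i^\top)\otimes(D\y D\y^\top)$ with $\y$ the bracketed vector, its conditional expectation given $W,D$ factors through $\E[\x_i\x_i^\top \otimes \cdots]$; I would first condition on $W$ and $D$, expand the outer product, and organize the entries of $\E[\omega^i\otimes\omega^i\mid W,D]$ into three groups according to the number of $\psi$-factors ($0$, $1$, or $2$) appearing. The key input is that $\psi$ is orthogonal (in $L^2(\mathcal N(0,1))$) to both the constant and linear functions, so $\E[z\psi(z)]=\E[\psi(z)]=0$; combined with the pseudo-Lipschitz bound from Assumption on $\phi$ (which controls all relevant Gaussian moments of $\psi$), this will show that the only terms that survive to leading order are the purely-linear ones, which are \emph{identical} for $\omega^i$ and $\wt\omega^i$.

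Concretely, after conditioning, each entry of $\E[\omega^i\otimes\omega^i\mid W,D]$ indexed by $(q,r,s,t)\in[d]\times[p]\times[d]\times[p]$ is a sum of $\E[x_q x_s \cdot (\text{linear or }\psi\text{ in }w_r^\top\x)(\text{linear or }\psi\text{ in }w_t^\top\x)]$ times $a_r^2$-type factors. For the $\wt\omega^i$ version, the cross terms and $\psi$-$\psi$ terms decouple because $\wt\x$ is independent of $\x$ and $W$: the cross term vanishes by $\E[\psi(\wt x)] = 0$ after integrating out $\wt x$, and the $\psi$-$\psi$ term becomes $\E[x_qx_s]\,\beta_\psi^2\,\delta_{rt}a_r^2 = \delta_{qs}\delta_{rt}\beta_\psi^2 a_r^2$. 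For the $\omega^i$ version, one must instead expand $\E[x_qx_s\,\psi(w_r^\top\x/\sqrt d)\,\psi(w_t^\top\x/\sqrt d)]$ — here I would use a Gaussian-style computation (or, for non-Gaussian $\x$, a leading-order Taylor/Hermite expansion leveraging that $w_r^\top\x/\sqrt d$ is approximately standard Gaussian conditionally on the typical $\|w_r\|$, with corrections of size $O(d^{-1/2})$) to show this differs from $\delta_{qs}\delta_{rt}\beta_\psi^2\|w_r/\sqrt d\|^2 a_r^2$ by a term of size $O(d^{-1/2})$ per entry, after which the high-probability bound $\|\x_i\|^2/d = 1 + O(d^{\epsilon-1/2})$ from \eqref{eq:conchi2} and the analogous control on $\|w_r\|^2/d$ absorb the discrepancy. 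The mixed linear-$\psi$ terms are handled similarly: on the $\wt\omega^i$ side they vanish exactly, and on the $\omega^i$ side $\E[x_qx_s\,(w_t^\top\x/\sqrt d)\,\psi(w_r^\top\x/\sqrt d)]$ is again $O(d^{-1/2})$ by the orthogonality $\E[z\psi(z)]=0$ plus a fourth-moment correction.

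Summing the squared entrywise discrepancies over the $d^2p^2$ index tuples is where the bookkeeping matters: a naive bound of $(\text{discrepancy})^2 \lesssim d^{-1}$ per entry would give $d p^2$, which is worse than the claimed $pd^\epsilon$. The resolution — and the main obstacle — is that for most index tuples the discrepancy is \emph{zero}, not merely small: the surviving entries are concentrated on the ``diagonal'' sets $\{q=s\}$ and $\{r=t\}$ (a set of size $O(dp)$ for the leading structure, with genuinely off-diagonal contributions suppressed by an extra $d^{-1/2}$). I would therefore partition the index set by these coincidence patterns, show that fully off-diagonal tuples ($q\ne s$ and $r\ne t$) contribute $0$ or $O(d^{-1})$ \emph{and} are weighted by extra decorrelation factors, and carefully verify that the $O(dp)$ near-diagonal entries each carry a discrepancy of size $O(d^{\epsilon/2-1/2})$ so that the squared sum is $O(dp \cdot d^{\epsilon - 1}) = O(p d^{\epsilon-1})$, comfortably inside $Cpd^\epsilon$ (indeed with room to spare). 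The clean way to organize this is to write $\E[\omega^i\otimes\omega^i\mid W,D] - \E[\wt\omega^i\otimes\wt\omega^i\mid W,D]$ as $\E[\x\x^\top]\otimes(\text{small matrix})$ plus genuinely higher-order corrections, isolate the Hermite-expansion coefficients of $\psi$, and invoke $\alpha_\psi = 0 = \E[\psi]$ to kill the dangerous terms; the pseudo-Lipschitz hypothesis guarantees the tail of the Hermite expansion is summable so that the $O(d^{-1/2})$ errors are uniform. Finally, taking the (unconditional) expectation only requires that $\E[a_r^4]$ and $\E\|w_r\|^4/d^2$ are $O(1)$, which holds by the boundedness of $\mathbf a$ and standard $\chi^2$ moment bounds.
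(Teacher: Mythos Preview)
Your high-level structure is right --- decompose by the number of $\psi$-factors, exploit $\E[\psi(z)]=\E[z\psi(z)]=0$, partition by coincidence pattern --- but your conditioning order is backwards and this creates a genuine obstacle. The paper conditions on $\x_i$ and $D$, not on $W$: since $W$ is \emph{exactly} Gaussian by assumption, the variables $z_{i,r}=(W^\top\x_i)_r/\sqrt d$ are, given $\x_i$, i.i.d.\ $\mathcal N(0,\|\x_i\|^2/d)$ across $r$, so every inner expectation involving $\psi$ collapses to a \emph{scalar} function of the norm --- e.g.\ $\E[z_{i,r}\psi(z_{i,r})\mid\x_i]=f(\|\x_i\|^2/d)$ with $f(1)=0$ by orthogonality. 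The residual $\E[x_{i,q}x_{i,s}\,f(\|\x_i\|^2/d)]$ is then dispatched by removing $x_{i,q}^2+x_{i,s}^2$ from the norm (pseudo-Lipschitz of $f$ gives an $O(d^{\varepsilon-1})$ error) and using independence to get $0$ when $q\ne s$. Your route instead requires computing $\E_{\x}[x_q x_s\,\psi(w_r^\top\x/\sqrt d)\cdot(\cdots)]$ for fixed $w_r$, where $w_r^\top\x/\sqrt d$ is only \emph{approximately} Gaussian since $X$ is not assumed Gaussian; the ``Taylor/Hermite expansion for non-Gaussian $\x$'' you gesture at is precisely the hard part, and the pseudo-Lipschitz hypothesis on $\phi$ does not by itself give the $O(d^{-1})$ off-diagonal control you need without a quantitative CLT argument you have not supplied.

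Two smaller points. First, the reduction to $\{r=t\}$ comes for free from the centeredness and independence of the entries of $D$ (so $\E[D_{rr}D_{tt}]=0$ for $r\ne t$), not from any $\psi$-orthogonality; you do not invoke this, and it cuts the bookkeeping in half. Second, once you have the paper's scalar functions $f,g$ of the norm, the final count is exactly as you anticipate: $dp$ terms with $q=s,r=t$ each of size $O(d^{2\varepsilon-1/2})$, and $d^2p$ terms with $r=t$ each of size $O(d^{\varepsilon-1})$, squaring and summing to $O(pd^{4\varepsilon})$.
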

  \begin{proof}
    We can write the difference of the expectations as three terms,
    \begin{align*}
      \E\left[\omega^i\otimes \omega^i\right]-\E\left[\wt{\omega}^i\otimes \wt{\omega}^i\right] 
      &=
      \E\left[\x_i \otimes D\left(\y_{\psi,i}-\psi(\wt{\x}_{i})\right)\otimes \x_i\otimes \frac{\alpha_\phi}{\sqrt{d}}DW^\top \x_i\right]\\
      & + \E\left[\x_i \otimes \frac{\alpha_\phi}{\sqrt{d}}DW^\top \x_i\otimes \x_i\otimes  D\left(\y_{\psi,i}-\psi(\wt{\x}_{i})\right)\right]\\
      &+ \E\left[\x_i \otimes D\y_{\psi,i}\otimes \x_i\otimes D\y_{\psi,i}\right] -\E\left[\x_i \otimes D\psi(\wt{\x}_{i})\otimes \x_i\otimes D\psi(\wt{\x}_{i})\right]. 
    \end{align*}
    The first two terms are bounded identically, for the first term, if we denote $\z_i = \frac{1}{\sqrt{d}}W^\top \x_i$, then we want to bound 
    \[
    \alpha_\phi\E\left[\x_i\otimes D\psi(\z_i)\otimes \x_i\otimes D\z_i\right]  
    \]
    since we have
    \[
    \E\left[\x_i\otimes D\psi(\wt{\x}_{i})\otimes \x_i \otimes D\z_i\right]=0  
    \]
    coming from the fact that $W$ is centered and independent of $X$ and $\wt{X}$. Let $q,s\in\unn{1}{d}$ and $r,t\in\unn{1}{p}$, then considering the $(q,r,s,t)$ entry of the expected tensor, we get 
    \[
    \alpha_\phi\E\left[x_{i,q}D_{rr}\psi(z_{i,r})x_{i,s}D_{tt}z_{i,t}\right] = \alpha_\phi\delta_{rt} \E\left[x_{i,q}x_{i,s}D_{rr}^2z_{i,r}\psi(z_{i,r})\right].  
    \]
    Conditionally on $X$ and $D$, we see that $z_{i,r}\sim \mathcal{N}\left(0,\frac{\Vert \x_i\Vert_2^2}{d}\right)$ and by \eqref{eq:conchi2}, we know that with overwhelming probability 
    \[
    \frac{\Vert \x_i\Vert_2^2}{d}=1+\O{\frac{d^\varepsilon}{\sqrt{d}}}.  
    \]
    We can then write that 
    \[
    \alpha_\phi \E\left[x_{i,q}x_{i,s}D_{rr}^2 z_{i,r}\psi(z_{i,r})\right] 
    =
    \alpha_\phi\sigma_D^2 \E\left[x_{i,q}x_{i,s}\E\left[z_{i,r}\psi(z_{i,r})\middle\vert \x_i\right]\right] 
    \eqqcolon
    \alpha_\phi\sigma_D^2 \E\left[x_{i,q}x_{i,s}f\left(\frac{\Vert \x_i\Vert^2_2}{d}\right)\right],
    \]
    where we can write $f(x) = \E\left[\mathcal{N}(0,x)\psi(\mathcal{N}(0,x))\right]$. For $q = s$, we get since $f(1)=0$, and $f$ is pseudo-Lipschitz, 
    \[
    \alpha_\phi\sigma_D^2 \E\left[x_{i,q}^2z_{i,r}\psi(z_{i,r})\right]
    \leqslant 
    \alpha_\phi \sigma_D^2 \sqrt{\E\left[x_{i,q}^4\right]}\sqrt{\E\left[f\left(\frac{\Vert \x_i\Vert_2^2}{d}\right)^2\right]}\leqslant \frac{d^{2\varepsilon}}{\sqrt{d}}. 
    \] 
    For $q\neq s$, we need a better bound due to the $d^2$ number of such terms, but we can write using the fact that $f$ is pseudo-Lipschitz, 
    \[
      \E\left[
        x_{i,q}x_{i,s}D_{rr}^2 f\left(\frac{\Vert \x_i\Vert_2^2}{d}\right)
      \right] 
      = 
      \sigma_D^2 \E\left[
        x_{i,q}x_{i,s} f\left(\frac{1}{d}\sum_{j\neq q,s}x_{i,j}^2\right)
      \right]
      +
      \O{\frac{d^\varepsilon}{d}}
      =
    \O{\frac{d^\varepsilon}{d}}.
    \]

    For the last term, consider the $(q,r,s,t)$ entry of the tensor, 
    \begin{multline*}
      \E\left[x_{i,q}x_{i,s}D_{rr}D_{tt}\left(\psi\left(z_{i,r}\right)\psi(z_{i,t})-\psi\left(\wt{x}_{i,r}\right)\psi(\wt{x}_{i,t})\right)\right]
      =
      \delta_{rt} \sigma_D^2 \E\left[x_{i,q}x_{i,s}\left(\psi\left(z_{i,r}\right)^2-\psi\left(\wt{x}_{i,r}\right)^2\right)\right]\\
      =
      \delta_{rt}\sigma_D^2 \E\left[x_{i,q}x_{i,s}\psi\left(z_{i,r}\right)^2\right] - \delta_{rt}\delta_{qs}\sigma_D^2 \E\left[\psi\left(\wt{x}_{i,r}\right)^2\right]
    \end{multline*}
    where we used the independence of $D$ and the independence of $\wt{\x}_i$ from $\x_i$.To control the first term we first write,
    \[
    \E\left[x_{i,q}x_{i,s}\psi\left(z_{i,r}\right)^2\right] 
    =
    \E\left[
      x_{i,q}x_{i,s}\E\left[
        \psi\left(
          z_{i,r}
        \right)^2
        \middle\vert
        \x
      \right]
    \right]
    \eqqcolon
    \E\left[
      x_{i,q}x_{i,s}
      g\left(
        \frac{1}{d}\sum_{j=1}^n x_{i,j}^2
      \right)
    \right].
    \]
    where we defined $g(x)=\E\left[\psi(\mathcal{N}(0,x))^2\right],$ since $\psi^2$ is pseudo-Lipschitz, $g$ is too and thus can write 
    \[
    \E\left[x_{i,q}x_{i,s}\psi\left(z_{i,r}\right)^2\right] 
    =
    \E\left[
      x_{i,q}x_{i,s}g\left(
        \frac{1}{d}\sum_{j\neq q,s}x_{i,j}^2
      \right)
    \right]
    +
    \O{\frac{d^\varepsilon}{d}}
    =
    \delta_{qs}\E\left[{\psi}\left(\wt{x}_{i,r}\right)^2\right]+\O{\frac{d^\varepsilon}{d}} 
    \]
    where we used the concentration bound from Lemma \eqref{eq:conchi2} and thus, we get that this last term is a $\O{d^{-1+\varepsilon}}$. Putting everything together we get that 
    \begin{multline*}
      \left\Vert
        \E\left[\omega^i\otimes \omega^i\right]-\E\left[\wt{\omega}^i\otimes \wt{\omega}^i\right]
      \right\Vert_{\mathrm{F}}^2
      =
      \sum_{q,s=1}^d\sum_{r,t=1}^p 
      \left\vert
        \E\left[
        \omega^i_{qr}\omega^i_{st}-\wt{\omega}^i_{qr}\wt{\omega}^i_{st}
        \right]
      \right\vert^2  
      =
      \O{
      \sum_{q,s=1}^d
      \sum_{r,t=1}^p
      \delta_{qs}\delta_{rt}\frac{d^{4\varepsilon}}{d} 
      +
      \delta_{rt}\frac{d^{2\varepsilon}}{d^2}
      }
      \\
      =
      \O{d^{4\varepsilon}p}
    \end{multline*}
  \end{proof}
Now that we have a bound on the expected value of the Frobenius norm, we prove that we have concentration of the quadratic forms. To do so, we bound the variance these forms. We first prove a lemma which helps decorrelate a fourth moment with pseudo-Lipschitz function. 
  \begin{lem}\label{lem:x4pl}
    Suppose $h_1$ and $h_2$ are pseudo-Lipschitz.  Define 
    \[
        g\left( \frac{\|\x\|^2}{d}\right)
        \coloneqq \E \left[     
          h_1\left(\frac{(W^\top \x)_b}{\sqrt{d}}\right)
          h_2\left(\frac{(W^\top \x)_j}{\sqrt{d}}\right) 
          \middle\vert  \x
          \right].
    \]
    Then
    \[
      \E \left[ x_ax_cx_ix_k 
      h_1\left(\frac{(W^\top \x)_b}{\sqrt{d}}\right)
      h_2\left(\frac{(W^\top \x)_j}{\sqrt{d}}\right)
      \right]
      = 
      \E \left[ x_ax_cx_ix_k 
      \right] 
      \E \left[
        g\left( \frac{\|\x\|^2}{d}\right)
      \right] 
      +\frac{\mathcal{T}_{acik}}{d}\E \left[g' \left( \frac{\|\x\|^2}{d}\right)\right] + \O{\frac{1}{d^2}}.
    \]
    The tensor $\mathcal{T}$ has bounded entries, it vanishes if the cardinality of the set $\{a,c,i,k\}$ is $3$ or $4$.
  \end{lem}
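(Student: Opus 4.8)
The plan is to integrate out $W$ first, collapsing the four-variable moment into a single moment of a function of $\|\x\|^2/d$, and then to extract the $\tfrac1d$ correction by isolating the handful of coordinates that actually appear in the monomial $x_ax_cx_ix_k$.

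\textbf{Reducing to $g$.} Conditionally on $\x$, the pair $\bigl((W^\top\x)_b,(W^\top\x)_j\bigr)/\sqrt d$ is a centered Gaussian vector whose covariance equals $(\|\x\|^2/d)$ times a fixed matrix (the $2\times 2$ identity when $b\neq j$), so its conditional law depends on $\x$ only through $V:=\|\x\|^2/d$; this is exactly why $g$ is a well-defined function of $V$. Writing $\E[h(\sqrt V Z)]=\int_\Rbb h(u)\,\e^{-u^2/(2V)}(2\pi V)^{-1/2}\,\d u$ and differentiating under the integral shows $g\in C^2$ on $(0,\infty)$ with $g,g',g''$ bounded on a neighbourhood of $1$ (here we use that a pseudo-Lipschitz function grows at most polynomially). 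Taking the conditional expectation over $W$ turns the left-hand side into $\E[x_ax_cx_ix_k\,g(V)]$.

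\textbf{Isolating the special coordinates.} Let $J:=\{a,c,i,k\}$ as a set, so $|J|\le 4$, and split $V=S+T/d$ with $S:=\tfrac1d\sum_{\ell\notin J}x_\ell^2$ and $T:=\sum_{\ell\in J}x_\ell^2$. The two structural facts we use are that $S$ is independent of $(x_\ell)_{\ell\in J}$, and that $T$ — and also $x_ax_cx_ix_k\,T$ — has controlled moments after a routine truncation of the entries of $X$ at a small polynomial scale $d^\delta$, which perturbs every quantity here negligibly. Taylor expanding $g(S+T/d)=g(S)+\tfrac Td g'(S)+\O{T^2/d^2}$ (the remainder controlled by $\sup g''$ near $1$ on the event in \eqref{eq:conchi2}, and crudely on its complement) and factoring the expectations via $S\perp(x_\ell)_{\ell\in J}$ gives
\[
\E[x_ax_cx_ix_k\,g(V)]=\E[x_ax_cx_ix_k]\,\E[g(S)]+\tfrac1d\,\E[x_ax_cx_ix_k\,T]\,\E[g'(S)]+\O{1/d^2}.
\]
Finally one trades $\E[g(S)]$ for $\E[g(V)]$ at cost $\O{1/d}$ and $\E[g'(S)]$ for $\E[g'(V)]$ at cost $\O{1}$ (the same one-term expansion applied to $g(V-T/d)$, together with $\E[T]=|J|$); this redistributes the $\tfrac1d$ term and produces the claimed identity with
\[
\mathcal T_{acik}=\E\Bigl[x_ax_cx_ix_k\sum_{\ell\in J}(x_\ell^2-1)\Bigr].
\]

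\textbf{Properties of $\mathcal T$ and the main obstacle.} Boundedness of the entries of $\mathcal T$ is immediate from the moment assumption (after truncation). The vanishing claim is a parity count: in each summand $\E[x_ax_cx_ix_k\,x_\ell^2]$ and in $\E[x_ax_cx_ix_k]$, when $|J|\ge 3$ at least one coordinate of $J$ necessarily occurs to the first power, and since the entries of $X$ are i.i.d.\ and centered the expectation then factorizes through an $\E[x_{11}]=0$; hence $\mathcal T$ is supported on $|\{a,c,i,k\}|\le 2$. The only genuine work is the bookkeeping in the middle step — carrying the Taylor remainders, the $S\leftrightarrow V$ swap, and the truncation errors simultaneously at order $\O{1/d^2}$ while tracking which moments of $x_{11}$ enter — and this is also where the truncation is not optional: in the fully degenerate case $a=c=i=k$ the coefficient $\mathcal T_{aaaa}$ involves $\E[x_{11}^6]$ and the remainder an eighth moment, so some a priori higher-moment control is needed for the statement to be meaningful at this precision.
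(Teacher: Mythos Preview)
Your approach is essentially the same as the paper's: both condition on $\x$ to reduce to $\E[x_ax_cx_ix_k\,g(\|\x\|^2/d)]$, then Taylor-expand $g$ after isolating the coordinates in $\{a,c,i,k\}$ from $\|\x\|^2/d$, and both identify $\mathcal{T}$ with (a variant of) $\E[x_ax_cx_ix_k\cdot T]$. Your regularity argument for $g$ via smoothness of the Gaussian integral is arguably cleaner than the paper's route through Stein's lemma and a pseudo-Lipschitz bound on $g'$, and your observation that the fully degenerate case $a=c=i=k$ implicitly requires a sixth moment (so truncation or an extra assumption is needed) is a point the paper glosses over.
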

  \begin{proof}
    We start by noting that we can write, if $z\sim\mathcal{N}(0,1)$,  
    \[
    h(x)\coloneqq g(x^2)=\E\left[
      h_1(xz)h_2(xz)
    \right].  
    \]
    In particular, since $h_1$ and $h_2$ are pseudo-Lipschitz, they are locally Lipschitz and thus differentiable Lebesgue-almost everywhere and we have for $x\neq 0$, 
    \[
    h'(x) = 2xg'(x^2) = \E\left[
      z\left(
      h_1'(xz)h_2(xz)+h_1(xz)h_2'(xz)
      \right)
    \right]   
    =
    \E\left[
      \frac{z^2-1}{x}h_1(xz)h_2(xz)
    \right]
    \]
    where we used Stein's lemma and thus 
    \[
    g'(x^2)=\frac{1}{2x^2}\E\left[
      (z^2-1)h_1(xz)h_2(xz)
    \right].  
    \]
    We note that the product of two pseudo-Lipschitz functions is pseudo-Lipschitz and thus there exists $K$ and $\alpha$ such that for $x,y> 0$,
    \begin{align*}
      \left\vert g'(x^2)-g'(y^2)\right\vert 
      &\leqslant 
      \frac{1}{2x^2}\E\left[
        \vert z^2-1\vert\left\vert
          h_1(xz)h_2(xz)-h_1(yz)h_2(yz)
        \right\vert
      \right]
      +
      \frac{1}{2}\E\left[
        \vert (z^2-1)h_1(yz)h_2(yz)\vert\left\vert
          \frac{1}{x^2}-\frac{1}{y^2}
        \right\vert
      \right]\\
      &\leqslant 
      K\frac{\vert x-y\vert}{2x^2}\E\left[
        \vert z^3-z\vert (1+\vert zx\vert^\alpha+\vert zy\vert^\alpha)
      \right]
      +
      \frac{\vert x^2-y^2\vert}{2x^2y^2}\E\left[
        \vert (z^2-1)h_1(xz)h_2(yz)\vert
      \right]\\
      &\leqslant 
      K'\frac{\vert x^2-y^2\vert }{2\min(x^2\vert x+y\vert,x^2y^2)}\left(
        1+\vert x\vert^{\alpha'}+\vert y\vert^{\alpha'}
      \right).
    \end{align*}
    Thus, we see that as long as $x$ and $y$ are away from 0, $g'$ is itself a pseudo-Lipschitz function. Now, we first write
    \[
    \frac{\Vert \x^{(a,c,i,k)}\Vert^2}{d}
    \coloneqq 
    \frac{\Vert \x\Vert^2}{d}  
    -
    \frac{x_a^2+x_c^2+x_i^2+x_k^2}{d}    \] 
    and then by a Taylor expansion
    \begin{align*}
    \E\left[
      x_ax_cx_ix_kg\left(
        \frac{\Vert \x\Vert^2}{d}
      \right)
    \right]  
    =&
    \E\left[
      x_ax_cx_ix_k
    \right]
    \E\left[
      g\left(
        \frac{\Vert \x^{(a,c,i,k)}\Vert^2}{d}
      \right)
    \right]\\
    &
    +\frac{1}{d}
    \E\left[
      x_ax_cx_ix_k 
      \left(
        x_a^2+x_c^2+x_i^2+x_k^2
      \right)
    \right]
    \E\left[
      g'\left(
        \frac{\Vert \x^{(a,c,i,k)}\Vert^2}{d}
      \right)
    \right]\\
    &
    +
    \O{
      \E\left[
        \left\vert
          \frac{d^2x_ax_cx_ix_k}{\Vert \x^{(a,c,i,k)}\Vert^4}
          \left(
            1+\left\vert \frac{\Vert \x^{(a,c,i,k)}\Vert^2}{d}\right\vert^{\alpha'}
          \right)
        \right\vert
        \left(
          \frac{x_a^2+x_c^2+x_i^2+x_k^2}{d}
        \right)^2
      \right]
    }.
    \end{align*}
    We see by independence of the $x$ entries and the fact that they are centered that the second term is zero if the cardinality of $\{a,c,i,k\}$ is 3 or 4 and the last term is actually $\O{\frac{1}{d^2}}.$ Now, if we want to reintegrate the missing term in the norm to get the final result we do the same process and since the terms will be multiplied by a tensor with zero entries when $\mathrm{Card}\{a,c,i,k\}\geqslant 3$, we obtain the result.
  \end{proof}
  We are now ready to prove the concentration of these quadratic forms in $\omega$.
  \begin{prop}
    For $i\in\unn{1}{n}$ and $\mathcal{A}\in \Rbb^d\otimes \Rbb^p\otimes \Rbb^d\otimes \Rbb^p$ such that $\Vert \mathcal{A}\Vert_{\mathrm{op}}\lesssim 1$, there exists a $C>0$, such that we have 
    \[
      \mathrm{Var}\left(\frac{1}{n}\langle \omega^i\otimes \omega^i,\mathcal{A}\rangle\right) \leqslant \frac{C}{d},
      \quad 
      \mathrm{Var}\left(
        \frac{1}{n}\langle 
          \wt{\omega}^i\otimes \wt{\omega}^i,\mathcal{A}
        \rangle
      \right)
      \leqslant \frac{C}{d}.
    \]
  \end{prop}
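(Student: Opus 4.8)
The plan is to rewrite the quartic form as a quadratic form and then apply the law of total variance. Fix $i$ and drop it from the notation. Flattening $\Rbb^d\otimes\Rbb^p\simeq\Rbb^{dp}$, one has
\[
\langle\omega\otimes\omega,\mathcal{A}\rangle=\langle\x\otimes D\v,\,\mathcal{A}\,(\x\otimes D\v)\rangle=\v^\top B\,\v,\qquad B:=D\,\mathcal{A}_{\x}\,D,
\]
where $\v\in\Rbb^p$ is the relevant row ($\v=\tfrac{\alpha_\phi}{\sqrt d}W^\top\x+\psi(\tfrac1{\sqrt d}W^\top\x)$ for $\omega$, and the same with $\psi(\wt\x)$ in place of $\psi(\tfrac1{\sqrt d}W^\top\x)$ for $\wt\omega$), and $\mathcal{A}_{\x}\in\Rbb^{p\times p}$ is the partial contraction $(\mathcal{A}_{\x})_{rt}=\langle\x\otimes e_r,\mathcal{A}(\x\otimes e_t)\rangle$. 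I would first record the deterministic norm estimates
\[
\|\mathcal{A}_{\x}\|_{\mathrm{op}}\le\|\x\|^2\|\mathcal{A}\|_{\mathrm{op}},\quad \|B\|_{\mathrm{op}}\lesssim\|\x\|^2,\quad \|B\|_{\mathrm F}\le\sqrt p\,\|B\|_{\mathrm{op}}\lesssim\sqrt p\,\|\x\|^2,
\]
and, for the contractions $\mathcal{A}^{\flat}_{qs}:=\sum_r D_{rr}^2\mathcal{A}_{qrsr}$ and $\mathcal{A}^{\sharp}_{qs}:=\sum_{r,t}D_{rr}D_{tt}\mathcal{A}_{qrst}$ (for which $\tr B=\x^\top\mathcal{A}^{\flat}\x$ and $\mathbf 1^\top B\mathbf 1=\x^\top\mathcal{A}^{\sharp}\x$), the bound
\[
\|\mathcal{A}^{\flat}\|_{\mathrm{op}}\vee\|\mathcal{A}^{\sharp}\|_{\mathrm{op}}\le\Bigl(\textstyle\sum_r D_{rr}^2\Bigr)\|\mathcal{A}\|_{\mathrm{op}}\lesssim p;
\]
this last one crucially uses that $\mathcal{A}$ is a genuine bounded operator on $\Rbb^{dp}$, not merely a tensor with bounded Frobenius norm, and it is what produces the correct power of $p$. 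I would also note that, conditionally on $(\x,D)$, the entries $v_r$ are i.i.d.\ with moments bounded uniformly on $\mathcal{E}:=\{\|\x\|^2\le 2d\}$, with conditional mean $m(\x)$ (identically $0$ for $\wt\omega$, and a pseudo-Lipschitz function of $\|\x\|^2/d$ vanishing at $1$ for $\omega$) and $\E[v_rv_t\mid\x]=\tau(\|\x\|^2/d)\,\delta_{rt}+m(\x)^2(1-\delta_{rt})$, where $\tau$ is pseudo-Lipschitz with $\tau(1)=\alpha_\phi^2+\beta_\psi^2$ (and $\tau$ affine for $\wt\omega$).

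Since $\Var(\tfrac1n\langle\omega\otimes\omega,\mathcal{A}\rangle)=n^{-2}\Var(\langle\omega\otimes\omega,\mathcal{A}\rangle)$ and, because $n\asymp dp$ with $p\asymp d$ (as $p/d\to\gamma_2>0$), $n^2/d\asymp d^3$, it suffices to prove $\Var(\langle\omega\otimes\omega,\mathcal{A}\rangle)\lesssim d^3$. I would split this via the law of total variance as $\E[\Var(\v^\top B\v\mid\x,D)]+\Var(\E[\v^\top B\v\mid\x,D])$ and bound each summand by $O(d^3)$. For the first, writing $\v=m(\x)\mathbf 1+\xi$ with $\xi$ conditionally centered, expanding $\v^\top B\v$, and applying the standard fourth-moment variance bounds for linear and quadratic forms in i.i.d.\ variables, I obtain $\Var(\v^\top B\v\mid\x,D)\lesssim\|B\|_{\mathrm F}^2+m(\x)^2 p\,\|B\|_{\mathrm{op}}^2\lesssim p\|\x\|^4$, which on $\mathcal{E}$ is $\lesssim pd^2\asymp d^3$; the contribution of $\mathcal{E}^c$ is handled by the crude bound $\Var(\v^\top B\v\mid\x,D)\le\|\x\|^4\,\E[\|D\v\|^4\mid\x]$, which is polynomial in $\|\x\|^2/d$, together with \eqref{eq:conchi2}.

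For the second summand the conditional moments above give $\E[\v^\top B\v\mid\x,D]=\tau(\|\x\|^2/d)\,\x^\top\mathcal{A}^{\flat}\x+m(\x)^2\,\x^\top(\mathcal{A}^{\sharp}-\mathcal{A}^{\flat})\x$. I would linearise $\tau$ and $m$ around $\|\x\|^2/d=1$ and treat the leading term $(\alpha_\phi^2+\beta_\psi^2)\,\x^\top\mathcal{A}^{\flat}\x$ on its own: conditioning further on $D$, its variance is $\E_D[\Var_\x(\x^\top\mathcal{A}^{\flat}\x\mid D)]+\Var_D(\tr\mathcal{A}^{\flat})$, where the first piece is $\lesssim\|\mathcal{A}^{\flat}\|_{\mathrm F}^2\le d\,\|\mathcal{A}^{\flat}\|_{\mathrm{op}}^2\lesssim dp^2\asymp d^3$ by the fourth-moment variance formula for $\x^\top M\x$, while $\tr\mathcal{A}^{\flat}=\sum_r D_{rr}^2 c_r$ with $c_r:=\sum_q\mathcal{A}_{qrqr}$ and $|c_r|\le d\|\mathcal{A}\|_{\mathrm{op}}\lesssim d$, so $\Var_D(\tr\mathcal{A}^{\flat})=\Var(D_{11}^2)\sum_r c_r^2\lesssim pd^2\asymp d^3$. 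The linearisation-error and $m^2$-terms all carry a factor $\|\x\|^2/d-1$, whose fourth moment is $O(d^{-2})$; pairing this against the $L^2$-size of the accompanying quadratic forms (all of operator norm $\lesssim p$) via Cauchy--Schwarz keeps those terms $\lesssim d^3$ as well. Summing, $\Var(\langle\omega\otimes\omega,\mathcal{A}\rangle)\lesssim d^3$, and the $\wt\omega$ computation is identical (and cleaner, since $m\equiv 0$ and $\tau$ is affine there).

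The step I expect to be the main obstacle is the second summand: in contrast with the analogous conjugate-kernel estimates, $\E[\v^\top B\v\mid\x,D]$ is here a genuinely quartic polynomial in $\x$ — the product of $\|\x\|^2/d$ with the quadratic form $\x^\top\mathcal{A}^{\flat}\x$ — so one must extract exactly the order $d^3$ from it rather than bound it crudely; the two ingredients that make this possible are the operator-norm bound $\|\mathcal{A}^{\flat}\|_{\mathrm{op}}\lesssim p$ (a naive Frobenius bound $\|\mathcal{A}^{\flat}\|_{\mathrm F}\lesssim\|\mathcal{A}\|_{\mathrm F}\lesssim\sqrt{dp}$ would be far too weak) and the $L^2$-concentration of $\|\x\|^2/d$ at $1$. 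A secondary, purely technical point is that several of these estimates (e.g.\ $\E[(\|\x\|^2/d-1)^4]=O(d^{-2})$ and the control of $\Var(\v^\top B\v\mid\x,D)$ off $\mathcal{E}$) brush against moments of $x_{11}$ beyond the fourth, for which a routine preliminary truncation of the entries of $X$ — which, by the rank-perturbation estimates of Lemma~\ref{lem:stieltjes}, does not affect the limiting spectral distribution — is convenient; alternatively, note that all that is used downstream is $\Var(\tfrac1n\langle\omega\otimes\omega,\mathcal{A}\rangle)\lesssim d^{-1+o(1)}$.
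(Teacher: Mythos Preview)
Your approach is correct and takes a genuinely different route from the paper. The paper expands the variance as $n^{-2}\bigl(\E[\langle\omega^{\otimes 4},\mathcal{A}^{\otimes 2}\rangle]-\E[\langle\omega^{\otimes 2},\mathcal{A}\rangle]^2\bigr)$, first matches the $p$-indices using the i.i.d.\ structure of $D$, then invokes the decorrelation Lemma~\ref{lem:x4pl} (a Taylor expansion of the nonlinear factor around removing a few $x$-coordinates) to split $\E[x_ax_cx_ix_k\,h_1(z_b)h_2(z_j)]$ into $\E[x_ax_cx_ix_k]\,\E[g]+\cdots$, and finally bounds a list of tensor contractions term by term.

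You instead exploit a structural fact the paper does not use at this stage: conditionally on $(\x,D)$, the entries $v_r$ are i.i.d.\ (each $v_r$ depends only on the $r$-th column of $W$ and, for $\wt\omega$, on $\wt x_{ir}$). The law of total variance then reduces the problem to (i) the standard fourth-moment bound $\Var(\xi^\top B\xi\mid\x,D)\lesssim\|B\|_{\mathrm F}^2$ for the inner piece, and (ii) a quadratic-form-in-$\x$ estimate for the outer piece, where the decisive input is the partial-trace bound $\|\mathcal{A}^\flat\|_{\mathrm{op}}\lesssim p$. This is more modular and avoids the combinatorial tensor bookkeeping; it also makes the scaling transparent (one factor of $p$ from $\|B\|_{\mathrm F}^2$, one of $p$ from the partial trace, one of $d$ from the quadratic form in $\x$, matching $n^2/d\asymp d^3$).

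Your caveat about moments is honest and on point. The steps using $\E[(\|\x\|^2/d-1)^4]=O(d^{-2})$ and the control of $\mathcal{E}^c$ indeed touch moments of $x_{11}$ beyond the fourth. The paper's argument has the same issue implicitly: the $O(1/d^2)$ remainder in Lemma~\ref{lem:x4pl} involves $\E[|x_ax_cx_ix_k|(x_a^2+\cdots+x_k^2)^2]$, which for coincident indices requires sixth or eighth moments. The truncation you propose, or relaxing the conclusion to $\Var\lesssim d^{-1+o(1)}$ (which is all that is needed in the Chebyshev step of Lemma~\ref{lem:comparewwtilde}), closes this for both routes.
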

  \begin{proof}
    The proof for the two quantities are similar but it is harder for $\omega$ instead of $\wt{\omega}$ since there is more dependence between the entries of the tensor, we thus develop the proof only for this term. Since the $\omega^i$ are i.i.d variables for $i\in\unn{1}{n}$, we omit the $i$-dependence in this proof for ease of notation. We first see that 
    \[
      \Var \left(
        \frac{1}{n}\langle \omega\otimes \omega,\mathcal{A}\rangle
      \right)
      =
      \frac{1}{n^2}
      \E\left[
        \langle \omega^{\otimes 4},\mathcal{A}\otimes \mathcal{A}\rangle
      \right]
      -
      \frac{1}{n^2}
      \E\left[
        \langle\omega\otimes \omega,\mathcal{A}\rangle
      \right]^2.
    \]
  If we unfold the first term we get something of the form, remembering that $\omega = \x\otimes D\y_{\phi}$ (omitting the $i$-dependence), 
  \[
    \frac{1}{n^2}
    \E\left[
        \langle \omega^{\otimes 4},\mathcal{A}\otimes \mathcal{A}\rangle
      \right]
    =
    \frac{1}{n^2} \sum_{a,c,i,k=1}^d \sum_{b,d,j,\ell=1}^p 
    \E\left[
      x_aD_{bb}y_{\phi,b}x_cD_{dd}y_{\phi,d}x_iD_{jj}y_{\phi,j}x_kD_{\ell\ell}y_{\phi,\ell}
    \right]
    \mathcal{A}_{ijk\ell}\mathcal{A}_{abcd}.
  \]
  Using the fact that $D$ is a diagonal matrix with $i.i.d.$ centered random variables, we must match the entries and we can write this expectation as the sum of four terms,
  \begin{multline}\label{eq:tensordevelop}
    \frac{1}{n^2}
    \E\left[
      \langle \omega^{\otimes 4},\mathcal{A}\otimes \mathcal{A}\rangle
    \right]
    =
    \frac{\sigma_D^4}{n^2}
    \sum_{a,c,i,k=1}^d
    \sum_{b\neq j}^p
    \E\left[
      x_ax_cx_{i}x_ky_{\phi,b}^2y_{\phi,j}^2
    \right]
    \left(
      \mathcal{A}_{ijkj}\mathcal{A}_{abcb}+\mathcal{A}_{ijkb}\mathcal{A}_{ajcb}+\mathcal{A}_{ijkb}\mathcal{A}_{ab cj}
    \right)\\
    +
    \frac{\E\left[D_{11}^4\right]}{n^2}
    \sum_{a,c,i,k=1}^d
    \sum_{b=1}^p
    \E\left[
      x_ax_cx_ix_k y_{\phi,b}^4
    \right]\mathcal{A}_{ibkb}\mathcal{A}_{abcb}.
  \end{multline}
  We claim,
  using Lemma \ref{lem:x4pl}, 
  that
  \begin{multline}\label{eq:tensordevelop2}
    \frac{1}{n^2}
    \E\left[
      \langle \omega^{\otimes 4},\mathcal{A}\otimes \mathcal{A}\rangle
    \right]
    \\=
    \frac{\sigma_D^4}{n^2}
    \sum_{a,c,i,k=1}^d
    \sum_{b\neq j}^p
    \E\left[
      x_ax_cx_{i}x_k
      \right]
    \E\left[
      y_{\phi,b}^2y_{\phi,j}^2
    \right]
    \left(
      \mathcal{A}_{ijkj}\mathcal{A}_{abcb}+\mathcal{A}_{ijkb}\mathcal{A}_{ajcb}+\mathcal{A}_{ijkb}\mathcal{A}_{ab cj}
    \right)
    +\O{\frac{1}{d}}.
  \end{multline}
  The errors are controlled in two different ways.  First, for the error terms that multiply the tensor $\mathcal{T}$, the sum over $6$ indices is reduced to a sum over $4$ indices; each term in this is uniformly bounded by 
  \[ 
    \frac{1}{dn^2} 
    \|\mathcal{A}\|_{\mathrm{op}}^2 
    \cdot
    \max_{a,c,i,k} |\mathcal{T}_{acik}|
    \cdot
    \left|\E \left[g' \left( \frac{\|\x\|^2}{d}\right)\right]\right|
  \]
  and thus the whole contribution is $\O{\frac{1}{d}}$.  The remaining error terms carry a $\O{\frac{1}{d^2}}$ but require additional care in estimation. Indeed, we get an error of order
  \[
  \frac{1}{d^2n^2}\sum_{a,c,i,k=1}^d \sum_{b,j=1}^p \left\vert \mathcal{A}_{ijkj}\mathcal{A}_{abcb}+\mathcal{A}_{ijkb}\mathcal{A}_{ajcb}+\mathcal{A}_{ijkb}\mathcal{A}_{abcj}\right\vert.  
  \]
  For the first term we can bound it in the following way,
  \[
  \frac{1}{d^2n^2}\sum_{a,c,i,k=1}^d \sum_{b,j=1}^p \vert \mathcal{A}_{ijkj}\vert \cdot\vert \mathcal{A}_{abcb}\vert 
  =
  \frac{1}{d^2n^2}\left(
    \sum_{a,c=1}^d \sum_{b=1}^p \vert \mathcal{A}_{abcb}\vert
  \right)^2
  \leqslant \frac{pd^2}{d^2n^2}\Vert \mathcal{A}\Vert_\mathrm{F}^2
  \leqslant \frac{p^2d^3}{d^2n^2}\Vert \mathcal{A}\Vert_{\mathrm{op}} = \O{\frac{1}{d}}.
  \]
  The other two terms are bounded in the same way and are actually of lower order, 
  \begin{multline*}
  \frac{1}{d^2n^2}\sum_{a,c,i,k=1}^d\sum_{b,j=1}^p \vert \mathcal{A}_{ijkb}\vert \cdot\vert \mathcal{A}_{ajcb}\vert
  \leqslant
  \frac{1}{d^2n^2}\sum_{a,c,i,k=1}^d \sqrt{\sum_{b,j=1}^p \vert \mathcal{A}_{ijkb}\vert^2}\sqrt{\sum_{b,j=1}^p\vert \mathcal{A}_{ajcb}\vert^2}  
  \\=
  \frac{1}{d^2n^2}\left(
    \sum_{a,c=1}^d \sqrt{\sum_{b,j=1}^p \vert \mathcal{A}_{ajcb}\vert^2}
  \right)^2
  \leqslant \frac{1}{n^2}\Vert \mathcal{A}\Vert_{\mathrm{F}}^2 \leqslant \frac{1}{pd}\Vert \mathcal{A}\Vert_{\mathrm{op}}^2=\O{\frac{1}{pd}}.
  \end{multline*}
  For the term involving $\E[D_{11}^4]$, we can still apply Lemma \ref{lem:x4pl} (with $h_1=h_2$) and the term of higher order is of order
  \begin{multline*}
  \frac{1}{n^2}\sum_{a,c,i,k=1}^d \sum_{b=1}^p
  \E\left[
    x_ax_cx_ix_k
  \right]
  \E\left[
    y_{\phi,b}^4
  \right]\mathcal{A}_{ibkb}\mathcal{A}_{abcb}
  =
  \frac{\E\left[y_{\phi,1}^4\right]\sigma_X^4}{n^2}
  \sum_{a,i=1}^d \sum_{b=1}^p \left(
  \mathcal{A}_{ibib}\mathcal{A}_{abab}+\mathcal{A}_{ibab}^2+\mathcal{A}_{ibab}\mathcal{A}_{abib}
  \right)
  \\
  +
  \frac{\E\left[y_{\phi,1}^4\right]\E\left[x_{11}^4\right]}
  {n^2}\sum_{a=1}^d\sum_{b=1}^p \mathcal{A}_{abab}^2.
  \end{multline*}
  The first term in the sum can be bounded in the following way 
  \[
  \frac{1}{n^2}\sum_{a,i=1}^d \sum_{b=1}^p \mathcal{A}_{ibib}\mathcal{A}_{abab}
  =
  \frac{1}{n^2}\sum_{b=1}^p \left(
    \sum_{i=1}^d \mathcal{A}_{ibib}
  \right)^2
  \leqslant \frac{d}{n^2}\sum_{b=1}^p 
  \sum_{i=1}^d \mathcal{A}_{ibib}^2
  \leqslant \frac{pd^2}{n^2}\Vert \mathcal{A}\Vert_{\mathrm{op}}=\O{\frac{1}{d}}.
  \]
  The two other next terms are bounded in the same way by 
  \[
  \frac{1}{n^2}\sum_{a,i=1}^d \sum_{b=1}^p \mathcal{A}_{ibab}^2 \leqslant \frac{1}{n^2}\Vert \mathcal{A}\Vert_{\mathrm{F}}^2 \leqslant\frac{pd}{n^2}\Vert \mathcal{A}\Vert_{\mathrm{op}}=\O{\frac{1}{pd}}.   
  \]
  Finally, the very last term can also be crudely bounded by, 
  \[
  \frac{C}{n^2} \Vert \mathcal{A}\Vert_{\mathrm{F}}^2 \leqslant\frac{C}{pd}\Vert \mathcal{A}\Vert_{\mathrm{op}}=\O{\frac{1}{pd}}. 
  \]

  We now come back to \eqref{eq:tensordevelop2} and see that the first term from the sum will partially be cancelled by the centering the variance, indeed we have 
  \begin{multline}\label{eq:tensorexpect}
  \frac{1}{n^2}
  \left(
    \sum_{i,k=1}^d \sum_{j,\ell=1}^p 
    \E\left[
      x_{i}x_kD_{jj}y_{\phi,j}D_{\ell\ell}y_{\phi,\ell}
    \right]
    \mathcal{A}_{ijk\ell}
  \right)^2
  =
  \frac{\sigma_D^4}{n^2}
  \left(
    \sum_{i,k=1}^d 
    \sum_{j=1}^p
    \E\left[
      x_ix_ky_{\phi,j}^2
    \right]
    \mathcal{A}_{ijkj}
  \right)^2 
  \\
  =
  \frac{\sigma_D^4}{n^2}
  \sum_{a,c,i,k=1}^d
  \sum_{b,j,=1}^p
  \E\left[
    x_ix_ky_{\phi,j}^2
  \right]
  \E\left[
    x_ax_cy_{\phi,b}^2
  \right]
  \mathcal{A}_{ijkj}\mathcal{A}_{abcb}.
\end{multline}

We see that in \eqref{eq:tensordevelop2}, we have the four moment $\E[x_ax_ix_cx_k]$ and since entries of $x$ are independent and centered we obtain three terms from matching pairs of indices and one term from matching them all. Firstly, if we match $a\leftrightarrow c$ and $i\leftrightarrow k$ we get a contribution of order 
\[
  \frac{1}{n^2}\sum_{a,i=1}^d\sum_{b,j=1}^p\left(
    \mathcal{A}_{ijij}\mathcal{A}_{abab}+\mathcal{A}_{ijib}\mathcal{A}_{ajab}+\mathcal{A}_{ijib}\mathcal{A}_{abaj}
  \right).
\]
 The first term $\sum_{a,i}\sum_{b,j}\mathcal{A}_{ijij}\mathcal{A}_{abab}$ is exactly cancelled by the expectation from \eqref{eq:tensorexpect}. 
The two other terms are bounded similarly since we can write 
\[
\sum_{a,i=1}^d\sum_{b,j=1}^p \mathcal{A}_{ijib}\mathcal{A}_{ajab}
=
\sum_{b,j=1}^p \left(\sum_{i=1}^d \mathcal{A}_{ijib}\right)^2
=
\sum_{b,j=1}^p B_{jb}^2
=
\Vert B\Vert_\mathrm{F}^2
\leqslant pd^2\Vert \mathcal{A}\Vert_{\mathrm{op}}^2 
\]
and 
\[
\sum_{a,i=1}^d\sum_{b,j=1}^p \mathcal{A}_{ijib}\mathcal{A}_{abaj}
=
\sum_{b,j=1}^p B_{jb}B_{bj}
\leqslant \Vert B\Vert_\mathrm{F}^2
\leqslant pd^2\Vert \mathcal{A}\Vert_{\mathrm{op}}.
\]
Now, if we match $a\leftrightarrow i$ and $c\leftrightarrow k$, we obtain 
\[
  \sum_{i,k=1}^d\sum_{b,j=1}^p\left(
    \mathcal{A}_{ijkj}\mathcal{A}_{ibkb}+\mathcal{A}_{ijkb}\mathcal{A}_{ijkb}+\mathcal{A}_{ijkb}\mathcal{A}_{ibkj}
  \right).
\]
The first term can be bounded in exactly the same way as above and for the other two terms 
\[
\sum_{i,k=1}^d \sum_{b,j=1}^p \mathcal{A}_{ijkb}^2 + \mathcal{A}_{ijkb}\mathcal{A}_{ibkj} \leqslant 2\Vert \mathcal{A}\Vert_{\mathrm{F}}^2 \leqslant 2pd\Vert \mathcal{A}\Vert_{\mathrm{op}}.  
\]
Finally, for the matching $a\leftrightarrow k$ and $c\leftrightarrow i$, we have 
\[
  \sum_{i,k=1}^d\sum_{b,j=1}^p
  \left(
    \mathcal{A}_{ijkj}\mathcal{A}_{kbib}+\mathcal{A}_{ijkb}\mathcal{A}_{kjib}+\mathcal{A}_{ijkb}\mathcal{A}_{kbij}
  \right)
  \leqslant pd^2\Vert \mathcal{A}\Vert_{\mathrm{op}}
\]
since each term can be bounded similarly as above. Finally, we have the term when all indices are matched and the 4th moment of $x$ appear and similarly, 
\[
\frac{1}{n^2}\sum_{i=1}^d \sum_{b\neq j}^p\left(
  \mathcal{A}_{ijij}\mathcal{A}_{ibib}+\mathcal{A}_{ijib}^2+\mathcal{A}_{ijib}\mathcal{A}_{ibij}
\right)  
=
\O{\frac{1}{d}}.
\]
\end{proof}
We are now ready to prove Lemma \ref{lem:comparewwtilde}.
\begin{proof}[Proof of Lemma \ref{lem:comparewwtilde}]
  Denote for simplicity 
  \[
  \mathcal{K}=\E\left[
    \omega^i\otimes \omega^i
  \right],\quad \wt{\mathcal{K}}=\E\left[
    \wt{\omega}^i\otimes \wt{\omega}^i
  \right]
  \]
  Then we can write 
  \[
  \frac{1}{n}\left\vert 
          \langle \omega^i\otimes \omega^i - \wt{\omega}^i\otimes \wt{\omega}^i,\A\rangle  
  \right\vert
  \leqslant  
  \frac{1}{n}\left\vert
    \langle \omega^i\otimes \omega^i -\mathcal{K},\A\rangle
  \right\vert
  +
  \frac{1}{n}\left\vert
    \langle \wt{\omega}^i\otimes \wt{\omega}^i -\wt{\mathcal{K}},\A\rangle
  \right\vert 
  +
  \frac{1}{n}\left\vert
    \langle \wt{\mathcal{K}}-\mathcal{K},\A\rangle
  \right\vert.  
  \]
  For the third term, by Lemma \ref{lem:compareexpect}, we have that 
  \[
  \frac{1}{n}\langle \wt{\mathcal{K}}-\mathcal{K},\A\rangle 
  \leqslant 
  \frac{1}{n}\Vert \wt{\mathcal{K}}-\mathcal{K}\Vert_{\mathrm{F}}\Vert \A\Vert_{\mathrm{F}}
  \leqslant Cd^{\frac{\varepsilon}{2}}\frac{p\sqrt{d}}{n}\Vert \A\Vert_{\mathrm{op}}
  =
  \O{\frac{d^{\frac{\varepsilon}{2}}}{\sqrt{d}}}.  
  \]
  Using the Bienaym\'e--Chebyshev inequality, we thus have that 
    \[
    \P\left(
      \frac{1}{n}\left\vert 
        \langle
          \omega^i\otimes \omega^i-\mathcal{K},\A 
        \rangle
      \right\vert
      \geqslant \frac{d^\varepsilon}{\sqrt{d}}
    \right)  
    \leqslant d^{1-2\varepsilon}
    \mathrm{Var}\left(
      \frac{1}{n}\langle 
        \omega^i\otimes \omega^i,\A
      \rangle
    \right)
    \leqslant \frac{C}{d^{2\varepsilon}}
    \]
    and the same result holds for $\wt{\omega}^i$ and the result is proved.
\end{proof}
Finally we are ready to prove Proposition \ref{prop:replace}.
\begin{proof}[Proof of Proposition \ref{prop:replace}]
We start by using Lemma \ref{lem:sherman} to write 
\[
\frac{1}{n}\Tr(\mathcal{L}-z)^{-1}-\frac{1}{n}\Tr(\wt{\mathcal{L}}-z)^{-1}
=
\sum_{i=1}^n \frac{1}{n}\Tr (\mathcal{L}^{[i-1],\wt{\omega}}-z)^{-1}-\frac{1}{n}\Tr (\mathcal{L}^{[i],\wt{\omega}}-z)^{-1}  
\]
where we denoted 
\[
\mathcal{L}^{[i],\wt{\omega}}
=
\mathcal{L} - \frac{1}{n}\sum_{j=1}^i \omega^i\otimes \omega^i +\frac{1}{n}\sum_{j=1}^i \wt{\omega}^i\otimes \wt{\omega}^i
\quad\text{and}\quad 
\mathcal{L}^{[0],\wt{\omega}}
=
\mathcal{L}.  
\]
as
\begin{multline*}
  \frac{1}{n}\Tr(\mathcal{L}-z)^{-1}-\frac{1}{n}\Tr(\wt{\mathcal{L}}-z)^{-1}
  \\=
  \frac{1}{n}\sum_{i=1}^n \frac{1}{n}\Tr\left( 
  (\hat{\mathcal{L}}^{i,0}-z)^{-1}
    \left( 
      \frac{\omega^i\otimes\omega^i}{1+\frac{1}{n}\langle \omega^i\otimes \omega^i,(\hat{\mathcal{L}}^{i,0}-z)^{-1}\rangle}- \frac{\wt{\omega}^i\otimes \wt{\omega}^i}{1+\frac{1}{n}\langle \wt{\omega}^i\otimes \wt{\omega}^i,(\hat{\mathcal{L}}^{i,0}-z)^{-1}\rangle} 
    \right)
    (\hat{\mathcal{L}}^{i,0}-z)^{-1} 
    \right)
\end{multline*}
where we set 
\[
\hat{\mathcal{L}}^{i,0} = \mathcal{L}^{[i-1],\wt{\omega}}-\frac{1}{n}\omega^i\otimes \omega^i.  
\]
We can now use Lemma \ref{lem:comparewwtilde} to see that this quantity is of order, with high probability,  
\[
  \frac{1}{n}\left(
    1+\frac{d^\varepsilon}{\sqrt{d}}
  \right)
  \sum_{i=1}^n 
  \frac{1}{1+\frac{1}{n}\langle 
    \omega^i\otimes\omega^i,(\hat{\mathcal{L}}^{i,0}-z)^{-1}
  \rangle}
  \frac{1}{n}\Tr \left(
    (\hat{\mathcal{L}}^{i,0}-z)^{-1}
    \left(
      \omega^i\otimes \omega^i -\wt{\omega}^i\otimes \wt{\omega}^i
    \right)
    (\hat{\mathcal{L}}^{i,0}-z)^{-1}
  \right).
    \]
    We also note that we have 
    \begin{multline*}
      \frac{1}{n}\Tr \left(
    (\hat{\mathcal{L}}^{i,0}-z)^{-1}
    \left(
      \omega^i\otimes \omega^i
    \right)
    (\hat{\mathcal{L}}^{i,0}-z)^{-1}
    \right)
    =
    \frac{1}{n}\sum_{q,s=1}^d\sum_{r,t=1}^p 
    \omega_{qr}^i\omega_{st}^i 
    \sum_{a=1}^d\sum_{b=1}^p 
    (\hat{\mathcal{L}}^{i,0}-z)^{-1}_{abqr}
    (\hat{\mathcal{L}}^{i,0}-z)^{-1}_{stab}
    \\=
    \frac{1}{n}\langle 
      \omega^i\otimes \omega^i , \mathcal{B}
    \rangle
    \end{multline*}
    where we defined the tensor
     \[\mathcal{B}_{qrst} 
     =
      \sum_{a=1}^d\sum_{b=1}^p (\hat{\mathcal{L}}^{i,0}-z)^{-1}_{abqr}
    (\hat{\mathcal{L}}^{i,0}-z)^{-1}_{stab}.
    \]
    We note that since $\Vert (\hat{\mathcal{L}}^{i,0}-z)^{-1}\Vert_{\mathrm{op}}\lesssim 1$, we have that $\Vert\mathcal{B}\Vert_{\mathrm{op}}\lesssim 1$ and we can use Lemma \ref{lem:comparewwtilde} to see that overall, with high probability  
    \[
      \left\vert
        \frac{1}{n}\Tr(\mathcal{L}-z)^{-1}-\frac{1}{n}\Tr(\wt{\mathcal{L}}-z)^{-1}
      \right\vert
      \leqslant \frac{d^\varepsilon}{\sqrt{d}}
    \] 
\end{proof}
\section{Convergence to the Marchenko--Pastur map}\label{sec:baizhou}
In this section, we compute the equation followed by the expected resolvent of our new model $\wt{K}$ which lost its dependence structure within the nonlinearity $\phi$. We recall that we consider the model, with $\wt{X}$ an $i.i.d$ matrix of $\mathcal{N}(0,1)$, 
\[
  \wt{K}=\frac{1}{d}XX^\top \odot \frac{1}{p}\left(
    \frac{\alpha_\phi}{\sqrt{d}}XW + \psi(\wt{X})
  \right)D^2
  \left(
    \frac{\alpha_\phi}{\sqrt{d}}XW + \psi(\wt{X})
  \right)^\top 
\]
and we saw that we can see this matrix as a Gram matrix of elements in $\R^d\otimes \R^p$,
\[
\wt{k}_{ij} = \frac{1}{pd}\langle \wt{\omega}^i,\wt{\omega}^j\rangle
\quad\text{with}\quad 
\wt{\omega}^i=\x_i\otimes D\left(
  \frac{\alpha_\phi}{\sqrt{d}}W^\top \x_i + \psi(\wt{\x}_i)
\right).  
\]
We give in this section a self-consistent equation based upon the work \cite{baizhou}.
We start by computing the covariance structure of $\wt{\omega}$ conditionally on $W$ and $D$. Since in this section, we always work conditionally on these two matrices, to ease the notation we use the notation 
\[
\mathbf{E}[A] = \E\left[
  A\middle\vert W,D 
\right].  
\]
\begin{lem}\label{lem:covcomp}
Conditionally on $W$ and $D$ we have that for $i,j\in\unn{1}{n}$,
\begin{equation*}
\mathbf{E}\left[
  \left(
    \wt{\omega}^i-\mathbf{E}[\wt{\omega}^i]
  \right)\otimes 
  \left(
    \wt{\omega}^j-\mathbf{E}[\wt{\omega}^j]
  \right)
\right] \\
=
\delta_{ij}\mathcal{Q}
+
\delta_{ij}
\frac{\alpha_\phi^2}{d}\mu_{4,x}\mathrm{diag}_1\left(
  WD\otimes WD
  \right)
\end{equation*}
where we defined for $\B\in\R^{d,p}\otimes \R^{d,p}$,
\[
  \begin{gathered}
\mathrm{diag}_1(\B)_{qrst} = \delta_{qs}\B_{qrqt},\\
\Q = \frac{\alpha_\phi^2}{d}
\left(
  \tau_{23}\left(
    \Id_d\otimes (DW^\top WD)
  \right)
  +
  \tau_{24}\left(
    WD\otimes WD
  \right)
\right)
+
\beta_\psi^2\tau_{23}\left(
  \Id_d\otimes D^2
\right),
  \end{gathered}  
\]
\end{lem}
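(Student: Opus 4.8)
The plan is a direct computation of the conditional covariance, with $W$ and $D$ held fixed throughout (so that, conditionally, $X$ and $\wt{X}$ are independent with i.i.d.\ entries, of unit variance for $X$ and standard Gaussian for $\wt{X}$). First I would dispose of the off-diagonal case $i\neq j$: the centered vectors $\wt{\omega}^i-\mathbf{E}[\wt{\omega}^i]$ and $\wt{\omega}^j-\mathbf{E}[\wt{\omega}^j]$ are measurable functions of the independent pairs $(\x_i,\wt{\x}_i)$ and $(\x_j,\wt{\x}_j)$ and each has vanishing conditional mean, so the conditional expectation of their tensor product factorizes and vanishes, producing the factor $\delta_{ij}$; from here on I drop the index. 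The next step is the conditional mean: writing $\wt{y}_{\psi,r}=\frac{\alpha_\phi}{\sqrt d}\sum_k W_{kr}x_k+\psi(\wt{x}_r)$ and using $\mathbf{E}[x_qx_k]=\delta_{qk}$ together with $\mathbf{E}[\psi(\wt{x}_r)]=\E_{\mathcal{N}(0,1)}[\psi]=0$ gives $\mathbf{E}[\wt{\omega}]_{qr}=\frac{\alpha_\phi}{\sqrt d}D_{rr}W_{qr}$, hence
\[
\wt{\omega}_{qr}-\mathbf{E}[\wt{\omega}]_{qr}
=
D_{rr}\Big(\tfrac{\alpha_\phi}{\sqrt d}\textstyle\sum_k W_{kr}(x_qx_k-\delta_{qk})+x_q\psi(\wt{x}_r)\Big).
\]

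Then I would expand the conditional covariance $\mathbf{E}\big[(\wt{\omega}_{qr}-\mathbf{E}[\wt{\omega}]_{qr})(\wt{\omega}_{st}-\mathbf{E}[\wt{\omega}]_{st})\big]$ into four terms along this decomposition. The two mixed terms, each pairing a factor $x_q\psi(\wt{x}_r)$ with a factor $\sum_k W_{kr}(x_qx_k-\delta_{qk})$, vanish because $\wt{X}$ is independent of $X$ and $\E_{\mathcal{N}(0,1)}[\psi]=0$. The $\psi$--$\psi$ term equals $D_{rr}D_{tt}\,\mathbf{E}[x_qx_s]\,\mathbf{E}[\psi(\wt{x}_r)\psi(\wt{x}_t)]=\beta_\psi^2\,\delta_{qs}\delta_{rt}D_{rr}^2$, using $\mathbf{E}[\psi(\wt{x}_r)\psi(\wt{x}_t)]=\delta_{rt}\beta_\psi^2$ (independence when $r\neq t$, the definition of $\beta_\psi^2$ when $r=t$), and this is exactly $\beta_\psi^2\,\tau_{23}(\Id_d\otimes D^2)_{qrst}$. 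The remaining term is $\frac{\alpha_\phi^2}{d}D_{rr}D_{tt}\sum_{k,\ell}W_{kr}W_{\ell t}\,\mathbf{E}[(x_qx_k-\delta_{qk})(x_sx_\ell-\delta_{s\ell})]$, for which I only need the mixed fourth-moment identity for i.i.d.\ centered unit-variance entries, namely $\mathbf{E}[(x_qx_k-\delta_{qk})(x_sx_\ell-\delta_{s\ell})]=\delta_{qs}\delta_{k\ell}+\delta_{q\ell}\delta_{ks}+(\mu_{4,x}-3)\,\one\{q=k=s=\ell\}$; note that no hypothesis on third moments enters, since any unpaired index always contributes a vanishing first moment.

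Substituting this identity and resumming over $k,\ell$ against the weights $W_{kr}W_{\ell t}$, the three pieces become, once the $D_{rr}D_{tt}$ prefactor is reinstated, $\frac{\alpha_\phi^2}{d}\delta_{qs}(DW^\top WD)_{rt}=\frac{\alpha_\phi^2}{d}\tau_{23}(\Id_d\otimes DW^\top WD)_{qrst}$, then $\frac{\alpha_\phi^2}{d}(WD)_{sr}(WD)_{qt}=\frac{\alpha_\phi^2}{d}\tau_{24}(WD\otimes WD)_{qrst}$, and finally a term supported on the coordinates with $q=s$, proportional to $(WD)_{qr}(WD)_{qt}$, i.e.\ a multiple of $\mathrm{diag}_1(WD\otimes WD)_{qrst}$ involving $\mu_{4,x}$. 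Collecting the $\tau_{23}$, $\tau_{24}$ and $\psi$--$\psi$ pieces into $\Q$ and keeping the diagonal piece separate then yields the stated identity. I expect the only real obstacle to be organizational: keeping the two $\R^d$ coordinates and the two $\R^p$ coordinates straight through the index permutations $\tau_{23},\tau_{24}$ and through the operation $\mathrm{diag}_1$, and correctly localizing the ``all-indices-equal'' correction of the fourth-moment identity, which loads only the coordinates with $q=s$ and is precisely why it assembles into the $\mathrm{diag}_1(WD\otimes WD)$ term rather than disturbing the rest. Everything else is routine substitution; it is also worth recording that this diagonal term is of strictly lower order in $d$ than $\Q$, so it is immaterial for the limiting spectral distribution.
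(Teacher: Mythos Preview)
Your proposal is correct and follows essentially the same route as the paper: compute $\mathbf{E}[\wt{\omega}^i_{qr}\wt{\omega}^j_{st}]$ directly, use independence of $(\x_i,\wt{\x}_i)$ across $i$ to get the $\delta_{ij}$, kill the cross terms via $\mathbf{E}[\psi(\wt{x})]=0$, and split the $X$--$X$ part using the fourth-moment identity for i.i.d.\ centered unit-variance entries. Your intermediate constant $(\mu_{4,x}-3)$ on the $\mathrm{diag}_1$ term is in fact the correct one (the statement's bare $\mu_{4,x}$ looks like a typo in the paper); as you note, this term is lower order and plays no role downstream.
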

\begin{proof}
  We can write for $i,j\in\unn{1}{n},$ $q,s\in\unn{1}{d},$ and $r,t\in\unn{1}{p}$, 
  \[
  \mathbf{E}\left[
    \wt{\omega}^i_{qr}\otimes \wt{\omega}^j_{st}
  \right]  
  =
  \mathbf{E}\left[
    x_{iq}\left(
      \frac{\alpha_\phi}{\sqrt{d}}D_{rr}\sum_{k=1}^d w_{kr}x_{ik}+D_{rr}\psi(\wt{x}_{ir})
    \right)
    x_{js}
    \left(
      \frac{\alpha_\phi}{\sqrt{d}}D_{tt}\sum_{\ell=1}^d w_{\ell t}x_{j\ell}+D_{tt}\psi(\wt{x}_{jt})
    \right)
  \right].
  \]
  If we unfold this product, we obtain 4 terms. 
  However, since $\wt{X}$ is independent of $X$, two terms vanish and thus we can write 
  \[
    \begin{aligned}
  \mathbf{E}\left[
    \wt{\omega}^i_{qr}\otimes \wt{\omega}^j_{st}
  \right]
  &=
  \frac{\alpha_\phi^2}{d}D_{rr}D_{tt}
  \sum_{k,\ell=1}^dw_{kr}w_{\ell t}
  \E\left[
    x_{iq}x_{ik}x_{js}x_{j\ell}
  \right]
  +
  D_{rr}D_{tt}\E\left[
    x_{iq}x_{js}
  \right]
  \E\left[
    \psi(\wt{x}_{ir})\psi(\wt{x}_{jt})
  \right] \\
  &+
  \frac{\alpha_\phi}{\sqrt{d}}D_{rr}D_{tt}
  \sum_{k=1}^d
  \left(
  w_{kr} \E[x_{iq}x_{ik}x_{js}]\E[\psi(\wt{x}_{jt})]
  +w_{kt} \E[x_{iq}x_{js}x_{jk}]\E[\psi(\wt{x}_{ir})]
  \right).
    \end{aligned}
  \]
  Since $\E[\psi(\mathcal{N}(0,1))]=0$ by construction, we see that the second line vanishes and for $i\neq j$, the second term of the first line too. Also, the first term simply becomes 
  \[
    \mathbf{E}\left[
      \wt{\omega}^i_{qr}
      \otimes 
      \wt{\omega}^j_{st}
    \right]
    =
    \frac{\alpha_\phi^2}{d}D_{rr}D_{tt}w_{qr}w_{st}
    =
    \mathbf{E}\left[
      \wt{\omega}^i_{qr}
    \right]
    \mathbf{E}\left[
      \wt{\omega}^j_{st}
    \right]
  \]
  and thus this term is cancelled by the centering in the covariance computation. For $i= j$, due to the structure of $X$, we have three terms when matching in pairs the $x$'s and a term when matching all $4$. The term matching $q\leftrightarrow k$ and $s\leftrightarrow \ell$ is also cancelled by the centering and thus we obtain the terms 
  \begin{multline*}
    \mathbf{E}\left[
      \wt{\omega}^i_{qr}
      \otimes 
      \wt{\omega}^j_{st}
    \right]
    =
    \frac{\alpha_\phi^2}{d}\left(
      \delta_{qs}\left(
        DW^\top W D
      \right)_{rt}
      +
      (WD)_{qt}(WD)_{sr}
      +
      \delta_{qs}\E[x_{11}^4](WD)_{qr}(WD)_{qt}
    \right)
    \\+
    \delta_{qs}\delta_{rt}D_{rr}^2
    \E\left[
      \psi(\wt{x}_{ir})^2
    \right].
  \end{multline*}
\end{proof}

We now construct an operator whose entries are also given by $\wt{\omega}^i\otimes \wt{\omega}^j$. If we set the rank 3 tensor, which we see as an operator from $\Rbb^n$ to $\Rbb^d\otimes \Rbb^p\eqqcolon \R^{d,p}$,
\[
\mathcal{A} = \frac{1}{\sqrt{pd}}\sum_{i=1}^n \wt{\omega}^i \otimes \e_i \in \R^{d,p}\otimes \R^n  
\]
then its adjoint $\mathcal{A}^\top$ is an operator from $\R^{d,p}$ to $\R^n$ and we can write
\[
\mathcal{A}^\top \mathcal{A} = 
\frac{1}{pd}\sum_{i,j=1}^n \langle \wt{\omega}^i,\wt{\omega}^j\rangle \e_i\otimes \e_j \in \R^n\otimes \R^n
\]
or, in other words, $\mathcal{A}^\top\mathcal{A}$ is a $n\times n$ matrix whose entries are given by $\frac{1}{pd}\langle \wt{\omega}^i, \wt{\omega}^j\rangle$. We recall the fundamental result of \cite{baizhou} for matrices with independent columns which will be used to understand the eigenvalues of $\A^\top \A$. We denote 
\[
\underline{\A}_{qri} =  \wt{\omega}^i_{qr}-\mathbf{E}[\wt{\omega}^i_{qr}]
\]

\begin{thm}\label{thm:baizhou}
  For all $k\in\unn{1}{n}$, if we have that $\Eb\left[\underline{\A}_{qrk}\underline{\A}_{stk} \right]=\T_{qrst}$ and for any non-random tensor $\B\in \R^{d,p}\otimes \R^{d,p}$ with bounded norm,
  \[
    \frac{1}{n^2}\Eb\left[
      \left(
        \Au_{\cdot k}^\top \B\Au_{\cdot k} - \mathrm{Tr}(\B\T)
      \right)^2
    \right]  
    \xrightarrow[n\to\infty]{}0.
  \] If additionally the norm of the tensor $\T$ is uniformly bounded and the empirical eigenvalue distribution of $\T$ tends to a non-random probability distribution $\pi$ then the eigenvalue distribution of $\A^\top \A$ converges weakly almost surely to the probability distribution $\mu_{\mathrm{MP}}^{\gamma_1}\boxtimes \pi$ defined by Theorem \ref{theo:baisilverstein}.
\end{thm}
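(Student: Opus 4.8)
The plan is to run the Marchenko--Pastur--Silverstein resolvent argument for the Gram matrix $\A^{\top}\A$, which is exactly the scheme of \cite{baizhou}: the only step that in the i.i.d.-entry setting relies on independence of the matrix entries --- a concentration of quadratic forms in the columns --- is supplied here by the hypothesis. First I would reduce to the centered case. Since $\Eb[\wt\omega^{i}]=\tfrac{\alpha_\phi}{\sqrt d}WD$ does not depend on $i$ (see the proof of Lemma~\ref{lem:covcomp}), writing $N=pd$ and flattening $\R^{d}\otimes\R^{p}\simeq\R^{N}$, the $N\times n$ matrix with columns $\wt\omega^{i}/\sqrt N$ differs from the one with columns $\Au_{\cdot i}/\sqrt N$ by a rank-one term, so $\A^{\top}\A$ differs from $M:=\tfrac1N\Au^{\top}\Au\in\R^{n\times n}$ by a perturbation of rank at most $2$; by Lemma~\ref{lem:stieltjes} their Stieltjes transforms differ by $O(1/(n\Im z))$. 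It therefore suffices to study $M$ and its companion $\overline M:=\tfrac1N\Au\Au^{\top}=\tfrac1N\sum_{i=1}^{n}\xi_{i}\xi_{i}^{\top}\in\R^{N\times N}$, where $\xi_{i}:=\Au_{\cdot i}$ are, conditionally on $W,D$, independent and centered with covariance $\T$.

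Next I would carry out the leave-one-out expansion. With $R=(M-z)^{-1}$, $\overline R=(\overline M-z)^{-1}$ and $m(z)=\tfrac1n\Tr R$, the Sherman--Morrison identity of Lemma~\ref{lem:sherman} gives $R_{ii}=-\big(z(1+\tfrac1N\xi_{i}^{\top}\overline R^{(i)}\xi_{i})\big)^{-1}$ with $\overline R^{(i)}=(\tfrac1N\sum_{j\neq i}\xi_{j}\xi_{j}^{\top}-z)^{-1}$ independent of $\xi_{i}$ and of operator norm $\le 1/\Im z$. Conditioning on $\{\xi_{j}\}_{j\neq i}$ and applying the hypothesis with $\B=\overline R^{(i)}$ replaces $\tfrac1N\xi_{i}^{\top}\overline R^{(i)}\xi_{i}$ by $\tfrac1N\Tr(\T\overline R^{(i)})$ up to $o_{\P}(1)$ uniformly in $i$, and a rank-one perturbation bound (the resolvent identity with $\|\T\|_{\mathrm{op}}=O(1)$) swaps $\overline R^{(i)}$ for $\overline R$; writing $\underline s(z):=\tfrac1N\Tr(\T\overline R(z))$ and averaging over $i$ gives $m(z)=-\big(z(1+\underline s(z))\big)^{-1}+o_{\P}(1)$. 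A parallel treatment of $z\overline R=-I+\tfrac1N\sum_{i}\xi_{i}\xi_{i}^{\top}\overline R$ --- expanding each summand by Sherman--Morrison, tracing against $\T$, and controlling the fluctuations with the same quadratic-form hypothesis --- shows that $\overline R(z)$ is close to a deterministic equivalent $(c(z)\T-z)^{-1}$ for a scalar $c(z)$ determined by a scalar fixed-point equation in $\underline s(z)$, $n/N$, and normalised traces of resolvents of $\T$. Since the e.e.d.\ of $\T$ converges to $\pi$ and $\|\T\|_{\mathrm{op}}=O(1)$, all such traces converge to integrals against $\pi$, and feeding this back together with $n/N\to\gamma_{1}$ one checks that every subsequential limit of $m(z)$ satisfies --- after the change of variables relating $\underline s$ to the Stieltjes transform of the limiting measure --- exactly equation~\eqref{eq:mp_map} with $\nu=\pi$. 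Uniqueness of the solution of~\eqref{eq:mp_map} in $\C_{+}$ then identifies $\lim m(z)$ as the Stieltjes transform of $\mu_{\mathrm{MP}}^{\gamma_{1}}\boxtimes\pi$ (here $\gamma_{1}$ plays the role of a samples-to-features ratio, consistent with the Gram-matrix form of $\A^{\top}\A$ and with Theorem~\ref{theo:baisilverstein}).

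To upgrade from convergence in probability to almost sure convergence I would use the standard martingale argument: revealing $\xi_{1},\dots,\xi_{n}$ one at a time perturbs $m(z)$ by at most $1/(n\Im z)$ at each step, so Azuma--Hoeffding and Borel--Cantelli give $|m(z)-\Eb m(z)|\to0$ almost surely for each fixed $z\in\C_{+}$; since $\Eb m(z)$ converges to the target Stieltjes transform this holds a.s.\ on a countable dense subset of $\C_{+}$, and Vitali's theorem (uniform boundedness of the analytic functions $m$ on compact subsets of $\C_{+}$) promotes it to locally uniform a.s.\ convergence, equivalently weak a.s.\ convergence of the e.e.d.\ of $\A^{\top}\A$ to $\mu_{\mathrm{MP}}^{\gamma_{1}}\boxtimes\pi$.

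The step I expect to be the main obstacle is closing and identifying the self-consistent system in the second paragraph. Two features distinguish it from the classical i.i.d.\ proof. First, the hypothesis provides only a qualitative $o(1)$ rather than a quantitative rate, so one must ensure that the $n$ per-term errors accumulated in the average defining $m(z)$ still sum to $o_{\P}(1)$; this works because the bound is uniform in $i$ and can be recast as a single estimate on $\Eb\big|\tfrac1N\xi_{i}^{\top}\overline R^{(i)}\xi_{i}-\tfrac1N\Tr(\T\overline R)\big|$. Second, the population covariance $\T$ is itself random (it depends on $W$ and $D$), so the deterministic equivalent and its fixed-point equation are deterministic only after conditioning on $W,D$ and invoking the assumed convergence of the e.e.d.\ of $\T$ --- one must check that the leave-one-out surgery leaves that convergence intact, which again follows from rank-one perturbation bounds. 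All of this is carried out in \cite{baizhou}; the substantive new input in our setting is the verification of the quadratic-form hypothesis, which is the purpose of the remainder of this section.
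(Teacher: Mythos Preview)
The paper does not supply a proof of this theorem: it is stated as a recall of the fundamental result of \cite{baizhou} (introduced with ``We recall the fundamental result of \cite{baizhou}\ldots''), and the rest of Section~\ref{sec:baizhou} is devoted to verifying its hypotheses for the specific tensor $\T$ arising from the NTK model. Your proposal is a faithful high-level outline of the Bai--Zhou leave-one-out resolvent argument --- rank-one reduction to the centered matrix, Sherman--Morrison on the diagonal resolvent entries, replacement of the quadratic form by $\tfrac{1}{N}\Tr(\T\overline R^{(i)})$ via the assumed concentration, derivation of the Marchenko--Pastur self-consistent equation with population covariance $\T$, and an Azuma/Borel--Cantelli upgrade to almost sure convergence --- and is correct at the level of a sketch; there is nothing in the paper to compare it against beyond the citation itself.
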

We have defined the tensor $\T$ in Lemma \ref{lem:covcomp} and we can write it as 
\[
\begin{gathered}
\T = \Q + \R^{(1)} 
\quad\text{ with }\quad
\R^{(1)} = \frac{\alpha_\phi^2}{d}\mu_{4,x}\mathrm{diag}_1\left(
  WD\otimes WD
  \right)
\end{gathered}
\]
\begin{lem}
  With very high probability, the tensor $\T$ is uniformly bounded.
\end{lem}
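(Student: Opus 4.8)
The plan is to bound $\Vert\T\Vert_{\mathrm{op}}$ by the triangle inequality over the five summands of $\T=\Q+\R^{(1)}$, controlling each by first recognizing that the leg permutations $\tau_{23}$, $\tau_{24}$ and the map $\mathrm{diag}_1$ turn the corresponding structured tensor into a concrete operator on $\R^{d,p}=\R^d\otimes\R^p$, which we identify with $d\times p$ matrices under the Hilbert--Schmidt inner product. Throughout I will use three standard ingredients: (i) $\Vert D\Vert_{\mathrm{op}}=\max_i|a_i|$ is bounded almost surely, since the entries of $\mathbf{a}$ are bounded; (ii) the classical bound $\Vert W\Vert_{\mathrm{op}}\le\sqrt d+\sqrt p+d^{\varepsilon}$ with probability at least $1-2\e^{-d^{2\varepsilon}/2}$ for a Gaussian $d\times p$ matrix $W$; and (iii) that $p/d$ is bounded.

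First, for the two terms of the form $\tau_{23}(\Id_d\otimes M)$ (with $M=DW^\top WD$ or $M=D^2$, both symmetric): unwinding the reshape, $\tau_{23}(\Id_d\otimes M)$ acts on a matrix $v$ by $v\mapsto vM$, i.e.\ it is the tensor-product operator $\Id_d\otimes M$, whose operator norm is exactly $\Vert M\Vert_{\mathrm{op}}$. Hence $\Vert\beta_\psi^2\tau_{23}(\Id_d\otimes D^2)\Vert_{\mathrm{op}}=\beta_\psi^2\Vert D\Vert_{\mathrm{op}}^2$ is bounded deterministically, while $\Vert\tfrac{\alpha_\phi^2}{d}\tau_{23}(\Id_d\otimes DW^\top WD)\Vert_{\mathrm{op}}=\tfrac{\alpha_\phi^2}{d}\Vert DW^\top WD\Vert_{\mathrm{op}}\le\tfrac{\alpha_\phi^2}{d}\Vert D\Vert_{\mathrm{op}}^2\Vert W\Vert_{\mathrm{op}}^2\lesssim\tfrac{(\sqrt d+\sqrt p)^2}{d}\lesssim 1$ with very high probability. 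Next, writing $A=WD$, a direct index computation identifies $\tau_{24}(A\otimes A)$ with the operator $v\mapsto Av^{\top}A$ on $d\times p$ matrices, so $\Vert\tau_{24}(WD\otimes WD)\Vert_{\mathrm{op}}\le\Vert WD\Vert_{\mathrm{op}}^2\le\Vert W\Vert_{\mathrm{op}}^2\Vert D\Vert_{\mathrm{op}}^2$, and again $\tfrac{\alpha_\phi^2}{d}$ times this is $\lesssim 1$ with very high probability.

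It remains to control $\R^{(1)}=\tfrac{\alpha_\phi^2\mu_{4,x}}{d}\mathrm{diag}_1(WD\otimes WD)$. Since $\mathrm{diag}_1(WD\otimes WD)_{qrst}=\delta_{qs}(WD)_{qr}(WD)_{qt}$, this operator is block-diagonal along the first ($\R^d$) factor, the $q$-th block being the rank-one operator $u\mapsto(WD)_q\langle(WD)_q,u\rangle$ on $\R^p$, where $(WD)_q$ denotes the $q$-th row of $WD$. Therefore $\Vert\mathrm{diag}_1(WD\otimes WD)\Vert_{\mathrm{op}}=\max_{1\le q\le d}\Vert(WD)_q\Vert_2^2\le\Vert D\Vert_{\mathrm{op}}^2\max_{1\le q\le d}\sum_{r=1}^{p}W_{qr}^2$. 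Each $\sum_r W_{qr}^2$ is a $\chi^2_p$ variable, so a Laurent--Massart $\chi^2$ tail bound and a union bound over the $d$ rows give $\max_q\sum_r W_{qr}^2\le p+Cp^{1/2}d^{\varepsilon}$ with very high probability; hence $\Vert\R^{(1)}\Vert_{\mathrm{op}}\lesssim p/d$, which is bounded. Collecting the five estimates yields $\Vert\T\Vert_{\mathrm{op}}\lesssim 1$ with very high probability.

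I expect the only genuine content to be keeping the tensor-leg bookkeeping straight: the naive estimate $\Vert WD\otimes WD\Vert_{\mathrm{op}}=\Vert WD\Vert_{\mathrm{F}}^2\asymp dp$ would make $\tfrac1d$ times it of order $p$, which is \emph{not} bounded, so the estimate only works because the partial-transpose structure of $\tau_{24}$ replaces $\Vert WD\Vert_{\mathrm{F}}^2$ by $\Vert WD\Vert_{\mathrm{op}}^2\asymp d$, and the block structure of $\mathrm{diag}_1$ replaces it by $\max_q\Vert(WD)_q\Vert_2^2\asymp p$. Once these identifications are made, the probabilistic input is entirely off-the-shelf Gaussian operator-norm and $\chi^2$ concentration, and "very high probability" can be quantified as $1-\e^{-cd^{2\varepsilon}}$ for any small $\varepsilon>0$.
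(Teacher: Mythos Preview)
Your proof is correct and follows essentially the same approach as the paper: both bound each summand of $\T$ separately by identifying the leg-permuted tensors as concrete operators on $d\times p$ matrices ($v\mapsto vM$ for $\tau_{23}(\Id_d\otimes M)$, $v\mapsto Av^\top A$ for $\tau_{24}(A\otimes A)$) and then invoking standard operator-norm bounds for $W$ and boundedness of $D$. Your handling of $\R^{(1)}$ via the block-diagonal structure---operator norm $=\max_q\|(WD)_q\|_2^2$ controlled by $\chi^2$ concentration and a union bound---is arguably cleaner than the paper's entrywise computation of $(\R^{(1)}A)_{ab}$, but both routes yield the same $O(p/d)$ bound.
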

\begin{proof}
  We can bound each part of $\T$ separately. Firstly, we see that we have the first term in the definition of $\Q$, 
  \(
  \frac{\alpha_\phi^2}{d}\tau_{23}\left(
    \mathrm{Id}_d \otimes \left(
      DW^\top WD
    \right)
  \right).
  \)
  We can bound this part using the fact that $\mathrm{Id}_d$ has norm one and $\frac{1}{d}DW^\top WD$ has almost surely bounded spectral norm \cites{silverstein1995analysis, bai1998no}. The second part is slightly more complicated but note that we have for $\u\in\mathbb{R}^d$ and $\v\in\mathbb{R}^p$, 
  \[
  \tau_{24}\left(
    WD\otimes WD
  \right)(\u\otimes \v)
  =
  \left(
    WD\v
  \right)
  \otimes 
  \left(
    DW^\top \u
  \right).
  \]
  Using the fact that the map $\u\otimes \v \mapsto \v\otimes \u$ is an isometry from $\R^{d,p}$ to $\R^{p,d}$ and that $DW^\top WD$ has bounded spectral norm almost surely as before, we can bound independently of the dimension the norm of this part. For the part $\tau_{23}\left(\mathrm{Id}\otimes D^2\right)$ we can use the fact that the entries $D$ are compactly supported. Finally, we have the remainder term, if we consider a test matrix $A\in\mathbb{R}^{d\times p}$ of bounded norm then 
  \[
  (\R^{(1)}A)_{ab} = 
  \mu_{4,x}\frac{\alpha_\phi^2}{d}
  \sum_{r=1}^p
  (WD)_{ar}(WD)_{ab}A_{ar}
  =
  \mu_{4,x}\frac{\alpha_\phi^2}{d}
  (WD)_{ab}\left(ADW^\top\right)_{aa}
  \]
  which is uniformly bounded since $\frac{1}{\sqrt{d}}ADW^\top$ has bounded norm and $(WD)_{ab}$ is a single entry. 
\end{proof}
\begin{prop}\label{prop:baizhouquadratic}
  We have that for any $\varepsilon>0$, with very high probability and for all $i\in \{1,\dots,n\}$ and $\B\in \R^{d,p}\otimes \R^{d,p}$ of bounded norm, 
  \[
    \frac{1}{n^2}\Eb\left[
      \left(
        \Au_{\cdot i}^\top \B\Au_{\cdot i} - \mathrm{Tr}(\B\T)
      \right)^2
    \right]  
    =
    \O{\frac{d^{4\varepsilon}}{\sqrt{d}}}
  \] 
  where $(\Au_{\cdot i})_{qr} = \Au_{qri}$.
\end{prop}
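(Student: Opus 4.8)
The plan is to recast the quantity to be bounded as a conditional variance and then estimate it by a careful moment expansion, following the bookkeeping of the unconditional variance estimate in Section~\ref{sec:nonlinearity} but at the sharper level of precision forced by conditioning on $W$ and $D$. Since $\mathrm{Tr}(\B\T)=\Eb[\Au_{\cdot i}^\top\B\Au_{\cdot i}]$ by the definition of $\T$ in Lemma~\ref{lem:covcomp}, the left-hand side equals $\tfrac1{n^2}\mathrm{Var}_{W,D}\bigl(\Au_{\cdot i}^\top\B\Au_{\cdot i}\bigr)$, where $\Au_{\cdot i}^\top\B\Au_{\cdot i}=\sum_{q,s=1}^d\sum_{r,t=1}^p\B_{qrst}\,\Au_{qri}\Au_{sti}$; this bound is the same for every $i$ since the $\wt\omega^i$ are i.i.d.\ conditionally on $W,D$. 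I work on the very-high-probability event on which $\norm{W}_{\mathrm{op}}\lesssim\sqrt d$, the matrix $\tfrac1d DW^\top WD$ has bounded operator norm, and $\max_{r\le p}\bigl|\tfrac1d\sum_{k=1}^d w_{kr}^2-1\bigr|\le d^{\varepsilon-1/2}$ (the last by a union bound over the $p\lesssim d$ columns and a $\chi^2$ tail bound); combined with the boundedness of the entries of $D$, this keeps every $W,D$-dependent constant below uniformly controlled. Writing $\z_i=\tfrac1{\sqrt d}W^\top\x_i$ so that $\wt\omega^i=\x_i\otimes D(\alpha_\phi\z_i+\psi(\wt\x_i))$, I split $\Au_{\cdot i}=\Au^{\mathrm{lin}}_{\cdot i}+\Au^{\mathrm{nl}}_{\cdot i}$, where
\[
\Au^{\mathrm{lin}}_{\cdot i}=\alpha_\phi\bigl(\x_i\otimes D\z_i-\Eb[\x_i\otimes D\z_i]\bigr),\qquad
\Au^{\mathrm{nl}}_{\cdot i}=\x_i\otimes D\psi(\wt\x_i),
\]
the nonlinear part already being centered because $\x_i$ is centered and independent of $\wt\x_i$. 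Expanding $\Au_{\cdot i}^\top\B\Au_{\cdot i}$ into the four bilinear combinations of $\Au^{\mathrm{lin}}$ and $\Au^{\mathrm{nl}}$ and using $\mathrm{Var}\bigl(\sum_{j=1}^4 Z_j\bigr)\le4\sum_j\mathrm{Var}(Z_j)$, it suffices to bound the conditional variance of each of the four pieces by $\O{n^2 d^{4\varepsilon}/\sqrt d}$.

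For each piece the variance is a double sum over index octuples $(q,r,s,t)$ and $(q',r',s',t')$, weighted by $\B_{qrst}\B_{q'r's't'}$ (and bounded $D$-entries) and by the joint cumulant of the corresponding entries of $\x_i$ and, where relevant, of $\psi(\wt\x_i)$. Since the entries of $\x_i$ and of $\wt\x_i$ are independent, such a cumulant vanishes unless the two index blocks are \emph{connected} through a shared coordinate, and imposing this constraint cuts the number of free summation indices down enough that each surviving partial sum, after the same Cauchy--Schwarz steps as in Section~\ref{sec:nonlinearity}, is bounded by $\O{d^{a}p^{b}\norm{\B}_{\mathrm{op}}^2}$ with $d^a p^b\le n^2 d^{4\varepsilon}/\sqrt d$. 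The purely nonlinear piece $(\Au^{\mathrm{nl}}_{\cdot i})^\top\B\,\Au^{\mathrm{nl}}_{\cdot i}$ is the most transparent: it is a quadratic form in the variables $x_{iq}\psi(\wt x_{ir})$, which are independent across $q$ and across $r$ with fourth moment bounded by $\mu_{4,x}\,\E[\psi(\mathcal N(0,1))^4]<\infty$ — the last quantity finite since $\psi$ is pseudo-Lipschitz and $\wt x_{ir}$ is Gaussian — so the bound follows exactly as in the earlier computation, in fact with $\O{n^2/d}$. The two mixed pieces are then dispatched by Cauchy--Schwarz against the two pure pieces.

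The genuine obstacle is the linear--linear piece $(\Au^{\mathrm{lin}}_{\cdot i})^\top\B\,\Au^{\mathrm{lin}}_{\cdot i}$: because $\z_i=\tfrac1{\sqrt d}W^\top\x_i$ itself depends on $\x_i$, this is a degree-four polynomial in the entries of $\x_i$, whose variance a priori involves eighth-order moments of those entries, beyond what the fourth-moment hypothesis supplies. Two devices handle this. First, one isolates the diagonal by writing $x_{iq}(W^\top\x_i)_r=w_{qr}x_{iq}^2+x_{iq}\sum_{k\ne q}w_{kr}x_{ik}$: conditionally on the other coordinates of $\x_i$ the off-diagonal term is a linear statistic whose moments are governed by $\tfrac1d\sum_k w_{kr}^2$ and $\mu_{4,x}$ alone, while the diagonal term produces only $x_{iq}^2$; the residual high powers of individual $x$-entries that still occur are removed by a preliminary truncation $x_{iq}\mapsto x_{iq}\one_{\{|x_{iq}|\le d^{\delta}\}}$ with $\delta$ small, whose tail is negligible by the fourth-moment bound and whose bounded moments cost only a factor $d^{O(\delta)}$, comfortably absorbed by the $d^{4\varepsilon}$ of the statement. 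Second, the centering in $\Au^{\mathrm{lin}}_{qri}=\tfrac{\alpha_\phi w_{qr}}{\sqrt d}(x_{iq}^2-1)+\tfrac{\alpha_\phi}{\sqrt d}x_{iq}\sum_{k\ne q}w_{kr}x_{ik}$ turns the diagonal term into a mean-zero fluctuation, so that in the variance only pairings linking the two index blocks survive (the block-internal pairing being exactly what the variance subtracts), and each such pairing carries an extra factor $1/\sqrt d$ from an off-diagonal $w$, or is governed by the fluctuation $\tfrac1d\sum_k w_{kr}^2-1$ of size $d^{\varepsilon-1/2}$ on the good event; either way the $d^{4\varepsilon}$ slack absorbs the loss and the variance of this piece closes at order $\O{n^2/\sqrt d}$. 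Assembling the four pieces yields the claimed bound.
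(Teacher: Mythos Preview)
Your outline has the right shape but two genuine gaps prevent it from being a proof.

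\textbf{The mixed pieces.} The sentence ``the two mixed pieces are then dispatched by Cauchy--Schwarz against the two pure pieces'' is not valid as stated: there is no inequality bounding $\mathrm{Var}\bigl((\Au^{\mathrm{lin}})^\top\B\,\Au^{\mathrm{nl}}\bigr)$ by anything involving only the variances of $(\Au^{\mathrm{lin}})^\top\B\,\Au^{\mathrm{lin}}$ and $(\Au^{\mathrm{nl}})^\top\B\,\Au^{\mathrm{nl}}$. After using independence of $\psi(\wt\x_i)$ from $\x_i$ to force $t=t'$, the mixed variance still contains $\Eb\bigl[\Au^{\mathrm{lin}}_{qr}x_{is}\,\Au^{\mathrm{lin}}_{q'r'}x_{is'}\bigr]$, a degree-$6$ polynomial in $\x_i$; this needs a direct combinatorial treatment of the same type as the linear--linear piece, not a black-box reduction.

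\textbf{The linear--linear piece.} You correctly flag this as the obstacle, but the remedy you propose (truncation at level $d^\delta$ plus a vague ``pairing'' argument) is not the mechanism that makes the bound work, and the paper does neither. The variance of the linear quadratic form is a degree-$8$ polynomial in $\x_i$, and the combinatorics of which monomials survive the centering and how they interact with the tensor $\B$ and the $W$-entries is the entire content of the proof. The paper handles this as follows: it conditions on $\mathfrak{X}=\{x_{iq},x_{is},x_{iq'},x_{is'}\}$---the four coordinates appearing as first indices in the quadratic form---and computes the first four \emph{conditional} cumulants of $\Au_{qri}$ given $\mathfrak{X}$ (Lemma~\ref{lem:conditional-cumulants}). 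The moment--cumulant formula then expresses $\Eb[\Au_{qri}\Au_{sti}\Au_{q'r'i}\Au_{s't'i}\mid\mathfrak{X}]$ as a sum over set partitions of $\{(q,r),(s,t),(q',r'),(s',t')\}$; each partition type $(4),(3,1),(2,1,1),(1,1,1,1),(2,2)$ is bounded separately (Lemma~\ref{lem:baizhou-0}), isolating a family of ``Wick'' terms. These Wick terms are organized as perfect matchings on an $8$-vertex diagram, and each matching that crosses between the two copies of the quadratic form is shown to be $\O{d^{4\varepsilon-1/2}}$ by a case analysis using operator- and Frobenius-norm bounds on $\B$, $Z$ and $Z^\top Z$ (Lemma~\ref{lem:baizhou-1}). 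The conditioning on exactly those four coordinates is what keeps the final $\Eb$ over $\mathfrak{X}$ manageable; your ``conditionally on the other coordinates'' goes the wrong direction and leaves the hard part untouched. The one-paragraph summary you give of why the pairings gain $1/\sqrt d$ is morally right for a couple of the matching types but is not a substitute for the case-by-case estimates, several of which (e.g.\ types $\mathfrak{T}_{LX}$, $\mathfrak{T}_{XR}$ in Lemma~\ref{lem:baizhou-1}) only close at the borderline order $d^{-1/2}$ and require nontrivial regroupings of the sums.
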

Recall that we can write 
\[
\Au_{qri} = x_{iq}\left(
  \frac{\alpha_\phi}{\sqrt{d}}(DW^\top \x_i)_r + D_{rr}\psi(\wt{x}_{ir})
\right)
-
\frac{\alpha_\phi}{\sqrt{d}}(WD)_{qr}.
\]
For ease of notation, we introduce the following matrices 
\[
Z = \frac{\alpha_\phi}{\sqrt{d}}WD\quad\text{and}\quad M = Z^\top Z.
\]
In particular, we have that with very high probability and for any $\varepsilon>0$, $\Vert Z\Vert,\,\Vert M\Vert \leqslant d^\varepsilon$. We can also introduce the notation 
\[
\Au_{qri} = \Au^1_{qri}+\A^2_{qri}
\quad\text{with}\quad 
\Au^1_{qri}=x_{iq}(Z^\top\x_i)_r-Z_{qr} \quad\text{and}\quad 
\A^2_{qri} = x_{iq}D_{rr}\psi(\wt{x}_{ir}).
\]
If we compute the quadratic form we get that 
\[
\frac{1}{n^2}\sum_{q,s,q',s'=1}^d\sum_{r,t,r',t'=1}^p 
\B_{qrst}\B_{q'r's't'}
\left(
  \Eb\left[
    \Au_{qri}\Au_{sti}\Au_{q'r'i}\Au_{s't'i}-\T_{qrst}\T_{q'r's't'}
  \right]
\right)
\]

To evaluate this fourth moment, we consider conditioning on the random variables $\mathfrak{X} = \{ x_{i\ell}, \ell\in L\}$, where $L = \{q,s,q',s'\}$.  
This allows us to represent  
\begin{equation}\label{eq:Au-cond}
\Au_{qri} = (x_{iq}^2-1)Z_{qr} 
+ \sum_{\substack{\ell \in L\\ \ell\neq q}} x_{iq}x_{i\ell}Z_{\ell r} 
+ \sum_{\substack{\ell \notin L}} x_{iq}x_{i\ell}Z_{\ell r} 
+ x_{iq}D_{rr}\psi(\wt{x}_{ir}).
\end{equation}
We now record some conditional cumulants of $\Au_{qri}$ given $\mathfrak{X}$ (and still all $\{Z_{qr}\}$).
\begin{lem}
  The conditional cumulants satisfy, on an event in $\sigma( \{Z_{qr}\})$ of very high probability, the following estimates.
  \begin{enumerate}
    \item The first conditional cumulant:
    \begin{equation}\label{eq:first-cumulant}
    \kappa( \Au_{qri} \mid \mathfrak{X}) = (x_{iq}^2-1)Z_{qr} 
    + \sum_{\substack{\ell \in L\\ \ell\neq q}} x_{iq}x_{i\ell}Z_{\ell r}.
    \end{equation}
    \item The second conditional cumulant:
    \[
    \kappa( \Au_{qri},\Au_{q'r'i} \mid \mathfrak{X}) =
    x_{iq}x_{iq'}
    \left((
      Z^{\top} Z)_{rr'}
      + \omega^{(L)}_{rr'}
      + \delta_{r r'} D_{rr}^2\Eb[ \psi(\wt{x}_{ir})^2]
    \right),
    \]
    where $|\omega^{(L)}_{rr'}| \leq d^{\varepsilon-1}$ and $\|\omega^{(L)}\|^2 \leq d^{\varepsilon}$.
    \item The third conditional cumulant:
    \[
    \kappa( \Au_{qri},\Au_{q'r'i},\Au_{s't'i} \mid \mathfrak{X})
    = x_{iq}x_{iq'}x_{is'} \left( 
      \zeta_{rr't'} 
      + \omega^{(L)}_{rr't'}
      + \delta_{r r'}\delta_{r t'} D_{rr}^3 \kappa_3(\psi(\wt{x}_{ir}))
      \right),
    \]
    where $|\zeta_{rr't'}| \leq d^{\varepsilon-1}$ and $\|\omega^{(L)}\|^2 \leq d^{\varepsilon}$.
    \item The fourth conditional cumulant:
    \[
    \kappa( \Au_{qri},\Au_{q'r'i},\Au_{sti},\Au_{s't'i} \mid \mathfrak{X})
    = x_{iq}x_{iq'}x_{is}x_{is'} \left( \zeta_{r,r',t,t'} + \delta_{r r'}\delta_{r t} \delta_{r t'} D_{rr}^4 \kappa_4(\psi(\wt{x}_{ir}))\right),
    \]
    where $|\zeta_{rr'tt'}| \leq d^{\varepsilon-2} \times \left(
      p^{1/2} 
      + p(\delta_{r r'}\delta_{tt'} 
      + \delta_{r t}\delta_{r' t'} 
      + \delta_{r t'}\delta_{t' t})
       \right)$
    and $\|\omega^{(L)}\|^2 \leq d^{\varepsilon}$.
  \end{enumerate}
  \label{lem:conditional-cumulants}
\end{lem}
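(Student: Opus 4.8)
\noindent The plan is to carry out every computation on a single high-probability event $\mathcal{G}\in\sigma(\{Z_{qr}\})$ on which $W$ is tame: $\max_{\ell,r}|w_{\ell r}|\le d^{\varepsilon/2}$, $\|Z\|,\|M\|\le d^{\varepsilon}$, $\sum_r w_{\ell r}^2\le 2p$ for every $\ell$, and each of the sums $\sum_{\ell=1}^d w_{\ell r_1}w_{\ell r_2}w_{\ell r_3}$ and $\sum_{\ell=1}^d w_{\ell r_1}\cdots w_{\ell r_4}$ lies within $d^{1/2+\varepsilon/2}$ of its mean, simultaneously over all index choices in $\unn{1}{p}$. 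By standard concentration for polynomials in i.i.d.\ Gaussians together with a union bound over $p^{\mathcal{O}(1)}$ tuples, each of these events has probability at least $1-N^{-\theta}$ for every $\theta$, and $\mathcal{G}$ is their intersection. Working also conditionally on $W,D$ (the convention behind $\mathbf{E}[\cdot]$), the first step is to read off from the representation \eqref{eq:Au-cond} that its first two summands are $\mathfrak{X}$-measurable while the last two are conditionally centered (since $\mathbf{E}[x_{i\ell}]=0$ for $\ell\notin L$ and $\psi$ is centered under the Gaussian); this is exactly \eqref{eq:first-cumulant}, and it gives
\[
\Au_{qri}-\kappa(\Au_{qri}\mid\mathfrak{X})=x_{iq}\,(S_r+T_r),\qquad S_r:=\sum_{\ell\notin L}x_{i\ell}Z_{\ell r},\quad T_r:=D_{rr}\psi(\wt{x}_{ir}),
\]
where, conditionally on $\mathfrak{X},W,D$, the family $\{S_r\}$ (linear in the fresh coordinates $\{x_{i\ell}:\ell\notin L\}$) is independent of $\{T_r\}$ (built from $\{\wt x_{ir}\}$), and the scalars $x_{iq},x_{iq'},\dots$ factor out of all cumulants.

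\noindent Next I would use two multilinearity facts. First, independence of $\{S_r\}$ and $\{T_r\}$ makes every joint cumulant of $S_\bullet+T_\bullet$ split into a purely-$S$ plus a purely-$T$ term. Second, for the $S$-part, mixed cumulants of distinct i.i.d.\ variables vanish, so the $m$-fold joint cumulant collapses to the diagonal sum $\kappa_m(x_{11})\sum_{\ell\notin L}Z_{\ell r_1}\cdots Z_{\ell r_m}$, with $\kappa_2(x_{11})=1$, $\kappa_3(x_{11})=\E[x_{11}^3]$, $\kappa_4(x_{11})=\mu_{4,x}-3$; for the $T$-part, independence of the coordinates $r$ together with $\mathbf{E}[\psi(\wt x_{ir})]=0$ forces the cumulant to vanish unless $r_1=\dots=r_m$, in which case it is $D_{r_1r_1}^m\,\kappa_m(\psi(\wt x_{ir_1}))$, a finite constant because $\psi$, being pseudo-Lipschitz, has polynomial growth. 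This already yields the stated formulas: the full sum $\sum_{\ell=1}^d Z_{\ell r_1}\cdots Z_{\ell r_m}$ is $\zeta$ (for $m=2$ literally $(Z^\top Z)_{rr'}=M_{rr'}$), the remainder $-\kappa_m(x_{11})\sum_{\ell\in L}Z_{\ell r_1}\cdots$ is the error tensor $\omega^{(L)}$, and the all-coordinates-equal piece is the $T$-contribution; note the $-3$ in $\kappa_4(x_{11})$ is precisely what replaces the delicate ``mean times mean'' cancellations handled via Lemma \ref{lem:x4pl} in the non-conditioned approach. Coincidences among $q,s,q',s'$ (so $|L|<4$) change nothing, as every identity is at the level of individual entries.

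\noindent It remains to bound $\zeta$ and $\omega^{(L)}$ on $\mathcal{G}$. Since $Z_{\ell r}=\tfrac{\alpha_\phi}{\sqrt d}D_{rr}w_{\ell r}$, a diagonal sum equals $\tfrac{\alpha_\phi^m}{d^{m/2}}\bigl(\prod_j D_{r_jr_j}\bigr)\sum_\ell w_{\ell r_1}\cdots w_{\ell r_m}$, and the crux is the dichotomy: $\sum_{\ell=1}^d w_{\ell r_1}\cdots w_{\ell r_m}$ has a mean of size $\Theta(d)$ exactly when the multiset $\{r_1,\dots,r_m\}$ has all its multiplicities even, and mean $0$ otherwise, while on $\mathcal{G}$ its fluctuation is $\mathcal{O}(d^{1/2+\varepsilon/2})$ in every case. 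For $m=2$ this gives $|\omega^{(L)}_{rr'}|\le d^{\varepsilon-1}$, and $\|\omega^{(L)}\|$ is controlled because $\omega^{(L)}$ is a sum of $|L|$ rank-one matrices with $\mathcal{O}(1)$ norms on $\mathcal{G}$. For $m=3$ the multiplicities cannot all be even, so the mean vanishes and $|\zeta_{rr't'}|\le C\,d^{-3/2}d^{1/2+\varepsilon/2}=\mathcal{O}(d^{\varepsilon-1})$. For $m=4$: when $\{r,r',t,t'\}$ splits into two equal pairs --- equivalently when one of the pairing indicators $\delta_{rr'}\delta_{tt'}$, $\delta_{rt}\delta_{r't'}$, $\delta_{rt'}\delta_{r't}$ (the all-equal case included) equals $1$ --- the mean is $\Theta(d)$ and $|\zeta_{rr'tt'}|=\mathcal{O}(d^{-1})$, while otherwise the mean vanishes and $|\zeta_{rr'tt'}|=\mathcal{O}(d^{-3/2+\varepsilon/2})$; combining these and using $p\asymp d$ gives the announced bound on $|\zeta_{rr'tt'}|$. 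The $\ell\in L$ corrections are $\mathcal{O}(d^{-m/2+m\varepsilon/2})$ entrywise, of strictly lower order, and the claimed tensor norms for $\omega^{(L)}$ follow by summing these estimates over the $p^{\mathcal{O}(1)}$ indices.

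\noindent The hard part will be this last step in the case $m=4$: tracking precisely which equality patterns among $\{r,r',t,t'\}$ give the diagonal Gaussian-product sum a genuine $\Theta(d)$ mean versus leaving it mean-zero with only $\Theta(\sqrt d)$ fluctuations. This is exactly the $\Theta(p)$-versus-$\Theta(p^{1/2})$ distinction in the fourth cumulant, and getting it right is what later makes the variance in Proposition \ref{prop:baizhouquadratic} come out $o(1)$ rather than $\Theta(1)$; by comparison the concentration defining $\mathcal{G}$, the cumulant bookkeeping, and the lower-order $\omega^{(L)}$ terms are routine.
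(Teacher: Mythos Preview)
Your proposal is correct and follows essentially the same route as the paper's own proof: factor out $x_{iq},x_{iq'},\dots$, split the centered piece into the $S_r=\sum_{\ell\notin L}x_{i\ell}Z_{\ell r}$ and $T_r=D_{rr}\psi(\wt x_{ir})$ parts, use that mixed cumulants across these independent families vanish, and reduce the $S$-cumulant to the diagonal sum $\sum_{\ell\notin L}Z_{\ell r_1}\cdots Z_{\ell r_m}$, then complete the sum to define $\zeta$ and call the $\ell\in L$ remainder $\omega^{(L)}$. Your treatment is in fact slightly more explicit than the paper's in two places: you keep track of the prefactors $\kappa_m(x_{11})$ (which the paper silently drops for $m=3,4$, harmlessly since they are $O(1)$ constants absorbed into $\zeta$), and you spell out the high-probability event $\mathcal{G}$ on which the Gaussian diagonal sums concentrate, whereas the paper just asserts the relevant bounds ``with very high probability''.
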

\begin{proof}
  These cumulant formulas follow directly from the multilinearity of the cumulant, \eqref{eq:Au-cond} and the fact that the entries of $x$ are independent and centered.
  The first cumulant is the conditional mean and is straightforward.
  
  \paragraph{Second cumulant:} For the second cumulant, we have that 
  \[
  \kappa( \Au_{qri},\Au_{q'r'i} \mid \mathfrak{X})
  =
  \kappa( \Au_{qri} - \Eb[\Au_{qri} \mid \mathfrak{X}],\Au_{q'r'i} - \Eb[\Au_{q'r'i} \mid \mathfrak{X}] \mid \mathfrak{X}),
  \]
  using that all higher conditional cumulants of $\mathfrak{X}$ are zero.  We can then expand this using independence of the entries of $x$ and the fact that $\Eb[x_{iq}^2] = 1$ to get 
  \[
  \begin{aligned}
  \kappa( \Au_{qri},\Au_{q'r'i} \mid \mathfrak{X})
  &=
  \sum_{\ell \notin L} \kappa( x_{iq}x_{i\ell}Z_{\ell r}, x_{iq'}x_{i\ell}Z_{\ell r'} \mid \mathfrak{X}) + \kappa(x_{iq}D_{rr}\psi(\wt{x}_{ir}), x_{iq'}D_{rr'}\psi(\wt{x}_{i'r'}) \mid \mathfrak{X}) \\
  &= 
  \sum_{\ell \notin L} \kappa( x_{iq}x_{i\ell}Z_{\ell r}, x_{iq'}x_{i\ell}Z_{\ell r'} \mid \mathfrak{X}) + \kappa(x_{iq}D_{rr}\psi(\wt{x}_{ir}), x_{iq'}D_{rr'}\psi(\wt{x}_{i'r'}) \mid \mathfrak{X}) \\
  &= 
  \sum_{\ell \notin L} x_{iq}x_{iq'} Z_{\ell r}Z_{\ell r'} + x_{iq}x_{iq'} \delta_{r r'} D_{rr}^2\Eb[ \psi(\wt{x}_{ir})^2].
  \end{aligned}
  \]
  Hence setting $\omega^{(L)}_{rr'}= -\sum_{\ell \in L} Z_{\ell r}Z_{\ell r'}$, we get the desired result.
  
  \paragraph{Third cumulant:} For the third cumulant, subtracting the mean, we have 
  \[
  \begin{aligned}
  &\kappa( \Au_{qri},\Au_{q'r'i},\Au_{s't'i} \mid \mathfrak{X}) \\
  &=
  \sum_{\ell \notin L} \kappa( x_{iq}x_{i\ell}Z_{\ell r}, x_{iq'}x_{i\ell}Z_{\ell r'}, x_{is'}Z_{\ell t'} \mid \mathfrak{X}) + \kappa(x_{iq}D_{rr}\psi(\wt{x}_{ir}), x_{iq'}D_{rr'}\psi(\wt{x}_{i'r'}), x_{is'}D_{tt'}\psi(\wt{x}_{is'}) \mid \mathfrak{X}) \\ 
  &= x_{iq}x_{iq'}x_{is'} \left( 
    \sum_{\ell \notin L} Z_{\ell r}Z_{\ell r'}  Z_{\ell t'} 
    +
  \delta_{r r'}\delta_{r t'} D_{rr}^3 \kappa_3(\psi(\wt{x}_{ir}))\right) \\
  &\eqqcolon x_{iq}x_{iq'}x_{is'} 
  \left( 
    \sum_{\ell} Z_{\ell r}Z_{\ell r'}  Z_{\ell t'} 
    + \omega^{(L)}_{rr't'}
    + \delta_{r r'}\delta_{r t'} D_{rr}^3 \kappa_3(\psi(\wt{x}_{ir}))
  \right).
  \end{aligned}
  \]
  This third order sum over $Z$ terms is always centered (taking expectation) and essentially uncorrelated.  As each term is $\O{d^{-3/2}}$ in standard deviation, and there are $p$ terms, we get that the sum is $\O{d^{\varepsilon-1}}$ with very high probability, uniformly in $L$.

  \paragraph{Fourth cumulant:} For the fourth cumulant, the procedure is similar. 
  \[
    \begin{aligned}
    &\kappa( \Au_{qri},\Au_{q'r'i},\Au_{sti},\Au_{s't'i} \mid \mathfrak{X}) \\
    &= x_{iq}x_{iq'}x_{is}x_{is'} \left( 
      \sum_{\ell \notin L} Z_{\ell r}Z_{\ell r'}   Z_{\ell t}Z_{\ell t'} 
      +\delta_{r r'}\delta_{r t} \delta_{r t'} D_{rr}^4 \kappa_4(\psi(\wt{x}_{ir}))
    \right) \\
    &\eqqcolon x_{iq}x_{iq'}x_{is}x_{is'} \left( 
      \sum_{\ell} Z_{\ell r}Z_{\ell r'}   Z_{\ell t} Z_{\ell t'}
      +\omega^{(L)}_{rr'tt'}
      +\delta_{r r'}\delta_{r t} \delta_{r t'} D_{rr}^4 \kappa_4(\psi(\wt{x}_{ir}))
    \right).
    \end{aligned}
  \]
  When all of $\{r,r',t,t'\}$ are paired (so the set is at most $2$ elements), we simply bound the entries of the sum.  Otherwise, we can bound the sum by $p^{1/2} \times d^{\varepsilon-2}$ with very high probability, which gives the desired result.

\end{proof}

From the moment-cumulant formulas, we have the representation 
\begin{equation}\label{eq:Au-moment-cumulant}
\Eb\left[
  \Au_{qri}\Au_{sti}\Au_{q'r'i}\Au_{s't'i} \mid \mathfrak{X}
\right]
=\sum_{\pi} \prod_{B \in \pi} \kappa( \{ \Au_{abi} : (a,b) \in B \} \mid \mathfrak{X}),
\end{equation}
with the sum over all set partitions of the multiset $\{ (q,r), (s,t), (q',r'), (s',t') \}$.
Taking expectation, we will have an expression that is dominated by the contributions of ``Wick terms''.  To describe these terms, we introduce a diagrammatic representation.  We arrange the $8$ vertices $\{q,s,q',s',r,t,r',t'\}$ into two columns, with the first column containing $\{q,s,q',s'\}$ and the second column containing $\{r,t,r',t'\}$.  We then draw the graph complement of the matching $\{ (q,r), (s,t), (q',r'), (s',t') \}$ (see Figure \ref{fig:matching}).
\begin{figure}[ht]
  \centering
  \begin{tikzpicture}[every node/.style={circle, fill=black, inner sep=1.5pt}]

    \node (q)  at (0, 1.5) {};
    \node (s)  at (0, 1) {};
    \node (qp) at (0, .5) {};
    \node (sp) at (0, 0) {};
  
    \node (r)  at (1, 1.5) {};
    \node (t)  at (1, 1) {};
    \node (rp)  at (1, .5) {};
    \node (tp)  at (1, 0) {};
  
    \node[draw=none, fill=none, left=10pt of q]  {$q$};
    \node[draw=none, fill=none, left=10pt of s]  {$s$};
    \node[draw=none, fill=none, left=8pt of qp] {$q'$};
    \node[draw=none, fill=none, left=8pt of sp] {$s'$};
  
    \node[draw=none, fill=none, right=8pt of r]  {$r$};
    \node[draw=none, fill=none, right=8pt of t]  {$t$};
    \node[draw=none, fill=none, right=8pt of rp]  {$r'$};
    \node[draw=none, fill=none, right=8pt of tp]  {$t'$};

    \draw[line width = .1em] (q) edge (t);
    \draw[line width = .1em] (q) edge (rp);
    \draw[line width = .1em] (q) edge (tp);

    \draw[line width = .1em] (s) edge (r);
    \draw[line width = .1em] (s) edge (rp);
    \draw[line width = .1em] (s) edge (tp);

    \draw[line width = .1em] (qp) edge (r);
    \draw[line width = .1em] (qp) edge (t);
    \draw[line width = .1em] (qp) edge (tp);

    \draw[line width = .1em] (sp) edge (r);
    \draw[line width = .1em] (sp) edge (t);
    \draw[line width = .1em] (sp) edge (rp);

    \draw[line width = .1em] (q) edge (s);
    \draw[line width = .1em] (s) edge (qp);
    \draw[line width = .1em] (qp) edge (sp);
    \draw[line width = .1em] (q) to[bend right=45] (sp);
    \draw[line width = .1em] (s) to[bend right=45] (sp);
    \draw[line width = .1em] (q) to[bend right=45] (qp);

    \draw[line width = .1em] (r) edge (t);
    \draw[line width = .1em] (t) edge (rp);
    \draw[line width = .1em] (rp) edge (tp);
    \draw[line width = .1em] (r) to[bend left=45] (tp);
    \draw[line width = .1em] (r) to[bend left=45] (rp);
    \draw[line width = .1em] (t) to[bend left=45] (tp);

  \end{tikzpicture}
  \caption{Diagrammatic representation of the matching $\{ (q,r), (s,t), (q',r'), (s',t') \}$.}
  \label{fig:matching}
\end{figure}
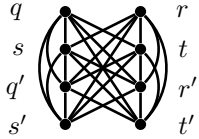  
For any matching on this graph, we weight edges between $\{q,s,q',s'\}$ by $\delta_{a b},$ edges between $\{r,t,r',t'\}$ by $(Z^{\top}Z)_{ab} + \delta_{a b} D_{aa}^2\kappa_2(\psi(\wt{x}))$ and crossing edges from $a$ to $b$ by $Z_{ab}$.  We define the weight of the matching as the product of the weights of the edges.  We then define the $8$-tensor 
\[
K_{qrstq'r's't'} \coloneqq \sum_{\mathfrak{m}} \operatorname{Weight}(\mathfrak{m})_{qrstq'r's't'}.
\]

Then we start by showing we can reduce attention to the $K$ terms.
\begin{lem}\label{lem:baizhou-0}
  We have that for any $\varepsilon >0$, with very high probability, uniformly in $i\in\{1,\dots,n\}$, 
  \[
  \frac{1}{n^2}\sum_{q,s,q',s'=1}^d\sum_{r,t,r',t'=1}^p \B_{qrst}\B_{q'r's't'}
  \left(
    \Eb\left[
      \Au_{qri}\Au_{sti}\Au_{q'r'i}\Au_{s't'i}
      \right]
      -
      K_{qrstq'r's't'}
  \right)
  =
  \O{d^{\varepsilon-1/2}}.
  \]
\end{lem}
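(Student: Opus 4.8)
The plan is to expand $\Eb[\Au_{qri}\Au_{sti}\Au_{q'r'i}\Au_{s't'i}]$ by conditioning on $\mathfrak X$ exactly as in \eqref{eq:Au-moment-cumulant}, substitute the conditional cumulant formulas of Lemma~\ref{lem:conditional-cumulants}, average over $\mathfrak X$, and verify that the pieces that survive with no index coincidence beyond the Wick ones and no small scalar factor reassemble precisely $K_{qrstq'r's't'}$, while everything else contracts against $\tfrac{1}{n^2}\B\otimes\B$ to something of size $\O{d^{\varepsilon-1/2}}$.

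First I would note that \eqref{eq:Au-moment-cumulant} is a finite sum over the set partitions of the multiset $\{(q,r),(s,t),(q',r'),(s',t')\}$, and that by Lemma~\ref{lem:conditional-cumulants} each cumulant block of size $k$ contributes a degree-$k$ monomial in $\{x_{i\ell}:\ell\in L\}$ times a ``$Z$-weight''. This $Z$-weight splits into a leading part — for a singleton block $(a,b)$ the term $x_{ia}x_{i\ell}Z_{\ell b}$ with $\ell\in L\setminus\{a\}$ (the $\ell\in L$ part of the first cumulant \eqref{eq:first-cumulant}), and for a two-block $\{(a,b),(a',b')\}$ the factor $x_{ia}x_{ia'}\big((Z^\top Z)_{bb'}+\delta_{bb'}D_{aa}^2\kappa_2(\psi)\big)$ — and an error part consisting of the $(x_{ia}^2-1)Z_{ab}$ piece of the first cumulant, the $\omega^{(L)}$-remainders of the first, second and third cumulants, the length-three $Z$-chains $\sum_\ell Z_{\ell r}Z_{\ell r'}Z_{\ell t'}$, and the $\zeta$-tensors and $\kappa_3(\psi),\kappa_4(\psi)$-terms of the third and fourth cumulants. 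Averaging over $\mathfrak X$ turns each $x$-monomial into its Gaussian pair-matching part, a product of Kronecker deltas among $\{q,s,q',s'\}$, plus corrections that replace a pair of deltas by a single delta weighted by $\mu_{4,x}$. Matching this against the diagrammatic description preceding the lemma, the leading $Z$-weights together with the $\delta$-pairings of the $x$'s reconstruct exactly the weighted perfect matchings of the complement graph of $\{(q,r),(s,t),(q',r'),(s',t')\}$: a pairing of two column-one vertices gives a $\delta_{ab}$ edge, a leading two-block gives a $(Z^\top Z)_{ab}+\delta_{ab}D_{aa}^2\kappa_2(\psi)$ edge, and an $x_{ia}x_{i\ell}Z_{\ell b}$ first-cumulant piece (whose free $x$ is then matched to a column-one index) gives a crossing edge $Z_{ab}$. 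Hence the leading contribution is exactly $K_{qrstq'r's't'}$, and it remains to control the errors.

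For the error terms I would split into: (i) partitions containing a block of size three or four; (ii) terms keeping an $\omega^{(L)}$- or $\zeta$-factor, or a length-three $Z$-chain, from a first, second or third cumulant; (iii) terms keeping the $(x_{ia}^2-1)Z_{ab}$ piece of a first cumulant or a $\mu_{4,x}$-correction from the $\mathfrak X$-average. In each case the term is a contraction of $\B\otimes\B$ against a tensor built from Kronecker deltas, single entries $Z_{ab}$, matrix entries $(Z^\top Z)_{ab}$ and bounded scalars ($D_{aa}$, $\kappa_j(\psi)$), and one bounds it by repeated Cauchy--Schwarz, collapsing either to $\|\B\|_{\mathrm F}^2\le dp\|\B\|_{\mathrm{op}}^2$ or to the operator bounds $\|Z\|_{\mathrm{op}},\|Z^\top Z\|_{\mathrm{op}}\le d^\varepsilon$ that hold on the stated very-high-probability event — the same mechanism as in the variance estimate used for Lemma~\ref{lem:comparewwtilde}. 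The bookkeeping rests on $n\asymp dp\asymp d^2$ (so $n^{-2}\asymp d^{-4}$ exactly absorbs the index count of a full matching) together with the observation that, relative to a $K$-matching, each error term either loses an index summation of size $\asymp d$ — through a coincidence forced by $\mu_{4,x}$, by a $(x_{ia}^2-1)^2$ self-pairing, or by the stacked deltas $\delta_{rr'}\delta_{rt}$ and $\delta_{rr'}\delta_{rt}\delta_{rt'}$ accompanying $\kappa_3(\psi)$ and $\kappa_4(\psi)$ — or carries an explicit small scalar ($\omega^{(L)}$- and $\zeta$-entries of size $d^{\varepsilon-1}$, a length-three $Z$-chain of size $d^{\varepsilon-1}$ with very high probability, or the degenerate part of the fourth-cumulant tensor bounded by $d^{\varepsilon-2}\big(p^{1/2}+p(\delta_{rr'}\delta_{tt'}+\delta_{rt}\delta_{r't'}+\delta_{rt'}\delta_{tr'})\big)$). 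In either alternative the surplus over the $K$-terms is a factor $d^{-1/2}$ times bounded powers of $d^\varepsilon$, which yields the claim.

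I expect the fourth-cumulant family to be the main obstacle, because its $\zeta$-tensor bound is genuinely dimension dependent: one has to case-split on which of $\{r,t,r',t'\}$ coincide and check that even the worst case — the $p(\delta_{rr'}\delta_{tt'}+\delta_{rt}\delta_{r't'}+\delta_{rt'}\delta_{tr'})$ contribution, which reinstates one $\asymp d$ summation while saving only one index — still leaves a net gain of order $d^{-1/2}$ after the $\B\otimes\B$ contraction and the $n^{-2}$ normalization. This is the precise analogue of the most delicate terms in the variance bound carried out just before the proof of Lemma~\ref{lem:comparewwtilde}, and I would reuse the same $\|\B\|_{\mathrm F}$-versus-$\|\B\|_{\mathrm{op}}$ splitting there; the remaining families (i)--(iii) should all be routine by comparison.
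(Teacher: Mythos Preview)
Your proposal is correct and takes essentially the same approach as the paper: expand via the conditional moment--cumulant formula \eqref{eq:Au-moment-cumulant}, identify the leading pieces of each cumulant that reassemble into the perfect matchings defining $K$, and bound the remainders by Cauchy--Schwarz and operator-norm arguments. The only difference is organizational --- the paper groups the errors by partition shape $(4),(3,1),(2,1,1),(1,1,1,1),(2,2)$ rather than by your types (i)--(iii), and makes the Cauchy--Schwarz steps concrete through the sub-block decomposition $B^{(q,q')}_{rt}=\B_{qrq't}$ (see \eqref{eq:B-op-norm}) and auxiliary rank-one matrices $A^{(\ell,t)}_{qr}=Z_{\ell r}Z_{qt}$; your identification of the fourth-cumulant $\zeta$-tensor as the sharpest case matches the paper's emphasis.
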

\begin{proof}
  Using \eqref{eq:Au-moment-cumulant}, we can group the error terms into the following types:
  \[
  (4) \quad (3,1) \quad (2,1,1) \quad (1,1,1,1) \quad (2,2),
  \]
  which refers to the shape of the set partition.  For the first two types, there is no analogous term in the $K$ tensor; for the three types, we need to identify cancellation with the $K$ terms.  For all of these bounds, we will work on the event that 
  \begin{equation}\label{eq:Z-bound}
  |(Z^{\top}Z)_{\ell,u}| \leq d^{\varepsilon-(1-\delta_{\ell,u})/2}
  \quad \text{and} \quad
  |Z_{\ell,u}| \leq d^{\varepsilon-1/2}
  \quad \text{for all $\ell,u$},
  \end{equation}
  which holds with very high probability, as well as the events in Lemma \ref{lem:conditional-cumulants}.

  \paragraph{(4) terms:}  
  For these terms, we replace the contribution to $\Eb\left[
    \Au_{qri}\Au_{sti}\Au_{q'r'i}\Au_{s't'i}
    \right]$
  from the fourth cumulant contribution 
    \[
    \Eb\left[\kappa( \Au_{qri},\Au_{q'r'i},\Au_{sti},\Au_{s't'i} \mid \mathfrak{X})\right]
    = \Eb\left[x_{iq}x_{iq'}x_{is}x_{is'}\right] 
    \left( 
      \zeta_{rr'tt'} 
      + \omega^{(L)}_{rr'tt'}
      + \delta_{r r'}\delta_{r t} \delta_{r t'} D_{rr}^4 \kappa_4(\psi(\wt{x}_{ir}))
      \right),
    \]
  Using the moment-cumulant formula, we have that
  \[
  \Eb\left[ x_{iq}x_{iq'}x_{is}x_{is'} \right]
  = 
  \delta_{q q'}\delta_{s s'} + \delta_{q s}\delta_{q' s'} + \delta_{q s'}\delta_{q' s}
  + \kappa_4(x_{iq}) \delta_{q q'} \delta_{q s} \delta_{q s'}.  
  \]
  The $\kappa_4(x_{iq})$ terms will be only smaller, so we focus on the first terms, which we refer to as \emph{Wick terms}.
  These give contributions like
  \[
  \frac{1}{n^2}\sum_{q,q'=1}^d\sum_{r,t,r',t'=1}^p \B_{qrqt}\B_{q'r'q't'}
  \left( 
    \zeta_{rr'tt'} 
    +\omega^{(L)}_{rr'tt'}
  + \delta_{r r'}\delta_{r t} \delta_{r t'} D_{rr}^4 \kappa_4(\psi(\wt{x}_{ir}))
  \right). 
  \]
  \noindent (\textit{Bounding the $\kappa_4$ terms}): Bounding the entries of $\B$ by $\|\B\|_{\mathrm{op}}$ and $D$ by $d^{\varepsilon}$, we can bound the $\kappa_4$ terms by 
  \[
  \frac{1}{n^2}\sum_{r,t,r',t'=1}^p \left|\sum_{q,q'=1}^d\B_{qrqt}\B_{q'r'q't'}\left(\delta_{r r'}\delta_{r t} \delta_{r t'} D_{rr}^4 \kappa_4(\psi(\wt{x}_{ir}))\right)\right| \leq \frac{d^{2+4\varepsilon} p}{n^2}\|\B\|_{\mathrm{op}}^2 \kappa_4(\psi(\wt{x})) =  \O{\frac{d^{2+4\varepsilon}}{d}}.
  \]
  \noindent (\textit{Bounding the $\zeta$ terms}): We introduce the notation $(B^{(q,q')})_{rt} = \B_{qrq't}$.  Then we can express the $\zeta$ terms as  
  \[
    \frac{1}{n^2}\sum_{q,q'=1}^d\sum_{r,t,r',t'=1}^p \B_{qrqt}\B_{q'r'q't'} \zeta_{rr'tt'} 
    =
    \frac{1}{n^2}\sum_{q,q'=1}^d\sum_{r,t,r',t'=1}^p B^{(q,q)}_{rt}B^{(q',q')}_{r't'} \zeta_{rr'tt'} 
    = \frac{1}{n^2}
    \sum_{q,q'=1}^d
    \langle B^{(q,q)}, \zeta B^{(q',q')} \rangle,
  \]
  for an appropriate braid of the tensor $\zeta$ and where the multiplication is contraction over both free coordinates of $\B^{(q',q')}$. Then the vector norm of $\B^{(q,q')}$ is the Hilbert-Schmidt norm of the matrix $\B^{(q,q')}$, and hence
  \begin{equation}\label{eq:B-op-norm}
  \|B^{(q,q')}\|^2= \mathrm{Tr}\left(B^{(q,q')}B^{(q,q')\top}\right) \leq p \|B^{(q,q')}\|_{\mathrm{op}}^2 \leq p \|B\|_{\mathrm{op}}^2,
  \end{equation}
  where in the last line we have used that 
  \[
    \|B^{(q,q')}\|_{\mathrm{op}}
    = \sup_{\substack{\|v\|_2 = 1 \\ \|w\|_2 = 1}} \left| \langle B^{(q,q')}v, w \rangle \right|
    = \sup_{\substack{\|v\|_2 = 1 \\ \|w\|_2 = 1}} \left| \sum_{r,t} \B_{qrqt} v_r w_t \right|
    \leq \sup_{\substack{\|V\|_2 = 1 \\ \|W\|_2 = 1}} \left| \langle \B V, W \rangle \right|,
  \]
  where in the last step, $V$ and $W$ are in $\R^{d} \otimes \R^{p}$, and the inequality is by identification of $V$ with $\delta_q \otimes v$ and $W$ with $\delta_q' \otimes w$, which both have norm $1$.

  We then divide $\zeta = \zeta^{(0)} + \zeta^{(1)}$, where $\zeta^{(0)}_{rtr't'}$ is $0$ if $|\{r,t,r',t'\}|\leq 2$ and $\zeta^{(1)}_{rtr't'}$ is $0$ if $|\{r,t,r',t'\}|\geq 3$.  For the operator norm of $\zeta^{(0)}$, we can use the Schur test to get that 
  \[
  \| \zeta^{(0)} \|_{\mathrm{op}} \leq d^{\varepsilon-2} p^{1/2} pd. 
  \]
  Putting everything together, using \eqref{eq:B-op-norm}, we have that 
  \[
    \frac{1}{n^2}
    \sum_{q,q'=1}^d
    \left| \langle \B^{(q,q)}, \zeta^{(0)} \B^{(q',q')} \rangle \right|
    \leq C\frac{p^{5/2} d^{\epsilon+1}}{n^2} = \O{d^{\epsilon-1/2}}.
  \]
  As for the $\zeta^{(1)}$ terms, we bound the sum entrywise, i.e. 
  \[
    \frac{1}{n^2}\sum_{q,q'=1}^d\sum_{r,t,r',t'=1}^p \left|\B_{qrqt}\B_{q'r'q't'}\left( \zeta^{(1)}_{rr'tt'}\right)\right| \leq \frac{d^{2} p^2}{n^2}\|\B\|_{\mathrm{op}}^2 d^{\varepsilon-2}p = \O{{d^{\epsilon-1}}}.
  \]

  \noindent (\textit{Bounding the $\omega^{(L)}$ terms}): The $\omega^{(L)}$ terms can be bounded by
  \[
    \frac{1}{n^2}
    \sum_{q,q'=1}^d
    \left| \langle B^{(q,q)}, \omega^{(L)} B^{(q',q')} \rangle \right|
    \leq \frac{d^2}{n^2} p \|\B\|_{\mathrm{op}}^2 d^{\epsilon},
  \]
  where we have bounded the operator norm of $\omega^{(L)}$ by its Hilbert-Schmidt norm.x

  \paragraph{(3,1) terms:}
  For these terms, we replace the contribution to $\Eb\left[
    \Au_{qri}\Au_{sti}\Au_{q'r'i}\Au_{s't'i}
    \right]$
  from the third cumulant contribution and a first cumulant contribution
    \[
    \begin{aligned}
    \Eb\left[
      \kappa( \Au_{qri},\Au_{q'r'i},\Au_{sti} \mid \mathfrak{X})
      \kappa(\Au_{s't'i} \mid \right. &\left. \mathfrak{X})
    \right]
    = \Eb\left[x_{iq}x_{iq'}x_{is} (x_{is'}^2 - 1) \right] \left( 
      \zeta_{rr't}
      +\omega^{(L)}_{rr't}
      + \delta_{r r'}\delta_{r t} D_{rr}^3 \kappa_3(\psi(\wt{x}_{ir}))\right) Z_{s't'} \\
    &+ \Eb\biggl[x_{iq}x_{iq'}x_{is} x_{is'} \sum_{\substack{\ell \in L \\ \ell \neq s'}} x_{i\ell}Z_{\ell t'}\biggr] \left( 
      \zeta_{rr't'} 
      +\omega^{(L)}_{rr't'}
      + \delta_{r r'}\delta_{r t'} D_{rr}^3 \kappa_3(\psi(\wt{x}_{ir}))\right)\!. \\
    \end{aligned}
    \]
    For the first line, since the $x_{is'}^2-1$ is again centered, we still have the Wick terms and a single higher cumulant term when all $q=q'=s=s'$.  Hence this is the same as the fourth cumulant case.  Furthermore, $\zeta_{r,r',t} Z_{s't'}$ has the same entrywise bound as in the fourth cumulant case, i.e. $\O{d^{\epsilon-3/2}}$.  Hence the bound proceeds the same way.  For $\kappa_3(\psi(\wt{x}_{ir}))$ term, the cardinality of the terms over which we sum is $d^2p^2$; the summands can be bounded by $\|\B\|_{\mathrm{op}}^2 d^{\varepsilon-1/2}$, and the normalization is $n^{-2}$; hence these terms are $\O{d^{\epsilon-1/2}}$.

    For the second line, we first evaluate the expectation using the moment-cumulant formula; a representative term is 
    \[
    \Eb\biggl[x_{iq}x_{iq'}x_{is} x_{is'} x_{iq}\biggr]
    = 
    \kappa_2(x_{iq})\kappa_3(x_{iq'})\delta_{q's}\delta_{q's'}
    +\kappa_3(x_{iq})\left(\delta_{q q'}\delta_{s s'} + \delta_{q s}\delta_{q' s'} + \delta_{q s'}\delta_{q' s}\right) 
    +\kappa_5(x_{iq}) \delta_{q q'} \delta_{q s} \delta_{q s'}.     
    \]
    Up to the reweighting by the cumulants, all but the first term is the same as the first line up to reweighting by cumulants, and hence the bound is identical.  The first term must be treated separately.

    For these, we should bound 
    \[
      \frac{1}{n^2}\sum_{q,q'=1}^d\sum_{r,t,r',t'=1}^p \B_{qrq't}\B_{q'r'q't'}\left( \zeta_{rr't}Z_{q't'} + \delta_{r r'}\delta_{r t} D_{rr}^3 \kappa_3(\psi(\wt{x}_{ir}))Z_{q't'}\right). 
    \]
    The $\kappa_3$ terms can again be bounded by the cardinality of the sum and the max of the entries, which is unchanged from the earlier $\kappa_3$ cases.  For the $\zeta$ terms, we follow a similar argument as in the fourth cumulant case. We set for parallelism with that case, $\zeta^{(0),q'}_{rtr't'} = \zeta_{r,r',t}Z_{q',t'}$, in terms of which we have 
    \[
    \| \zeta^{(0),q'} \|_{\mathrm{op}} \leq d^{2\varepsilon-3/2} pd,
    \]
    which is the same order as in the fourth cumulant case up to factors of $d^{\varepsilon}$.  Hence the $\zeta^{(0)}$ terms take the form 
    \[
      \frac{1}{n^2}\sum_{q,q'=1}^d\sum_{r,t,r',t'=1}^p \B_{qrq't}\B_{q'r'q't'}\zeta^{(0),q'}_{rtr't'}
      = 
      \frac{1}{n^2}
      \sum_{q,q'=1}^d
      \langle \B^{(q,q')}, \zeta^{(0),q'} \B^{(q',q')} \rangle.
    \]
    This is now bounded the same way as the fourth cumulant case, i.e.\ using \eqref{eq:B-op-norm},
    \[
      \frac{1}{n^2}
      \sum_{q,q'=1}^d
      \left|\langle B^{(q,q')}, \zeta^{(0),q'} B^{(q',q')} \rangle\right|
      \leq C\frac{p^{2} d^{2\varepsilon+3/2}}{n^2} = \O{d^{2\varepsilon-1/2}}.
    \]

    \paragraph{(2,1,1) terms:}
    For these terms, we replace the contribution to $\Eb\left[
      \Au_{qri}\Au_{q'r'i}\Au_{sti}\Au_{s't'i}
      \right]$
    with second cumulant contribution and two first cumulant contributions.  This gives 
    \[
      \begin{aligned}
      &\Eb\left[
        \kappa( \Au_{qri},\Au_{q'r'i} \mid \mathfrak{X})
        \kappa(\Au_{sti} \mid \mathfrak{X})
        \kappa(\Au_{s't'i} \mid \mathfrak{X})
      \right] \\ 
      &= \Eb\left[x_{iq}x_{iq'}(x_{is}^2-1)(x_{is'}^2 - 1) \right] \left( (Z^{\top}Z)_{rr'} + \omega^{(L)}_{rr'} + \delta_{r r'} D_{rr}^2 \kappa_2(\psi(\wt{x}_{ir}))\right)Z_{st} Z_{s't'} \\
      &+ \Eb\biggl[
        x_{iq}x_{iq'}x_{is} x_{is'} 
        \sum_{\substack{\ell \in L \\ \ell \neq s}} x_{i\ell}Z_{\ell t}
        \sum_{\substack{\ell' \in L \\ \ell' \neq s'}} x_{i\ell'}Z_{\ell' t'}
      \biggr] \left( (Z^{\top}Z)_{rr'} + \omega^{(L)}_{rr'} + \delta_{r r'} D_{rr}^2 \kappa_2(\psi(\wt{x}_{ir}))\right) Z_{st} Z_{s't'} \\
      &+ \text{(cross terms)}.
      \end{aligned}
    \]
    where for the cross terms, we mix $(x_{is}^2-1)Z_{st}$ and $\sum_{\substack{\ell \in L \\ \ell \neq s'}} x_{i\ell}Z_{\ell t'}$ terms.

    We note that in the first line, evaluating the expectation, we again have Wick terms and a single higher cumulant term when all $q=q'=s=s'$, as in the fourth cumulant case.  For the $(r,r',t,t')$ part of the tensor, we set 
    \[
    \zeta^{(0,s,s')}_{rr'tt'} = (1-\delta_{rr'})\left((Z^{\top}Z)_{rr'} + \omega^{(L)}_{rr'}\right) Z_{st} Z_{s't'}
    \, \text{and} \,
    \zeta^{(1,s,s')}_{rr'tt'} = \delta_{r r'}\left((Z^{\top}Z)_{rr'} + \omega^{(L)}_{rr'} + D_{rr}^2 \kappa_2(\psi(\wt{x}_{ir}))\right) Z_{st} Z_{s't'}.
    \]
    Then we have using \eqref{eq:Z-bound} that 
    \[
    \| \zeta^{(0,s,s')}\|_{\mathrm{op}} \leq d^{3\varepsilon-3/2} pd
    \quad \text{and} \quad
    \max_{r,r',t,t'} | \zeta^{(1,s,s')}_{rr'tt'} | \leq d^{3\varepsilon-1}.
    \]
    The argument for the $\zeta^{(0,s,s')}$ terms is now similar to the previous cases; there are $d^2$ choices of $(q,q',s,s')$ which give nonzero contribution to the expectation.  The sum over $(r,r',t,t')$ we bound either using the operator norm of $\zeta^{(0,s,s')}$ or doing an entrywise bound on $\zeta^{(1,s,s')}$.

    There are new terms that arise in the second line which are matching terms for which there are two crossing edges and thus are canceled by the definition of $K$. Indeed, if we take as a representative term $\ell = q$ and $\ell' = q'$ we obtain the diagram term 
    \[
    \begin{tikzpicture}[every node/.style={circle, fill=black, inner sep=1.5pt}]

  \node (q)  at (0, 1.5) {};
  \node (s)  at (0, 1) {};
  \node (qp) at (0, .5) {};
  \node (sp) at (0, 0) {};

  \node (r)  at (1, 1.5) {};
  \node (t)  at (1, 1) {};
  \node (rp)  at (1, .5) {};
  \node (tp)  at (1, 0) {};

  \node[draw=none, fill=none, left=4pt of q]  {$q$};
  \node[draw=none, fill=none, left=4pt of s]  {$s$};
  \node[draw=none, fill=none, left=4pt of qp] {$q'$};
  \node[draw=none, fill=none, left=4pt of sp] {$s'$};

  \node[draw=none, fill=none, right=4pt of r]  {$r$};
  \node[draw=none, fill=none, right=4pt of t]  {$t$};
  \node[draw=none, fill=none, right=4pt of rp]  {$r'$};
  \node[draw=none, fill=none, right=4pt of tp]  {$t'$};

  \draw[dashed] (-0.5,.75) -- (1.5,.75);

  \draw[line width = .1em] (r) edge[bend left] (rp);
  \draw[line width = .1em] (q) edge (t);
  \draw[line width = .1em] (s) edge[bend right] (sp);
  \draw[line width = .1em] (qp) edge (tp);
\end{tikzpicture}
    \]
    The higher order terms are all higher order in the number of $\delta$-functions, and hence lead to tensors which are supported on at most $d^2$ choices of $(q,q',s,s')$.

      \noindent (\textit{Bounding the $\omega^{(L)}$ terms}): we still however need to bound the term involving $\omega^{(L)}$.
      Define the family of matrices for $\ell\in\{1,\dots,d\}$ and $t\in\{1,\dots,p\}$
      \[
      A^{(\ell,t)}_{qr} = Z_{\ell r}Z_{qt} = \left(\textbf{z}_\ell \otimes (\mathbf{z}^\top)_{t}\right)_{qr}
      \]
      whose norms are uniformly bounded with high probability by $d^{2\varepsilon}$. 
      The sum over $\ell$ takes the values $\ell=q,q',$or $s$. For $\ell=q$ or $q'$ the bounds are the same and we can write 
      \[
      \frac{1}{n^2}\sum_{q,q',s=1}^d
      \sum_{r,t,r't'=1}^p 
      \B_{qrst}\B_{q'r'st'}
      Z_{qt}Z_{q't'}Z_{qr}Z_{qr'}.
      \]
      We define, similarly as before, $A^{(q,t')}_{q'r'}=Z_{q't'}Z_{qr'}$ then the term becomes 
      \[
      \frac{1}{n^2}\sum_{q,q',s=1}^d\sum_{r,t,r',t'=1}^p 
      \B_{qrst}\B_{q'r'st'}Z_{qt}Z_{qr}A^{(q,t')}_{q'r'}
      =
      \frac{1}{n^2}\sum_{q,s=1}^d \sum_{r,t,t'=1}^p 
      \B_{qrst}Z_{qt}Z_{qr}
      \left(
        A^{(q,t')}\B
      \right)_{st'}.
      \]
      We now consider $B^{(q,s)}_{rt}=\B_{qrst}$ and the term now can be written as, with $\mathbf{z}_q$ the $q$-th row of $Z$ whose $\ell^2$-norm is bounded by $d^{\varepsilon}$ with very high probability, 
      \[
      \left\vert
      \frac{1}{n^2}\sum_{q,s=1}^d 
      \left(
        \mathbf{z}_q B^{(q,s)}\mathbf{z}_q^\top
      \right)
      \sum_{t'=1}^p 
      \left(
        A^{(q,t')}\B
      \right)_{st'}
      \right\vert
      \leqslant
      \frac{d^{2\varepsilon}p}{n^2}
      \sum_{q=1}^d 
      \sum_{t'=1}^p
      \left\| 
        \left(
          A^{{(q,t')}}\B
        \right)_{\cdot t'}
      \right\|_2^2
      =
      \O{
        \frac{d^{1+4\varepsilon}p^2}{n^2}
      }
      \]  
    For the case $\ell =s$, the term becomes 
    \[
    \begin{aligned}
    \frac{1}{n^2}
    \sum_{q,q',s=1}^d\sum_{r,t,r',t'=1}^p \B_{qrst}\B_{q'r'st'}Z_{qt}Z_{q't'}Z_{sr}Z_{sr'}
    =
    \frac{1}{n^2}
    \sum_{s=1}^d\left(
      \sum_{t=1}^p \left(
        A^{(s,t)}\B
      \right)_{st}
    \right)^2
    & \leqslant
    \frac{d^{1+2\varepsilon}p^2}{n^2}
    \end{aligned}
    \]
    using the fact that for uniformly in $s$ and $t$, the matrix $A^{(s,t)}\B$ has an operator norm bounded by $d^{\varepsilon}$.
    \paragraph{(1,1,1,1) terms:}
    For these terms, we replace the contribution to $\Eb\left[
      \Au_{qri}\Au_{sti}\Au_{q'r'i}\Au_{s't'i}
      \right]$
    with four first cumulant contributions.  This gives 
    \[
      \begin{aligned}
      &\Eb\left[
        \kappa(\Au_{qri} \mid \mathfrak{X})
        \kappa(\Au_{q'r'i} \mid \mathfrak{X})
        \kappa(\Au_{sti} \mid \mathfrak{X})
        \kappa(\Au_{s't'i} \mid \mathfrak{X})
      \right].
      \end{aligned}
    \]
    In comparison to the previous cases, we gain a new term which is not sparse in $(q,q',s,s')$.  In the case that $|\{q,q',s,s'\}|=4$, we have the term only involving the sums over $L$ when unfolding the product as per the computation of the first cumulant \eqref{eq:first-cumulant}
    \[
      \begin{aligned}
        \sum_{\substack{\ell_1 \in L \\ \ell_1 \neq q}}
        \sum_{\substack{\ell_2 \in L \\ \ell_2 \neq q'}}
        \sum_{\substack{\ell_3 \in L \\ \ell_3 \neq s}}
        \sum_{\substack{\ell_4 \in L \\ \ell_4 \neq s'}}
        \Eb\left[
          x_{iq}x_{iq'}x_{is}x_{is'}x_{i\ell_1}x_{i\ell_2}x_{i\ell_3}x_{i\ell_4}
        \right]
        Z_{\ell_1 r}Z_{\ell_2 r'}Z_{\ell_3 t}Z_{\ell_4 t'}
      \end{aligned}
    \]
    where in the sum we also have the restriction $|\{\ell_1,\ell_2,\ell_3,\ell_4\}|=4.$
    This is a matching term in which all edges cross from left to right and is thus canceled by the definition of $K$. It involves diagrams such as
    \[
    \begin{tikzpicture}[every node/.style={circle, fill=black, inner sep=1.5pt}]

  \node (q)  at (0, 1.5) {};
  \node (s)  at (0, 1) {};
  \node (qp) at (0, .5) {};
  \node (sp) at (0, 0) {};

  \node (r)  at (1, 1.5) {};
  \node (t)  at (1, 1) {};
  \node (rp)  at (1, .5) {};
  \node (tp)  at (1, 0) {};

  \node[draw=none, fill=none, left=4pt of q]  {$q$};
  \node[draw=none, fill=none, left=4pt of s]  {$s$};
  \node[draw=none, fill=none, left=4pt of qp] {$q'$};
  \node[draw=none, fill=none, left=4pt of sp] {$s'$};

  \node[draw=none, fill=none, right=4pt of r]  {$r$};
  \node[draw=none, fill=none, right=4pt of t]  {$t$};
  \node[draw=none, fill=none, right=4pt of rp]  {$r'$};
  \node[draw=none, fill=none, right=4pt of tp]  {$t'$};

  \draw[dashed] (-0.5,.75) -- (1.5,.75);

  \draw[line width = .1em] (q) edge (t);
  \draw[line width = .1em] (s) edge (tp);
  \draw[line width = .1em] (qp) edge (r);
  \draw[line width = .1em] (sp) edge (rp);
\end{tikzpicture}
\] 
If $\vert \{q,q',s,s'\}\neq 4$, then there are terms involving the two parts of the first cumulant in \eqref{eq:first-cumulant} but they are smaller due do the additional identification. For instance, if one consider $q=q'$, then one can get a term 
\[
\frac{1}{n^2}
  \sum_{q,s,s'=1}^d\sum_{r,t,r',t'=1}^p
  \B_{qrst}\B_{qr's't'}
  \sum_{\substack{\ell_1 \in L \\ \ell_1 \neq q}}
  \sum_{\substack{\ell_3 \in L \\ \ell_3 \neq s}}
  \sum_{\substack{\ell_4 \in L \\ \ell_4 \neq s'}}
  \Eb\left[
    (x_{iq}^2-1)x_{iq}x_{is}x_{is'}x_{i\ell_1}x_{i\ell_2}x_{i\ell_3}
  \right]
  Z_{qr}Z_{\ell_1 r'}Z_{\ell_2t}Z_{\ell_3t'}
\]
and due to the centering of the entries of $X$ there must be some identification between the $\ell$ and $\{s,s'\}$. For instance, we can  have the term, with $C_x = \Eb[x_{i_q}^3-x_{iq}]$, 
\[
\begin{aligned}
\frac{C_x}{n^2}
\sum_{q,s,s'=1}^d \sum_{r,t,r',t'=1}^p 
\B_{qrst}\B_{qr's't'}
Z_{qr}Z_{sr'}Z_{s't}Z_{qt'}
&=
\frac{C_x}{n^2}
\sum_{q=1}^d \sum_{r=1}^p \B_{qrst}Z_{qr}\sum_{s=1}^d 
\left(
  \left(
    \B A^{(q,t)}
  \right)Z^\top
\right)_{qs}
\\& = 
C_x\frac{d^{1+2\varepsilon}}{n^2}
\sum_{q=1}^d 
\sqrt{\sum_{r=1}^p \B_{qrst}^2}
\sqrt{\sum_{r=1}^p Z_{qr}^2}
\\&=
\O{
  \frac{d^{2+3\varepsilon}}{n^2}
}.
\end{aligned}
\]
Other terms can be bounded similarly.
    \paragraph{(2,2) terms:}
    For these terms, we replace the contribution to $\Eb\left[
      \Au_{qri}\Au_{sti}\Au_{q'r'i}\Au_{s't'i}
      \right]$
    with two second cumulant contributions.  A representative term is 
    \[
    \begin{aligned}
    \Eb\left[
    \kappa(\Au_{qri},\Au_{q'r'i} \mid \mathfrak{X})
    \kappa(\Au_{sti},\Au_{s't'i} \mid \mathfrak{X})
    \right]
    =
    \Eb\left[
      x_{iq}x_{iq'}x_{is}x_{is'}
      \right]
      &\left(
        (Z^{\top}Z)_{rr'} + \omega^{(L)}_{rr'} + \delta_{r r'} D_{rr}^2 \kappa_2(\psi(\wt{x}_{ir}))
      \right)\\
    \times&\left(
        (Z^{\top}Z)_{tt'} + \omega^{(L)}_{tt'} + \delta_{t t'} D_{tt}^2 \kappa_2(\psi(\wt{x}_{it}))
      \right).
    \end{aligned}
    \]
    These contribute matching terms in which all edges on the right side are matched.  Higher order terms are bounded in a the same fashion as the other cases. It remains to bound the terms involving $\omega^{(L)}$ but this can done directly since for general matrices $M^{(L)}$ and $N^{(L)}$ of operator norm bounded by $d^{\varepsilon}$ we can obtain the bound 
    \begin{multline*}
      \frac{1}{n^2}\sum_{q,s,q',s'=1}^d \sum_{r,t,r',t'=1}^p 
      \Eb\left[
        x_{iq}x_{iq'}x_{is}x_{is'}
      \right]
      \B_{qrst}\B_{q'r's't'}
      M^{(L)}_{rr'}N^{(L)}_{tt'}
      \\=
      \frac{1}{n^2}
      \sum_{q,s,q',s'=1}^d 
      \Eb\left[
        x_{iq}x_{iq'}x_{is}x_{is'} 
      \right]
      \mathrm{Tr}\left(
        B^{(q,s)}N^{(L)}{B^{(q',s')}}^\top M^{(L)}
      \right).
    \end{multline*}
Now, by independence of the entries of $X$, we must have at least one (and thus two) identifications in the set $L=\{q,s,q',s'\}$ which then gives a bound of order $\O{\frac{d^{2\varepsilon+3}}{n^2}}$.
\end{proof}
Now that we have removed all terms besides the $(2,2)$ terms, we show that these converge to the correct limit.
\begin{lem}\label{lem:baizhou-1}
  We have that for any $\varepsilon >0$, with very high probability, uniformly in $i\in\{1,\dots,n\}$, 
  \[
  \frac{1}{n^2}\sum_{q,s,q',s'=1}^d\sum_{r,t,r',t'=1}^p \B_{qrst}\B_{q'r's't'}
  \left(
      K_{qrstq'r's't'}
      -
      \Q_{qrst}\Q_{q'r's't'}
  \right)
  =
  \O{d^{4\varepsilon-\frac{1}{2}}}.
  \]
\end{lem}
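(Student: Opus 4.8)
The plan is to first give a combinatorial interpretation of $\Q$ inside the matching sum defining $K$, so that the residual $K-\Q\otimes\Q$ collapses to a finite sum of ``cross'' matchings, and then to bound the $\B$-contracted contribution of each such matching.

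\emph{Step 1: combinatorial identification of $\Q$.} Write $Z=\tfrac{\alpha_\phi}{\sqrt d}WD$, $M=Z^\top Z$, and let $M'_{rt}:=M_{rt}+\delta_{rt}D_{rr}^2\kappa_2(\psi(\wt x))$ be the weight attached to a right--right edge; write $\mathfrak m_0=\{(q,r),(s,t),(q',r'),(s',t')\}$ for the removed matching. By Lemma \ref{lem:covcomp}, $\Q_{qrst}=\delta_{qs}M'_{rt}+Z_{qt}Z_{sr}$, and the two summands are precisely the weights of the two perfect matchings of the four vertices $\{q,s,r,t\}$ in the complement of $\{(q,r),(s,t)\}$ --- namely $\{(q,s),(r,t)\}$ and $\{(q,t),(s,r)\}$ --- computed with the same edge weights ($\delta$ on left--left, $M'$ on right--right, $Z$ on crossing edges) as in the definition of $K$. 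Since $\operatorname{Weight}$ is multiplicative over edges, $\Q_{qrst}\Q_{q'r's't'}$ is exactly the part of $K_{qrstq'r's't'}$ coming from the \emph{separated} matchings, i.e.\ those $\mathfrak m$ whose union with $\mathfrak m_0$ splits into two disjoint $4$-cycles, one on $\{q,s,r,t\}$ and one on $\{q',s',r',t'\}$. Therefore $K_{qrstq'r's't'}-\Q_{qrst}\Q_{q'r's't'}=\sum_{\mathfrak m}\operatorname{Weight}(\mathfrak m)$, the sum running over the remaining (``cross'') matchings, for which $\mathfrak m\cup\mathfrak m_0$ is either an $8$-cycle or a $4+4$ split of one of the other two types.

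\emph{Step 2: bounding the cross matchings.} Each matching has $c\in\{0,2,4\}$ crossing ($Z$-weighted) edges and $(4-c)/2$ left--left and $(4-c)/2$ right--right edges, so the cross matchings form a short list after using the symmetry $(q,r,s,t)\leftrightarrow(q',r',s',t')$. For each of them I would: resolve the Kronecker deltas of the left--left edges by identifying left indices of $\B$; rewrite $\tfrac1{n^2}\sum\B\,\B\,\operatorname{Weight}(\mathfrak m)$ as a sum of traces or Hilbert--Schmidt pairings of products of the block matrices $B^{(a,b)}:=(\B_{arbt})_{r,t}$, the matrix $Z$, and the symmetric matrix $M'$; and estimate it using the very-high-probability bounds $\|Z\|_{\mathrm{op}},\|M'\|_{\mathrm{op}}\le d^\varepsilon$ and \eqref{eq:Z-bound}, the inequality $\|B^{(a,b)}\|_{\mathrm F}^2\le p\,\|\B\|_{\mathrm{op}}^2$ from \eqref{eq:B-op-norm}, the identity $\sum_{a,b}\|B^{(a,b)}\|_{\mathrm F}^2=\|\B\|_{\mathrm F}^2\le dp\,\|\B\|_{\mathrm{op}}^2$, the trace bounds $\|Z\|_{\mathrm F}^2\le d^{1+2\varepsilon}$ and $\|M'\|_{\mathrm F}^2\le d^{1+O(\varepsilon)}$, and the counting $n^{-2}\asymp d^{-4}$. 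For the $c=4$ matchings (all of $\mathfrak m$ crossing), the naive entrywise or operator bounds are too weak, and I would instead use a square trick: the contraction equals $\tfrac1{n^2}\operatorname{Tr}(\mathcal G^2)$ for the $d^2\times d^2$ matrix $\mathcal G_{(q,s),(q',s')}=\sum_{r,t}Z_{q'r}Z_{s't}\B_{qrst}=\mathbf z_{q'}B^{(q,s)}\mathbf z_{s'}^{\top}$; bounding $|\operatorname{Tr}(\mathcal G^2)|\le\|\mathcal G\|_{\mathrm F}^2$ and summing out the rows $\mathbf z_{q'},\mathbf z_{s'}$ into copies of $M=Z^\top Z$ gives $O(d^{2\varepsilon-1})$. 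A direct check shows that the largest cross contributions come from the $c=2$ matchings, and for those the chain of inequalities above produces exactly $O(d^{4\varepsilon-1/2})$.

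\emph{Main obstacle.} Since both $K$ and $\Q\otimes\Q$ are matching sums over the same edge weights, the subleading $x$- and $\psi$-cumulant corrections already discarded in Lemma \ref{lem:baizhou-0} play no role here; the difficulty is entirely in the case analysis. There are enough cross-matching types that one must proceed systematically, and the delicate point in each case is to regroup the eight-fold index sum so that operator-norm bounds are applied only to genuine matrix products --- a careless grouping produces spurious factors of $p$ or $d$ that would wreck the estimate. Carrying the ``very high probability'' event (from \eqref{eq:Z-bound} and Lemma \ref{lem:conditional-cumulants}) through the argument is routine.
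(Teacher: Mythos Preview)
Your proposal is correct and follows essentially the same approach as the paper: both identify $\Q\otimes\Q$ as the contribution of the ``separated'' matchings (those whose union with $\mathfrak m_0$ factors into two $4$-cycles on $\{q,s,r,t\}$ and $\{q',s',r',t'\}$), and then bound each remaining cross matching individually via Cauchy--Schwarz and operator/Frobenius norm estimates on products of the blocks $B^{(a,b)}$, $Z$, and $M'$. The only cosmetic difference is the bookkeeping --- you organize the case analysis by the number $c\in\{0,2,4\}$ of $Z$-weighted (left--right) edges, whereas the paper organizes it by how many edges cross the top--bottom dashed line, and, like you, handles only representative diagrams explicitly; the worst cases ($\mathfrak T_{LX}$, $\mathfrak T_{XR}$ in the paper's notation, both with $c=2$) are exactly where the $d^{4\varepsilon-1/2}$ bound is saturated.
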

\begin{proof}
Using the diagrammatic representation from Figure \ref{fig:matching}, we need to bound the different types of perfect matchings. We note also that due to the centering of $\Q_{qrst}\Q_{q'r's't'}$ there must be at least an edge (and thus at least two) crossing between the top $\{q,r,s,t\}$ and bottom indices $\{q',r',s',t'\}$. This is illustrated by adding a dashed line between these indices. Many such perfect matchings are bounded similarly and we now give some typical examples of matchings whose bounding techniques can be applied to all. We now consider matchings where only two edges cross the line from top to bottom indices, we consider first the three following types
 \[
 \mathfrak{T}_{LL}:\quad\vcenter{\hbox{\begin{tikzpicture}[every node/.style={circle, fill=black, inner sep=1.5pt}]

  \node (q)  at (0, 1.5) {};
  \node (s)  at (0, 1) {};
  \node (qp) at (0, .5) {};
  \node (sp) at (0, 0) {};

  \node (r)  at (1, 1.5) {};
  \node (t)  at (1, 1) {};
  \node (rp)  at (1, .5) {};
  \node (tp)  at (1, 0) {};

  \node[draw=none, fill=none, left=4pt of q]  {$q$};
  \node[draw=none, fill=none, left=4pt of s]  {$s$};
  \node[draw=none, fill=none, left=4pt of qp] {$q'$};
  \node[draw=none, fill=none, left=4pt of sp] {$s'$};

  \node[draw=none, fill=none, right=4pt of r]  {$r$};
  \node[draw=none, fill=none, right=4pt of t]  {$t$};
  \node[draw=none, fill=none, right=4pt of rp]  {$r'$};
  \node[draw=none, fill=none, right=4pt of tp]  {$t'$};

  \draw[dashed] (-0.5,.75) -- (1.5,.75);

  \draw[line width = .1em] (q) edge[bend left] (qp);
  \draw[line width = .1em] (s) edge[bend right] (sp);
  \draw[line width = .1em] (r) edge (t);
  \draw[line width = .1em] (rp) edge (tp);
\end{tikzpicture}}},
\quad 
\mathfrak{T}_{RR}:\quad 
\vcenter{\hbox{
  \begin{tikzpicture}[every node/.style={circle, fill=black, inner sep=1.5pt}]

  \node (q)  at (0, 1.5) {};
  \node (s)  at (0, 1) {};
  \node (qp) at (0, .5) {};
  \node (sp) at (0, 0) {};

  \node (r)  at (1, 1.5) {};
  \node (t)  at (1, 1) {};
  \node (rp)  at (1, .5) {};
  \node (tp)  at (1, 0) {};

  \node[draw=none, fill=none, left=4pt of q]  {$q$};
  \node[draw=none, fill=none, left=4pt of s]  {$s$};
  \node[draw=none, fill=none, left=4pt of qp] {$q'$};
  \node[draw=none, fill=none, left=4pt of sp] {$s'$};

  \node[draw=none, fill=none, right=4pt of r]  {$r$};
  \node[draw=none, fill=none, right=4pt of t]  {$t$};
  \node[draw=none, fill=none, right=4pt of rp]  {$r'$};
  \node[draw=none, fill=none, right=4pt of tp]  {$t'$};

  \draw[dashed] (-0.5,.75) -- (1.5,.75);
\draw[line width = .1em] (r) edge[bend left] (tp);
  \draw[line width = .1em] (sp) edge (qp);
  \draw[line width = .1em] (q) edge (s);
  \draw[line width = .1em] (rp) edge (t);
\end{tikzpicture}
}},
\quad \mathfrak{T}_{LR} :\quad
\vcenter{\hbox{
  \begin{tikzpicture}[every node/.style={circle, fill=black, inner sep=1.5pt}]

  \node (q)  at (0, 1.5) {};
  \node (s)  at (0, 1) {};
  \node (qp) at (0, .5) {};
  \node (sp) at (0, 0) {};

  \node (r)  at (1, 1.5) {};
  \node (t)  at (1, 1) {};
  \node (rp)  at (1, .5) {};
  \node (tp)  at (1, 0) {};

  \node[draw=none, fill=none, left=4pt of q]  {$q$};
  \node[draw=none, fill=none, left=4pt of s]  {$s$};
  \node[draw=none, fill=none, left=4pt of qp] {$q'$};
  \node[draw=none, fill=none, left=4pt of sp] {$s'$};

  \node[draw=none, fill=none, right=4pt of r]  {$r$};
  \node[draw=none, fill=none, right=4pt of t]  {$t$};
  \node[draw=none, fill=none, right=4pt of rp]  {$r'$};
  \node[draw=none, fill=none, right=4pt of tp]  {$t'$};

  \draw[dashed] (-0.5,.75) -- (1.5,.75);

  \draw[line width = .1em] (r) edge[bend left] (rp);
  \draw[line width = .1em] (q) edge (t);
  \draw[line width = .1em] (s) edge[bend right] (sp);
  \draw[line width = .1em] (qp) edge (tp);
\end{tikzpicture}
}} 
\]
We recall that the weight given to a edge between $\{q,s,q',s'\}$ gives a $\delta_{ab}$, between $\{r,t,r',t'\}$ gives a $(Z^\top Z)_{ab}+\delta_{ab}D_{aa}^2\kappa_2(\psi(\tilde{x}))\eqqcolon M_{ab}$, and crossing between the two gives a $Z_{ab}$. Thus, bound the matchings of type $\mathfrak{T}_{LL}$ as above gives,
\[
\begin{aligned}
\frac{1}{n^2}\sum_{q,s=1}^d\sum_{r,t,r't'=1}^p\B_{qrst}\B_{qr'st'}M_{rt}M_{r't'}
=
\frac{1}{n^2}\sum_{q,s=1}^d \left(
  \sum_{r,t=1}^p \B_{qrst}M_{rt}
\right)^2
&\leqslant
\frac{1}{n^2}\sum_{q,s=1}^d 
\left(
  \sum_{r,t=1}^p \B_{qrst}^2
\right)
\left(
  \sum_{r,t=1}^ p M_{rt}^2
\right)
\\
& 
=
\frac{1}{n^2}\Vert M\Vert_{\mathrm{F}}^2\Vert \B\Vert_{\mathrm{F}}^2 \\
&\leqslant
\frac{p^2d}{n^2}\Vert M\Vert^2_{\mathrm{op}}\Vert \B\Vert^2_{\mathrm{op}}
=
\O{\frac{d^{\varepsilon}}{d}} 
\end{aligned}
\]
with very high probability. A matching of type $\mathfrak{T}_{RR}$ as above can be bounded in the following way, 
\[
\frac{1}{n^2}\sum_{q,q'=1}^d\sum_{r,t,r',t'=1}^p \B_{qrqt}\B_{q'r'q't'}M_{rt'}M_{tr'} =
\frac{1}{n^2}\mathrm{Tr}\left(
  \mathrm{Tr}_1(\B)M\mathrm{Tr}_1(\B)M
\right)
\]
where we introduced the notation
\(
\mathrm{Tr}_1(\B)_{rt} =\sum_{q=1}^d \B_{qrqt}.
\)
We can then bound this term as 
\[
  \frac{1}{n^2}\mathrm{Tr}\left(
    \mathrm{Tr}_1(\B)M\mathrm{Tr}_1(\B)M
  \right)
  \leqslant
  \frac{p}{n^2}\Vert \mathrm{Tr}_1(\B)\Vert_{\mathrm{op}}^2\Vert M\Vert_{\mathrm{op}}^2
  =
  \frac{pd^2}{n^2}\Vert\B\Vert_{\mathrm{op}}^2\Vert M\Vert_{\mathrm{op}}^2
  =
  \O{\frac{d^\varepsilon}{d}}
\]
with very high probability. Finally, for graphs of the type $\mathfrak{T}_{LR}$, we obtain the term 
\[
\frac{1}{n^2}\sum_{q,s,q'=1}^d \sum_{r,t,r',t'=1}^p \B_{qrst}\B_{qrst}\B_{q'r'st'}Z_{qt}Z_{q't'}M_{rr'}
\] 
    Define the matrix
    \[
    V_{rr'} = \sum_{q,q',s=1}^d\sum_{t,t'=1}^p \B_{qrst}\B_{q'r'st'}Z_{q t}Z_{q' t'}.
    \]
    We will bound the operator norm of this matrix. Then 
    \[
    \begin{aligned}
    \|V\|_{\mathrm{op}} 
    = \sup_{\|u\|=1} \left|\langle u, Vu\rangle\right|  
    &= \sup_{\|u\|=1}
    \left| \sum_{q,q',s=1}^d\sum_{r,t,r',t'=1}^p \B_{qrst}\B_{q'r'st'}Z_{q t}Z_{q' t'}u_r u_{r'}\right|. \\ 
    &= \sup_{\|u\|=1}
    \left| \sum_{s=1}^d 
      \left(
        \sum_{q=1}^d\sum_{r,t=1}^p \B_{qrst}Z_{q t}u_r
      \right)^2
    \right|. \\ 
    &\leq \sup_{\|u\|=1}
      \sum_{s=1}^d 
      \sum_{t=1}^p
      p
      \left(
        \sum_{q=1}^d\sum_{r=1}^p \B_{qrst}Z_{q t}u_r
      \right)^2. \\ 
    &\leq \sup_{\|u\|=1} p^2 \max_{t}\| \B^T(Z^{(t)} \otimes u)\|^2,
    \end{aligned}
    \]
    where $Z^{(t)}_q = Z_{qt}$.  The norm of the vector $Z^{(t)}\otimes u$ is bounded by $d^{\varepsilon}$ by \eqref{eq:Z-bound}, and hence we conclude $\|M\|_{\mathrm{op}} \leq p^2 d^{\varepsilon}$.  Hence, we conclude 
    \begin{equation}\label{eq:boundingLR}
    \begin{aligned}
      \frac{1}{n^2}\sum_{q,q',s=1}^d\sum_{r,t,r',t'=1}^p \B_{qrst}\B_{q'r'st'}Z_{q t}Z_{q' t'}
      M_{rr'}
      &= \frac{1}{n^2}
      \mathrm{Tr}(VM)
      = \O{d^{\epsilon-1}}.
    \end{aligned}
    \end{equation}

We have two distinct types where the dashed line is being crossed by two crossing from left to right.
\begin{equation}\label{eq:3graphs}
  \mathfrak{T}_{XX_1}:\quad
  \vcenter{\hbox{
 \begin{tikzpicture}[every node/.style={circle, fill=black, inner sep=1.5pt}]

  \node (q)  at (0, 1.5) {};
  \node (s)  at (0, 1) {};
  \node (qp) at (0, .5) {};
  \node (sp) at (0, 0) {};

  \node (r)  at (1, 1.5) {};
  \node (t)  at (1, 1) {};
  \node (rp)  at (1, .5) {};
  \node (tp)  at (1, 0) {};

  \node[draw=none, fill=none, left=4pt of q]  {$q$};
  \node[draw=none, fill=none, left=4pt of s]  {$s$};
  \node[draw=none, fill=none, left=4pt of qp] {$q'$};
  \node[draw=none, fill=none, left=4pt of sp] {$s'$};

  \node[draw=none, fill=none, right=4pt of r]  {$r$};
  \node[draw=none, fill=none, right=4pt of t]  {$t$};
  \node[draw=none, fill=none, right=4pt of rp]  {$r'$};
  \node[draw=none, fill=none, right=4pt of tp]  {$t'$};

  \draw[dashed] (-0.5,.75) -- (1.5,.75);

  \draw[line width = .1em] (q) edge (rp);
  \draw[line width = .1em] (s) edge (tp);
  \draw[line width = .1em] (qp) edge (sp);
  \draw[line width = .1em] (r) edge (t);
\end{tikzpicture}
  }},
  \quad 
  \mathfrak{T}_{XX_2}: \quad 
\vcenter{\hbox{
\begin{tikzpicture}[every node/.style={circle, fill=black, inner sep=1.5pt}]

  \node (q)  at (0, 1.5) {};
  \node (s)  at (0, 1) {};
  \node (qp) at (0, .5) {};
  \node (sp) at (0, 0) {};

  \node (r)  at (1, 1.5) {};
  \node (t)  at (1, 1) {};
  \node (rp)  at (1, .5) {};
  \node (tp)  at (1, 0) {};

  \node[draw=none, fill=none, left=4pt of q]  {$q$};
  \node[draw=none, fill=none, left=4pt of s]  {$s$};
  \node[draw=none, fill=none, left=4pt of qp] {$q'$};
  \node[draw=none, fill=none, left=4pt of sp] {$s'$};

  \node[draw=none, fill=none, right=4pt of r]  {$r$};
  \node[draw=none, fill=none, right=4pt of t]  {$t$};
  \node[draw=none, fill=none, right=4pt of rp]  {$r'$};
  \node[draw=none, fill=none, right=4pt of tp]  {$t'$};

  \draw[dashed] (-0.5,.75) -- (1.5,.75);

  \draw[line width = .1em] (q) edge (t);
  \draw[line width = .1em] (s) edge (tp);
  \draw[line width = .1em] (qp) edge (r);
  \draw[line width = .1em] (sp) edge (rp);
\end{tikzpicture}}}
\end{equation}
For the type $\mathfrak{T}_{XX_1}$, we have the term 
\[
\begin{aligned}
\frac{1}{n^2}\sum_{q,s,q'=1}^d\sum_{r,t,r',t'}
\B_{qrst}\B_{q'r'q't'}
M_{rt}Z_{qr'}Z_{st'}
&=
\frac{1}{n^2}
\sum_{q,s=1}^d \left(
  \sum_{r,t=1}^pB^{(q,s)}_{rt}M_{rt}
\right)
\left(
  Z\mathrm{Tr}_1(\B)Z^\top
\right)_{qs}
\\&=
\frac{1}{n^2}\sum_{q,s=1}^d \mathrm{Tr}\left(
  B^{(q,s)}M^\top
\right)
\left(
  Z\mathrm{Tr}_1(\B)Z^\top
\right)_{qs}
\\&\leqslant
\frac{1}{n^2}
\sqrt{
  \sum_{q,s=1}^d \mathrm{Tr}\left(
    B^{(q,s)}M^\top
  \right)^2
}
\sqrt{
  \sum_{q,s=1}^d 
  (Z\mathrm{Tr}_1(\B)Z^\top)^2_{qs}
}
\end{aligned}
\]
with the notation $B^{(q,s)}_{rt} = \B_{qsrt}$ the contraction of the tensor. We can bound with high probability,
\[
\sum_{q,s=1}^d \mathrm{Tr}\left(B^{(q,s)}M^\top\right)^2 
\leqslant
d^{2+2\varepsilon }p^2
\]
and 
\[
\sum_{q,s=1}^d \left(
  Z\mathrm{Tr}_1(\B)Z^\top
\right)^2_{qs}
=
\Vert Z\mathrm{Tr}_1(\B)Z^\top \Vert_\mathrm{F}^2 
\leqslant
d\Vert Z\mathrm{Tr}_1(\B)Z^\top \Vert_{\mathrm{op}} 
\leqslant d^{2+2\varepsilon}.
\]
Finally, this gives us the very high probability bound 
\[
\frac{1}{n^2}\sum_{q,s,q'=1}^d \sum_{r,t,r',t'} \B_{qrst}\B_{q'r'q't'}M_{rt}Z_{qr'}Z_{st'}
\leqslant
\frac{d^{1+2\varepsilon}p}{n^2}
=
\O{
  \frac{d^{2\varepsilon}}{d}
}.
\]
The type $\mathfrak{T}_{XX_2}$ can be written as 
\[
\begin{aligned}
\frac{1}{n^2}\sum_{q,s,q',s'=1}^d \sum_{r,t,r',t'=1}^p 
\B_{qrst}
\B_{q'r's't'}
Z_{qt}Z_{st'}Z_{q'r}Z_{s'r'}
&=
\frac{1}{n^2}\sum_{s,q'=1}^d \sum_{t,r'=1}^p
\left(
  A^{(t,q')}\B
\right)_{st}
\left(
  A^{(r',s)}\B
\right)_{q'r'}
\\&\leqslant
\frac{1}{n^2}
\sqrt{
  \sum_{s,q'=1}^d\sum_{t,r'=1}^p 
  \left(
    A^{(t,q')}\B
  \right)_{st}^2
}
\sqrt{
  \sum_{s,q=1}^d\sum_{t,r'=1}^p 
  \left(
    \B A^{(r',s)}
  \right)_{q'r'}^2
}
\\&=
\O{
  \frac{d^{1+2\varepsilon}p^2}{n^2}   
}
\end{aligned}
\]
with very high probability, using the notation $A^{(t,q')}_{qr}=Z_{qt}Z_{q'r}$ and the fact that its operator norm is bounded with very high probability by $d^{\varepsilon}$. 
We now consider the final types of matchings coming from only two crossing of the dashed line, the type where there is one on the left or right and one accross left and right 
\[
  \mathfrak{T}_{LX}:\quad 
  \vcenter{\hbox{\begin{tikzpicture}[every node/.style={circle, fill=black, inner sep=1.5pt}]

    \node (q)  at (0, 1.5) {};
    \node (s)  at (0, 1) {};
    \node (qp) at (0, .5) {};
    \node (sp) at (0, 0) {};
  
    \node (r)  at (1, 1.5) {};
    \node (t)  at (1, 1) {};
    \node (rp)  at (1, .5) {};
    \node (tp)  at (1, 0) {};
  
    \node[draw=none, fill=none, left=4pt of q]  {$q$};
    \node[draw=none, fill=none, left=4pt of s]  {$s$};
    \node[draw=none, fill=none, left=4pt of qp] {$q'$};
    \node[draw=none, fill=none, left=4pt of sp] {$s'$};
  
    \node[draw=none, fill=none, right=4pt of r]  {$r$};
  \node[draw=none, fill=none, right=4pt of t]  {$t$};
  \node[draw=none, fill=none, right=4pt of rp]  {$r'$};
  \node[draw=none, fill=none, right=4pt of tp]  {$t'$};
  
    \draw[dashed] (-0.5,.75) -- (1.5,.75);
  
    \draw[line width = .1em] (q) edge[bend right] (sp);
    \draw[line width = .1em] (s) edge (r);
    \draw[line width = .1em] (qp) edge (t);
    \draw[line width = .1em] (rp) edge (tp);
  \end{tikzpicture}}},
  \quad 
  \mathfrak{T}_{XR}:\quad 
  \vcenter{\hbox{
    \begin{tikzpicture}[every node/.style={circle, fill=black, inner sep=1.5pt}]

    \node (q)  at (0, 1.5) {};
    \node (s)  at (0, 1) {};
    \node (qp) at (0, .5) {};
    \node (sp) at (0, 0) {};
  
    \node (r)  at (1, 1.5) {};
    \node (t)  at (1, 1) {};
    \node (rp)  at (1, .5) {};
    \node (tp)  at (1, 0) {};
  
    \node[draw=none, fill=none, left=4pt of q]  {$q$};
    \node[draw=none, fill=none, left=4pt of s]  {$s$};
    \node[draw=none, fill=none, left=4pt of qp] {$q'$};
    \node[draw=none, fill=none, left=4pt of sp] {$s'$};
  
    \node[draw=none, fill=none, right=4pt of r]  {$r$};
  \node[draw=none, fill=none, right=4pt of t]  {$t$};
  \node[draw=none, fill=none, right=4pt of rp]  {$r'$};
  \node[draw=none, fill=none, right=4pt of tp]  {$t'$};
  
    \draw[dashed] (-0.5,.75) -- (1.5,.75);
  
    \draw[line width = .1em] (q) edge (s);
    \draw[line width = .1em] (qp) edge (tp);
    \draw[line width = .1em] (sp) edge (r);
    \draw[line width = .1em] (rp) edge (t);
  \end{tikzpicture}
  }}
\]
For the type $\mathfrak{T}_{LX}$, we obtain the term, 
\[
\begin{aligned}
\frac{1}{n^2}\sum_{q,s,q'=1}^d \sum_{r,t,r',t'=1}^p 
\B_{qrst}\B_{q'r'qt'}M_{r't'}Z_{sr}Z_{q't}
&=
\frac{1}{n^2}\sum_{q,s,q'=1}^d \left(
  ZB^{(q,s)}Z^\top
\right)_{sq'}\mathrm{Tr}\left(
  B^{(q',q)}M^\top
\right)
\\& \leqslant
\frac{1}{n^2}\sqrt{
  \sum_{q,s,q'=1}^d \left(
    ZB^{(q,s)}Z^\top
  \right)_{sq'}^2
}
\sqrt{
  \sum_{q,s,q'=1}^d 
  \mathrm{Tr}\left(B^{(q',q)}M^\top\right)^2
}
\end{aligned}
\]
Now we can see that 
\[
\sum_{q,s=1}^d \sum_{q'=1}^d \left(
  ZB^{(q,s)}Z^\top
\right)_{sq'}^2 
\leqslant
d^{2+2\varepsilon}
\]
with very high probability and we have that, with very high probability, 
\[
\sum_{q,s,q'=1}^d \mathrm{Tr}\left(
  B^{(q',q)}M^\top
\right)^2
\leqslant {p^2d^{3+2\varepsilon}}.
\]
Altogether, this gives the very high probability bound 
\[
  \frac{1}{n^2}\sum_{q,s,q'=1}^d \sum_{r,t,r',t'=1}^p 
  \B_{qrst}\B_{q'r'qt'}M_{r't'}Z_{sr}Z_{q't}
  \leqslant \frac{d^{\frac{5}{2}+2\varepsilon}p}{n^2}
  =
  \O{
    \frac{d^{2\varepsilon}}{\sqrt{d}}
  }.
\]
Considering the type $\mathfrak{T}_{XR}$, we need to bound the term 
\[
\begin{aligned}
\frac{1}{n^2}
\sum_{q,q',s'=1}^d\sum_{r,t,r',t'=1}^p
\B_{qrqt}\B_{q'r's't'}M_{r't}Z_{q't'}Z_{s'r}
&=
\frac{1}{n^2}
\sum_{q',s'=1}^d\sum_{r',t'=1}^p
\left(
  Z\mathrm{Tr}_1(\B)M
\right)_{s'r'}
\B_{q'r's't'}\\
&=
\frac{\sqrt{pd}}{n^2}\Vert \B\Vert_{\mathrm{F}}\Vert Z\mathrm{Tr}_1(\B)M\Vert_{\mathrm{F}}
\leqslant \frac{p\sqrt{p}d^{2+2\varepsilon}}{n^2}=
\O{
  \frac{d^{2\varepsilon}}{\sqrt{d}}
}
\end{aligned}
\]
where we used the fact that $\Vert \mathrm{Tr}_1(\B)\Vert =\O{d}$ and that the operator of $Z$ and $M$ are bounded by $\d^{\varepsilon}$ with very high probability.

We now consider matchings where all four edges cross the dashed line. They can either cross the line vertically, horizontally, or a mix between the two. We thus show the different bounds on three graphs typical of these behaviors.
\begin{equation}\label{eq:3graphs-2}
 \mathfrak{T}_{LLRR}\, : \quad  
 \vcenter{\hbox{
 \begin{tikzpicture}[every node/.style={circle, fill=black, inner sep=1.5pt}]
 
   \node (q)  at (0, 1.5) {};
   \node (s)  at (0, 1) {};
   \node (qp) at (0, .5) {};
   \node (sp) at (0, 0) {};
 
   \node (r)  at (1, 1.5) {};
   \node (t)  at (1, 1) {};
   \node (rp)  at (1, .5) {};
   \node (tp)  at (1, 0) {};
 
   \node[draw=none, fill=none, left=4pt of q]  {$q$};
   \node[draw=none, fill=none, left=4pt of s]  {$s$};
   \node[draw=none, fill=none, left=4pt of qp] {$q'$};
   \node[draw=none, fill=none, left=4pt of sp] {$s'$};
 
   \node[draw=none, fill=none, right=4pt of r]  {$r$};
  \node[draw=none, fill=none, right=4pt of t]  {$t$};
  \node[draw=none, fill=none, right=4pt of rp]  {$r'$};
  \node[draw=none, fill=none, right=4pt of tp]  {$t'$};
 
   \draw[dashed] (-0.5,.75) -- (1.5,.75);
 
   \draw[line width = .1em] (q) edge[bend left] (qp);
   \draw[line width = .1em] (s) edge[bend right] (sp);
   \draw[line width = .1em] (r) edge[bend right] (tp);
   \draw[line width = .1em] (t) edge (rp);
 \end{tikzpicture}
 }}
 \qquad
 \mathfrak{T}_{XXXX}\,:\quad 
 \vcenter{\hbox{
  \begin{tikzpicture}[every node/.style={circle, fill=black, inner sep=1.5pt}]
 
   \node (q)  at (0, 1.5) {};
   \node (s)  at (0, 1) {};
   \node (qp) at (0, .5) {};
   \node (sp) at (0, 0) {};
 
   \node (r)  at (1, 1.5) {};
   \node (t)  at (1, 1) {};
   \node (rp)  at (1, .5) {};
   \node (tp)  at (1, 0) {};
 
   \node[draw=none, fill=none, left=4pt of q]  {$q$};
   \node[draw=none, fill=none, left=4pt of s]  {$s$};
   \node[draw=none, fill=none, left=4pt of qp] {$q'$};
   \node[draw=none, fill=none, left=4pt of sp] {$s'$};
 
   \node[draw=none, fill=none, right=4pt of r]  {$r$};
  \node[draw=none, fill=none, right=4pt of t]  {$t$};
  \node[draw=none, fill=none, right=4pt of rp]  {$r'$};
  \node[draw=none, fill=none, right=4pt of tp]  {$t'$};
 
   \draw[dashed] (-0.5,.75) -- (1.5,.75);
 
   \draw[line width = .1em] (q) edge (rp);
   \draw[line width = .1em] (s) edge (tp);
   \draw[line width = .1em] (qp) edge (r);
   \draw[line width = .1em] (sp) edge (t);
 \end{tikzpicture}
 }}
 \qquad
 \mathfrak{T}_{LXXR}\,:\quad 
 \vcenter{\hbox{
 \begin{tikzpicture}[every node/.style={circle, fill=black, inner sep=1.5pt}]
 
   \node (q)  at (0, 1.5) {};
   \node (s)  at (0, 1) {};
   \node (qp) at (0, .5) {};
   \node (sp) at (0, 0) {};
 
   \node (r)  at (1, 1.5) {};
   \node (t)  at (1, 1) {};
   \node (rp)  at (1, .5) {};
   \node (tp)  at (1, 0) {};
 
   \node[draw=none, fill=none, left=4pt of q]  {$q$};
   \node[draw=none, fill=none, left=4pt of s]  {$s$};
   \node[draw=none, fill=none, left=4pt of qp] {$q'$};
   \node[draw=none, fill=none, left=4pt of sp] {$s'$};
 
   \node[draw=none, fill=none, right=4pt of r]  {$r$};
  \node[draw=none, fill=none, right=4pt of t]  {$t$};
  \node[draw=none, fill=none, right=4pt of rp]  {$r'$};
  \node[draw=none, fill=none, right=4pt of tp]  {$t'$};
 
   \draw[dashed] (-0.5,.75) -- (1.5,.75);
 
   \draw[line width = .1em] (q) edge[bend right] (sp);
   \draw[line width = .1em] (s) edge (rp);
   \draw[line width = .1em] (qp) edge (r);
   \draw[line width = .1em] (t) edge[bend left] (tp);
 \end{tikzpicture}
 }}
 \end{equation}
The type $\mathfrak{T}_{LLRR}$ from \eqref{eq:3graphs-2} gives the term 
\[
\frac{1}{n^2}\sum_{q,s=1}^d\sum_{r,t,r',t'=1}^p 
\B_{qrst}\B_{qr'st'}M_{rt'}M_{tr'}
=
\frac{1}{n^2}\sum_{q,s=1}^d 
\mathrm{Tr}\left(
  B^{(q,s)}MB^{(q,s)}M 
\right)
=
\O{
  \frac{d^{2+\varepsilon}p}{n^2}
}
=
\O{\frac{d^{\varepsilon}}{d}}
\]
with very high probability. The type $\mathfrak{T}_{XXXX}$ in \eqref{eq:3graphs-2} gives 
\[
\begin{aligned}
\frac{1}{n^2}\sum_{q,s,q',s'=1}^d\sum_{r,t,r',t'=1}^p
\B_{qrst}\B_{q'r's't'}Z_{qr'}Z_{st'}Z_{q'r}Z_{s't}
&=
\frac{1}{n^2}\sum_{q,s,q',s'=1}^d 
\left(
  ZB^{(q',s')}Z^\top
\right)_{qs}
\left(
  ZB^{(q,s)}Z^\top
\right)_{q's'}\\
&\leqslant
\frac{1}{n^2}\sum_{q,s,q',s'=1}^d \left(
  ZB^{(q's')}Z^\top
\right)_{qs}^2\\
&=
\frac{1}{n^2}\sum_{q's'=1}^d \Vert ZB^{(q',s')}Z^\top\Vert_{\mathrm{F}}^2 
=
\O{
  \frac{d^{3+\varepsilon}}{n^2}
}
\end{aligned}
\]
with very high probability. The final type $\mathfrak{T}_{LXXR}$ from \eqref{eq:3graphs-2} gives the term 
\[
\begin{aligned}
\frac{1}{n^2}\sum_{q,s,q'=1}^d \sum_{r,t,r',t'=1}^p 
\B_{qrst}\B_{q'r'qt'}Z_{sr'}Z_{q'r}M_{tt'}
&=
\frac{1}{n^2}\sum_{q,s,q'=1}^d 
\sum_{t'=1}^p
\left(
  ZB^{(q,s)}M
\right)_{q't'}
\left(
  Z{B^{(q',q)}}
\right)_{st'}
\\&\leqslant
\frac{1}{n^2}\sum_{q,s,q'=1}^d 
\sqrt{
  \sum_{t'=1}^p 
  \left(
    ZB^{(q,s)}M
  \right)_{q't'}^2
}
\sqrt{
  \sum_{t'=1}^p 
  \left(
    ZB^{(q',q)}
  \right)_{st'}^2
}
\\& = \O{
  \frac{d^{3+\varepsilon}}{n^2}
}
\end{aligned}
\]
with very high probability.
\end{proof}
\begin{proof}[Proof of Proposition \ref{prop:baizhouquadratic}] It suffices now to combine Lemmas \ref{lem:baizhou-0}, \ref{lem:baizhou-1} to get the final result.
  
\end{proof}
\section{Analysis of the tensor \texorpdfstring{$\Q$}{Q} and proof of the main result}\label{sec:analysisQ} 
From the previous section, we know that the limiting eigenvalue distribution of $\wt{K}$ is given by $\mu_{\mathrm{MP}}^{\gamma_1}\boxtimes \pi$ where $\pi$ is the asymptotic e.e.d of the tensor $\Q$ which is the leading part of the tensor $\T$. In this section we analyse the tensor $\Q$ to compute this measure $\pi$. We first start by writing,
\[
\Q = \frac{\alpha_\phi^2}{d}\Q_1 + \beta_\psi^2\Q_2,  
\]
with 
\[
\Q_1 = \tau_{23}\left(
  \Id_d\otimes DW^\top WD
\right)
+
\tau_{24}(WD\otimes WD)
\quad\text{and}\quad  
\Q_2 =  \tau_{23}\left(
  \Id_d\otimes D^2
\right).
\]
We also define the tensor 
\[
\D = \tau_{23}(\Id_d\otimes D).  
\]
We first notice that we can factorize the tensor $\Q$.
\begin{lem}\label{lem:factor}
  We can write 
  \[
  \Q = \D\left(
    \frac{\alpha_\phi^2}{d} \hat{\Q}_1  + \beta_\psi^2  \hat{\Q}_2
  \right)\D 
  \eqqcolon 
  \D\hat{\Q}\D
  \]
  with 
  \[
  \hat{\Q}_1 = \tau_{23}\left(
    \Id_d\otimes W^\top W
  \right)
  +
  \tau_{24}(W\otimes W),
  \quad\text{and}\quad 
  \hat{\Q}_2 = \tau_{23}(\Id_d\otimes \Id_p)=\mathcal{I}_{d,p}.  
  \]
\end{lem}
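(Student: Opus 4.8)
The plan is to prove Lemma \ref{lem:factor} by a direct computation at the level of tensor entries, since the factorization is purely structural: the matrix $D$ only ever acts on the $\R^p$--factor of $\R^{d,p}=\R^d\otimes\R^p$, and the tensor $\D=\tau_{23}(\Id_d\otimes D)$ is precisely the operator $\Id_d\otimes D$ on this space. I will use the convention that a tensor $\B\in\R^{d,p}\otimes\R^{d,p}$ with entries $\B_{qrst}$ ($q,s\in\unn{1}{d}$, $r,t\in\unn{1}{p}$) acts on $\R^{d,p}$ by $(\B V)_{qr}=\sum_{s,t}\B_{qrst}V_{st}$, so that composition reads $(\B\C)_{qrst}=\sum_{a=1}^d\sum_{b=1}^p\B_{qrab}\C_{abst}$, together with the braid conventions fixed earlier, namely $\tau_{23}(\Id_d\otimes M)_{qrst}=\delta_{qs}M_{rt}$ for a $p\times p$ matrix $M$ and $\tau_{24}(A\otimes B)_{qrst}=A_{qt}B_{sr}$ for $d\times p$ matrices $A,B$. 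In particular $\D$ has entries $\D_{qrst}=\delta_{qs}\delta_{rt}D_{rr}$ and is self-adjoint (as $D$ is diagonal), so the sandwich $\D\hat{\Q}\D$ is unambiguous even if $D$ is singular.

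The single computation doing all the work is the following: for any $\B\in\R^{d,p}\otimes\R^{d,p}$,
\[
(\D\B\D)_{qrst}=\sum_{a,c=1}^d\sum_{b,e=1}^p \D_{qrab}\,\B_{abce}\,\D_{cest}=\sum_{a,c=1}^d\sum_{b,e=1}^p \delta_{qa}\delta_{rb}D_{rr}\,\B_{abce}\,\delta_{cs}\delta_{et}D_{tt}=D_{rr}\,\B_{qrst}\,D_{tt},
\]
that is, left multiplication by $\D$ rescales the output $\R^p$--index of $\B$ by $D_{rr}$ and right multiplication rescales its input $\R^p$--index by $D_{tt}$. From this follow two immediate consequences: $\D\,\tau_{23}(\Id_d\otimes M)\,\D=\tau_{23}(\Id_d\otimes DMD)$ for any $p\times p$ matrix $M$, since $D_{rr}D_{tt}\delta_{qs}M_{rt}=\delta_{qs}(DMD)_{rt}$; and $\D\,\tau_{24}(A\otimes B)\,\D=\tau_{24}(AD\otimes BD)$ for any $d\times p$ matrices $A,B$, since $D_{rr}D_{tt}A_{qt}B_{sr}=(AD)_{qt}(BD)_{sr}$.

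It then remains to apply these two identities termwise to $\hat{\Q}=\frac{\alpha_\phi^2}{d}\hat{\Q}_1+\beta_\psi^2\hat{\Q}_2$. For $\hat{\Q}_1=\tau_{23}(\Id_d\otimes W^\top W)+\tau_{24}(W\otimes W)$ we get
\[
\D\hat{\Q}_1\D=\tau_{23}\left(\Id_d\otimes DW^\top WD\right)+\tau_{24}(WD\otimes WD)=\Q_1,
\]
and for $\hat{\Q}_2=\tau_{23}(\Id_d\otimes\Id_p)=\mathcal{I}_{d,p}$ we get, by the $M=\Id_p$ case, $\D\hat{\Q}_2\D=\tau_{23}(\Id_d\otimes D^2)=\Q_2$. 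Hence $\D\hat{\Q}\D=\frac{\alpha_\phi^2}{d}\Q_1+\beta_\psi^2\Q_2=\Q$, which is the claim. One should also cross-check this against the entrywise formula for $\Q$ obtained in the proof of Lemma \ref{lem:covcomp}, i.e.\ confirm that the discarded term $\frac{\alpha_\phi^2}{d}\mu_{4,x}\mathrm{diag}_1(WD\otimes WD)$ is genuinely not part of $\Q$, so that no fourth-moment correction is required on the right-hand side.

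There is no substantial obstacle here; the proof is entirely bookkeeping. The only point that needs care is keeping the placement of indices in the braids $\tau_{23}$ and $\tau_{24}$ consistent between Lemma \ref{lem:covcomp} and the present statement --- once one has pinned down that $\D$ acts as $\Id_d\otimes D$ purely on the $\R^p$--coordinate, the conjugation $\B\mapsto\D\B\D$ reduces to the entrywise rescaling $\B_{qrst}\mapsto D_{rr}D_{tt}\B_{qrst}$, and everything follows.
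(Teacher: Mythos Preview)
Your proof is correct and takes essentially the same approach as the paper's: both verify the factorization by direct entrywise computation, with the $\tau_{24}$ term being the only case requiring any care. Your version is slightly more streamlined in that you first isolate the general rescaling identity $(\D\B\D)_{qrst}=D_{rr}\B_{qrst}D_{tt}$ and then specialize, whereas the paper just computes the $\tau_{24}$ case explicitly; but this is a presentational difference, not a different argument.
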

\begin{proof}
  The one contraction which is not direct comes from the braid operator $\tau_{24}$, but indeed we can compute 
  \begin{align*}
  \left( 
    \D\tau_{24}(W\otimes W)\D 
  \right)_{ijk\ell}
  &=
  \sum_{q,s=1}^d\sum_{r,t=1}^p
  \tau_{23}(\Id_d\otimes D)_{ijqr}
  \tau_{24}(W\otimes W)_{qrst}
  \tau_{23}(\Id_d\otimes D)_{stk\ell}
  \\
  &=
  \sum_{q,s=1}^d \sum_{r,t=1}^p \delta_{iq}\delta_{jr}D_{jj}W_{qt}W_{sr}\delta_{sk}\delta_{t\ell}D_{\ell\ell}
  \\
  &=
  W_{i\ell}D_{\ell}W_{kj}D_{jj}=(WD)_{i\ell}(WD)_{kj}=\tau_{24}(WD\otimes WD)_{ijk\ell}.
  \end{align*}
\end{proof}
We start by computing the spectrum of $\hat{\Q}$ still working conditionally on $W$ and $D$. We first recall the properties of the singular value decomposition of $W$ being from a rectangular Ginibre ensemble.

\begin{thm}\label{thm:svd}
  If we write the singular value decomposition of $W$ as 
  \[
  W = U\Sigma V^\top  
  \]
  then $U\sim\mathrm{Haar}(\mathrm{O}_d(\R))$, $V\sim \mathrm{Haar}(\mathrm{O}_p(\R))$, and $U$ and $V$ are independent of $\Sigma$. Beside, if we consider the empirical distribution of the squared singular values of $W$ they converge vaguely almost surely to the Marchenko--Pastur distribution of shape $\gamma_2$.  
\end{thm}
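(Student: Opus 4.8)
The plan is to treat the two assertions separately: the joint distributional structure of the SVD $W=U\Sigma V^\top$, and the Marchenko--Pastur limit for the squared singular values. Both are classical, and the second reduces immediately to Theorem~\ref{theo:baisilverstein}.

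\emph{The SVD structure.} The key input is the bi-orthogonal invariance of the Ginibre ensemble: the Lebesgue density of $W$ on $\Rbb^{d\times p}$ is proportional to $\exp(-\tfrac12\Tr(W^\top W))$, and since $\Tr\big((O_1 W O_2^\top)^\top O_1 W O_2^\top\big)=\Tr(W^\top W)$ for all $O_1\in\mathrm O_d(\Rbb)$ and $O_2\in\mathrm O_p(\Rbb)$, the law of $W$ is invariant under $W\mapsto O_1 W O_2^\top$. I would first fix a measurable selection $W\mapsto(U_0(W),\Sigma_0(W),V_0(W))$ of the SVD with $\Sigma_0$ in nonincreasing order; this exists off the set where $W$ is rank-deficient or has a repeated singular value, which is contained in a proper real-algebraic subvariety of $\Rbb^{d\times p}$ and hence is both Lebesgue-null and a $\P$-null event. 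Then, introducing $H_1$ Haar on $\mathrm O_d(\Rbb)$ and $H_2$ Haar on $\mathrm O_p(\Rbb)$, mutually independent and independent of $W$, I set $\widetilde W:=H_1 W H_2^\top$. By invariance $\widetilde W\overset{d}{=}W$, and $\widetilde W$ has the same ordered singular values as $W$; moreover $\widetilde W=\widetilde U\,\widetilde\Sigma\,\widetilde V^\top$ with $\widetilde U:=H_1 U_0(W)$, $\widetilde\Sigma:=\Sigma_0(W)$, $\widetilde V:=H_2 V_0(W)$, which is a bona fide SVD of $\widetilde W$. Conditionally on $W$, the matrices $U_0(W)$ and $V_0(W)$ are fixed, so by bi-invariance of Haar measure on the compact groups, $(\widetilde U,\widetilde V)$ is distributed as $\mathrm{Haar}(\mathrm O_d)\otimes\mathrm{Haar}(\mathrm O_p)$; since this conditional law does not depend on $W$, the pair $(\widetilde U,\widetilde V)$ is independent of $W$ --- in particular of $\widetilde\Sigma$ --- with $\widetilde U$ and $\widetilde V$ Haar and mutually independent. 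As $\widetilde W\overset{d}{=}W$, this establishes the claim for the version of the SVD obtained by randomizing the (inessential) sign/rotation ambiguity, which is all that is used in the sequel.

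\emph{The squared singular values.} The nonzero squared singular values of $W$ are exactly the nonzero eigenvalues of $W^\top W$, so the empirical distribution of $\tfrac1d$ times the squared singular values of $W$ coincides with the empirical spectral distribution of $\tfrac1d W^\top W$. Applying Theorem~\ref{theo:baisilverstein} with $Y=W^\top\in\Rbb^{p\times d}$ (so that $p$ is the number of features and $d$ the number of samples), $A=\Id_p$ and $B=\Id_d$, whose empirical eigenvalue distributions are $\delta_1$, and using $p/d\to\gamma_2$, gives that the e.e.d.\ of $\tfrac1d W^\top W=\tfrac1d A^{1/2}YY^\top A^{1/2}$ converges weakly almost surely to $\mu_{\mathrm{MP}}^{\gamma_2}\boxtimes\delta_1=\mu_{\mathrm{MP}}^{\gamma_2}$; this is just the classical Marchenko--Pastur theorem. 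Weak almost sure convergence entails the asserted vague almost sure convergence.

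\emph{Main obstacle.} I do not expect a substantial difficulty here: the only point needing care is the measure-zero set of ill-posed SVDs together with the sign/rotation indeterminacy, which is precisely why the statement should be understood as describing the law of a suitably chosen version of the SVD rather than of an arbitrary measurable selection; everything else is a direct invocation of bi-orthogonal invariance and of Theorem~\ref{theo:baisilverstein}.
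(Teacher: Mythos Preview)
The paper does not prove this statement; it is stated as a recalled classical fact (``We first recall the properties of the singular value decomposition of $W$ being from a rectangular Ginibre ensemble''). Your argument --- bi-orthogonal invariance of the Ginibre law for the Haar and independence claims, and a direct appeal to Theorem~\ref{theo:baisilverstein} with $Y=W^\top$, $A=\Id_p$, $B=\Id_d$ for the Marchenko--Pastur limit --- is the standard and correct one, including the care you take with the SVD ambiguity and the null set where singular values collide.
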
 
A notable feature of this tensor is that his spectrum is exactly computable in terms of the squared singular values of $W$. 
\begin{lem}\label{lem:spectrumQhat}
  The spectrum of $\hat{\Q}$ is given by
  \[
  \mathrm{Spec}(\hat{\Q})
  =
  \{\beta_\psi^2\}\cup \left\{
    \frac{\alpha_\phi^2}{d}\left(
      \sigma_i^2+\sigma_j^2
    \right)
    +
    \beta_\psi^2
  \right\}_{i \leqslant j \leqslant \min\{d,p\}}
  \cup
  \left\{
    \frac{\alpha_\phi^2}{d}
    \sigma_j^2
    +\beta_\psi^2
  \right\}
  _{j \leqslant p < i }
  \]
  Besides, an eigenvector of $\frac{\alpha_\phi^2}{d}(\sigma_i^2+\sigma_j^2)+\beta_\psi^2$ is given by
  \[
    \frac{1}{\sqrt{(1+\delta_{ij})(\sigma_i^2+\sigma_j^2)}}\left(\sigma_j\u^i\otimes \v^j + \sigma_i\u^j\otimes \v^i  \right)
  \]
  and the eigenspace of $\beta_\psi^2$ is given by
  \[
  \mathrm{Ker} \left(
    \hat{\Q}-\beta_\psi^2 \mathcal{I}_{d,p}
  \right)
  =
  \mathrm{Span} \left(
    \left\{
    \frac{1}{\sqrt{\sigma_i^2+\sigma_j^2}}\left(
      \sigma_i\u^i\otimes \v^j - \sigma_j\u^j\otimes \v^i
    \right)
    \right\}_{i < j\leqslant \min\{d,p\}}
    \cup
    \left\{
      \u^i\otimes \v^j
    \right\}_{ i \leqslant d < j}
  \right)
  \]  
  where $\u^i$ and $\v^j$ are singular vectors of $W$ as in Theorem \ref{thm:svd}.
\end{lem}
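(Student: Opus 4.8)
The plan is to diagonalize $\hat{\Q}$ completely, exploiting that once one passes to the singular basis of $W$ it decouples into $1$- and $2$-dimensional blocks. Throughout we fix the realization of $W$ (we work conditionally on it; note $\hat{\Q}$ does not involve $D$). First I would rewrite $\hat{\Q}$ as a concrete linear map on matrices: unwinding the braids $\tau_{23},\tau_{24}$ exactly as in the proof of Lemma~\ref{lem:factor}, and identifying $\Rbb^{d}\otimes\Rbb^{p}$ with the space of $d\times p$ matrices via $\u\otimes\v\leftrightarrow\u\v^{\top}$, one checks that
\[
\hat{\Q}(M)=\frac{\alpha_\phi^{2}}{d}\bigl(MW^{\top}W+WM^{\top}W\bigr)+\beta_\psi^{2}M.
\]
This map is self-adjoint for the Hilbert--Schmidt inner product (as is immediate from the displayed formula), so it admits an orthonormal eigenbasis and it suffices to produce one.

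Next I would substitute the singular value decomposition $W=U\Sigma V^{\top}$ of Theorem~\ref{thm:svd} and change variables to $M=U\Lambda V^{\top}$ with $\Lambda\in\Rbb^{d\times p}$. Using $U^{\top}U=\Id_{d}$ and $V^{\top}V=\Id_{p}$, this conjugation turns $\hat{\Q}$ into the componentwise map
\[
\hat{\Q}(\Lambda)_{ij}=\frac{\alpha_\phi^{2}}{d}\bigl(\sigma_{j}^{2}\,\Lambda_{ij}+\sigma_{i}\sigma_{j}\,\Lambda_{ji}\bigr)+\beta_\psi^{2}\,\Lambda_{ij},\qquad \sigma_{k}:=0 \text{ for } k>m:=\min\{d,p\},
\]
so that the cross term is present only for $i,j\le m$. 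The decisive observation is that this map couples $\Lambda_{ij}$ only to its transpose $\Lambda_{ji}$; hence $\Rbb^{d\times p}$ splits orthogonally into the lines $\Rbb\,\Lambda_{ii}$ for $i\le m$, the planes $\operatorname{span}\{\Lambda_{ij},\Lambda_{ji}\}$ for $1\le i<j\le m$, and the remaining lines $\Rbb\,\Lambda_{ij}$ with $(i,j)$ outside the $m\times m$ corner.

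On $\operatorname{span}\{\Lambda_{ij},\Lambda_{ji}\}$ the operator is $\tfrac{\alpha_\phi^{2}}{d}\left(\begin{smallmatrix}\sigma_{j}^{2}&\sigma_{i}\sigma_{j}\\\sigma_{i}\sigma_{j}&\sigma_{i}^{2}\end{smallmatrix}\right)+\beta_\psi^{2}I$, which has trace $\tfrac{\alpha_\phi^{2}}{d}(\sigma_{i}^{2}+\sigma_{j}^{2})+2\beta_\psi^{2}$ and determinant $\beta_\psi^{2}\bigl(\tfrac{\alpha_\phi^{2}}{d}(\sigma_{i}^{2}+\sigma_{j}^{2})+\beta_\psi^{2}\bigr)$; therefore its eigenvalues are exactly $\tfrac{\alpha_\phi^{2}}{d}(\sigma_{i}^{2}+\sigma_{j}^{2})+\beta_\psi^{2}$, with eigenvector $(\sigma_{j},\sigma_{i})$ in the $(\Lambda_{ij},\Lambda_{ji})$-coordinates, and $\beta_\psi^{2}$, with eigenvector $(\sigma_{i},-\sigma_{j})$. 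The line $\Rbb\,\Lambda_{ii}$ contributes $\tfrac{2\alpha_\phi^{2}\sigma_{i}^{2}}{d}+\beta_\psi^{2}$, i.e.\ the $i=j$ case of the first family, with eigenvector $\Lambda_{ii}$. Finally a line $\Rbb\,\Lambda_{ij}$ outside the corner gives $\tfrac{\alpha_\phi^{2}}{d}\sigma_{j}^{2}+\beta_\psi^{2}$: if $d>p$ these are indexed by $j\le p<i$ and give the third family, while if $p>d$ one has $\sigma_{j}=0$ and they give further copies of $\beta_\psi^{2}$ with eigenvectors $\Lambda_{ij}$, $i\le d<j$ --- so exactly one of the two ``extra'' families is nonempty, according to the sign of $d-p$. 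Translating each eigenmatrix $\Lambda$ back by $M=U\Lambda V^{\top}$, i.e.\ $\Lambda_{k\ell}\mapsto\u^{k}(\v^{\ell})^{\top}\simeq\u^{k}\otimes\v^{\ell}$, and using the elementary norm identity $\Vert\sigma_{j}\u^{i}\otimes\v^{j}+\sigma_{i}\u^{j}\otimes\v^{i}\Vert^{2}=(1+\delta_{ij})(\sigma_{i}^{2}+\sigma_{j}^{2})$ (and its analogue with the minus sign), yields precisely the stated orthonormal eigenvectors. The dimension count $\binom{m+1}{2}+\binom{m}{2}+|d-p|\,m=m\cdot\max\{d,p\}=dp$ confirms that no eigenspace is missed.

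The statement carries no deep obstacle: its entire content is the algebraic fact that the singular basis of $W$ decouples $\hat{\Q}$ into $1$- and $2$-dimensional blocks. The only places that require care are (i) unwinding $\tau_{23}$ and $\tau_{24}$ correctly to obtain the matrix action of $\hat{\Q}$, and (ii) the index bookkeeping near $m=\min\{d,p\}$, so that the three families in the statement exhaust $\Rbb^{d}\otimes\Rbb^{p}$ and the single displayed formula genuinely covers both regimes $d>p$ and $p>d$.
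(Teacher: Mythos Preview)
Your proposal is correct and follows essentially the same approach as the paper: both arguments show that in the singular basis $\{\u^i\otimes\v^j\}$ the operator $\hat{\Q}$ decouples into the identical $2\times2$ blocks $\tfrac{\alpha_\phi^2}{d}\bigl(\begin{smallmatrix}\sigma_j^2&\sigma_i\sigma_j\\\sigma_i\sigma_j&\sigma_i^2\end{smallmatrix}\bigr)+\beta_\psi^2 I$, and then diagonalize these. Your packaging via the matrix map $M\mapsto\tfrac{\alpha_\phi^2}{d}(MW^\top W+WM^\top W)+\beta_\psi^2 M$ and the change of variables $M=U\Lambda V^\top$ is slightly more systematic than the paper's direct computation of $\hat{\Q}(\u^i\otimes\v^j)$, and your explicit dimension count is a nice addition that the paper omits, but the mathematical content is the same.
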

\begin{proof}
  We start by computing 
  \[
  \left(
    \tau_{23}\left(\Id_d\otimes W^\top W\right)\u^i\otimes \v^j
  \right)_{k\ell}
  =
  \sum_{q=1}^d\sum_{r=1}^p \tau_{23}(\Id_d\otimes W^\top W)_{k\ell qr}\u^i_{q}\v^j_r
  = 
  \sum_{r=1}^p (W^\top W)_{\ell q}\u^i_k \v^j_q
  =
  \sigma_j^2(\u^i\otimes \v^j)_{k\ell}.
  \]
  For the second term, we have 
  \[
  \left(
    \tau_{24}\left(W\otimes W\right)\u^i\otimes \v^j
  \right)_{k\ell}
  =
  \sum_{q=1}^d \sum_{r=1}^p
  W_{kr}W_{q\ell}\u^i_q \v^j_r
  =
  \sigma_i\sigma_j (\u^j\otimes \v^i)_{k\ell}.
  \]
  We thus see that on $\mathrm{Span}\left( \u^i\otimes \v^j,\u^j\otimes \v^i\right)$, for $i\neq j$ and $i,j \leq \min\{p,d\}$, $\hat{\Q}$ is stable and the matrix in this basis can be written as 
  \[
  \frac{\alpha_\phi^2}{d}\begin{bmatrix}
    \sigma_j^2 & \sigma_i\sigma_j \\
    \sigma_i\sigma_j & \sigma_i^2
  \end{bmatrix}  
  +
  \beta_\psi^2 \Id_2
  \]
  which has for eigenvalues $\frac{\alpha_\psi^2}{d}(\sigma_i^2+\sigma_j^2)+\beta_\psi^2$ and $\beta_\psi^2$ and we have 
  \[
  \begin{bmatrix}
    \sigma_j^2 & \sigma_i\sigma_j\\
    \sigma_i\sigma_j & \sigma_i^2
  \end{bmatrix}
  \begin{bmatrix}
    \sigma_j \\
    \sigma_i
  \end{bmatrix}
  =
  (\sigma_i^2+\sigma_j^2)\begin{bmatrix}
    \sigma_j\\
    \sigma_i
  \end{bmatrix},
  \quad 
  \begin{bmatrix}
    \sigma_j^2 & \sigma_i\sigma_j\\
    \sigma_i\sigma_j & \sigma_i^2
  \end{bmatrix} 
  \begin{bmatrix}
    \sigma_i \\
    -\sigma_j
  \end{bmatrix}
  =
  0.
  \]
  As for other pairs, for $i \leq \min\{p,d\}$,
  $\u^i\otimes \v^i$ is an eigenvector of $\hat{\Q}-\beta_\psi^2 \mathcal{I}_{d,p}$ of eigenvalue $2\sigma_i^2$.  We also have that $\u^i\otimes \v^j \in  \mathrm{Ker} \left(
    \hat{\Q}-\beta_\psi^2 \mathcal{I}_{d,p}
  \right)$ whenever $j > \min\{d,p\}$.
  
\end{proof} 
We are now ready to give the asymptotic e.e.d of the tensor $\hat{\Q}$.
\begin{lem}
  The asymptotic eigenvalue distribution of $\hat{\Q}$ is given by $\mathrm{Law}(\alpha_\phi^2 \chi +\beta_\psi^2)$ where 
  \[
  \mathrm{Law}(\chi) 
  = \frac{\gamma_2}{2}\mu_{\mathrm{MP}}^{\gamma_2}\ast \mu_{\mathrm{MP}}^{\gamma_2}
  +
  (1-\gamma_2)\mu_{\mathrm{MP}}^{\gamma_2}
  +
  \frac{\gamma_2}{2}\delta_0.
  \]
\end{lem}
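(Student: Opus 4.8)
The plan is to read off the asymptotic empirical eigenvalue distribution of $\hat{\Q}$ directly from the exact spectral description in Lemma \ref{lem:spectrumQhat}, using the convergence of the squared singular values of $W$ to $\mu_{\mathrm{MP}}^{\gamma_2}$ from Theorem \ref{thm:svd}. Since $\hat{\Q} = \frac{\alpha_\phi^2}{d}\hat{\Q}_1 + \beta_\psi^2 \mathcal{I}_{d,p}$, it suffices to identify the limiting e.e.d.\ of $\chi_N \coloneqq \frac1d \hat{\Q}_1$; the stated claim then follows by the affine map $x\mapsto \alpha_\phi^2 x + \beta_\psi^2$. First I would set $m \coloneqq \min\{d,p\}$ and write the list of $dp$ eigenvalues of $\frac1d\hat{\Q}_1$ supplied by Lemma \ref{lem:spectrumQhat}: the values $\frac1d(\sigma_i^2+\sigma_j^2)$ for $i\le j\le m$ (with multiplicity one each, and $i=j$ giving $\frac{2\sigma_i^2}{d}$), the values $\frac{\sigma_j^2}{d}$ for $j\le m < i \le d$ (appearing when $p<d$), and the value $0$ with the remaining multiplicity (the $\u^i\otimes\v^j$ with $j>m$, appearing when $p>d$). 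The empirical measure of $\hat{\Q}$'s spectrum is the pushforward of this under $x\mapsto \alpha_\phi^2 x + \beta_\psi^2$, so I focus on the empirical measure $\rho_N$ of the $\frac1d\hat{\Q}_1$ eigenvalues.

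Next I would count the three blocks by their asymptotic mass out of the total $dp$ eigenvalues. The ``$0$'' block has size $d(p-m)^+ = d(p-d)^+$, which is a fraction $\frac{(p-d)^+}{p} \to (1 - 1/\gamma_2)^+ = 1 - \min\{1,1/\gamma_2\}$; when $\gamma_2 \le 1$ this is $0$, when $\gamma_2>1$ it equals $1-1/\gamma_2$. The ``single'' block $\{\sigma_j^2/d : j\le m < i\le d\}$ has size $m(d-m)^+ = d(d-m)^+$ when $p>d$, i.e.\ $m=d$ so this block is empty; it is nonempty only when $p<d$, where it has size $p(d-p)$, a fraction $\frac{d-p}{d} \to 1 - \gamma_2$ (valid for $\gamma_2<1$); and the ``pair'' block has the remaining fraction. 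To get the limiting measure I would use the standard device: sample $i^*, j^*$ uniformly and independently from $\{1,\dots,m\}$; then $\frac1d(\sigma_{i^*}^2+\sigma_{j^*}^2)$ converges in distribution to $X+Y$ with $X,Y$ i.i.d.\ $\mu_{\mathrm{MP}}^{\gamma_2}$ (this is where Theorem \ref{thm:svd} and weak convergence of the squared-singular-value distribution enter, together with the scaling $\sigma_i^2/d \approx (\text{sing.\ value of } \frac{1}{\sqrt d}W)^2 \to \mu_{\mathrm{MP}}^{\gamma_2}$), i.e.\ the law $\mu_{\mathrm{MP}}^{\gamma_2}\ast\mu_{\mathrm{MP}}^{\gamma_2}$; and $\frac{\sigma_{j^*}^2}{d}$ converges to $\mu_{\mathrm{MP}}^{\gamma_2}$. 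The ``pair'' block is indexed by $i\le j$, i.e.\ roughly half of the $m^2$ ordered pairs (the diagonal $i=j$ is asymptotically negligible, of order $m = o(m^2)$), so its empirical measure is asymptotically the symmetrization $\mu_{\mathrm{MP}}^{\gamma_2}\ast\mu_{\mathrm{MP}}^{\gamma_2}$ of the unordered-pair statistic, which equals that of the ordered-pair statistic; the diagonal contributing $\frac{2\sigma_i^2}{d}$ with law $2\cdot\mu_{\mathrm{MP}}^{\gamma_2}$ is negligible in mass.

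Assembling the masses: when $\gamma_2\le 1$ (so $m=d$, $p\ge d$), the pair block has size $\binom{d}{2}+d = \frac{d(d+1)}{2}$, fraction $\frac{d+1}{2p}\to \frac{\gamma_2}{2}$, with law $\mu_{\mathrm{MP}}^{\gamma_2}\ast\mu_{\mathrm{MP}}^{\gamma_2}$; the single block is empty; the $0$ block has fraction $1-\gamma_2$. This is not quite the claimed formula, which has $(1-\gamma_2)\mu_{\mathrm{MP}}^{\gamma_2}$ and only $\frac{\gamma_2}{2}\delta_0$; the resolution is that I should instead take $m = p$ when $p<d$ and $m=d$ when $p\ge d$ but present the count by symmetry — the cleaner route is to note that $\hat\Q_1$ acts on $\R^d\otimes\R^p$ and by Lemma \ref{lem:spectrumQhat} its nonzero part is supported on the span of $\{\u^i\otimes\v^j : i\le m \text{ or } j\le m\}$ of dimension $dp - (d-m)(p-m)$, with the block structure symmetric in the roles once one fixes which of $d,p$ is smaller. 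I would therefore split into the two cases $\gamma_2 \le 1$ and $\gamma_2 > 1$ and verify the claimed weights $\frac{\gamma_2}{2}, 1-\gamma_2, \frac{\gamma_2}{2}$ (resp.\ their $\gamma_2>1$ analogues, where $\mu_{\mathrm{MP}}^{\gamma_2}$ itself carries an atom at $0$ of mass $1-1/\gamma_2$ which recombines correctly). The main obstacle I anticipate is precisely this bookkeeping: matching the atom at $0$ coming from the block structure with the atom already present in $\mu_{\mathrm{MP}}^{\gamma_2}$ for $\gamma_2>1$, and checking that the two cases glue into the single stated formula $\mathrm{Law}(\chi) = \frac{\gamma_2}{2}\mu_{\mathrm{MP}}^{\gamma_2}\ast\mu_{\mathrm{MP}}^{\gamma_2} + (1-\gamma_2)\mu_{\mathrm{MP}}^{\gamma_2} + \frac{\gamma_2}{2}\delta_0$; the convergence-in-distribution input itself is routine given Theorem \ref{thm:svd} and the continuity of convolution.
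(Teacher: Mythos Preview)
Your approach is exactly the paper's: read off the eigenvalue list from Lemma~\ref{lem:spectrumQhat}, split into the cases $p<d$ and $p\ge d$, count the three blocks (pair, single, zero), and invoke Theorem~\ref{thm:svd} for the Marchenko--Pastur limit of $\sigma_i^2/d$. The one concrete slip is your case labeling: since $\gamma_2 = \lim p/d$, the regime $\gamma_2\le 1$ corresponds to $p\le d$ and hence $m=p$, not $m=d$; with that fix your fraction $\frac{m(m+1)}{2dp}$ correctly gives $\frac{\gamma_2}{2}$ when $m=p$ and $\frac{1}{2\gamma_2}$ when $m=d$, and the single block $\{\sigma_j^2/d\}_{j\le p<i\le d}$ is nonempty precisely when $p<d$ (mass $\to 1-\gamma_2$). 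For $\gamma_2>1$ the paper does exactly what you anticipate: it introduces the reduced measure $\bar\mu_{\mathrm{MP}}^{\gamma_2}\coloneqq \gamma_2(\mu_{\mathrm{MP}}^{\gamma_2}-(1-1/\gamma_2)\delta_0)$, writes the limit as $\frac{1}{2\gamma_2}\bar\mu*\bar\mu + (1-\frac{1}{2\gamma_2})\delta_0$, and then expands algebraically to recover the same signed combination $\frac{\gamma_2}{2}\mu*\mu + (1-\gamma_2)\mu + \frac{\gamma_2}{2}\delta_0$.
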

\begin{proof}
  By Lemma \ref{lem:spectrumQhat}, if we remove the scaling and the shift we see that the eigenvalues are given in terms of the reduced singular values (nonzero) of $W$. 
  
  First, suppose that $d>p$, in this case we have $\frac{p(p+1)}{2}$ eigenvalues of the form $\sigma_i^2+\sigma_j^2$. Combined with Theorem \ref{thm:svd}, we obtain a proportion of $\frac{p(p+1)}{2pd}\to \frac{\gamma_2}{2}$ converging toward $\mu_{\mathrm{MP}}^{\gamma_2}\ast \mu_{\mathrm{MP}}^{\gamma_2}$ and we have $p(d-p)$ eigenvalues of the form $\sigma_i^2$ for a proportion of $\frac{p(d-p)}{dp}\to 1-\gamma_2$ converging toward $\mu_{\mathrm{MP}}^{\gamma_2}$. This also gives a multiplicity of $\frac{p(p-1)}{2}$ for the eigenvalue 0 for a proportion of $\frac{p(p-1)}{2pd}\to\frac{\gamma_2}{2}$. Finally, we do obtain a scaling and shift of the measure 
  \[
   \frac{\gamma_2}{2}\mu_{\mathrm{MP}}^{\gamma_2}\ast \mu_{\mathrm{MP}}^{\gamma_2}
  +
  (1-\gamma_2)\mu_{\mathrm{MP}}^{\gamma_2}
  +
  \frac{\gamma_2}{2}\delta_0.
  \]
  
  Now, suppose that $d\leqslant p$, then the Marchenko--pastur distribution $\mu_{\mathrm{MP}}^{\gamma_2}$ has a point mass at 0 of size $1-\frac{1}{\gamma_2}$ and $\mu_{\mathrm{MP}}^{\gamma_2}-\left(1-\frac{1}{\gamma_2}\right)\delta_0$ is a sub-probability measure of total mass $\frac{1}{\gamma_2}$ and thus we define the reduced Marchenko--Pastur distribution, now a probability measure, 
  \begin{equation}\label{eq:defmubar}
  \mathrm{Law(\chi)}=\bar{\mu}_{\mathrm{MP}}^{\gamma_2} 
  \coloneqq
  \gamma_2\left( 
    \mu_{\mathrm{MP}}^{\gamma_2}-
    \left(
      1-\frac{1}{\gamma_2}
    \right)
      \delta_0
  \right).
  \end{equation}
  Similarly as the case $d>p$, we have $\frac{d(d+1)}{2}$ eigenvalues of the form $\sigma_i^2+\sigma_j^2$ for a proportion of $\frac{d(d+1)}{2pd}\to \frac{1}{2\gamma_2}$ converging towards $\bar{\mu}_{\mathrm{MP}}^{\gamma_2}\ast \bar{\mu}_{\mathrm{MP}}^{\gamma_2}$ and we have a multiplicity of $\frac{d(2p-(d+1))}{2}$ for the eigenvalue 0 for a proportion going towards $1-\frac{1}{2\gamma_2}$ and we thus gets a scaling and shift of the measure 
  \[
  \frac{1}{2\gamma_2}\bar{\mu}_{\mathrm{MP}}^{\gamma_2}\ast \bar{\mu}_{\mathrm{MP}}^{\gamma_2}
  +
  \left(
    1-\frac{1}{2\gamma_2}
  \right)\delta_0.
  \] 
  We can now use the definition of $\bar{\mu}_{\mathrm{MP}}^{\gamma_2}$ and unfold to write
  \begin{align*}
  \mathrm{Law}(\chi) &= \frac{\gamma_2}{2}\left(
    \mu_{\mathrm{MP}}^{\gamma_2}\ast \mu_{\mathrm{MP}}^{\gamma_2}
    -2\left(1-\frac{1}{\gamma_2}\right)\mu_{\mathrm{MP}}^{\gamma_2}
    +\left(1-\frac{1}{\gamma_2}\right)^2\delta_0
  \right)
  +
  \left(1-\frac{1}{2\gamma_2}\right)\delta_0
  \\&= 
  \frac{\gamma_2}{2}
  \mu_{\mathrm{MP}}^{\gamma_2}\ast \mu_{\mathrm{MP}}^{\gamma_2}
  +
  (1-\gamma_2)\mu_{\mathrm{MP}}^{\gamma_2}
  +
  \frac{\gamma_2}{2}\delta_0.
  \end{align*}
\end{proof}
This does not give the asymptotic eigenvalue distribution of $\Q$. However, in the two cases from Corollary \ref{cor:main}, we can obtain the limiting measure.
\begin{lem}
  If $\nu=\delta_1$ or if $\beta_\psi^2=0$ i.e $\varphi(x)=ax+b$ then the asymptotic eigenvalue distribution of $\Q$ is given by 
  \[
  \mathrm{Law}\left( \alpha_\phi^2 \chi + \beta_\psi^2\right)
  \quad\text{where}\quad
  \mathrm{Law}(\chi) = \frac{\gamma_2}{2} (\mu_{\mathrm{MP}}^{\gamma_2} \boxtimes \nu) * (\mu_{\mathrm{MP}}^{\gamma_2} \boxtimes \nu) + (1-\gamma_2) (\mu_{\mathrm{MP}}^{\gamma_2} \boxtimes \nu) + \frac{\gamma_2}{2} \delta_0.
  \]
\end{lem}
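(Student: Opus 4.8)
The plan is to deduce both cases from the spectral analysis already performed for $\hat{\Q}$, using the factorization $\Q = \D\hat{\Q}\D$ of Lemma~\ref{lem:factor}, where $\D = \tau_{23}(\Id_d\otimes D)$ is a symmetric operator on $\R^{d,p}$ (indeed $\D_{ijk\ell} = \delta_{ik}\delta_{j\ell}D_{jj}$, so $\D^2 = \tau_{23}(\Id_d\otimes D^2)$).

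\textbf{Case $\nu = \delta_1$.} Here $a_{11}^2 = 1$ almost surely, so $D^2 = \Id_p$ and hence $\D^2 = \tau_{23}(\Id_d\otimes\Id_p) = \mathcal{I}_{d,p}$; thus $\D$ is a symmetric orthogonal involution and $\Q = \D\hat{\Q}\D^{-1}$ is orthogonally similar to $\hat{\Q}$. Therefore $\Q$ and $\hat{\Q}$ have the same spectrum for every realization, and the asymptotic eigenvalue distribution of $\Q$ is the one computed in the preceding lemma, namely $\mathrm{Law}(\alpha_\phi^2\chi + \beta_\psi^2)$ with $\mathrm{Law}(\chi) = \frac{\gamma_2}{2}\mu_{\mathrm{MP}}^{\gamma_2}*\mu_{\mathrm{MP}}^{\gamma_2} + (1-\gamma_2)\mu_{\mathrm{MP}}^{\gamma_2} + \frac{\gamma_2}{2}\delta_0$. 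Since $\mu_{\mathrm{MP}}^{\gamma_2}\boxtimes\delta_1 = \mu_{\mathrm{MP}}^{\gamma_2}$, this is precisely the asserted formula with $\nu = \delta_1$.

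\textbf{Case $\varphi(x) = ax+b$.} Then $\psi\equiv 0$, so $\beta_\psi^2 = 0$ and, by Lemma~\ref{lem:factor}, $\Q = \tfrac{\alpha_\phi^2}{d}\Q_1$ with $\Q_1 = \tau_{23}(\Id_d\otimes (WD)^\top(WD)) + \tau_{24}(WD\otimes WD)$. I would repeat the computation of Lemma~\ref{lem:spectrumQhat} verbatim, but replacing the singular value decomposition of $W$ by that of $WD = \tilde{U}\tilde{\Sigma}\tilde{V}^\top$: the identities $\tau_{23}(\Id_d\otimes(WD)^\top(WD))(\tilde{\u}^i\otimes\tilde{\v}^j) = \tilde{\sigma}_j^2\,\tilde{\u}^i\otimes\tilde{\v}^j$ and $\tau_{24}(WD\otimes WD)(\tilde{\u}^i\otimes\tilde{\v}^j) = \tilde{\sigma}_i\tilde{\sigma}_j\,\tilde{\u}^j\otimes\tilde{\v}^i$ hold for exactly the same reason (using $\tau_{24}(WD\otimes WD)(\u\otimes\v) = (WD\v)\otimes(DW^\top\u)$), and $\{\tilde{\u}^i\otimes\tilde{\v}^j\}$ is an orthonormal basis of $\R^{d,p}$. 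This gives, writing $\eta_j = \tilde{\sigma}_j^2/d$,
\[
\mathrm{Spec}\!\left(\tfrac1d\Q_1\right) = \{0\} \cup \left\{\eta_i+\eta_j\right\}_{i\le j\le\min\{d,p\}} \cup \left\{\eta_j\right\}_{j\le p < i},
\]
the zero eigenvalue carrying the remaining multiplicity, exactly as in Lemma~\ref{lem:spectrumQhat} with $\beta_\psi^2 = 0$ and $\sigma_j$ replaced by $\tilde{\sigma}_j$.

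It remains to feed in the limiting law of the nonzero $\eta_j$. These are the nonzero eigenvalues of $\tfrac1d DW^\top W D$, which is the sample covariance matrix $\tfrac1b A^{1/2}YY^{\top}A^{1/2}$ of Theorem~\ref{theo:baisilverstein} with $Y = W^\top$ ($a = p$, $b = d$) and $A = D^2$, whose empirical spectral distribution tends to $\nu$; since $a/b = p/d\to\gamma_2$, its empirical eigenvalue distribution converges to $\mu_{\mathrm{MP}}^{\gamma_2}\boxtimes\nu$. For $\gamma_2 \le 1$ this limit has no atom at $0$ and all $p$ eigenvalues are asymptotically relevant; for $\gamma_2 > 1$ it has an atom of mass $1-1/\gamma_2$ at $0$, and the e.e.d of the $d$ nonzero $\eta_j$ converges to the reduced measure $\bar\mu_{\mathrm{MP}}^{\gamma_2}\boxtimes\nu \coloneqq \gamma_2\big((\mu_{\mathrm{MP}}^{\gamma_2}\boxtimes\nu) - (1-\tfrac1{\gamma_2})\delta_0\big)$, in analogy with \eqref{eq:defmubar}. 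From here the argument is word-for-word the one in the proof of the preceding lemma: the empirical pair distribution of two distinct $\eta_i,\eta_j$ factorizes in the limit, so the $\{\eta_i+\eta_j\}_{i\le j}$ block (the diagonal $i=j$ being negligible) has asymptotic density the classical self-convolution of the limiting law, while the $\{\eta_j\}_{j\le p<i}$ block reproduces that law itself; the proportions $\gamma_2/2$, $1-\gamma_2$, $\gamma_2/2$ (resp.\ $\tfrac1{2\gamma_2}$ and $1-\tfrac1{2\gamma_2}$ when $\gamma_2>1$) are the same, and the identical elementary expansion of $\bar\mu_{\mathrm{MP}}^{\gamma_2}\boxtimes\nu$ collapses the $\gamma_2>1$ expression into $\frac{\gamma_2}{2}(\mu_{\mathrm{MP}}^{\gamma_2}\boxtimes\nu)*(\mu_{\mathrm{MP}}^{\gamma_2}\boxtimes\nu) + (1-\gamma_2)(\mu_{\mathrm{MP}}^{\gamma_2}\boxtimes\nu) + \frac{\gamma_2}{2}\delta_0$. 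Scaling by $\alpha_\phi^2$ yields $\mathrm{Law}(\alpha_\phi^2\chi) = \mathrm{Law}(\alpha_\phi^2\chi + \beta_\psi^2)$, as claimed. The only genuinely new input beyond the $\hat{\Q}$ case is the identification of the singular-value law of $WD$ via Theorem~\ref{theo:baisilverstein}; the remaining care is purely in the atom-at-$0$ bookkeeping, which the reduced-measure device handles uniformly, and I expect the boundary case $\gamma_2 = 1$ to follow from the same counting (or by continuity in $\gamma_2$).
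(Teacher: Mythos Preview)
Your proof is correct and follows essentially the same approach as the paper: the $\nu=\delta_1$ case reduces $\Q$ to (something with the same spectrum as) $\hat{\Q}$, and the $\beta_\psi^2=0$ case reruns Lemma~\ref{lem:spectrumQhat} with the SVD of $WD$ in place of that of $W$, invoking Theorem~\ref{theo:baisilverstein} for the limiting singular-value law. Your treatment of the first case is in fact slightly more careful than the paper's, since $\nu=\delta_1$ only gives $D^2=\Id_p$ (not $D=\Id_p$), which you handle correctly via the orthogonal conjugation $\Q=\D\hat{\Q}\D^{-1}$.
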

\begin{proof}
  We can do the two cases separately. If $\nu=\delta_1$ then $D=\mathrm{Id}_p$ and the factorization from Lemma \ref{lem:factor} is irrelevant and the asymptotic eigenvalue distribution of $\Q$ is the same as of $\hat{\Q}$ thus giving the result. In the case where $\beta_\psi^2=0$, the factorization is not needed in Lemma \ref{lem:spectrumQhat} and we can deal directly with $\Q$ by changing the singular values and vectors of $W$ by the singular values and vectors of $WD$. We thus obtain the result using Theorem \ref{theo:baisilverstein}.
\end{proof}
\subsection{Moment computation}
While the factorization from Lemma \ref{lem:factor} helped us compute the exact spectrum of $\hat{\Q}$, it does not always help us to compute the asymptotic eigenvalue of $\Q$ as there is no good relationship between the spectrum of $\hat{\Q}$ and the one of $\D \hat{\Q}\D$. In order to give information on the asymptotic eigenvalue distribution, we compute its moments. We first define a projection on the first coordinates in our tensor space.
\begin{defn}
Let $\mathbf{e}_1,\dots, \mathbf{e}_d$ be an orthonormal basis of $\Rbb^d$, we define the projection $\Pi^{i}:\Rbb^{d,p}\to\Rbb^{d,p}$, 
\[
\Pi^i (\v\otimes\mathbf{w}) = \langle\v,\ebf_i\rangle \ebf_i\otimes \mathbf{w}.
\] 
\end{defn}
We can then write the projection of the tensor $\widehat{\Q}$.
\begin{lem}\label{lem:ProjectionQhat}
We have for $i,j\in\unn{1}{d}$,
\[
  \left(\Pi^{i}\widehat{\Q}\Pi^{j}\right)(\v\otimes \mathbf{w}) = 
  v_j\ebf_i\otimes \left(
    \beta_\psi^2\delta_{ij}\w+\frac{\alpha_\phi^2}{d}\left(
      \delta_{ij} W^\top W\w+(W\w)_iW^\top\ebf_j
    \right)
  \right)
\]
\end{lem}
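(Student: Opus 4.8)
The plan is to reduce the computation to the action of the braid operators appearing in $\widehat{\Q}$ on a simple tensor $\v\otimes\w\in\Rbb^{d,p}$, which has effectively already been recorded in the proofs of Lemma \ref{lem:factor} and Lemma \ref{lem:spectrumQhat}. Concretely, I would first note the three elementary identities
\[
\tau_{23}(\Id_d\otimes W^\top W)(\u\otimes\w)=\u\otimes (W^\top W\,\w),\qquad
\tau_{24}(W\otimes W)(\u\otimes\w)=(W\w)\otimes(W^\top\u),\qquad
\mathcal{I}_{d,p}(\u\otimes\w)=\u\otimes\w,
\]
the first two being exactly the index-level computations carried out (on singular vectors) in the proof of Lemma \ref{lem:spectrumQhat} and (with the $D$-factors) in the proof of Lemma \ref{lem:factor}. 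Summing with the prefactors from the definition $\widehat{\Q}=\tfrac{\alpha_\phi^2}{d}\widehat{\Q}_1+\beta_\psi^2\widehat{\Q}_2$ gives, for any $\u\in\Rbb^d$, $\w\in\Rbb^p$,
\[
\widehat{\Q}(\u\otimes\w)=\frac{\alpha_\phi^2}{d}\Bigl(\u\otimes(W^\top W\,\w)+(W\w)\otimes(W^\top\u)\Bigr)+\beta_\psi^2\,\u\otimes\w.
\]

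Next I would apply this with $\u=\ebf_j$ after using $\Pi^j(\v\otimes\w)=v_j\,\ebf_j\otimes\w$, and then act by $\Pi^i$ on each of the three resulting summands via $\Pi^i(\u'\otimes\mathbf{b})=\langle\u',\ebf_i\rangle\,\ebf_i\otimes\mathbf{b}$. The first and third summands carry the factor $\langle\ebf_j,\ebf_i\rangle=\delta_{ij}$, while the middle summand carries $\langle W\w,\ebf_i\rangle=(W\w)_i$; pulling out the common $\ebf_i\otimes(\cdot)$ then yields precisely
\[
\left(\Pi^{i}\widehat{\Q}\Pi^{j}\right)(\v\otimes\w)=v_j\,\ebf_i\otimes\left(\beta_\psi^2\delta_{ij}\w+\frac{\alpha_\phi^2}{d}\bigl(\delta_{ij}W^\top W\w+(W\w)_iW^\top\ebf_j\bigr)\right).
\]
Since $\widehat{\Q}$, $\Pi^i$ and $\Pi^j$ are all linear, verifying the identity on simple tensors $\v\otimes\w$ suffices to conclude it on all of $\Rbb^{d,p}$.

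This is a routine calculation and I do not anticipate any genuine obstacle. The only point meriting a moment of care is fixing the braid conventions — in particular that $\tau_{24}(W\otimes W)$ sends $\u\otimes\w$ to $(W\w)\otimes(W^\top\u)$ rather than with $W$ and $W^\top$ or the two tensor legs interchanged — but this is already pinned down by the explicit entrywise computation of $(\D\tau_{24}(W\otimes W)\D)_{ijk\ell}$ in the proof of Lemma \ref{lem:factor}, which I would simply cite.
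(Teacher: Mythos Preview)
Your proposal is correct and follows essentially the same route as the paper's proof: first compute $\widehat{\Q}\Pi^j(\v\otimes\w)=\widehat{\Q}(v_j\ebf_j\otimes\w)$ using the action of each braid on a simple tensor, then apply $\Pi^i$ term by term. Your only addition is the explicit remark about fixing the $\tau_{24}$ convention via Lemma~\ref{lem:factor}, which is harmless and arguably clarifying.
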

\begin{proof}
  We compute, by denoting $v_i=\langle \v,\mathbf{e}_i\rangle$ 
  \[
    (\widehat{\Q}\Pi^j)(\v\otimes \mathbf{w})
    =
    \widehat{\Q}(v_j\ebf_j\otimes \w) 
    =
    \frac{\alpha_\phi^2}{d}
    \left(
      v_j\ebf_j\otimes W^\top W \w + W\w\otimes v_j W^\top \ebf_j 
    \right)
    +
    \beta^2_\psi v_j\ebf_j\otimes \w.
  \]
  Now applying $\Pi^i$ to this, we obtain 
  \[
    (\Pi^i\widehat{\Q}\Pi^j)(\v\otimes \w)
    =
    \frac{\alpha_\phi^2}{d}\left(
      v_j\delta_{ij}\ebf_j\otimes W^\top W \w + v_j(W\w)_i\ebf_i\otimes W^\top \ebf_j
    \right)
    +
    \beta_\psi^2 v_j \delta_{ij}\ebf_j\otimes \w
  \]
  which gives the result.
\end{proof}
We then use the decomposition on the orthonormal basis to compute the tracial moments of $\Q$.
\begin{lem}\label{lem:TrQrepresentation}
  We have
  \[
  \frac{1}{pd}
    \Tr(\Q^k)
  =
  \frac{1}{pd}\sum_{\substack{i_1,\dots,i_{k+1}=1\\i_1=i_{k+1}}}^d \Tr\left(
    \prod_{\ell=1}^k \D \Pi^{i_\ell}\widehat{\Q}\Pi^{i_{\ell+1}}\D
  \right).
  \]
\end{lem}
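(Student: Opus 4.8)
The plan is to prove this identity by a purely algebraic rearrangement resting on three elementary facts. First, Lemma~\ref{lem:factor} gives $\Q=\D\widehat{\Q}\D$. Second, $\D=\tau_{23}(\Id_d\otimes D)$ acts by $\v\otimes\w\mapsto\v\otimes D\w$ — as is visible from the computation in the proof of Lemma~\ref{lem:factor} — so it acts only on the second tensor factor, whereas $\Pi^{i}\colon\v\otimes\w\mapsto\langle\v,\ebf_i\rangle\,\ebf_i\otimes\w$ acts only on the first; hence $\D$ and $\Pi^{i}$ commute. Third, the $\Pi^{i}$ are orthogonal projections with $(\Pi^{i})^{2}=\Pi^{i}$ and $\sum_{i=1}^{d}\Pi^{i}=\Id$ on $\Rbb^{d,p}$.

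The strategy is then to start from the right-hand side and collapse the sums. Writing out $\prod_{\ell=1}^{k}\D\Pi^{i_\ell}\widehat{\Q}\Pi^{i_{\ell+1}}\D$ with the cyclic convention $i_{k+1}=i_1$, the factor $\Pi^{i_{\ell+1}}\D$ ending block $\ell$ and the factor $\D\Pi^{i_{\ell+1}}$ beginning block $\ell+1$ combine into $\Pi^{i_{\ell+1}}\D^{2}\Pi^{i_{\ell+1}}=\D^{2}\Pi^{i_{\ell+1}}$, using commutation followed by idempotency. Under the trace, the remaining $\D$ at the far left and the $\D$ at the far right are brought adjacent by cyclicity, and the repeated projection coming from $i_{k+1}=i_1$ collapses in exactly the same way; this yields
\[
\Tr\Big(\prod_{\ell=1}^{k}\D\Pi^{i_\ell}\widehat{\Q}\Pi^{i_{\ell+1}}\D\Big)=\Tr\Big(\prod_{\ell=1}^{k}\D^{2}\Pi^{i_\ell}\widehat{\Q}\Big).
\]
Summing over $i_1,\dots,i_k\in\unn{1}{d}$ and applying $\sum_{i}\Pi^{i}=\Id$ to each of the $k$ projection factors produces $\Tr\big((\D^{2}\widehat{\Q})^{k}\big)$, which equals $\Tr\big((\D\widehat{\Q}\D)^{k}\big)=\Tr(\Q^{k})$ after one further use of trace cyclicity (carrying a $\D$ around the product). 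Dividing by $pd$ gives the statement. (Equivalently, one could run the same manipulation forward, starting from $\Tr(\Q^k)=\Tr((\D^2\widehat{\Q})^k)$ and inserting $\Id=\sum_i\Pi^i$ before each copy of $\widehat{\Q}$.)

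I expect no genuine obstacle here: the argument is deterministic, valid for every realization of $W$ and $D$, and requires no analytic estimates. The only point demanding care is the index bookkeeping — in particular, treating the cyclic junction $i_1=i_{k+1}$ on the same footing as the interior junctions $i_2,\dots,i_k$ via trace cyclicity, and simplifying $\Pi^{i}\D^{2}\Pi^{i}=\D^{2}\Pi^{i}$ consistently at each junction.
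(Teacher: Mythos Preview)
Your proof is correct and follows essentially the same approach as the paper: both rely on the factorization $\Q=\D\widehat{\Q}\D$, the commutation of $\D$ with $\Pi^i$, the idempotency $(\Pi^i)^2=\Pi^i$, and the resolution $\sum_i\Pi^i=\Id$, together with trace cyclicity. The only cosmetic difference is direction---the paper expands $\Q^k=(\sum_i\Pi^i\D\widehat{\Q}\D)^k$ forward and then squares the projections, whereas you start from the right-hand side and collapse; you even note this equivalence yourself.
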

\begin{proof}
  We start by noting that we can write  
  \[
  \Q = \D\widehat{Q}\D = \sum_{i=1}^d \Pi^i \D\widehat{\Q}\D.
  \]
  Besides, $\D$ and $\Pi^i$ commute since
  \[
  \Pi^i\D (\v\otimes \w) = \Pi^i(\v\otimes D\w) = v_i\ebf_i\otimes D\w = \D (v_i\ebf_i\otimes \w) = \D\Pi^i(\v\otimes \w)
  \]
  and since $\Pi^i$ is a projection, we have $\Pi^i = (\Pi^i)^2$, thus we can finally write 
  \begin{align*}
  \Q^ k = \left(\sum_{i=1}^d \Pi^i\D\widehat{\Q}\D \right)^k
  &=
  \sum_{i_1,\dots,i_k=1}^d
  \Pi^{i_1}\D\widehat{\Q}\D \Pi^{i_2}\D\widehat{\Q}\D \cdots \Pi^{i_k}\D\widehat{\Q}\D\\
  & = 
  \sum_{i_1,\dots,i_k=1}^d 
  (\Pi^{i_1})^2 \D\widehat{\Q}\D(\Pi^{i_2})^2\D\widehat{\Q}\D \cdots (\Pi^{i_k})^2\D\widehat{\Q}D\\
  & =
  \sum_{i_1,\dots,i_k=1}^d \Pi^{i_1}\D\Pi^{i_1}\widehat{\Q}\Pi^{i_2}\D\Pi^{i_2}\widehat{Q}\Pi^{i_3}\D \cdots \D\Pi^{i_k}\widehat{Q}\D.
  \end{align*}
  Finally, taking the trace and using the cyclic property to put the first $\Pi^{i_1}$ at the end and using that $\D$ and $\Pi_{i_1}$ commute we get the final result.  
\end{proof}

The trace $\Tr$ we can represent as two separate traces $\Tr = \Tr_1\otimes \Tr_2$ where $\Tr_1$ is the trace over the first $d$ coordinates and $\Tr_2$ is the trace over the last $p$ coordinates, so that for $w\in \Rbb^p$ we have 
\[
  \Tr_1(\Q)(w) = \sum_{i=1}^d (\Q \ebf_i\otimes \w)_i.
\]
Thus $\Tr_1(\Q)$ is a matrix, and we have the following representation of this matrix:
\begin{lem}\label{lem:TrQrepresentation2}
  For any $i_1,\dots,i_k\in\unn{1}{d}$, and with $i_{k+1}=i_1$, we let $1 \leq \rho_1 < \rho_2 < \cdots < \rho_{\ell+1}=k+1$ and $u_1, \ldots, u_\ell$ be such that 
  \[
  \begin{aligned}
  i_1 = i_2 = &\dots =i_{\rho_1} = u_1 \\
  i_{\rho_1+1} = i_{\rho_1+2} = &\dots = i_{\rho_2} = u_2 \\
  &\vdots \\
  i_{\rho_\ell+1} = i_{\rho_\ell+2} = &\dots = i_{\rho_{\ell+1}} = u_{1},
  \end{aligned}
  \]
  with $u_1 \neq u_2 \neq \dots \neq u_{\ell} \neq u_1$.  Then, for $\ell \geq 2$ we have, with $n_j \coloneqq \rho_j - \rho_{j-1} \geq 1$ for all $j\in\unn{1}{\ell+1}$ (setting $\rho_0=0$, $u_{\ell+1}=u_1$, $u_0=u_{\ell}$),
  \[
  \Tr \left(
    \prod_{j=1}^k \D \Pi^{i_j}\widehat{\Q}\Pi^{i_{j+1}}\D
  \right)
  =
  \left(       \frac{\alpha_\phi^2}{d}
  W D H(u_1)^{n_1+n_{\ell+1}-2} DW^\top \right)_{u_0, u_2}
  \prod_{j=2}^{\ell}
  \left(       \frac{\alpha_\phi^2}{d}
  W D H(u_j)^{n_j-1} DW^\top \right)_{u_{j-1}, u_{j+1}},
  \]
  where we set $H(u)$ to be 
  \[
  H(u) \coloneqq \beta_\psi^2D^2 + \frac{\alpha_\phi^2}{d}\left( DW^\top WD + DW^\top (\ebf_{u} \otimes \ebf_{u}) WD \right).
  \]
\end{lem}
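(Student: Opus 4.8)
The plan is to first make the building block $\D\Pi^{i}\widehat{\Q}\Pi^{j}\D$ fully explicit by combining Lemma~\ref{lem:ProjectionQhat} with the identity $\D(\v\otimes\w)=\v\otimes D\w$ (which in particular shows $\D$ commutes with every $\Pi^{i}$, as already used in the proof of Lemma~\ref{lem:TrQrepresentation}). Sandwiching the formula of Lemma~\ref{lem:ProjectionQhat} between two copies of $\D$ gives, for every $m\in\unn{1}{d}$ and $\w\in\Rbb^{p}$,
\[
\D\Pi^{i}\widehat{\Q}\Pi^{j}\D\,(\ebf_m\otimes\w)=\delta_{jm}\,\ebf_i\otimes S_{ij}\w,
\qquad
S_{ij}\coloneqq \delta_{ij}\Bigl(\beta_\psi^{2}D^{2}+\tfrac{\alpha_\phi^{2}}{d}DW^{\top}WD\Bigr)+\tfrac{\alpha_\phi^{2}}{d}\bigl(DW^{\top}\ebf_j\bigr)\bigl(DW^{\top}\ebf_i\bigr)^{\top},
\]
where we used $(WD\w)_{i}=(DW^{\top}\ebf_{i})^{\top}\w$. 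Thus $S_{ii}=H(i)$, while for $i\neq j$ the matrix $S_{ij}=\tfrac{\alpha_\phi^{2}}{d}(DW^{\top}\ebf_j)(DW^{\top}\ebf_i)^{\top}$ has rank one. I would also record the elementary identity $(DW^{\top}\ebf_{u})^{\top}M(DW^{\top}\ebf_{u'})=(WD\,M\,DW^{\top})_{u,u'}$, valid for any $p\times p$ matrix $M$, which converts the $p\times p$ traces below into the matrix entries appearing in the statement.

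Next I would compose the $k$ factors $P_{\ell}\coloneqq\D\Pi^{i_{\ell}}\widehat{\Q}\Pi^{i_{\ell+1}}\D$. Since $P_{\ell}$ reads off only the $i_{\ell+1}$-th coordinate of the first tensor leg and outputs a vector supported on $\ebf_{i_{\ell}}$ in that leg, a direct induction (using $i_{k+1}=i_{1}$) gives
\[
\Bigl(\prod_{\ell=1}^{k}P_{\ell}\Bigr)(\ebf_m\otimes\w)=\delta_{m,i_{1}}\,\ebf_{i_{1}}\otimes\bigl(S_{i_{1}i_{2}}S_{i_{2}i_{3}}\cdots S_{i_{k}i_{k+1}}\w\bigr).
\]
Taking the trace over $\Rbb^{d}\otimes\Rbb^{p}$ and summing over the first leg collapses the $\delta$, so that
\[
\Tr\Bigl(\prod_{\ell=1}^{k}\D\Pi^{i_{\ell}}\widehat{\Q}\Pi^{i_{\ell+1}}\D\Bigr)=\Tr_{\Rbb^{p}}\bigl(S_{i_{1}i_{2}}S_{i_{2}i_{3}}\cdots S_{i_{k}i_{k+1}}\bigr).
\]

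Finally I would insert the block decomposition of the cyclic sequence $(i_{\ell})_{\ell=1}^{k}$. Within each constant run all factors $S_{i_{\ell}i_{\ell+1}}$ equal the corresponding $H(u_{m})=S_{u_{m}u_{m}}$, while at each of the $\ell$ block boundaries $S_{i_{\ell}i_{\ell+1}}$ is the rank-one matrix $\tfrac{\alpha_\phi^{2}}{d}(DW^{\top}\ebf_{u_{m+1}})(DW^{\top}\ebf_{u_{m}})^{\top}$; counting lengths, the product reads
\[
H(u_{1})^{n_{1}-1}\,S_{u_{1}u_{2}}\,H(u_{2})^{n_{2}-1}\,S_{u_{2}u_{3}}\cdots H(u_{\ell})^{n_{\ell}-1}\,S_{u_{\ell}u_{1}}\,H(u_{1})^{n_{\ell+1}-1}.
\]
By cyclicity of the trace the two $H(u_{1})$-ends merge into $H(u_{1})^{n_{1}+n_{\ell+1}-2}$, and the trace of the resulting cyclic chain of $\ell$ rank-one factors interleaved with $\ell$ powers of the various $H(u_{m})$ factors into a product of $\ell$ scalars, each of the form $(DW^{\top}\ebf_{a})^{\top}H(u)^{c}(DW^{\top}\ebf_{b})$. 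Rewriting each scalar through the recorded identity, distributing one factor $\tfrac{\alpha_\phi^{2}}{d}$ from the $\ell$ rank-ones into the $\ell$ scalars, and using $u_{\ell+1}=u_{1}$, $u_{0}=u_{\ell}$, this becomes
\[
\Bigl(\tfrac{\alpha_\phi^{2}}{d}WD\,H(u_{1})^{n_{1}+n_{\ell+1}-2}\,DW^{\top}\Bigr)_{u_{0},u_{2}}\prod_{j=2}^{\ell}\Bigl(\tfrac{\alpha_\phi^{2}}{d}WD\,H(u_{j})^{n_{j}-1}\,DW^{\top}\Bigr)_{u_{j-1},u_{j+1}},
\]
which is the claimed identity.

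The argument rests on the first-coordinate collapse and on the rank-one structure of the off-diagonal $S_{ij}$, both immediate from Lemma~\ref{lem:ProjectionQhat}; the only delicate point is the cyclic block bookkeeping, in particular the treatment of the wrap-around — which is why the hypothesis $\ell\ge2$ is imposed and why $H(u_{1})$ enters with the combined exponent $n_{1}+n_{\ell+1}-2$.
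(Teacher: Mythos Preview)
Your proposal is correct and follows essentially the same approach as the paper: both use Lemma~\ref{lem:ProjectionQhat} to collapse the first tensor leg, then exploit the constant-run block structure and the rank-one form of the off-diagonal transitions to factor the resulting $p\times p$ trace into a product of matrix entries. Your introduction of the explicit matrices $S_{ij}$ makes the bookkeeping slightly cleaner than the paper's version, but the underlying computation is the same.
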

\begin{proof}
  Given the definition of $\rho_1$ and $u_1$ we have using Lemma \ref{lem:ProjectionQhat} that 
  \[
  \begin{aligned}
    &\left( \prod_{\ell=1}^{\rho_1} \D \Pi^{i_{\ell}}\widehat{\Q}\Pi^{i_{\ell+1}}\D \right) ( \ebf_{u_2}\otimes \w) \\
    &=
    \ebf_{u_1} \otimes 
    \left(
      \beta_\psi^2D^2 + \frac{\alpha_\phi^2}{d}\left(
        DW^\top WD  + DW^\top (\ebf_{u_1} \otimes \ebf_{u_1}) WD 
      \right)
    \right)^{\rho_1 - 1} \left(
      \frac{\alpha_\phi^2}{d}
      DW^\top (\ebf_{u_2} \otimes \ebf_{u_1}) WD 
    \right)\w \\
    &=
    \ebf_{u_1} \otimes 
      H(u_1)^{\rho_1 - 1} 
  \left(
      \frac{\alpha_\phi^2}{d}
      DW^\top (\ebf_{u_2} \otimes \ebf_{u_1}) WD 
    \right)\w.
    \end{aligned}
  \]
  Continuing in this way (and using that $u_{\ell+1}=u_1$), we have 
  \[
    \begin{aligned}
    &\left( \prod_{\ell=1}^k \D \Pi^{i_\ell}\widehat{\Q}\Pi^{i_{\ell+1}}\D \right) ( \ebf_{u_{\ell+1}}\otimes \w) \\
    &=
    \ebf_{u_1}
    \otimes
    \left(
    \prod_{j=1}^{\ell} 
    \left(
       H(u_j)^{n_j-1}
      \frac{\alpha_\phi^2}{d}
      DW^\top (\ebf_{u_{j+1}} \otimes \ebf_{u_{j}}) WD 
    \right)
    \right)
    H(u_1)^{n_{\ell+1}-1}
    \w.
   \end{aligned}
  \]
  Thus we conclude using cyclicity of the trace and that $u_{\ell+1}=u_1$ and introducing $u_{0}=u_{\ell}$
  \[
    \begin{aligned}
      \Tr \left( \prod_{\ell=1}^k \D \Pi^{i_\ell}\widehat{\Q}\Pi^{i_{\ell+1}}\D \right)
      &=
      \Tr\left(
        H(u_1)^{n_{\ell+1}-1}
        \prod_{j=1}^{\ell} 
        \left(
       H(u_j)^{n_j-1}
      \frac{\alpha_\phi^2}{d}
      DW^\top (\ebf_{u_{j+1}} \otimes \ebf_{u_{j}}) WD 
    \right)
    \right) \\
    &=  
    \left(       \frac{\alpha_\phi^2}{d}
    W D H(u_1)^{n_1+n_{\ell+1}-2} DW^\top \right)_{u_0, u_2}
    \prod_{j=2}^{\ell}
    \left(       \frac{\alpha_\phi^2}{d}
    W D H(u_j)^{n_j-1} DW^\top \right)_{u_{j-1}, u_{j+1}}.
    \end{aligned}
  \]
\end{proof}

\begin{prop}\label{prop:TrQk}
  Define for $n \geq 1$,
  \[
  M(n) \coloneqq \left(
    \frac{\alpha_\phi^2}{d} W D H^{n-1} DW^\top
  \right),
  \quad \text{where} \quad
  H \coloneqq \beta_\psi^2D^2 + \frac{\alpha_\phi^2}{d}\left( DW^\top WD\right).
  \]
  Then we have for $k \geq 1$
  \[
  \frac{1}{pd}\Tr(\Q^k) 
  =
  \frac{1}{p}\Tr(H^k)+
  \frac{1}{dp} \sum n_1 \Tr[ M(n_1)M(n_2)\cdots M(n_\ell)]\Tr[M(n_1')M(n_2')\cdots M(n_\ell')]
  +o_{\P}(1),
  \]
  where $\ell \in \N\{1,\dots,k\}$ and $n_j \geq 1$ for $j \in \unn{1}{\ell}$ and $n_j' \geq 1$ for $j \in \unn{1}{\ell}$ are such that $n_1+\cdots+n_\ell+ n_1'+\cdots+n_\ell'=k$.
\end{prop}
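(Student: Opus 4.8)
The plan is to expand $\frac1{pd}\Tr(\Q^k)$ with Lemma \ref{lem:TrQrepresentation} and to organize the sum over $(i_1,\dots,i_k)$ by run structure, using exactly the parametrization of Lemma \ref{lem:TrQrepresentation2}: every tuple determines uniquely an integer $\ell\ge1$, a cyclically non‑repeating word $u_1\neq u_2\neq\cdots\neq u_\ell\neq u_1$ of run values, and run lengths $n_1,\dots,n_{\ell+1}\ge1$ with $\sum_j n_j=k+1$, where $n_1,n_{\ell+1}$ are the two pieces into which the run through position $1$ is cut. Thus $\frac1{pd}\Tr(\Q^k)$ splits as an $\ell=1$ contribution plus an $\ell\ge2$ contribution, and I will show: the first equals $\frac1p\Tr(H^k)+o_{\P}(1)$; the even‑$\ell$, $\ell\ge2$ part equals the double‑trace sum up to $o_{\P}(1)$; and the odd‑$\ell$ part is $o_{\P}(1)$.

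For $\ell=1$ all $i_j$ equal one value $u$. Since $\D$ and $\Pi^u$ commute and $(\Pi^u)^2=\Pi^u$ (as in the proof of Lemma \ref{lem:TrQrepresentation}), and since by Lemma \ref{lem:ProjectionQhat} the operator $\D\widehat{\Q}\Pi^u$ restricted along $\ebf_u$ acts as multiplication by $H(u)=\beta_\psi^2D^2+\frac{\alpha_\phi^2}{d}\big(DW^\top WD+DW^\top(\ebf_u\otimes\ebf_u)WD\big)$ on $\Rbb^p$, the $\ell=1$ contribution is exactly $\frac1{pd}\sum_{u=1}^d\Tr(H(u)^k)$. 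On the very‑high‑probability event that $\|W^\top\ebf_u\|^2=d(1+o(1))$ simultaneously for all $u$ and $\|H\|_{\mathrm{op}}=\mathcal{O}(1)$, the matrix $H(u)-H=\frac{\alpha_\phi^2}{d}DW^\top(\ebf_u\otimes\ebf_u)WD$ is rank one of operator norm $\mathcal{O}(1)$; expanding $H(u)^k$ in powers of $H(u)-H$ and bounding each resulting term by a product of quantities $v^\top H^a v$ with $\|v\|=1$ gives $\Tr(H(u)^k)=\Tr(H^k)+\mathcal{O}_k(1)$ uniformly in $u$, whence $\frac1{pd}\sum_u\Tr(H(u)^k)=\frac1p\Tr(H^k)+\mathcal{O}_k(1/p)=\frac1p\Tr(H^k)+o_{\P}(1)$.

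For $\ell\ge2$ I feed in Lemma \ref{lem:TrQrepresentation2}. Its formula depends on $(n_1,n_{\ell+1})$ only through $m_1:=n_1+n_{\ell+1}-1$, so summing over the $m_1$ admissible pairs produces a weight $m_1$ — the source of the factor $n_1$ in the statement. I then replace each $H(u_j)$ by $H$, i.e. each $M_{u_j}(n_j):=\frac{\alpha_\phi^2}{d}WDH(u_j)^{n_j-1}DW^\top$ by $M(n_j)$; since $H(u)-H$ is rank one of bounded norm, $M_u(n)-M(n)$ has rank $\mathcal{O}_n(1)$ and operator norm $\mathcal{O}_n(1)$, and the induced error — summed over $u_1\neq\cdots\neq u_\ell\neq u_1$ and divided by $pd$ — is $o_{\P}(1)$, estimated through operator‑norm/Hilbert–Schmidt bounds on the index chains rather than crude entrywise bounds (which are too weak here). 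This replacement is essential because $M_{u_j}(n_j)$ carries the odd index $u_j$, which would otherwise couple the two traces that should factorize. Next I drop the constraint $u_1\neq\cdots\neq u_\ell\neq u_1$: a collision $u_j=u_{j+1}$ fuses two bivalent vertices of the index graph into one tetravalent vertex, and the corresponding sum, bounded by Cauchy–Schwarz over the collided index together with $\|M(n)\|_{\mathrm{op}}=\mathcal{O}(1)$ and $\|M(n)\|_{\mathrm F}=\mathcal{O}(\sqrt d)$ (valid with very high probability), carries one fewer power of $d$ than the leading term and is therefore $o_{\P}(1)$ after the $1/(pd)$ normalization. What remains is $\frac1{pd}\sum_{\ell\ge2}\sum_{n_1+\cdots+n_\ell=k,\,n_j\ge1}n_1\sum_{u_1,\dots,u_\ell=1}^d\prod_{j=1}^\ell M(n_j)_{u_{j-1}u_{j+1}}$, with cyclic indices $u_0=u_\ell,\,u_{\ell+1}=u_1$ and $n_1$ the relabelled $m_1$.

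Finally I evaluate the unconstrained inner sum. Factor $j$ couples $u_{j-1}$ to $u_{j+1}$, so the index graph is a ``skip‑by‑two'' cycle in which each $u_j$ occurs in precisely two factors. When $\ell$ is even the odd‑ and even‑indexed vertices decouple into two independent closed walks, and the sum factorizes as $\Tr\big(M(n_1)M(n_3)\cdots M(n_{\ell-1})\big)\cdot\Tr\big(M(n_2)M(n_4)\cdots M(n_\ell)\big)$, which is $\mathcal{O}(d^2)$ with very high probability; since $d/p\to1/\gamma_2$ this survives the $1/(pd)$ normalization, and relabelling $(n_1,n_3,\dots,n_{\ell-1})$ and $(n_2,n_4,\dots,n_\ell)$ as $(n_1,\dots,n_{\ell/2})$ and $(n_1',\dots,n_{\ell/2}')$ — with $n_1$ still the first exponent of the first trace and $\sum n_j+\sum n_j'=k$ — reproduces exactly the double‑trace sum in the statement. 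When $\ell$ is odd, $\gcd(2,\ell)=1$, so the skip‑by‑two walk is a single cycle, the sum is a single trace of $\ell$ matrices of operator norm $\mathcal{O}(1)$, hence $\mathcal{O}(d)$, and $\frac1{pd}\mathcal{O}(d)=\mathcal{O}(1/p)=o_{\P}(1)$. Assembling the three pieces gives the claimed identity. The hard part will be the uniform control — in $i$, and on a single high‑probability event in $(W,D)$ — of all the subleading families simultaneously (odd $\ell$, collisions, and the $H(u)\to H$ corrections): because $M(n)$ and $M_u(n)-M(n)$ have $\mathcal{O}(1)$ entries and $\mathcal{O}(1)$ operator norm and are genuinely not small, every such bound must be extracted from the matrix/trace structure via operator‑norm and Hilbert–Schmidt inequalities and the low rank of $H(u)-H$, backed by standard high‑probability estimates on $\|W\|_{\mathrm{op}}$, $\|H\|_{\mathrm{op}}$ and the $\chi^2$‑concentration of $\|W^\top\ebf_u\|^2$ uniformly over $u$.
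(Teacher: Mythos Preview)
Your overall strategy --- separate the $\ell=1$ run, replace each $H(u)$ by $H$, drop the adjacency constraint, then observe that odd $\ell$ collapses to a single trace while even $\ell$ factors into two traces --- is exactly the paper's, and your handling of the $\ell=1$ term, the odd-$\ell$ term, and the even-$\ell$ factorization is correct.

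The gap is in your proposed error control by ``operator-norm/Hilbert--Schmidt bounds on the index chains''. Your claim that each collision (or each $H(u)\!\to\!H$ correction) ``carries one fewer power of $d$ than the leading term'' holds only for a \emph{single} such perturbation. With $r\geq 2$ well-separated collisions or $\Delta$-insertions, the contracted index graph is a double cycle on $r$ degree-$4$ vertices: the sum takes the form $\Tr(\tilde A_1\cdots\tilde A_r)$ where each $\tilde A_i$ is the Hadamard product of two $d\times d$ matrices of bounded operator norm. From operator norm alone you only get $\|\tilde A_i\|_{\mathrm F}=\mathcal O(\sqrt d)$ and $\|\tilde A_i\|_{\mathrm{op}}\le\|\tilde A_i\|_{\mathrm F}$, so the best trace bound is $\mathcal O(d^{r/2})$. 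For $r\geq 4$ (which already arises for $k\geq 8$: take $\tilde\ell=8$ with collisions at positions $1,3,5,7$) this is $\mathcal O(d^2)$, the same order as the main two-trace term, and after dividing by $pd$ you are left with $\mathcal O(1)$, not $o(1)$. The low rank of $H(u)-H$ does not rescue you, because its rank-one direction $Z^\top\ebf_{u_j}$ is a \emph{different} vector for each factor $j$, so nothing collapses across factors. What closes this gap in the paper is precisely the ``entrywise'' input you set aside: the anisotropic local-law bound $\max_{i\neq j}|M(n)_{ij}|\le d^{\varepsilon-1/2}$ of Theorem~\ref{a:main}, which forces every off-diagonal edge in the double cycle to cost a factor $d^{-1/2+\varepsilon}$ and renders each error term $\mathcal O(d^{\mathcal O(\varepsilon)}/(pd))$ regardless of $r$. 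That sharp $d^{-1/2}$ off-diagonal decay --- not the naive $|M_{ij}|\le\|M\|_{\mathrm{op}}$ you likely had in mind as ``crude'' --- is the missing ingredient.
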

\begin{proof}
  We start by combining Lemma \ref{lem:TrQrepresentation} and Lemma \ref{lem:TrQrepresentation2} to get 
  \begin{equation}\label{eq:trqk-1}
  \begin{aligned}
    &\frac{1}{pd}\Tr(\Q^k) 
    \\&
    =\frac{1}{p}\Tr(H^k)+
    \frac{1}{pd}
    \hspace{-.6em}\sum_{\substack{\tilde{\ell}, u_1, \dots, u_{\tilde{\ell}} \\ \tilde{n}_1+\cdots+\tilde{n}_{\tilde{\ell}+1}=k+1}}\hspace{-.6em}
    \left(       \frac{\alpha_\phi^2}{d}
    W D H(u_1)^{\tilde{n}_1+\tilde{n}_{\tilde{\ell}+1}-2} DW^\top \right)_{u_0, u_2}
    \prod_{j=2}^{\tilde{\ell}}
    \left(       \frac{\alpha_\phi^2}{d}
    W D H(u_j)^{\tilde{n}_j-1} DW^\top \right)_{u_{j-1}, u_{j+1}}
  \end{aligned}
\end{equation}
  where $\tilde{\ell} \in \{1,\dots,k\}$ and $\tilde{n}_j \geq 1$ for $j \in \unn{1}{\tilde{\ell}}$ are such that $\tilde{n}_1+\cdots+\tilde{n}_{\tilde{\ell}+1}=k+1$, and where $u_i \in \unn{1}{d}$ for $i \in \unn{1}{\tilde{\ell}}$ satisfy $u_{i+1} \neq u_i$ for $i \in \unn{1}{\tilde{\ell}}$.
  We now proceed in steps.
  \paragraph{Step 1: Removing the rank-1 perturbations from $H$.} We start by introducing the following notation
  \[
  Z = \frac{\alpha_\phi}{\sqrt{d}}WD,\quad
    M(u,m) = \frac{\alpha_\phi^2}{d}WDH(u)^{m-1}DW^\top = ZH(u)^{m-1}Z^\top 
  \quad\text{and}\quad
  \Delta(u,m) = M(u,m)-M(m).
  \]
  We expand $\Delta$ and see that only linear terms in $\e_u\otimes \e_u$ are actually important. Indeed, one can write 
  \begin{align*}
  &\Delta(u,m) = 
  \sum_{a_1=0}^{m-1}\sum_{b_1=1}^m\sum_{\substack{a_2,\dots,a_p\geqslant 0\\b_2,\dots,b_p\geqslant 0\\\sum a_i+b_i=m}}
  ZH^{a_1}(Z^\top \e_u\otimes \e_uZ)^{b_1}\dots H^{a_p}(Z\e_u\otimes Z)^{b_p}Z^\top\\
  &=
  \sum_{a_1=0}^{m-1}\sum_{b_1=1}^m\sum_{\substack{a_2,\dots,a_p\geqslant 0\\b_2,\dots,b_p\geqslant 0\\\sum a_i+b_i=m}}
  \left(
    \left(
      M(1)_{uu}
    \right)^{\sum_{i=1}^p(b_i-1)}
    \prod_{i=2}^{p-1}M(a_i)_{uu}
  \right)
  M(a_1)\e_u\otimes \e_u Z
  \left(
    M(a_p)_{uu}\mathrm{Id}_p\mathds{1}_{b_p\neq 0}+H^{a_p}\delta_{b_p,0}
  \right) Z^\top.
  \end{align*}
  Thus we see that each term in the expansion of $\Delta$ is a product of diagonal entries of $M$, which is $O(1)$ bounded with very high probability (see Theorem \ref{a:main}), and a term of the form $M(a)\e_u\otimes \e_uM(b)$, with $b=a_p+1$ or $1$. Remember that $m$ and thus $a_i$'s or $b_i$'s do not depend on the growing dimensions $n$, $d$, or $p$ so that we are considering $\Delta$ as being a single term of the sum without loss of generality i.e 
  \[
  \Delta(u,a,b) = C(u)M(a)\e_u\otimes \e_u M(b)\quad\text{in particular}\quad 
  \Delta(u,a,b)_{ij} = C(u)M(a)_{iu}M(b)_{uj}
  \quad\text{with}\quad 
  C(u)=\O{d^\varepsilon}
  \]  
  with very high probability.
  We recall that the goal in this step is to compare the sum in \eqref{eq:trqk-1} with $H(u)$ replaced with $H$. This term can be written, forgetting the sum over the exponents $\tilde{n}_j$ and over $\tilde{\ell}$, also dimension independent, as 
  \[
    \frac{1}{pd}
    \hspace{-.2em}\sum_{\substack{u_1, \dots, u_{\tilde{\ell}}}=1}^d
    M(u_1,\tilde{n}_1)_{u_0,u_2}
    \prod_{j=2}^{\tilde{\ell}}
    M(u_j,\tilde{n}_j)_{u_{j-1},u_{j+1}}
    \prod_{i=1}^{\tilde{\ell}}(1-\delta_{u_{i},u_{i+1}})
  \]
  By writing $M(u,m) = M(m)+\Delta(u,m)$ and with the consideration above on the form of $\Delta$, we want to bound terms which consist in products of entries of $\Delta(u_i,a_i,b_i)$, $M(n_i)$, and $\delta_{u_i,u_i+1}$ with at least one entry of $\Delta$ in the product. We note that by adding the adjacency condition $u_i\neq u_{i+1}$ in the sum as $1-\delta_{u_i,u_{i+1}}$ the sum is now free from restrictions. We start with an example to show the process of bounding these terms, considering a term of the form, for $\tilde{\ell}=6$, 
  \[
  \frac{1}{pd}\sum_{u_1,\dots u_6=1}^d\Delta(u_1,a_1,b_1)_{u_6u_2}M(\tilde{n}_2)_{u_1u_3}M(\tilde{n}_3)_{u_2u_4}M(\nt_4)_{u_3u_5}M(\nt_5)_{u_4u_6}M(\nt_6)_{u_5u_1}\delta_{u_2u_3}\delta_{u_5u_6}.
  \]
The first step consists in using the specific form of $\Delta$ which we showed above and using the $\delta_{u_iu_j}$ to write the term as 
\[
\frac{1}{pd}\sum_{u_1,u_2,u_4,u_5=1}^dC(u_1)M(a_1)_{u_5u_1}M(b_1)_{u_1u_2}M(\tilde{n}_2)_{u_1u_2}M(\tilde{n}_3)_{u_2u_4}M(\nt_4)_{u_2u_5}M(\nt_5)_{u_4u_5}M(\nt_6)_{u_5u_1}.
\]
We now can sum over the indices that appear only twice, since the sum is free from restrictions, this can be written as a matrix product. We note that $C(u_1)$ depending on $u_1$ is not a problem here since the index appearing inside $\Delta$ never appears only twice and thus can never be contracted as a matrix product. In this case, $u_4$ appears only twice in this term and we can write it as 
\[
\frac{1}{pd}\sum_{u_1,u_2,u_5=1}^dM(a_1)_{u_5u_1}M(b_1)_{u_1u_2}M(\tilde{n}_2)_{u_1u_2}
\left(
  M(\tilde{n}_3)M(\nt_5)
\right)_{u_2u_5}
M(\nt_4)_{u_2u_5}M(\nt_6)_{u_5u_1}.
\]
If every term is off-diagonal i.e.\ there is no equality between the $u_i$'s in the sum, then we can bound with very high probability each entry by $d^{-\frac{1}{2}+\varepsilon}$ (again using Theorem \ref{a:main}) and $C(u)$ by $d^\varepsilon$ so that the contribution from this restricted sum is of order $\O{\frac{d^{7\varepsilon}}{pd}}$. If there is additional identifications between the indices in the sum, we note that each identification transforms two off-diagonal edges into a diagonal edge and thus loses a power of $d$ while gaining a power of $d$ from summing one less index giving the same order of magnitude for the term.

We now give a general graphical representation of this procedure as in Figure \ref{fig:procedure}. We consider the vertices $u_1$,\dots,$u_{\tilde{\ell}}$ and we draw an edge for each entry $u_{i-1,i+1}$. We color each edge corresponding to $\Delta$ terms. On top of this we show identifications between adjacent vertices as arrows between them.

\begin{figure}[!ht]
  \centering
  \[
  \vcenter{\hbox{
    \begin{tikzpicture}
	  \node[fill=Black, label=$u_{1}$, inner sep = 0pt, minimum size=.2cm] (1) at (360/6+30: 1cm) {};
	  \node[fill=Black, label=135:$u_{6}$, inner sep = 0pt, minimum size=.2cm] (2) at (2*360/6+30: 1cm) {};
	  \node[fill=Black, label=225:$u_5$, inner sep = 0pt, minimum size=.2cm] (3) at (3*360/6+30: 1cm)  {};
	  \node[fill=Black, label=below:$u_{4}$, inner sep = 0pt, minimum size=.2cm] (4) at (4*360/6+30: 1cm) {};
	  \node[fill=Black, label=315:$u_{3}$, inner sep = 0pt, minimum size=.2cm] (5) at (5*360/6+30: 1cm) {};
	  \node[fill=Black, label=45:$u_{2}$, inner sep = 0pt, minimum size=.2cm] (6) at (6*360/6+30: 1cm) {};
	
	\draw[-,RoyalBlue] (1) edge[bend left=60, line width=.12em] (3);
	\draw[-,RoyalBlue] (2) edge[bend left=60, line width=.12em] (4);
	\draw[-,RoyalBlue] (3) edge[bend left=60, line width=.12em] (5);
	\draw[-,RoyalBlue] (4) edge[bend left=60, line width=.12em] (6);
	\draw[-,RoyalBlue] (5) edge[bend left=60, line width=.12em] (1);
	\draw[-,BrickRed] (6) edge[bend left=60, line width=.12em] (2);

  \draw[<->,ForestGreen] (2) edge[bend right, line width=.1em] (3);
  \draw[<->,ForestGreen] (5) edge[bend right, line width=.1em] (6);

	\end{tikzpicture}	
  }}
  \longrightarrow
  \vcenter{\hbox{
\begin{tikzpicture}
	  \node[fill=Black, label=$u_{1}$, inner sep = 0pt, minimum size=.2cm] (1) at (360/4: 1cm) {};
	  \node[fill=Black, label=135:$u_{6}{=}u_{5}$, inner sep = 0pt, minimum size=.2cm] (2) at (2*360/4: 1cm) {};
	  \node[fill=Black, label=270:$u_{4}$, inner sep = 0pt, minimum size=.2cm] (3) at (3*360/4: 1cm)  {};
	  \node[fill=Black, label=-45:$u_{3}{=}u_{2}$, inner sep = 0pt, minimum size=.2cm] (4) at (4*360/4: 1cm) {};

	\draw[-,RoyalBlue] (1) edge[bend left=40, line width=.12em] (2);
  \draw[-,RoyalBlue] (1) edge[bend right=40, line width=.12em] (2);
	\draw[-,RoyalBlue] (1) edge[bend left=40, line width=.12em] (4);
  \draw[-,RoyalBlue] (1) edge[bend right=40, line width=.12em] (4);
  \draw[-,RoyalBlue] (2) edge[bend right=40, line width=.12em] (4);
  \draw[-,RoyalBlue] (3) edge[bend left=40, line width=.12em] (2);
	\draw[-,RoyalBlue] (3) edge[bend right=40, line width=.12em] (4);

    \node[draw=ForestGreen, dotted, circle, fit=(3), inner sep=2pt, line width=.15 em] {};
	\end{tikzpicture}	
  }}
  \longrightarrow 
  \vcenter{\hbox{
  \begin{tikzpicture}
	  \node[fill=Black, label=225:$u_{5}$, inner sep = 0pt, minimum size=.2cm] (1) at (360/3+90: 1cm) {};
	  \node[fill=Black, label=-45:$u_{2}$, inner sep = 0pt, minimum size=.2cm] (2) at (2*360/3+90: 1cm) {};
	  \node[fill=Black, label=90:$u_{1}$, inner sep = 0pt, minimum size=.2cm] (4) at (3*360/3+90: 1cm) {};

	\draw[-,RoyalBlue] (1) edge[bend left=20, line width=.12em] (2);
  \draw[-,RoyalBlue] (1) edge[bend right=20, line width=.12em] (2);
	\draw[-,RoyalBlue] (1) edge[bend left=20, line width=.12em] (4);
  \draw[-,RoyalBlue] (1) edge[bend right=20, line width=.12em] (4);
  \draw[-,RoyalBlue] (2) edge[bend right=20, line width=.12em] (4);
  \draw[-,RoyalBlue] (2) edge[bend left=20, line width=.12em] (4);
  
	\end{tikzpicture}	
  }}
  \]
  \caption{Illustration of the procedure to bound the example term above, we start by drawing the graph and color, in red here, the edge corresponding to $\Delta$ terms and show identifications between adjacent terms. From this, we use the form of the $\Delta$ terms which replaces the red edge to two blue edges between adjacent indices, here $\{u_1,u_5\}$ and $\{u_1,u_2\}$. In the second step, we then highlight vertices that have degree 2 in this graph, $u_4$ here and we remove it and replace the two edges $\{u_4,u_5\}$ and $\{u_2,u_4\}$ for one edge $\{u_2,u_5\}$. This gives the final graph where we have 6 edges corresponding of entries (typically off-diagonal) of products of $M(m)$.}
  \label{fig:procedure}
\end{figure}
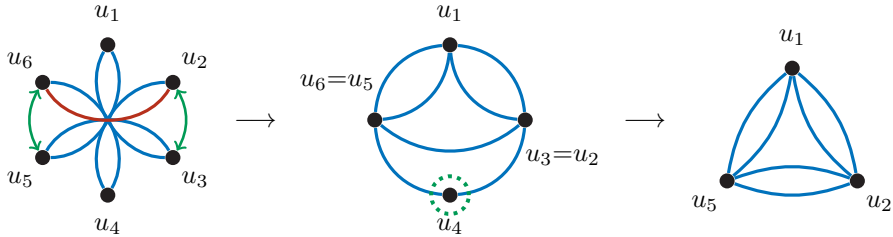
If we now consider a general graph with $\tilde{\ell}$ initial vertices $u_1,\dots,u_{\tilde{\ell}}$ and with corresponding edges $\{u_{i-1},u_{i+1}\}$. We color an arbitrary number of them (at least 1) to show $\Delta$ edges and we add identifications between an arbitrary number of adjacent vertices (possibly none). The first step consists in replacing the $\Delta$ edges by the corresponding $\{u_{i-1},u_i\}$ and $\{u_i,u_{i+1}\}$ edges and collapsing the vertices with identification together. From this, we note that if $\{u_{i-1},u_{i+1}\}$ is a colored edge, then after this procedure $u_i$ is a vertex with degree 4 since the colored edge is replaced by two edges $\{u_i,u_{i+1}\}$ and $\{u_{i-1},u_i\}$ and we also have the two pre-existing edges $\{u_{i-2},u_i\}$ and $\{u_i,u_{i+2}\}$. We note that if one of these pre-existing edge is itself colored, for instance $\{u_i,u_{i+2}\}$, the degree after the procedure of $u_i$ is stil 4 since this edge is replaced by $\{u_i,u_{i+1}\}$ and $\{u_{i+1},u_{i+2}\}$ and thus only one is incident to $u_i$. This is illustrated  in Figure \ref{fig:procedure-1}.

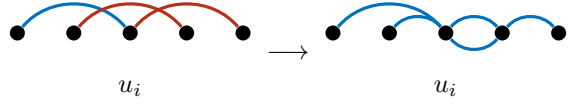
\begin{figure}[!ht]
  \[
  \vcenter{\hbox{
    \begin{tikzpicture}
      \node[fill=black, inner sep=0pt, minimum size=.2cm] (1) at (0,0) {};
      \node[fill=black, inner sep=0pt, minimum size=.2cm] (2) at (0.75,0) {};
      \node[fill=black, label = {[label distance =.95em]below:$u_{i}$}, inner sep=0pt, minimum size=.2cm] (3) at (1.5,0) {};
      \node[fill=black, inner sep=0pt, minimum size=.2cm] (4) at (2.25,0) {};
      \node[fill=black, inner sep=0pt, minimum size=.2cm] (5) at (3,0) {};

      \draw[-,RoyalBlue] (1) edge[bend left=50, line width=.12em] (3);
      \draw[-,BrickRed] (2) edge[bend left=50, line width=.12em] (4);
      \draw[-,BrickRed] (3) edge[bend left=50, line width=.12em] (5);
    \end{tikzpicture}
  }}
  \longrightarrow
  \vcenter{\hbox{
    \begin{tikzpicture}
      \node[fill=black, inner sep=0pt, minimum size=.2cm] (1) at (0,0) {};
      \node[fill=black, inner sep=0pt, minimum size=.2cm] (2) at (0.75,0) {};
      \node[fill=black, label = {[label distance =.95em]below:$u_{i}$}, inner sep=0pt, minimum size=.2cm] (3) at (1.5,0) {};
      \node[fill=black, inner sep=0pt, minimum size=.2cm] (4) at (2.25,0) {};
      \node[fill=black, inner sep=0pt, minimum size=.2cm] (5) at (3,0) {};

      \draw[-,RoyalBlue] (1) edge[bend left=50, line width=.12em] (3);
      \draw[-,RoyalBlue] (2) edge[bend left=50, line width=.12em] (3);
      \draw[-,RoyalBlue] (3) edge[bend left=50, line width=.12em] (4);
      \draw[-,RoyalBlue] (4) edge[bend left=50, line width=.12em] (3);
      \draw[-,RoyalBlue] (4) edge[bend left=50, line width=.12em] (5);

    \end{tikzpicture}
  }}
  \]
  \caption{After replacing the edge $\{u_{i-1},u_{i+1}\}$ to $\{u_{i-1},u_i\}$ and $\{u_{i},u_{i+1}\}$ doing the same for the red edge $\{u_{i},u_{i+1}\}$, we see that $u_i$ is of degree 4.}
  \label{fig:procedure-1}
\end{figure}

From collapsing the vertices with identification say $u_i=u_{i+1}=\dots=u_{i+r}$ for some $r\geqslant 1$, one should note two things: some self-loops can be created from identifying a group of adjacent vertices i.e.\ $r-1$ self-loops are created, and removing these self-loops, the degree of the collapsed vertex is itself 4 since one has two edges coming from $u_{i-1}$ and $u_{i-2}$ and towards $u_{i+r+1}$ and $u_{i+r+2}$. This is illustrated in Figure \ref{fig:procedure-2}.
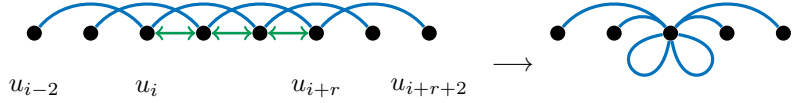
\begin{figure}[!ht]
  \[
  \vcenter{\hbox{
    \begin{tikzpicture}
      \node[fill=black, label = {[label distance = .5em]below:$u_{i-2}$}, inner sep=0pt, minimum size=.2cm] (1) at (0,0) {};
      \node[fill=black, inner sep=0pt, minimum size=.2cm] (2) at (0.75,0) {};
      \node[fill=black, label = {[label distance = .95em]below:$u_{i}$}, inner sep=0pt, minimum size=.2cm] (3) at (1.5,0) {};
      \node[fill=black, inner sep=0pt, minimum size=.2cm] (4) at (2.25,0) {};
      \node[fill=black, inner sep=0pt, minimum size=.2cm] (5) at (3,0) {};
      \node[fill=black, label = {[label distance = .5em]below:$u_{i+r}$}, inner sep=0pt, minimum size=.2cm] (6) at (3.75,0) {};
      \node[fill=black, inner sep=0pt, minimum size=.2cm] (7) at (4.5,0) {};
      \node[fill=black, label = {[label distance = 0em]below:$u_{i+r+2}$}, inner sep=0pt, minimum size=.2cm] (8) at (5.25,0) {};

      \draw[-,RoyalBlue] (1) edge[bend left=50, line width=.12em] (3);
      \draw[-,RoyalBlue] (2) edge[bend left=50, line width=.12em] (4);
      \draw[-,RoyalBlue] (3) edge[bend left=50, line width=.12em] (5);
      \draw[-,RoyalBlue] (4) edge[bend left=50, line width=.12em] (6);
      \draw[-,RoyalBlue] (5) edge[bend left=50, line width=.12em] (7);
      \draw[-,RoyalBlue] (6) edge[bend left=50, line width=.12em] (8);

      \draw[<->,ForestGreen] (4) edge[line width=.1em] (3);
      \draw[<->,ForestGreen] (4) edge[line width=.1em] (5);
      \draw[<->,ForestGreen] (5) edge[line width=.1em] (6);

    \end{tikzpicture}
  }}
  \longrightarrow
  \vcenter{\hbox{
    \begin{tikzpicture}
      \node[fill=black, inner sep=0pt, minimum size=.2cm] (1) at (0,0) {};
      \node[fill=black, inner sep=0pt, minimum size=.2cm] (2) at (0.75,0) {};
      \node[fill=black, inner sep=0pt, minimum size=.2cm] (3) at (1.5,0) {};
      \node[fill=black, inner sep=0pt, minimum size=.2cm] (4) at (2.25,0) {};
      \node[fill=black, label = {[label distance = 0em]below:$\phantom{u_{i+r+2}}$}, inner sep=0pt, minimum size=.2cm] (5) at (3,0) {};

      \draw[-,RoyalBlue] (1) edge[bend left=50, line width=.12em] (3);
      \draw[-,RoyalBlue] (2) edge[bend left=50, line width=.12em] (3);
      \draw[-,RoyalBlue] (4) edge[bend right=50, line width=.12em] (3);
      \draw[-,RoyalBlue] (5) edge[bend right=50, line width=.12em] (3);

      \path [draw, RoyalBlue, line width = .12em] (3) to [loop below, looseness=10, min distance=1cm, out=-10, in=-80] (3);
      \path [draw, RoyalBlue, line width=.12em] (3) to [loop below, looseness=10, min distance=1cm, out=-100, in=-170] (3);

    \end{tikzpicture}
  }}
  \]
  \caption{If a group of adjacent vertices are identified, the corresponding vertex has some self-loops and is of degree 4 if not counting self-loops. Note that we only drew relevant edges.}
  \label{fig:procedure-2}
\end{figure}

Since we finally remove the vertices of degree 2 and replaces their two edges $\{u_{i+2},u_i\}$ and $\{u_{i-2},u_i\}$ to $\{u_{i+2},u_{i-2}\}$ we see that the overall degree of the remaining vertices is unchanged and thus always of degree 4, up to a given number of self-loops. Thus the final graph after this procedure, is always of the same form: $\hat{\ell}\leqslant \tilde{\ell}$ vertices $u_{i_1},\dots,u_{i_{\hat{\ell}}}$ where we keep the initial ordering of the vertices such as each vertex $u_{i_j}$ has a double edge to $u_{i_{j-1}}$ and to $u_{i_{j+1}}$ with $i_{\hat{\ell}+1}=i_1$ and each vertex $u_{i_j}$ can have an arbitrary number of self-loops attached to it. We illustrate a more general procedure in Figure \ref{fig:procedure-3}.

\begin{figure}[!ht]
  \[
  \vcenter{\hbox{
    \begin{tikzpicture}
      \node[fill=Black, inner sep = 0pt, minimum size=.2cm] (2) at (360/10+18: 1cm) {};
	    \node[fill=Black, inner sep = 0pt, minimum size=.2cm] (1) at (2*360/10+18: 1cm) {};
      \node[fill=Black, inner sep = 0pt, minimum size=.2cm] (10) at (3*360/10+18: 1cm) {};
      \node[fill=Black, inner sep = 0pt, minimum size=.2cm] (9) at (4*360/10+18: 1cm) {};
      \node[fill=Black, inner sep = 0pt, minimum size=.2cm] (8) at (5*360/10+18: 1cm) {};
      \node[fill=Black, inner sep = 0pt, minimum size=.2cm] (7) at (6*360/10+18: 1cm) {};
      \node[fill=Black, inner sep = 0pt, minimum size=.2cm] (6) at (7*360/10+18: 1cm) {};
      \node[fill=Black, inner sep = 0pt, minimum size=.2cm] (5) at (8*360/10+18: 1cm) {};
      \node[fill=Black, inner sep = 0pt, minimum size=.2cm] (4) at (9*360/10+18: 1cm) {};
      \node[fill=Black, inner sep = 0pt, minimum size=.2cm] (3) at (10*360/10+18: 1cm) {};

      \draw[-,RoyalBlue] (1) edge[bend right=50, line width=.12em] (3);
      \draw[-,RoyalBlue] (2) edge[bend right=50, line width=.12em] (4);
      \draw[-,RoyalBlue] (4) edge[bend right=50, line width=.12em] (6);
      \draw[-,RoyalBlue] (5) edge[bend right=50, line width=.12em] (7);
      \draw[-,RoyalBlue] (6) edge[bend right=50, line width=.12em] (8);
      \draw[-,RoyalBlue] (7) edge[bend right=50, line width=.12em] (9);
      \draw[-,RoyalBlue] (8) edge[bend right=50, line width=.12em] (10);

      \draw[-,BrickRed] (10) edge[bend right=50, line width=.12em] (2);
      \draw[-,BrickRed] (9) edge[bend right=50, line width=.12em] (1);
      \draw[-,BrickRed] (3) edge[bend right=50, line width=.12em] (5);

      \draw[<->,ForestGreen] (10) edge[bend left, line width=.1em] (1);
      \draw[<->,ForestGreen] (2) edge[bend left, line width=.1em] (3);
      \draw[<->,ForestGreen] (3) edge[bend left, line width=.1em] (4);
      \draw[<->,ForestGreen] (4) edge[bend left, line width=.1em] (5);
      \draw[<->,ForestGreen] (6) edge[bend left, line width=.1em] (7);

    \end{tikzpicture}
  }}
  \longrightarrow
  \vcenter{\hbox{
    \begin{tikzpicture}
      \node[fill=Black, inner sep = 0pt, minimum size=.2cm] (2) at (360/10+18: 1cm) {};
	    \node[fill=Black, inner sep = 0pt, minimum size=.2cm] (1) at (2*360/10+18: 1cm) {};
      \node[fill=Black, inner sep = 0pt, minimum size=.2cm] (10) at (3*360/10+18: 1cm) {};
      \node[fill=Black, inner sep = 0pt, minimum size=.2cm] (9) at (4*360/10+18: 1cm) {};
      \node[fill=Black, inner sep = 0pt, minimum size=.2cm] (8) at (5*360/10+18: 1cm) {};
      \node[fill=Black, inner sep = 0pt, minimum size=.2cm] (7) at (6*360/10+18: 1cm) {};
      \node[fill=Black, inner sep = 0pt, minimum size=.2cm] (6) at (7*360/10+18: 1cm) {};
      \node[fill=Black, inner sep = 0pt, minimum size=.2cm] (5) at (8*360/10+18: 1cm) {};
      \node[fill=Black, inner sep = 0pt, minimum size=.2cm] (4) at (9*360/10+18: 1cm) {};
      \node[fill=Black, inner sep = 0pt, minimum size=.2cm] (3) at (10*360/10+18: 1cm) {};

      \draw[-,RoyalBlue] (1) edge[bend right=50, line width=.12em] (3);
      \draw[-,RoyalBlue] (2) edge[bend right=50, line width=.12em] (4);
      \draw[-,RoyalBlue] (4) edge[bend right=50, line width=.12em] (6);
      \draw[-,RoyalBlue] (5) edge[bend right=50, line width=.12em] (7);
      \draw[-,RoyalBlue] (6) edge[bend right=50, line width=.12em] (8);
      \draw[-,RoyalBlue] (7) edge[bend right=50, line width=.12em] (9);
      \draw[-,RoyalBlue] (8) edge[bend right=50, line width=.12em] (10);

      \draw[-,RoyalBlue] (1) edge[bend right=50, line width=.12em] (2);
      \draw[-,RoyalBlue] (3) edge[bend right=50, line width=.12em] (4);
      \draw[-,RoyalBlue] (4) edge[bend right=50, line width=.12em] (5);
      \draw[-,RoyalBlue] (9) edge[bend right=50, line width=.12em] (10);
      \draw[-,RoyalBlue] (10) edge[bend right=50, line width=.12em] (1);
      \draw[-,RoyalBlue] (1) edge[bend right=50, line width=.12em] (10);

      \draw[<->,ForestGreen] (10) edge[line width=.1em] (1);
      \draw[<->,ForestGreen] (2) edge[bend left, line width=.1em] (3);
      \draw[<->,ForestGreen] (3) edge[bend left, line width=.1em] (4);
      \draw[<->,ForestGreen] (4) edge[bend left, line width=.1em] (5);
      \draw[<->,ForestGreen] (6) edge[bend left, line width=.1em] (7);

    \end{tikzpicture}
  }}
  \longrightarrow 
  \vcenter{\hbox{
    \begin{tikzpicture}
      \node[fill=Black, inner sep = 0pt, minimum size=.2cm] (1) at (360/5+18: 1cm) {};
      \node[fill=Black, inner sep = 0pt, minimum size=.2cm] (9) at (2*360/5+18: 1cm) {};
      \node[fill=Black, inner sep = 0pt, minimum size=.2cm] (8) at (3*360/5+18: 1cm) {};
      \node[fill=Black, inner sep = 0pt, minimum size=.2cm] (6) at (4*360/5+18: 1cm) {};
      \node[fill=Black, inner sep = 0pt, minimum size=.2cm] (2) at (5*360/5+18: 1cm) {};

      \draw[-,RoyalBlue] (1) edge[bend right, line width=.12em] (2);
      \draw[-,RoyalBlue] (2) edge[bend right, line width=.12em] (1);
      \draw[-,RoyalBlue] (2) edge[bend right, line width=.12em] (6);
      \draw[-,RoyalBlue] (6) edge[bend right, line width=.12em] (2);
      \draw[-,RoyalBlue] (6) edge[bend right, line width=.12em] (8);
      \draw[-,RoyalBlue] (6) edge[bend right, line width=.12em] (9);
      \draw[-,RoyalBlue] (8) edge[bend right, line width=.12em] (1);
      \draw[-,RoyalBlue] (9) edge[bend right, line width=.12em] (1);

      \path [draw, RoyalBlue, line width = .12em] (1) to [loop above, min distance=.75cm, out=125, in=55] (1);
      \path [draw, RoyalBlue, line width = .12em] (2) to [loop below, looseness=10, min distance=.75cm, out=-23, in=-80] (2);
      \path [draw, RoyalBlue, line width = .12em] (2) to [loop below, looseness=10, min distance=.75cm, out=34, in=-23] (2);
      \path [draw, RoyalBlue, line width = .12em] (2) to [loop below, looseness=10, min distance=.75cm, out=91, in=34] (2);

      \node[draw=ForestGreen, dotted, circle, fit=(8), inner sep=2pt, line width=.15 em] {};
      \node[draw=ForestGreen, dotted, circle, fit=(9), inner sep=2pt, line width=.15 em] {};

    \end{tikzpicture}
  }}
  \longrightarrow 
  \vcenter{\hbox{
    \begin{tikzpicture}
      \node[fill=Black, inner sep = 0pt, minimum size=.2cm] (1) at (360/3-30: 1cm) {};
      \node[fill=Black, inner sep = 0pt, minimum size=.2cm] (6) at (2*360/3-30: 1cm) {};
      \node[fill=Black, inner sep = 0pt, minimum size=.2cm] (2) at (3*360/3-30: 1cm) {};

      \draw[-,RoyalBlue] (1) edge[bend right, line width=.12em] (2);
      \draw[-,RoyalBlue] (2) edge[bend right, line width=.12em] (1);
      \draw[-,RoyalBlue] (1) edge[bend right, line width=.12em] (6);
      \draw[-,RoyalBlue] (6) edge[bend right, line width=.12em] (1);
      \draw[-,RoyalBlue] (2) edge[bend right, line width=.12em] (6);
      \draw[-,RoyalBlue] (6) edge[bend right, line width=.12em] (2);

      \path [draw, RoyalBlue, line width = .12em] (1) to [loop above, min distance=.75cm, out=125, in=55] (1);
      \path [draw, RoyalBlue, line width = .12em] (2) to [loop below, looseness=10, min distance=.75cm, out=-68, in=-125] (2);
      \path [draw, RoyalBlue, line width = .12em] (2) to [loop below, looseness=10, min distance=.75cm, out=-11, in=-68] (2);
      \path [draw, RoyalBlue, line width = .12em] (2) to [loop below, looseness=10, min distance=.75cm, out=46, in=-11] (2);
    \end{tikzpicture}
  }}
  \]
  \caption{Example of a general procedure. In the first step, we replace the red edges by the two corresponding blue edges from the form of $\Delta$. In the second step, we identify the vertices according to the green arrows. In the third step, we highlight the vertices with degree 2 which we remove. The final graph is always of this form, vertices drawn around a circle and where adjacent vertices are linked with double edges and each vertex can have an arbitrary number of loops coming from adjacent identifications or identification between two adjacent red edges.}
  \label{fig:procedure-3}
\end{figure}
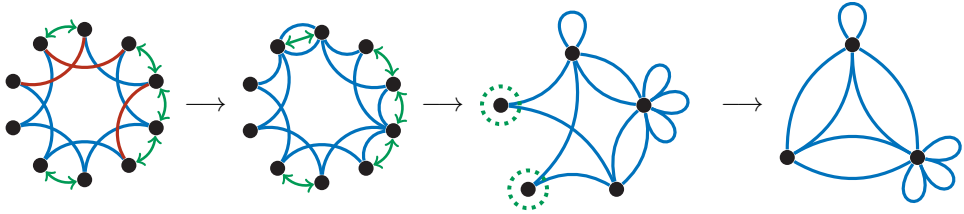

We now have two different bounds coming from these graphs. First if there $\hat{\ell}\geqslant 2$ vertices left after the procedure then we have $2\hat{\ell}$ edges between these vertices and some self-loops. We can now bound each edge between distinct vertices by $d^{-\frac{1}{2}+\varepsilon}$ and each self-loop by $d^{\varepsilon}$, with the $\frac{1}{pd}$ normalization in front of the sum we thus obtain an error bound of $\O{\frac{d^{\tilde{\ell} \varepsilon}}{pd}}$ where we used the fact that $\hat{\ell}\leqslant \tilde{\ell}.$ We also have to take into account the case where $\hat{\ell}=1$. In this case, we do not have any edges between vertices (since there is only one vertex) but only self-loops, the error bound is then $\O{\frac{d^{\tilde{\ell}\varepsilon}}{p}}$ since the sum over the value of the remaining index adds a factor of $d$ which is not canceled by off-diagonal entries. We illustrate a procedure which gives a single vertex in Figure \ref{fig:procedure-4}.

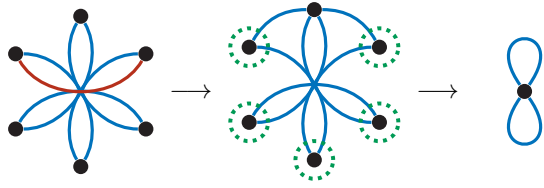
\begin{figure}[!ht]
  \centering
  \[
  \vcenter{\hbox{
    \begin{tikzpicture}
	  \node[fill=Black, inner sep = 0pt, minimum size=.2cm] (1) at (360/6+30: 1cm) {};
	  \node[fill=Black, inner sep = 0pt, minimum size=.2cm] (2) at (2*360/6+30: 1cm) {};
	  \node[fill=Black, inner sep = 0pt, minimum size=.2cm] (3) at (3*360/6+30: 1cm)  {};
	  \node[fill=Black, inner sep = 0pt, minimum size=.2cm] (4) at (4*360/6+30: 1cm) {};
	  \node[fill=Black, inner sep = 0pt, minimum size=.2cm] (5) at (5*360/6+30: 1cm) {};
	  \node[fill=Black, inner sep = 0pt, minimum size=.2cm] (6) at (6*360/6+30: 1cm) {};
	
	\draw[-,RoyalBlue] (1) edge[bend left=60, line width=.12em] (3);
	\draw[-,RoyalBlue] (2) edge[bend left=60, line width=.12em] (4);
	\draw[-,RoyalBlue] (3) edge[bend left=60, line width=.12em] (5);
	\draw[-,RoyalBlue] (4) edge[bend left=60, line width=.12em] (6);
	\draw[-,RoyalBlue] (5) edge[bend left=60, line width=.12em] (1);
	\draw[-,BrickRed] (6) edge[bend left=60, line width=.12em] (2);
	\end{tikzpicture}	
  }}
  \longrightarrow
  \vcenter{\hbox{
\begin{tikzpicture}
	   \node[fill=Black, inner sep = 0pt, minimum size=.2cm] (1) at (360/6+30: 1cm) {};
	  \node[fill=Black, inner sep = 0pt, minimum size=.2cm] (2) at (2*360/6+30: 1cm) {};
	  \node[fill=Black, inner sep = 0pt, minimum size=.2cm] (3) at (3*360/6+30: 1cm)  {};
	  \node[fill=Black, inner sep = 0pt, minimum size=.2cm] (4) at (4*360/6+30: 1cm) {};
	  \node[fill=Black, inner sep = 0pt, minimum size=.2cm] (5) at (5*360/6+30: 1cm) {};
	  \node[fill=Black, inner sep = 0pt, minimum size=.2cm] (6) at (6*360/6+30: 1cm) {};

	\draw[-,RoyalBlue] (1) edge[bend left=60, line width=.12em] (3);
	\draw[-,RoyalBlue] (2) edge[bend left=60, line width=.12em] (4);
	\draw[-,RoyalBlue] (3) edge[bend left=60, line width=.12em] (5);
	\draw[-,RoyalBlue] (4) edge[bend left=60, line width=.12em] (6);
	\draw[-,RoyalBlue] (5) edge[bend left=60, line width=.12em] (1);

  \draw[-,RoyalBlue] (1) edge[bend right, line width=.12em] (2);
	\draw[-,RoyalBlue] (1) edge[bend left, line width=.12em] (6);

  \node[draw=ForestGreen, dotted, circle, fit=(3), inner sep=2pt, line width=.15 em] {};
  \node[draw=ForestGreen, dotted, circle, fit=(2), inner sep=2pt, line width=.15 em] {};
  \node[draw=ForestGreen, dotted, circle, fit=(6), inner sep=2pt, line width=.15 em] {};
  \node[draw=ForestGreen, dotted, circle, fit=(5), inner sep=2pt, line width=.15 em] {};
  \node[draw=ForestGreen, dotted, circle, fit=(4), inner sep=2pt, line width=.15 em] {};

	\end{tikzpicture}	
  }}
  \longrightarrow 
  \vcenter{\hbox{
  \begin{tikzpicture}
	  \node[fill=Black, inner sep = 0pt, minimum size=.2cm] (1) at (0,0) {};

  \path [draw, RoyalBlue, line width = .12em] (1) to [loop above, looseness=10, min distance=1cm, out=55, in=125] (1);
  \path [draw, RoyalBlue, line width = .12em] (1) to [loop below, looseness=10, min distance=1cm, out=-55, in=-125] (1);

	\end{tikzpicture}	
  }}
  \]
  \caption{Example of a term which gives rise to a graph with a single vertex.}
  \label{fig:procedure-4}
\end{figure}

After this step, we now have
 \begin{equation*}
  \begin{aligned}
    &\frac{1}{pd}\Tr(\Q^k) 
    \\&
    =\frac{1}{p}\Tr(H^k)+
    \frac{1}{pd}
    \hspace{-.6em}\sum_{\substack{\tilde{\ell}, u_1, \dots, u_{\tilde{\ell}} \\ \tilde{n}_1+\cdots+\tilde{n}_{\tilde{\ell}+1}=k+1}}\hspace{-.6em}
    \left(       \frac{\alpha_\phi^2}{d}
    W D H^{\tilde{n}_1+\tilde{n}_{\tilde{\ell}+1}-2} DW^\top \right)_{u_0, u_2}
    \prod_{j=2}^{\tilde{\ell}}
    \left(       \frac{\alpha_\phi^2}{d}
    W D H^{\tilde{n}_j-1} DW^\top \right)_{u_{j-1}, u_{j+1}}
    +o(1)
  \end{aligned}
\end{equation*}
with very high probability, where $\tilde{\ell} \in \{1,\dots,k\}$ and $\tilde{n}_j \geq 1$ for $j \in \unn{1}{\tilde{\ell}}$ are such that $\tilde{n}_1+\cdots+\tilde{n}_{\tilde{\ell}+1}=k+1$, and where $u_i \in \unn{1}{d}$ for $i \in \unn{1}{\tilde{\ell}}$ satisfy $u_{i+1} \neq u_i$ for $i \in \unn{1}{\tilde{\ell}}$.
  \paragraph{Step 2 :Removing the constraint that $u_1 \neq u_2 \neq \dots \neq u_{\tilde{\ell}} \neq u_1$}. To remove the constraint from the sum, we actually can perform exactly the same reasoning as above except that we do not color any edge, but instead guarantee at least an identification. The reasoning is then simpler and the same procedure gives rise to the same types of graph, and thus the same bounds. This gives that we now have 
  \begin{equation}\label{eq:trqk-2}
  \begin{aligned}
    &\frac{1}{pd}\Tr(\Q^k) 
    \\&
    =\frac{1}{p}\Tr(H^k)+
    \frac{1}{pd}
    \hspace{-.2em}\sum_{\substack{\tilde{\ell}, u_1, \dots, u_{\tilde{\ell}} \\ \tilde{n}_1+\cdots+\tilde{n}_{\tilde{\ell}}=k+1}}
    \left(       \frac{\alpha_\phi^2}{d}
    W D H^{\tilde{n}_1+\tilde{n}_{\tilde{\ell}+1}-2} DW^\top \right)_{u_0, u_2}
    \prod_{j=2}^{\tilde{\ell}}
    \left(       \frac{\alpha_\phi^2}{d}
    W D H^{\tilde{n}_j-1} DW^\top \right)_{u_{j-1}, u_{j+1}}
    +o(1)
  \end{aligned}
\end{equation}
with very high probability, where $\tilde{\ell} \in \{1,\dots,k\}$ and $\tilde{n}_j \geq 1$ for $j \in \unn{1}{\tilde{\ell}}$ are such that $\tilde{n}_1+\cdots+\tilde{n}_{\tilde{\ell}+1}=k+1$, and where $u_i \in \unn{1}{d}$ for $i \in \unn{1}{\tilde{\ell}}.$ Note that the sum on the $u_i$'s is now unconstrained.
  \paragraph{Step 3: Bounding the contribution of odd $\tilde{\ell}$} Consider a fixed odd $\tilde{\ell}=2q+1$. The sum part from \eqref{eq:trqk-2} can be written in the more compact form 
  \[
  \frac{1}{pd}\sum_{\substack{u_1,\dots,u_{\tilde{\ell}}\\ \nt_1+\dots+\nt_{\tilde{\ell}}=k+1}}
  M(\nt_1+\nt_{\tilde{\ell}+1}-1)_{u_{2q+1},u_2}
  \prod_{j=2}^{\tilde{\ell}} M(\nt_j)_{u_{j-1},u_{j+1}}
  \]
  where the sum over $u_i$'s is unconstrained. This is simply a trace of product of $M(m)$ since from the structure of the entries we have that $u_0 \to u_2 \to \dots \to u_{2q}$ (wherer $u_i\to u_j$ means that the entry $M(m)_{u_i,u_j}$ appears) but since $\tilde{\ell}$ is odd, we then have that $u_{2q}\to u_1 \to \dots \to u_{2q+1} \to u_2$. Thus this term can be written as 
  \[
  \frac{1}{pd}\sum_{\nt_1+\dots+\nt_{\tilde{\ell}}=k+1}
  \mathrm{Tr}\left(
    M(\nt_1+\nt_{\tilde{\ell}+1})
    M(\nt_3)M(\nt_5)\dots M(\nt_{2q+1})M(\nt_2)M(\nt_4)\dots M(\nt_{2q})
  \right)
  =
  \O{
    \frac{d^{k\varepsilon}}{p}
  }
  \]
  with very high probability where we used the fact that the operator norm of each matrix $M$ is bounded by $d^{\varepsilon}$ and that the product of $M(m)$ is a matrix of size $d\times d$.
  \paragraph{Step 4: Conclusion}

  As we have removed the contribution of odd $\tilde{\ell}$ terms, we set $\ell = \tilde{\ell}/2$. We then have 
  \[
  \begin{aligned}
    &\frac{1}{pd}\Tr(\Q^k) 
    \\&=
    \frac{1}{p}\Tr(H^k)+
    \frac{1}{pd}
    \sum_{\substack{\ell, u_1, \dots, u_{2\ell} \\ \tilde{n}_1+\cdots+\tilde{n}_{2\ell}=k+1}}
    \left(     
      \frac{\alpha_\phi^2}{d}  
      W D H^{\tilde{n}_1+\tilde{n}_{2\ell+1}-2} DW^\top 
    \right)_{u_0, u_2}
    \prod_{j=2}^{\ell}
    \left( 
      \frac{\alpha_\phi^2}{d}
      W D H^{\tilde{n}_j-1} DW^\top 
    \right)_{u_{j-1}, u_{j+1}}
  \end{aligned}
  \] 
  This splits into two traces of matrix products, one in which we use the $u_{2i}$ and one in which we use the $u_{2i-1}$.  Hence, we set $n_1 = \tilde{n}_1+\tilde{n}_{2\ell+1}-1$, $n_j = \tilde{n}_{2j-1}$ for $j \in \unn{1}{\ell}$ and $n_j' = \tilde{n}_{2j}$ for $j \in \unn{1}{\ell}$.  We then have 
  \[
  \begin{aligned}
    &\sum_{\substack{\ell, u_1, \dots, u_{2\ell} \\ \tilde{n}_1+\cdots+\tilde{n}_{2\ell}=k+1}}
    \prod_{j=1}^{\ell}
    \left( 
      \frac{\alpha_\phi^2}{d}
      W D H^{{n}_j-1} DW^\top 
    \right)_{u_{2j-2}, u_{2j}}
    \prod_{j=1}^{\ell}
    \left( 
      \frac{\alpha_\phi^2}{d}
      W D H^{n_j'-1} DW^\top 
    \right)_{u_{2j-1}, u_{2j+1}} \\ 
    &=
    \sum_{\substack{\ell, n_1, \dots, n_{\ell} \\ n_1', \dots, n_{\ell}'}}
    n_1 \Tr[ M(n_1)M(n_2)\cdots M(n_\ell)]\Tr[M(n_1')M(n_2')\cdots M(n_\ell')],
  \end{aligned}
  \]
  where the sum is over all $\ell, n_1, \dots, n_{\ell}$ and $n_1', \dots, n_{\ell}'$ such that $n_1+\cdots+n_{\ell} + n_1'+\cdots+n_{\ell}'=k$.
  Here we have used that there are $n_1$ ways to choose the $\tilde{n}_1$ and $\tilde{n}_{\ell+1}$ (as their sum is $n_1+1$ and we should write this as a sum of $a+b$ with both $a,b\geq 1$).
\end{proof}

\begin{rmk}
Suppose that we have $\beta_\psi^2 = 0$.  Then we have from Proposition \ref{prop:TrQk} that 
\[
\begin{aligned}
\frac{1}{dp}\Tr(\Q^k) 
&=
\frac{1}{p}\Tr(H^k)+
\frac{1}{dp} \sum_{\substack{\ell, n_1,\dots,n_\ell \\ n_1', \dots, n_{\ell}'}}
n_1 \Tr( H^{n_1+\dots + n_{\ell}}) \Tr( H^{n_1'+\dots + n_{\ell}'}) \\
&=
\frac{1}{p}\Tr(H^k)+
\frac{1}{dp} \sum_{\substack{\ell, n_1,\dots,n_\ell,n_{\ell+1} \\ n_1', \dots, n_{\ell}'}}
\Tr( H^{n_1+\dots + n_{\ell}+n_{\ell+1}-1}) \Tr( H^{n_1'+\dots + n_{\ell}'}). \\
\end{aligned}
\]
Here we have used that the number of ways to express $n_1 +1 = a + b$ for $a,b \geq 1$ is $n_1.$ If we set the sum of $n_1, \dots, n_{\ell},n_{\ell+1}$ to $q$ and the sum of $n_1', \dots, n_{\ell}'$ to $q'$, then we have $q+q'=k+1$ and $q,q' \geq 1$.  The number of ways to write $q$ as a sum of $\ell+1$ natural numbers is $\binom{q-1}{\ell}$.  Hence, we have 
\[
\frac{1}{dp} \sum_{\substack{\ell, n_1,\dots,n_\ell,n_{\ell+1} \\ n_1', \dots, n_{\ell}'}}
\Tr( H^{n_1+\dots + n_{\ell}+n_{\ell+1}-1}) \Tr( H^{n_1'+\dots + n_{\ell}'})
=
\frac{1}{dp} \sum_{\substack{\ell,q,q' \\ q+q'=k+1}} 
\binom{q-1}{\ell}
\binom{q'-1}{\ell-1}
\Tr( H^{q-1}) \Tr( H^{q'}).
\]
Using the Vandermonde identity, we have 
\[
\binom{k-1}{q'}
=
\binom{q+q'-2}{q'}
=
\sum_{\ell=1}^{q'} 
\binom{q-1}{\ell} 
\binom{q'-1}{q'-\ell}
=\sum_{\ell=1}^{q'} 
\binom{q-1}{\ell} 
\binom{q'-1}{\ell-1}.
\]
Therefore the moment formula is 
\[
\begin{aligned}
\frac{1}{dp} \sum_{\substack{\ell,q,q' \\ q+q'=k+1}} 
\binom{q-1}{\ell}
\binom{q'-1}{\ell-1}
\Tr( H^{q-1}) \Tr( H^{q'})
&=
\frac{1}{dp} \sum_{q'=1}^{k-1} 
\binom{k-1}{q'}
\Tr( H^{k-q'}) \Tr( H^{q'}) \\
&=
\frac{1}{dp} \sum_{q'=1}^{k-1} 
\binom{k-1}{q'}
\Tr( H^{k-q'}) \Tr( H^{q'})
\end{aligned}
\]
We conclude that 
\[
\begin{aligned}
\frac{1}{dp}\Tr(\Q^k) 
&= \frac{1}{p}\Tr(H^k)
+
\frac{1}{dp} \sum_{q'=1}^{k-1} 
\binom{k-1}{q'}
\Tr( H^{k-q'}) \Tr( H^{q'})
+o_{\P}(1) \\
&= 
\frac{1-\gamma_2}{p}\Tr(H^k)
+
\frac{1}{dp} \sum_{q'=0}^{k-1} 
\binom{k-1}{q'}
\Tr( H H^{k-1-q'}) \Tr( H^{q'})
+o_{\P}(1) \\
\end{aligned}
\]
From the convergence of the moments of $H$, and using the equality that for iid $A,B$ real random variables,
\[
\E\left[ A(A+B)^{k-1} \right] = 
\E\left[B(A+B)^{k-1} \right] = 
\frac{1}{2}\E\left[ (A+B)^{k} \right],
\]
we can conclude that 
\[
\frac{1}{dp} \Tr(\Q^k) =
(1-\gamma_2) \E\left[ A^k\right] 
+ \frac{\gamma_2}{2} \E\left[ (A+B)^{k} \right] + o_{\P}(1),
\]
where $A$ and $B$ and follow the limit law of the empirical spectral distribution of $H$.
\end{rmk}

\subsection{Free probability formulation of the limit law}\label{subsec:freeproba}
The moment formula of Proposition \ref{prop:TrQk} can be expressed as a product of traces of non-commutative polynomials in the matrices $\frac{1}{d}W^{\top}W$ and $D^2$. It is standard that $\frac{1}{p}W^{\top}W$ converges to a Free Poisson random variable $\mathfrak{p}$ with parameter $\frac{d}{p} \to \frac{1}{\gamma_2}$ as $p \to \infty$ (see \cite{Mingo2017}*{Section 4.5.1}).  Moreover, we can consider a non-commutative probability space $(\mathcal{A}, \tau)$.  Provided the empirical distribution of $D^2$ converges to a distribution with compact support, then we can choose this space to contain $\mathfrak{p}$ and an element $\mathfrak{d}$ such that 
\[
\frac{1}{p}\Tr(D^{2k}) \to \tau(\mathfrak{d}^k)
\quad \text{and}\quad 
\frac{1}{p}\Tr\left( \left(\frac{1}{d}W^{\top}W \right)^{k} \right) \to \tau(\mathfrak{p}^k).
\]
as $p \to \infty$.  Moreover, $\{\mathfrak{d}, \mathfrak{p}\}$ are free.  In this section, we show the following.
\begin{prop}\label{prop:stieltjes}
  Suppose that $(D^2, \tfrac{1}{d}W^{\top}W)$ converge in the sense of non-commutative moments to the pair $(\mathfrak{d}, \mathfrak{p})$ of free non-commutative random variables.  Then the Stieltjes transform of $\Q$, $\frac{1}{dp}\Tr( (\Q-w)^{-1})$, converges pointwise in probability for $w \in \C^+$ to 
  \[
    \tau\left(
      (1-\mathcal{Y})\mathcal{R}
    \right)
    -w
    \tau\otimes\tau \left(
    (\mathbf{1} \otimes \mathcal{Y})
    ( (\mathcal{X}-w)^{\otimes 2} - \mathcal{Y}^{\otimes 2})^{-1}
    \right),
  \]
  where $\mathcal{X} = \beta_\psi^2 \mathfrak{d} + {\gamma_2\alpha_\phi^2}\left( \mathfrak{d}\mathfrak{p} \right)$, where $\mathcal{Y} = \gamma_2\alpha_\phi^2 \mathfrak{d}\mathfrak{p}$ and where $\mathcal{R} = (\mathcal{X}-w)^{-1}$, which defines a probability measure on $[0,\infty)$ by Stieltjes inversion.
\end{prop}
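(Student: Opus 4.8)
The plan is to feed the moment formula of Proposition~\ref{prop:TrQk} into the asymptotic free probability of $(D^2,\tfrac1pW^\top W)$, read off the limiting moments of $\Q$, and then resum the moment generating series in closed form. The key structural fact is that $\Q\succeq 0$ and, since $\Q=\T-\R^{(1)}$ with $\T$ uniformly bounded with very high probability, $\|\Q\|_{\mathrm{op}}\le C$ with very high probability; hence the empirical eigenvalue distribution of $\Q$ is supported in a fixed compact subset of $[0,\infty)$. Consequently, once we establish that $\tfrac1{dp}\Tr(\Q^k)\xrightarrow{\P}m_k$ for each fixed $k$ with $(m_k)$ the moments of a (automatically determinate, being a Hausdorff problem) probability measure $\mu$ on $[0,\infty)$, the method of moments gives that the e.e.d.\ of $\Q$ converges weakly in probability to $\mu$, and in particular $\tfrac1{dp}\Tr((\Q-w)^{-1})\xrightarrow{\P}\int(t-w)^{-1}\,\d\mu(t)$ for every $w\in\C_+$. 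It then suffices to identify this integral with the asserted expression, which I would do by summing $-\sum_{k\ge 0}m_kw^{-k-1}$ for $|w|$ large and extending by analyticity.

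First I would record the free-probabilistic identifications. Writing $P=\tfrac{\alpha_\phi^2}{d}DW^\top W D=\gamma_2\alpha_\phi^2\,D(\tfrac1pW^\top W)D+o(1)$ and $H=\beta_\psi^2D^2+P$, the asymptotic freeness of $D^2$ and $\tfrac1pW^\top W$ (a diagonal matrix with i.i.d.\ entries is asymptotically free from the orthogonally invariant matrix $\tfrac1pW^\top W$) yields $\tfrac1p\Tr(f(H,P))\to\tau(f(\mathcal X,\mathcal Y))$ for every non-commutative polynomial $f$, where $\mathcal X,\mathcal Y$ are as in the statement; at the level of traces one may replace words in the self-adjoint representative $\mathfrak d^{1/2}\mathfrak p\,\mathfrak d^{1/2}$ of $P$ by the same words in the non-self-adjoint $\mathfrak d\mathfrak p$, which is what licenses the notation $\mathcal Y=\gamma_2\alpha_\phi^2\,\mathfrak d\mathfrak p$. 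Then, by the cyclic invariance of the trace, $\Tr[M(n_1)\cdots M(n_\ell)]=\Tr[H^{n_1-1}P\,H^{n_2-1}P\cdots H^{n_\ell-1}P]$, so $\tfrac1{dp}\Tr[M(n_1)\cdots]\,\Tr[M(n_1')\cdots]=\tfrac pd\big(\tfrac1p\Tr[\cdots]\big)\big(\tfrac1p\Tr[\cdots]\big)\to\gamma_2\,\tau(\mathcal X^{n_1-1}\mathcal Y\cdots\mathcal X^{n_\ell-1}\mathcal Y)\,\tau(\mathcal X^{n_1'-1}\mathcal Y\cdots)$. Inserting this and $\tfrac1p\Tr(H^k)\to\tau(\mathcal X^k)$ into Proposition~\ref{prop:TrQk} gives, for each fixed $k$,
\[
m_k=\tau(\mathcal X^k)+\gamma_2\sum_{\ell\ge1}\ \sum_{\substack{n_i,n_i'\ge1\\ n_1+\cdots+n_\ell+n_1'+\cdots+n_\ell'=k}}n_1\,\tau\big(\mathcal X^{n_1-1}\mathcal Y\cdots\mathcal X^{n_\ell-1}\mathcal Y\big)\,\tau\big(\mathcal X^{n_1'-1}\mathcal Y\cdots\mathcal X^{n_\ell'-1}\mathcal Y\big),
\]
and since $|\tfrac1{dp}\Tr(\Q^k)|\le\|\Q\|^k\le C^k$ with very high probability and $m_k\le C^k$, dominated convergence lets us sum the per-$k$ statements, so $\tfrac1{dp}\Tr((\Q-w)^{-1})\xrightarrow{\P}-\sum_{k\ge0}m_kw^{-k-1}$ for $|w|>C$.

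The heart of the proof is the resummation of $-\sum_k m_kw^{-k-1}$. The first block sums to $\tau((\mathcal X-w)^{-1})=:\tau(\mathcal R)$. For the double-trace block I would use $\tau(A)\tau(B)=(\tau\otimes\tau)(A\otimes B)$ together with the operator identities $\sum_{n\ge1}w^{-n}\mathcal X^{n-1}\mathcal Y=-\mathcal R\mathcal Y$, $\sum_{n\ge1}nw^{-n}\mathcal X^{n-1}\mathcal Y=w\mathcal R^2\mathcal Y$, and the geometric sum $\sum_{\ell\ge1}\big((\mathcal R\mathcal Y)^{\otimes2}\big)^{\ell-1}=\big(\mathbf 1^{\otimes2}-(\mathcal R\mathcal Y)^{\otimes2}\big)^{-1}$; placing the weight $n_1$ on the distinguished tensor factor gives
\[
-\sum_k w^{-k-1}[\text{double-trace block}]=\gamma_2\,(\tau\otimes\tau)\Big[\big((\mathcal R\mathcal Y)\otimes\mathcal Y\big)\big(\mathbf 1^{\otimes2}-(\mathcal R\mathcal Y)^{\otimes2}\big)^{-1}\Big].
\]
Using $\mathbf 1^{\otimes2}-(\mathcal R\mathcal Y)^{\otimes2}=\big((\mathcal X-w)^{\otimes2}\big)^{-1}\big((\mathcal X-w)^{\otimes2}-\mathcal Y^{\otimes2}\big)$, the relation $(\mathcal R\otimes\mathbf 1)(\mathcal X-w)^{\otimes2}=\mathbf 1\otimes(\mathcal X-w)$, the identity $\mathcal Y^{\otimes2}\big((\mathcal X-w)^{\otimes2}-\mathcal Y^{\otimes2}\big)^{-1}=(\mathcal X-w)^{\otimes2}\big((\mathcal X-w)^{\otimes2}-\mathcal Y^{\otimes2}\big)^{-1}-\mathbf 1^{\otimes2}$, and cyclicity of $\tau\otimes\tau$, the expression collapses into a closed rational function of $\mathcal X,\mathcal Y,w$ of the form asserted in Proposition~\ref{prop:stieltjes} (explicitly, $(1-\gamma_2)\tau(\mathcal R)+\gamma_2\,(\tau\otimes\tau)\big[(\mathbf 1\otimes(\mathcal X-w))\big((\mathcal X-w)^{\otimes2}-\mathcal Y^{\otimes2}\big)^{-1}\big]$, which one then matches block-by-block with the displayed form $\tau((1-\mathcal Y)\mathcal R)-w\,(\tau\otimes\tau)[\cdots]$). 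Analyticity of both sides on $\C_+$ upgrades the identity from $|w|>C$ to all of $\C_+$, and since $\mu$ is a compactly supported probability measure on $[0,\infty)$ the Stieltjes inversion statement is immediate.

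I expect the main obstacle to be twofold. The purely algebraic part — the resummation — is delicate because the weight $n_1$ sits on a single tensor factor of only one of the two traces, the two traces are coupled only through the global degree constraint, and the operator-valued geometric series must first be justified in an operator-norm regime (for $|w|$ large) before analytic continuation. The more substantive issue is the free-probability input: one needs joint convergence of $(D^2,\tfrac1pW^\top W)$ to a free pair in a strong enough sense that all the required mixed moments — traces of words built from $H$ and the sandwiched matrix $DW^\top W D$ — converge, together with a careful check that the trace-level identification of $DW^\top W D$ with $\mathfrak d\mathfrak p$ (rather than $\mathfrak d^{1/2}\mathfrak p\,\mathfrak d^{1/2}$) is consistent at every step, which is used implicitly throughout the statement.
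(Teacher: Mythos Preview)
Your overall strategy is the same as the paper's: feed Proposition~\ref{prop:TrQk} into the free-probabilistic limit of $(D^2,\tfrac1pW^\top W)$ and resum the Stieltjes series. The packaging differs only cosmetically: the paper introduces an auxiliary variable $z$ and extracts the $n_1$--weighted and unweighted $\ell$--fold trace sums as the $z^\ell$--coefficients of $\tau(\mathcal X(z)^a)$ and $\tau(\mathcal X(z)^{b-1}z\partial_z\mathcal X(z))$ (Lemmas~\ref{lem:coefficientextraction}--\ref{lem:coefficientextraction2}), then takes $[z^0]$ of the product; you instead sum the geometric series directly in $\mathcal A\otimes\mathcal A$. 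These are the same computation.

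Two concrete points. First, your displayed intermediate step for the double--trace block, $\gamma_2(\tau\otimes\tau)\big[((\mathcal R\mathcal Y)\otimes\mathcal Y)(\mathbf 1^{\otimes2}-(\mathcal R\mathcal Y)^{\otimes2})^{-1}\big]$, is not quite what the resummation produces: carrying out your own identities gives $\gamma_2(\tau\otimes\tau)\big[(\mathcal R\otimes\mathbf 1)\,(\mathcal R\mathcal Y)^{\otimes2}(\mathbf 1^{\otimes2}-(\mathcal R\mathcal Y)^{\otimes2})^{-1}\big]$, which after the simplification you indicate indeed yields $(1-\gamma_2)\tau(\mathcal R)+\gamma_2(\tau\otimes\tau)\big[(\mathbf 1\otimes(\mathcal X-w))((\mathcal X-w)^{\otimes2}-\mathcal Y^{\otimes2})^{-1}\big]$. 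That final expression is correct: a Laurent expansion at $w=\infty$ returns $m_1=\tau(\mathcal X)$ and $m_2=\tau(\mathcal X^2)+\gamma_2\tau(\mathcal Y)^2$, and it specializes to Corollary~\ref{cor:main} when $\beta_\psi=0$.

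Second, the genuine gap is the last sentence, where you assert that your expression ``matches block-by-block'' with the proposition's displayed form $\tau((1-\mathcal Y)\mathcal R)-w(\tau\otimes\tau)\big[(\mathbf 1\otimes\mathcal Y)((\mathcal X-w)^{\otimes2}-\mathcal Y^{\otimes2})^{-1}\big]$. You do not carry this out, and in fact it cannot be carried out as written: expanding the displayed form at $w=\infty$ gives $m_1=\tau(\mathcal X)+\tau(\mathcal X)\tau(\mathcal Y)$, not $\tau(\mathcal X)$. The discrepancy is exactly the factor $p/d\to\gamma_2$ coming from $\tfrac1{dp}\Tr[\cdot]\Tr[\cdot]=\tfrac pd\cdot\tfrac1{p^2}\Tr[\cdot]\Tr[\cdot]$, which you (correctly) retain and the paper's proof drops when passing from Lemma~\ref{lem:coefficientextraction2} to the definition of $\mathfrak f$. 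So your sketch is sound up to and including the closed form with the explicit $\gamma_2$'s; the ``matching'' step is not a gap in your argument but rather flags an error in the stated formula.
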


We now observe that if we define 
\[
\mathcal{X}(z) = \beta_\psi^2 \mathfrak{d} + {\gamma_2\alpha_\phi^2}\left( \mathfrak{d}\mathfrak{p}(1+z) \right),
\]
then we can represent traces of products of $M$ as coefficient extraction from $\mathcal{X}(z)$.  We let $[z^\ell]\mathcal{F}(z)$ denote the coefficient of $z^\ell$ in a power series $\mathcal{F}(z)$.
\begin{lem}\label{lem:coefficientextraction}
  Suppose that $(D^2, \tfrac{1}{d}W^{\top}W)$ converge in the sense of non-commutative moments to the pair $(\mathfrak{d}, \mathfrak{p})$ of free non-commutative random variables.  Then for any $k \geq 1$, we have 
  \[
  \begin{aligned}
    &[z^\ell]\tau(\left( \mathcal{X}(z) \right)^k) = \sum_{n_1,n_2,\dots,n_\ell} \frac{n_1}{p} \Tr( M(n_1)M(n_2)\cdots M(n_\ell)) + o_{\P}(1), \\
    &[z^{\ell-1}]\partial_z\tau(\left( \mathcal{X}(z) \right)^{k}) = \sum_{n_1,n_2,\dots,n_\ell} \frac{k}{p}\Tr( M(n_1)M(n_2)\cdots M(n_\ell)) + o_{\P}(1).
  \end{aligned}
  \]
  where the sum is over $n_1,n_2,\dots,n_\ell \geq 1$ such that $k = n_1+\cdots+n_\ell$.
\end{lem}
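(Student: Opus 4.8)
The plan is to turn both identities into \emph{exact} finite-$p$ generating-function identities and then let $p\to\infty$. Write $A := \tfrac{\alpha_\phi}{\sqrt d}WD$, so that the matrices of Proposition \ref{prop:TrQk} read $M(n)=AH^{n-1}A^{\top}$ with $A^{\top}A=\tfrac{\alpha_\phi^2}{d}DW^{\top}WD=:G$ and $H=\beta_\psi^2D^2+G$. As a formal power series in $w$ one has $\sum_{n\ge1}M(n)w^{n}=wA(I-wH)^{-1}A^{\top}$, and the push-through identity $\Tr((AXA^{\top})^{\ell})=\Tr((A^{\top}AX)^{\ell})$ gives
\[
f(w):=\Tr\!\Big(\big(\textstyle\sum_{n\ge1}M(n)w^{n}\big)^{\ell}\Big)=w^{\ell}\Tr\!\big((G(I-wH)^{-1})^{\ell}\big).
\]
Expanding the $\ell$-th power of $\sum_n M(n)w^n$ directly yields $[w^{k}]f(w)=\sum_{n_1+\dots+n_\ell=k}\Tr(M(n_1)\cdots M(n_\ell))$, while the product rule and cyclicity of the trace give $[w^{k}]\,w\partial_w f(w)=\ell\sum_{n_1+\dots+n_\ell=k}n_1\Tr(M(n_1)\cdots M(n_\ell))$, which also equals $k[w^k]f(w)$ by cyclic relabelling of the $n_i$ (the source of the two weightings appearing in the lemma).

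Next I would identify $H+zG=\beta_\psi^2D^2+(1+z)\tfrac{\alpha_\phi^2}{d}DW^{\top}WD$ as the finite-dimensional model of $\mathcal{X}(z)$. From the resolvent expansion $(I-wH-wzG)^{-1}=(I-wH)^{-1}\sum_{m\ge0}(wz)^{m}(G(I-wH)^{-1})^{m}$, together with cyclicity and $(I-wH)^{-1}H(I-wH)^{-1}=w^{-1}\big((I-wH)^{-2}-(I-wH)^{-1}\big)$, one proves the formal identities
\[
[z^{\ell}]\Tr\!\big((I-wH-wzG)^{-1}\big)=\tfrac1\ell\,w\partial_w f(w),\qquad [z^{\ell-1}]\Tr\!\big(G(I-wH-wzG)^{-1}\big)=w^{-1}f(w).
\]
Since $\Tr((I-wH-wzG)^{-1})=\sum_{k\ge0}w^{k}\Tr((H+zG)^{k})$ and $\partial_{z}\Tr((H+zG)^{k})=k\Tr(G(H+zG)^{k-1})$, extracting the coefficient of $w^{k}$ from these turns them into the \emph{exact} (finite-$p$) identities $[z^{\ell}]\Tr((H+zG)^{k})=\sum_{n_1+\dots+n_\ell=k}n_1\Tr(M(n_1)\cdots M(n_\ell))$ and $[z^{\ell-1}]\partial_z\Tr((H+zG)^{k})=k\sum_{n_1+\dots+n_\ell=k}\Tr(M(n_1)\cdots M(n_\ell))$.

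Finally I would divide by $p$ and pass to the limit. For each fixed $m$ the polynomial $\tfrac1p\Tr((H+zG)^{m})$ converges, coefficient by coefficient in $z$, to $\tau(\mathcal{X}(z)^{m})$: this is exactly the identification of $\mathcal{X}(z)$ recorded just before the lemma, and it follows from the hypothesis on the joint non-commutative moments of $(D^2,\tfrac1dW^{\top}W)$ after reducing, by traciality, any normalized trace of a word in $D^2$ and $\tfrac1dDW^{\top}WD$ to a trace of a word in $D^2$ and $\tfrac1dW^{\top}W$ (and using traciality of $\tau$ symmetrically on the limiting side). This convergence holds in probability---indeed almost surely, since $\tfrac1dW^{\top}W$ is orthogonally invariant by Theorem \ref{thm:svd} and is therefore asymptotically free from the deterministic matrix $D^{2}$---so taking $z$-coefficients of the exact identities above and letting $p\to\infty$ yields both claims with $o_{\P}(1)$ errors. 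I do not expect a genuine obstacle: the one point needing care is the two-variable coefficient bookkeeping (in $w$ and $z$) that produces the weights $n_1$ and $k$ correctly, and the push-through identity $\Tr((AXA^{\top})^{\ell})=\Tr((A^{\top}AX)^{\ell})$ is precisely what makes the intermediate identities exact rather than merely asymptotic.
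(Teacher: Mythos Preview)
Your proposal is correct. The combinatorial core --- that cyclicity of the trace converts the sum over placements of the $\ell$ ``marked'' factors into the weight $n_1$ (equivalently $k/\ell$) --- is exactly the mechanism the paper uses. The paper, however, carries out this expansion directly in the free probability space: it writes $[z^\ell]\tau(\mathcal{X}(z)^k)$ as a sum over compositions $n_1+\dots+n_{\ell+1}=k+1$, uses traciality to merge the wrap-around blocks $n_1,n_{\ell+1}$ into a single block (producing the multiplicity $n_1$), and then invokes the assumed convergence of non-commutative moments to match each $\tau$-term with $\tfrac{1}{p}\Tr(M(n_1)\cdots M(n_\ell))$.

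You instead establish the identities \emph{exactly} at finite $p$ for $\tfrac{1}{p}\Tr((H+zG)^k)$ via the push-through identity and a two-variable $(w,z)$ generating-function calculus, and only then pass to the limit. This buys a clean separation: the combinatorics is an exact algebraic identity, and the asymptotic step is nothing more than the hypothesis that joint moments of $(D^2,\tfrac{1}{d}W^\top W)$ converge to those of $(\mathfrak{d},\gamma_2\mathfrak{p})$ (the traciality reduction you sketch, rewriting words in $DW^\top WD$ as words in $D^2$ and $W^\top W$, is what makes $\tfrac{1}{p}\Tr((H+zG)^k)\to\tau(\mathcal{X}(z)^k)$ go through). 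The cost is the extra resolvent machinery in $w$, which is correct but not needed: the direct coefficient extraction you already note, $[w^k]f(w)=\sum\Tr(M(n_1)\cdots M(n_\ell))$ together with $\sum n_1\Tr(\cdots)=\tfrac{k}{\ell}\sum\Tr(\cdots)$, suffices without ever introducing $(I-wH)^{-1}$.
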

\begin{proof}
  We have 
  \[
  \tau(\left( \mathcal{X}(z) \right)^k) = \tau\left(    
    \left(
      \beta_\psi^2 \mathfrak{d} + {\gamma_2\alpha_\phi^2}\left( \mathfrak{d}\mathfrak{p}(1+z) \right)
    \right)^k
  \right).  
  \]
  Hence, the coefficient of $z^\ell$ in this expression is given by 
  \[
  [z^\ell]\tau(\left( \mathcal{X}(z) \right)^k) 
  = \sum_{n_1,n_2,\dots,n_\ell,n_{\ell+1}}
  \tau\left(
    \mathcal{X}(0)^{n_1-1} 
    \left(
      \gamma_2\alpha_\phi^2
      \mathfrak{d}\mathfrak{p}
    \right)
    \mathcal{X}(0)^{n_2-1} 
    \left(
      \gamma_2\alpha_\phi^2
      \mathfrak{d}\mathfrak{p}
    \right)
    \cdots 
    \mathcal{X}(0)^{n_{\ell+1}-1} 
  \right),
  \]
  where $k+1 = n_1+\cdots+n_{\ell+1}$ and where all $n_i \geq 1$.
  Thus using cyclicity of the trace, and the convergence of non-commutative moments, we have (noting that $\mathcal{X}(0) = X$)
  \[
  [z^\ell]\tau(\left( \mathcal{X}(z) \right)^k) 
  =
  \sum_{n_1,n_2,\dots,n_\ell,n_{\ell+1}} \frac{1}{p} \Tr( M(n_1+n_{\ell+1}-1)M(n_2)\cdots M(n_\ell)) + o_{\P}(1).
  \]
  The number of ways to write $a=n_1+n_{\ell+1}-1$ for a given natural number $a$ is itself $a$ and therefore, (making the replacement $n_1+n_{\ell+1}-1 \to n_1$) 
  \[
    [z^\ell]\tau(\left( \mathcal{X}(z) \right)^k) = 
    \sum_{n_1,n_2,\dots,n_\ell} 
    \frac{n_1}{p} \Tr( M(n_1)M(n_2)\cdots M(n_\ell)) + o_{\P}(1).
  \]

  As for the second claim, we have 
  \[
  \begin{aligned}
    &[z^\ell]\tau(\left( \mathcal{X}(z) \right)^{k-1} z \partial_z \mathcal{X}(z)) = \\
    &\sum_{n_1,n_2,\dots,n_\ell}
    \tau\left(
      \mathcal{X}(0)^{n_1-1} 
      \left(
        \gamma_2\alpha_\phi^2
        \mathfrak{d}\mathfrak{p}
      \right)
      \mathcal{X}(0)^{n_2-1} 
      \left(
        \gamma_2\alpha_\phi^2
        \mathfrak{d}\mathfrak{p}
      \right)
      \cdots 
      \mathcal{X}(0)^{n_{\ell}-1}   
      \left(
        \gamma_2\alpha_\phi^2
        \mathfrak{d}\mathfrak{p}
      \right)
    \right),
    \end{aligned}
\]
where $k = n_1+\cdots+n_\ell$ and $n_i \geq 1$ for $i \in \unn{1}{\ell}$.
Thus, again using cyclicity of the trace, we have 
\[
  [z^\ell]\tau(\left( \mathcal{X}(z) \right)^{k-1} z \partial_z \mathcal{X}(z)) = 
  \sum_{n_1,n_2,\dots,n_\ell} 
  \frac{k}{p} \Tr( M(n_1)M(n_2)\cdots M(n_\ell)) + o_{\P}(1),
\]
and finally that  
\[
  [z^\ell]\tau(\left( \mathcal{X}(z) \right)^{k-1} z \partial_z \mathcal{X}(z)) = 
  [z^{\ell-1}]\frac{1}{k} \partial_z \tau(\left( \mathcal{X}(z) \right)^k) + o_{\P}(1).
\]
\end{proof}

Thus we conclude with a formal moment formula for $\Tr(\Q^k).$ 
\begin{lem}\label{lem:coefficientextraction2}
  Suppose that $(D^2, \tfrac{1}{d}W^{\top}W)$ converge in the sense of non-commutative moments to the pair $(\mathfrak{d}, \mathfrak{p})$ of free non-commutative random variables.  Then for any $k \geq 1$, we have 
  \begin{multline*}
  [z^0] 
  \left( 
    \sum_{a=1}^{k-1}
    \tau[ \left( \mathcal{X}(z^{-1}) \right)^a ]\tau[ \mathcal{X}(z)^{k-a-1} z \partial_z \mathcal{X}(z) ]
  \right)
  \\=\sum \frac{n_1}{p^2} \Tr[ M(n_1)M(n_2)\cdots M(n_\ell)]\Tr[M(n_1')M(n_2')\cdots M(n_\ell')] + o_{\P}(1),
  \end{multline*}
  where $\ell \in \N$ and $n_j \geq 1$ for $j \in \unn{1}{\ell}$ and $n_j' \geq 1$ for $j \in \unn{1}{\ell}$ are such that $n_1+\cdots+n_\ell+ n_1'+\cdots+n_\ell'=k$.
\end{lem}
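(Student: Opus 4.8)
The plan is to deduce this identity from Lemma~\ref{lem:coefficientextraction} by a purely formal generating-function manipulation, with essentially no new probabilistic input beyond what is already used there together with the operator-norm bounds of Theorem~\ref{a:main}. First I would record two structural facts about $\mathcal{X}(z) = \beta_\psi^2 \mathfrak{d} + \gamma_2\alpha_\phi^2\,\mathfrak{d}\mathfrak{p}(1+z)$. It is affine in $z$, so $\partial_z\mathcal{X}(z)=\gamma_2\alpha_\phi^2\,\mathfrak{d}\mathfrak{p}$ is a constant element, and by cyclicity of $\tau$ this gives $\tau\big((\mathcal{X}(z))^{m-1}\, z\,\partial_z\mathcal{X}(z)\big) = \tfrac{z}{m}\,\partial_z\tau\big((\mathcal{X}(z))^{m}\big)$ for every $m\ge 1$. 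Moreover $\tau((\mathcal{X}(z))^a)$ and $\tau((\mathcal{X}(z))^{k-a-1} z\,\partial_z\mathcal{X}(z))$ are polynomials in $z$ of degree at most $k$, so for Laurent polynomials $\mathcal{F}(z)=\sum_\ell f_\ell z^\ell$ and $\mathcal{G}(z)=\sum_\ell g_\ell z^\ell$ the operation $[z^0]$ reduces to the finite diagonal pairing $[z^0]\big(\mathcal{F}(z^{-1})\mathcal{G}(z)\big)=\sum_{\ell\ge 0} f_\ell g_\ell$.

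Next I would apply Lemma~\ref{lem:coefficientextraction} to the two factors separately. Substituting $w=z^{-1}$ and invoking the first identity of that lemma with exponent $a$,
\[
[z^{-\ell}]\,\tau\big((\mathcal{X}(z^{-1}))^a\big) = \sum_{\substack{n_1+\cdots+n_\ell=a\\ n_i\ge 1}} \frac{n_1}{p}\,\Tr\big(M(n_1)\cdots M(n_\ell)\big) + o_{\P}(1);
\]
combining the rewriting above with the second identity of Lemma~\ref{lem:coefficientextraction} applied with exponent $m=k-a$,
\[
[z^{\ell}]\,\tau\big((\mathcal{X}(z))^{k-a-1}\, z\,\partial_z\mathcal{X}(z)\big) = \sum_{\substack{n_1'+\cdots+n_\ell'=k-a\\ n_j'\ge 1}} \frac{1}{p}\,\Tr\big(M(n_1')\cdots M(n_\ell')\big) + o_{\P}(1).
\]
On the very-high-probability event of Theorem~\ref{a:main} each $\tfrac1p\Tr(M(\cdot)\cdots M(\cdot))$ is $O(1)$, so multiplying the two displays (only finitely many $\ell$ contribute) and using the diagonal pairing identity yields, for each fixed $a\in\unn{1}{k-1}$,
\[
[z^0]\Big(\tau\big((\mathcal{X}(z^{-1}))^a\big)\,\tau\big((\mathcal{X}(z))^{k-a-1}\, z\,\partial_z\mathcal{X}(z)\big)\Big) = \frac{1}{p^2}\sum_{\ell\ge 1}\,\sum_{\substack{n_1+\cdots+n_\ell=a\\ n_1'+\cdots+n_\ell'=k-a}} n_1\,\Tr\big(M(n_1)\cdots M(n_\ell)\big)\,\Tr\big(M(n_1')\cdots M(n_\ell')\big) + o_{\P}(1).
\]

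Finally I would sum over $a\in\unn{1}{k-1}$. Since the inner constraints read $n_1+\cdots+n_\ell = a$ and $n_1'+\cdots+n_\ell'=k-a$, letting $a$ range over $\{1,\dots,k-1\}$ produces exactly all splittings of $k$ into two nonempty ordered blocks of sizes $n_1,\dots,n_\ell$ and $n_1',\dots,n_\ell'$, so the sum collapses to $\tfrac{1}{p^2}\sum_{\ell\ge1}\sum n_1\,\Tr(M(n_1)\cdots M(n_\ell))\,\Tr(M(n_1')\cdots M(n_\ell'))$ over all $\ell\ge1$ and all $n_i,n_j'\ge1$ with $n_1+\cdots+n_\ell+n_1'+\cdots+n_\ell'=k$, which is the asserted right-hand side, and the $O(k^2)$ error terms aggregate into a single $o_{\P}(1)$. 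I expect the main delicacy to lie in the bookkeeping of constants: checking that the extra weight $n_1$ is attached only to the first factor while the second factor carries weight $1$. This reflects that the first trace is cyclically closed (the positions $j_0$ and $j_\ell$ of the $\mathcal{X}(0)$-runs merge under $\tau$, producing the $n_1$ solutions of $j_0+j_\ell=n_1-1$), whereas the trailing $z\,\partial_z\mathcal{X}(z)$ pins the starting point of the second trace. The two identities of Lemma~\ref{lem:coefficientextraction} are themselves consistent precisely because $\sum_{n_1+\cdots+n_\ell=m} n_1\,\Tr(M(n_1)\cdots M(n_\ell)) = \tfrac{m}{\ell}\sum_{n_1+\cdots+n_\ell=m}\Tr(M(n_1)\cdots M(n_\ell))$ by cyclic symmetry of the trace; once these points are settled the remainder is routine generating-function algebra.
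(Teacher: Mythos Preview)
Your proposal is correct and follows essentially the same approach as the paper. The paper's own proof is a terse one-liner: split $k=a+b$ with $a=n_1+\cdots+n_\ell$ and $b=n_1'+\cdots+n_\ell'$, apply the two identities of Lemma~\ref{lem:coefficientextraction} to each factor, and recognise the sum over $\ell$ of the products $\big([z^\ell]\tau(\mathcal{X}(z)^a)\big)\big([z^\ell]\tau(\mathcal{X}(z)^{b-1}z\partial_z\mathcal{X}(z))\big)$ as the $[z^0]$ coefficient of the product with $z\mapsto z^{-1}$ in the first factor. Your write-up simply makes the diagonal-pairing step and the $o_{\P}(1)$ control via Theorem~\ref{a:main} explicit.
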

\begin{proof}
  We let $k = a+b$ with $a,b \geq 1$, and we divide the sum on the right according to $a= n_1+\cdots+n_\ell$ and $b= n_1'+\cdots+n_\ell'$.  Doing so, and using Lemma \ref{lem:coefficientextraction}
  \begin{multline*}
  \sum \frac{n_1}{p^2} \Tr[ M(n_1)M(n_2)\cdots M(n_\ell)]\Tr[M(n_1')M(n_2')\cdots M(n_\ell')]
  =\\
  \sum_{a,b,\ell} 
  \left(
    [z^\ell]\tau(\left( \mathcal{X}(z) \right)^a)
  \right)
  \left(
    [z^\ell]\tau(\left( \mathcal{X}(z) \right)^{b-1} z \partial_z \mathcal{X}(z))
  \right) + o_{\P}(1).
  \end{multline*}
\end{proof}

\begin{proof}[Proof of Proposition \ref{prop:stieltjes}]
Hence we introduce the formal power series $\mathfrak{f}(w) \in \mathcal{A}[[w]]$, which is convergent for $|w|$ sufficiently large, defined by
\[
\mathfrak{f}(w)
\coloneqq
-
[z^0] 
\left(
  \sum_{k=0}^\infty 
  w^{-k-1}
  \tau(\mathcal{X}(0)^k) 
  +
  w^{-1} 
  \sum_{a=1}^{k-1}
  \tau[ \left( \mathcal{X}(z^{-1})w^{-1} \right)^a ]\tau[ \left( \mathcal{X}(z)w^{-1} \right)^{k-a-1} z \partial_z \mathcal{X}(z) ]
\right).
\]
From Lemma \ref{lem:coefficientextraction2}, we have that the Stieltjes transform of $\Q$ converges pointwise in probability to $\mathfrak{f}$ for all $|w|$ sufficiently large.
We take the second sum as empty when $k=0$ and $k=1$.
Then we have that (for $|w|$ sufficiently large)
\[
\begin{aligned}
  \mathfrak{f}(w)
  &= 
  -
  [z^0] 
  \left(
    \sum_{k=0}^\infty  
    w^{-k-1}
    \tau(\mathcal{X}(0)^k) 
    +
    w^{-1} 
    \sum_{a=1}^{k-1}
    \tau[ \left( \mathcal{X}(z^{-1})w^{-1} \right)^a ]\tau[ \left( \mathcal{X}(z)w^{-1} \right)^{k-a-1} z \partial_z \mathcal{X}(z) ]
  \right) \\
  &= 
  [z^0] 
  \left(
    \tau\left( \frac{1}{\mathcal{X}(0) - w} \right)
    -
    w^{-1} 
    \tau \left( \frac{\mathcal{X}(z^{-1})w^{-1}}{1- \mathcal{X}(z^{-1})w^{-1}} \right) 
    \tau \left( \frac{z \partial_z \mathcal{X}(z)}{1-\mathcal{X}(z)w^{-1}}  \right)
  \right) \\
  &= 
  [z^0] 
  \left(
    \tau\left( \frac{1}{\mathcal{X}(0) - w} \right)    -
    \tau \left( \frac{\mathcal{X}(z^{-1})}{\mathcal{X}(z^{-1})- w} \right) 
    \tau \left( \frac{z \partial_z \mathcal{X}(z)}{\mathcal{X}(z)- w}  \right)
  \right),
\end{aligned}
\]
and so the series is convergent for $|w| \gg \max\{|z|, |z|^{-1}\}$.  

We set $\mathcal{Y} = \partial_z \mathcal{X}(z)$, which we note is independent of $z$.  Then we have 
\[
\mathcal{X}(z) = \mathcal{X}(0) + z\mathcal{Y}.
\]
For $w$ sufficiently large we have that 
\[
\tau \left( \frac{\mathcal{X}(z^{-1})}{\mathcal{X}(z^{-1})- w} \right) 
=
\tau \left( \frac{\mathcal{X}(0) + z^{-1}\mathcal{Y}}{\mathcal{X}(0) + z^{-1}\mathcal{Y}- w} \right)
=
1
+
w\sum_{k=0}^\infty (-1)^k z^{-k} \tau\left((\mathcal{X}(0) -w)^{-1} ( \mathcal{Y}(\mathcal{X}(0) -w)^{-1} )^k\right).
\]
We also have 
\[
  \tau \left( \frac{z \partial_z \mathcal{X}(z)}{\mathcal{X}(z)- w}  \right)
  =
  \tau \left( \frac{z \mathcal{Y}}{\mathcal{X}(0) + z\mathcal{Y}- w}  \right)
  =
  \sum_{k=0}^\infty (-1)^k z^{k} \tau\left(( \mathcal{Y}(\mathcal{X}(0) -w)^{-1} )( \mathcal{Y}(\mathcal{X}(0) -w)^{-1} )^{k}\right).
\]
Hence, we have the representation, with $\mathcal{R} = (\mathcal{X}(0)-w)^{-1} = (\beta_\psi^2 \mathfrak{d} + {\gamma_2\alpha_\phi^2} \mathfrak{d}\mathfrak{p}-w)^{-1}$
and $\mathcal{Y} = {\gamma_2\alpha_\phi^2} \mathfrak{d}\mathfrak{p}$,
\begin{equation}\label{eq:frepresentation}
\begin{aligned}
\mathfrak{f}(w)
&=
\tau\left(
    (1-\mathcal{Y})\mathcal{R}
\right)
-w
\tau\otimes\tau \left(
(\mathcal{R} \otimes \mathcal{Y}\mathcal{R})
(\mathbf{1} \otimes \mathbf{1} - \mathcal{Y}\mathcal{R} \otimes \mathcal{Y}\mathcal{R})^{-1}
\right) \\
&=
\tau\left(
  (1-\mathcal{Y})\mathcal{R}
\right)
-w
\tau\otimes\tau \left(
(\mathbf{1} \otimes \mathcal{Y})
( (\mathcal{X}(0)-w)^{\otimes 2} - \mathcal{Y}^{\otimes 2})^{-1}
\right).
\end{aligned}
\end{equation}
We note that this is a Stieltjes transform of a compactly supported probability measure $\nu$ on $\R$, as $\mathfrak{f}(w) \sim -\frac{1}{w}$ as $w \to \infty$ and from the Herglotz representation theorem.  From the identity theorem for analytic functions, and as Stieltjes transforms of subprobability measures are a normal family on $\C^+$, it follows that the pointwise convergence in probability holds not just for $|w|$ large but for all $w \in \C^+$.  The support of the measure follows, as it is a weak limit of the empirical spectral distribution of $\Q$.
\end{proof}

\appendix

\section{ A few properties of sample covariance matrices }

In this section we collect a few relatively standard properties of random matrices of the form 
\begin{equation}\label{a:samplecov}
  \begin{aligned}
  H & \coloneqq \beta_\psi^2 D^2 + \frac{\alpha_\phi^2}{d} D W^{\top} W D = \beta_\psi^2 D^2 + Z^{\top} Z, \\
  H(u) & \coloneqq \beta_\psi^2 D^2 +  D W^{\top}\left(\frac{\alpha_\phi^2}{d} + \mathbf{e}_u \otimes \mathbf{e}_u \right) W D = \beta_\psi^2D^2 +  Z^{\top} Z + \frac{d}{\alpha_\phi^2} Z^\top(\mathbf{e}_u \otimes \mathbf{e}_u)Z,
  \end{aligned}
\end{equation}
as well as the matrices 
\begin{equation}\label{a:samplecov2}
  \begin{aligned}
  M(n) & \coloneqq \frac{\alpha_\phi^2}{d}WD H^{n-1} D W^{\top} \\
  M(u,n) & \coloneqq \frac{\alpha_\phi^2}{d} WD H(u)^{n-1} D W^{\top},
  \end{aligned}
\end{equation}
where we recall that $D$ is a diagonal matrix with bounded entries, $W$ is i.i.d.\ standard normal, and $Z=\frac{\alpha_\phi}{\sqrt{d}}WD$. In this section, we show the following estimate.
\begin{thm}\label{a:main}
  Suppose $\epsilon > 0$ is fixed.
  Suppose $\tr(D^2) \geq \epsilon d$.
  With very high probability, there is a constant $C  > 0$ so that for any $n \in \N$,
  \[
  \max_{u,i} (M(n))_{ii} \leq C^n 
  \quad \text{and} \quad
  \max_{u,i} |(M(u,n))_{ii}| \leq C^n.
  \]
  Moreover, with very high probability, the following bounds hold
  \begin{enumerate}
    \item 
    \(
    \max_{i \neq j} |(M(u,n))_{ij}| \leq d^{\epsilon-1/2}
    \)
    \item 
    \(
    \max_{i \neq j} |(M(n))_{ij}| \leq d^{\epsilon-1/2}.
    \)
    \item 
    \(
    \max_{i \neq u} |(M(u,n))_{ii} - (M(n))_{ii}| \leq d^{\epsilon-1}.
    \)
  \end{enumerate}
\end{thm}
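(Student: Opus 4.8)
Throughout, the argument runs conditionally on $D$. We may assume $\alpha_\phi\neq0$, as otherwise $Z=0$ and all the matrices vanish. Write $\mathbf{w}_i\in\Rbb^p$ for the $i$-th row of $W$ and set $g_i\coloneqq Z^\top\ebf_i=\tfrac{\alpha_\phi}{\sqrt d}D\mathbf{w}_i$ and $\Sigma\coloneqq\tfrac{\alpha_\phi^2}{d}D^2$, so that conditionally on $D$ the vectors $g_1,\dots,g_d$ are i.i.d.\ $\mathcal N(0,\Sigma)$ and one has the identities $Z^\top Z=\sum_i g_ig_i^\top$, $H=\beta_\psi^2D^2+\sum_i g_ig_i^\top$, $H(u)=H+\tfrac{\alpha_\phi^2}{d}DW^\top(\ebf_u\otimes\ebf_u)WD=H+g_ug_u^\top$, $(M(n))_{ij}=g_i^\top H^{n-1}g_j$ and $(M(u,n))_{ij}=g_i^\top H(u)^{n-1}g_j$. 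Since the entries of $D$ are bounded, $\norm{\Sigma}=\O{1/d}$ and $\Tr\Sigma=\tfrac{\alpha_\phi^2}{d}\Tr(D^2)$ is $\O{1}$ and bounded below by $\alpha_\phi^2\varepsilon>0$ by hypothesis. The plan is to reduce each of the four claims to a bound on a bilinear or quadratic form in a pair of Gaussian vectors against a matrix independent of that pair.

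First I would record, on a single event of very high probability, the operator-norm estimates $\norm{W}=\O{\sqrt d}$ (standard Gaussian matrix norm bound), hence $\norm{Z}^2=\tfrac{\alpha_\phi^2}{d}\norm{WD^2W^\top}=\O{1}$; $\max_i\norm{g_i}^2=\O{1}$ (concentration of $\chi^2$ variables and a union bound over $i\in\unn{1}{d}$, using $\Tr(D^2)=\O{d}$); and consequently $\norm{H}=\O{1}$ and $\max_u\norm{H(u)}\le\norm{H}+\max_u\norm{g_u}^2=\O{1}$. On this event, for every $n$ at once, $|(M(n))_{ii}|\le\norm{g_i}^2\norm{H}^{n-1}\le C^n$ and $|(M(u,n))_{ii}|\le\norm{g_i}^2\norm{H(u)}^{n-1}\le C^n$ with a single constant $C$; because $H(u)$ itself has bounded operator norm this also covers $i=u$, so the first assertion follows at once.

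For the off-diagonal estimates I would fix $i\neq j$ and set $H^{(ij)}\coloneqq\beta_\psi^2D^2+\sum_{k\notin\{i,j\}}g_kg_k^\top$, which is positive semidefinite, independent of $(g_i,g_j)$ given $D$, and $\preceq H$, so $\norm{H^{(ij)}}=\O{1}$ on the event above. Expanding $H=H^{(ij)}+g_ig_i^\top+g_jg_j^\top$ and grouping consecutive powers of $H^{(ij)}$ writes $g_i^\top H^{n-1}g_j$ as a sum of at most $3^{n-1}$ terms, each a product of $m+1\le n$ scalar factors $g_a^\top(H^{(ij)})^cg_b$ with $a,b\in\{i,j\}$ and $0\le c\le n-1$, indexed along a walk $i\to\rho_1\to\cdots\to\rho_m\to j$ with $\rho_\ell\in\{i,j\}$; since this walk has distinct endpoints, at least one factor in each term is off-diagonal ($a\neq b$). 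Conditioning on $\sigma(D,(g_k)_{k\notin\{i,j\}})$, the Hanson--Wright inequality gives $g_a^\top(H^{(ij)})^cg_a=\Tr((H^{(ij)})^c\Sigma)+\O{d^{\varepsilon-1/2}}=\O{1}$ with very high probability, and, conditioning also on $g_b$, the off-diagonal factor $g_a^\top(H^{(ij)})^cg_b$ is a centred Gaussian of variance $\O{1/d}$, hence $\le d^{\varepsilon-1/2}$ with very high probability. Thus each term is $\O{d^{\varepsilon-1/2}}$, the sum of the $\le3^{n-1}$ of them is too, and after a harmless enlargement of $\varepsilon$ and a union bound over the $\le d^2$ pairs (each exceptional probability of order $e^{-cd^{2\varepsilon}}$) this gives $\max_{i\neq j}|(M(n))_{ij}|\le d^{\varepsilon-1/2}$. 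The same computation with $H(u)$ in place of $H$ gives the bound for $M(u,n)$: for $u\notin\{i,j\}$ one absorbs $g_ug_u^\top$ into the core matrix $H^{(ij)}+g_ug_u^\top$ (still of bounded norm and independent of $(g_i,g_j)$ after conditioning), and for $u\in\{i,j\}$ it merely doubles one of the two rank-one perturbations. Finally, the comparison bound follows from the telescoping identity
\[
(M(u,n))_{ii}-(M(n))_{ii}=g_i^\top\big(H(u)^{n-1}-H^{n-1}\big)g_i=\sum_{\ell=0}^{n-2}\big(g_i^\top H^{\ell}g_u\big)\big(g_u^\top H(u)^{n-2-\ell}g_i\big),
\]
since for $i\neq u$ each of the two factors is an off-diagonal entry for the pair $(i,u)$, hence $\le d^{\varepsilon-1/2}$ by the two estimates just proved; summing the $n-1$ terms and relabelling $\varepsilon$ gives $\max_{i\neq u}|(M(u,n))_{ii}-(M(n))_{ii}|\le d^{\varepsilon-1}$.

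The only delicate part is the bookkeeping in the expansion: one must check that the rank-one factors detach cleanly into the stated scalar bilinear forms, that every matrix power occurring in them is a power of the core $H^{(ij)}$ (so that its bounded operator norm, not a growing power of it, controls the diagonal factors uniformly in $c\le n-1$), and that the parity of the walk's endpoints genuinely forces an off-diagonal factor in each summand; the probabilistic inputs --- Gaussian matrix norm, Hanson--Wright, Gaussian linear-form tails --- are then routine. I note that the $3^{n-1}$ count of terms renders the off-diagonal estimates uniform only for $n$ up to order $\log d$; this is harmless since in every application (Proposition \ref{prop:TrQk}) the power $n$ is a fixed constant, and a union bound over the finitely many relevant values of $n$ preserves the very-high-probability conclusion.
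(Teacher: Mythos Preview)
Your argument is correct and complete for each fixed $n$ (and you rightly note this is all that is ever used in Proposition~\ref{prop:TrQk}). The route, however, is genuinely different from the paper's. The paper works with resolvents: it establishes uniform control of $w_i^\top R(z;H)w_j$ and $w_i^\top R(z;H(u))w_j$ on a contour $\Gamma$ enclosing the spectrum (Propositions~\ref{a:usual} and~\ref{a:lessusual}), using the leave-one-out Sherman--Morrison identity~\eqref{a:leaveoneout}, and then recovers $M(n)$ and $M(u,n)$ by Cauchy's formula $M(n)_{ij}=\frac{-1}{2\pi i}\frac{\alpha_\phi^2}{d}\oint_\Gamma w_i^\top R(z;H)w_j\,z^n\,\d z$. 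You instead expand $H^{n-1}=(H^{(ij)}+g_ig_i^\top+g_jg_j^\top)^{n-1}$ directly, peel off the rank-one factors into scalar bilinear forms $g_a^\top(H^{(ij)})^cg_b$, and control each via Hanson--Wright or Gaussian tails against the \emph{leave-two-out} core $H^{(ij)}$. The walk parity argument forcing an off-diagonal factor, and the telescoping $H(u)^{n-1}-H^{n-1}=\sum_\ell H^\ell g_ug_u^\top H(u)^{n-2-\ell}$ for part~(3), are both clean. The trade-off is exactly the one you flag: the paper's resolvent estimates live on a single high-probability event independent of $n$, with the only $n$-dependence entering through the deterministic factor $|z|^n$ on $\Gamma$; your combinatorial expansion produces $3^{n-1}$ terms and requires $O(n)$ concentration inputs per pair $(i,j)$, so the implicit constants and the exceptional event both depend on $n$. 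For the moment computations of Section~\ref{sec:analysisQ}, where $n\le k$ is fixed, this costs nothing, and your argument is arguably more elementary since it avoids the self-consistent resolvent machinery of Proposition~\ref{a:usual}.
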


The matrix $H$ is essentially a sample covariance matrix.  When the factor $\beta_\psi = 0$, there is a vast literature we can apply.  The presence of $\beta_\psi^2 > 0$ does not substantially change the methods, in particular the \emph{leave-one-out} method (see \cite{LiaoCouillet} for a pleasant introduction or \cite{ElliotPaquetteNotes}*{Theorem 18} for a less pleasant one) can be used to show the bulk spectral law.

We recall the resolvent of a square matrix $A$ is defined by 
\[
R(z;A) \coloneqq (A-zI)^{-1},
\]
for all $z$ not in the spectrum of $A$.
We also recall the Sherman-Morrison-Woodbury formula, which gives that for $u$ and $v$ and all $z \in \C$ 
\begin{equation}\label{a:smw}
R\left(z ; A+ u \otimes v \right)-R(z ; A)=-\frac{R(z ; A) (u \otimes v) R(z ; A)}{1+u^\top R(z ; A) v}.
\end{equation}

This algebraic machinery powers the \emph{leave-one-out} method.
We further introduce the matrices 
\[
H^{[S]} \coloneqq \beta_\psi^2 D^2 + 
Z^{\top} \left( I - \sum_{i \in S} \mathbf{e}_i \otimes \mathbf{e}_i \right) Z 
= \frac{\alpha_\phi^2}{d} \sum_{i \not\in S} w_i \otimes w_i,
\quad w_i \coloneqq  D W^{\top}\mathbf{e}_i.
\]
For a singleton $S = \{i\}$, we abbreviate $H^{[S]} = H^{[i]}$.
Then if $S' \setminus S = \{i\}$, from \eqref{a:smw} we have that for $\Im z > 0$
\begin{equation}\label{a:leaveoneout}
  \begin{aligned}
  R(z; H^{[S]}) - R(z; H^{[S']}) &= -\frac{\alpha_\phi^2}{d}\frac{R(z; H^{[S']}) (w_i \otimes w_i) R(z; H^{[S']})}{1+\frac{\alpha_\phi^2}{d}w_i^\top R(z; H^{[S']}) w_i}, \\
  R(z; H(u)) - R(z; H^{[u]}) &= -\left(\frac{\alpha_\phi^2}{d}+1\right)\frac{R(z; H^{[u]}) (w_u \otimes w_u) R(z; H^{[u]})}{1+\left(\frac{\alpha_\phi^2}{d}+1\right)w_u^\top R(z; H^{[u]}) w_u}.
  \end{aligned}
\end{equation}

Now it is straightforward to show by standard methods that the resolvent entries of $H^{[S]}$ are with very high probability well behaved on a contour that (with very high probability) encloses the spectrum of $H$.
\begin{prop}\label{a:usual}
  Suppose $\epsilon > 0$ is fixed and $k \in \N$ is fixed.
  Suppose that $\epsilon < \frac{p}{d} < \frac{1}{\epsilon}$.
  There is a constant $C > 0$ (not depending on $\epsilon$) so that with very high probability the norm of $H^{[S]}$ is bounded by $C$ for any $S \subseteq \unn{1}{d}$.  If we let $\Gamma$ be the contour based at zero of radius least $C+1$ (and independent of $d$), then with very high probability, the following estimates hold uniformly in $S \subseteq \unn{1}{d}$ with $|S| \leq k$ and all $\epsilon >0$ sufficiently small:
  \begin{enumerate}
    \item The off-diagonal entries of the resolvent of $H^{[S]}$ are uniformly small on the contour. That is,
    \begin{equation}\label{a:usual1}
      \max_{z \in \Gamma}\max_{i \neq j} |R(z ; H^{[S]})_{ij}| \leq d^{\epsilon-1/2}.
    \end{equation}
    \item The diagonal entries of the resolvent of $H^{[S]}$ are uniformly bounded on the contour. That is,
    \begin{equation}\label{a:usual2}
      \max_{z \in \Gamma}\max_{i} |R(z ; H^{[S]})_{ii}| \leq 1.
    \end{equation}
    \item Uniformly over the contour,
    \[
    \min_{z \in \Gamma} \left| 1+ \frac{\alpha_\phi^2}{d}\tr(R(z;H^{[S]})D^2)\right| \geq \epsilon 
    \quad \text{and} \quad 
    \min_{z \in \Gamma} \left|\frac{\alpha_\phi^2}{d}\tr(R(z;H^{[S]})D^2)\right| \geq \epsilon \tr(D^2).
    \]
  \end{enumerate}
\end{prop}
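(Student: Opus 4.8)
The plan is to run the classical \emph{leave-one-out} analysis for the generalized sample covariance matrix $H^{[S]} = \beta_\psi^2 D^2 + \frac{\alpha_\phi^2}{d}\sum_{i\notin S} w_i\otimes w_i$, $w_i = DW^{\top}\mathbf{e}_i$, keeping every estimate uniform in $S$ (only $\O{d^k}$ index sets matter) and along the fixed macroscopic contour $\Gamma$, where a priori control is cheap precisely because $\Gamma$ avoids the spectrum. Writing $w_i = D\mathbf{g}_i$ with $\mathbf{g}_i = W^{\top}\mathbf{e}_i \sim \mathcal{N}(0,I_p)$ i.i.d., one has $H^{[S]} = D\bigl(\beta_\psi^2 I_p + \frac{\alpha_\phi^2}{d}\sum_{i\notin S}\mathbf{g}_i\mathbf{g}_i^{\top}\bigr)D$, a $D$-conjugated shifted Wishart matrix; throughout I abbreviate $R = R(z;H^{[S]})$ and, for a deleted index $i$, $R_i = R(z;H^{[S\cup\{i\}]})$. \emph{Step 1 (operator norm, and item (2)).} Since the entries of $D$ are bounded and $\frac1d\norm{W}^2$ is bounded with very high probability (e.g.\ by \cites{silverstein1995analysis,bai1998no}, using $\epsilon<p/d<1/\epsilon$), and since deleting the at most $k$ rank-one terms only lowers the positive semidefinite part, we get $\norm{H^{[S]}}\le C$ for all $S$ with $|S|\le k$ simultaneously with very high probability, for a constant $C$ depending only on the model parameters. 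For $\Gamma$ as in the statement, each $z\in\Gamma$ then has $\operatorname{dist}(z,\operatorname{spec}(H^{[S]}))\ge 1$, hence $\norm{R}\le 1$; in particular $|R_{ii}|\le 1$, which is \eqref{a:usual2}.

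\emph{Step 2 (self-consistent scalars, and item (3)).} Applying the leave-one-out identities \eqref{a:leaveoneout} one deleted sample at a time, and using that $\mathbf{g}_i$ is independent of $R_i$, the Hanson--Wright inequality gives $\frac{\alpha_\phi^2}{d} w_i^{\top} R_i w_i = \frac{\alpha_\phi^2}{d}\tr(R_i D^2) + \O{d^{\varepsilon-1/2}}$ with very high probability, while a rank-one perturbation bound (using Step 1 and the first bound of item (3) to control the denominator) shows $\frac{\alpha_\phi^2}{d}\tr(R_i D^2) = \frac{\alpha_\phi^2}{d}\tr(R D^2) + \O{d^{-1}}$. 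This closes an approximate scalar fixed point equation of Marchenko--Pastur type (cf.\ \eqref{eq:mp_map} and Theorem \ref{theo:baisilverstein}) for $m_S(z)\coloneqq \frac{\alpha_\phi^2}{d}\tr(R D^2)$, whose deterministic solution $m(z)$ is finite and bounded on $\Gamma$ because $\Gamma$ avoids the spectrum. From that fixed point equation, using $\tr(D^2)\ge\epsilon d$ and the boundedness of $|z|$ on $\Gamma$, one checks directly that the two denominators appearing in \eqref{a:leaveoneout} stay bounded away from $0$ (the first of order $1$, the second of order $\tr(D^2)$), which is item (3); the hypothesis $\tr(D^2)\ge\epsilon d$ is exactly what prevents these quantities from degenerating. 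Concentration $m_S(z) = m(z) + \O{d^{\varepsilon-1/2}}$, a net on $\Gamma$ (the resolvent is $1$-Lipschitz in $z$ there, since $\norm{R}^2\le 1$), and a union bound over the $\O{d^k}$ sets $S$ promote all of this to the claimed uniformity.

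\emph{Step 3 (off-diagonal entries, item (1)).} This is the crux, and genuine cancellation is needed: the naive Neumann expansion of $R$ fails because, although the off-diagonal entries of $H^{[S]}$ are $\O{d^{\varepsilon-1/2}}$, summing the internal index in each power already produces contributions of order $1$. Instead I would close the self-consistent equation for the full resolvent out of \eqref{a:leaveoneout} and show that, on $\Gamma$ and with very high probability, $R = \Pi(z) + \cE$, where $\Pi(z) = \bigl(\beta_\psi^2 D^2 + m(z) D^2 - z\bigr)^{-1}$ is the deterministic equivalent --- diagonal, since $D$ is --- and every entry of $\cE$ is at most $d^{\varepsilon-1/2}$. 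The error bound rests on the standard fluctuation-averaging input: the quadratic forms $w_i^{\top} R_i w_i$ concentrate as in Step 2, the bilinear forms $w_i^{\top} R_{i} w_j$ for $i\ne j$ (with both samples deleted from the resolvent) are $\O{d^{\varepsilon-1/2}}$, all divisions being admissible by item (3) and all a priori operator norms being $\le 1$ by Step 1. Since $\Pi(z)$ is diagonal, for $i\ne j$ this gives $|R_{ij}| = |\cE_{ij}|\le d^{\varepsilon-1/2}$, and a union bound over $i,j$, over a polynomial net on $\Gamma$, and over $S$ yields \eqref{a:usual1} uniformly.

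I expect Step 3 --- together with the fluctuation-averaging estimate on the bilinear forms feeding it --- to be the main obstacle: it is precisely the entrywise law for this class of matrices, and although on a macroscopic contour it is far softer than an edge local law, it still requires the leave-one-out self-consistent argument rather than any direct expansion. Steps 1 and 2, by contrast, are routine once the a priori operator norm bound is in place.
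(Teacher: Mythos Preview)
Your approach for the norm bound, item (2), and item (1) matches the paper's: standard leave-one-out/self-consistent analysis for the sample covariance matrix, with a priori control trivialized by working on a macroscopic contour outside the spectrum. The paper likewise defers item (1) to the usual resolvent self-consistent argument, citing \cite{ElliotPaquetteNotes}*{Theorem 18}.

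For item (3) you diverge. You run the full fixed-point machinery (Hanson--Wright on the quadratic forms, approximate Marchenko--Pastur equation for $m_S(z)$, concentration, netting), whereas the paper uses a one-line observation: since the radius of $\Gamma$ may be chosen freely (subject only to being at least $C+1$), taking it large enough forces $R(z;H^{[S]}) = -z^{-1}I + O(|z|^{-2})$ in operator norm uniformly on $\Gamma$, and the bounds in item (3) follow by inspection of $\tfrac{\alpha_\phi^2}{d}\tr(R D^2) \approx -\tfrac{\alpha_\phi^2}{dz}\tr(D^2)$. This simpler route also dissolves the circularity in your Step~2, where you invoke ``the first bound of item (3) to control the denominator'' while still establishing item (3); as written, your argument would need a bootstrap or continuity-in-$z$ step to close, which the large-contour trick makes unnecessary. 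A minor point: you import the hypothesis $\tr(D^2)\ge\epsilon d$, which belongs to the subsequent proposition, not this one.
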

\begin{proof}
  These are results that are contained in some form in most resolvent based approaches to sample covariance matrices.
  The norm bound is a direct consequence of the boundedness of the entries of $D$ and standard operator norm bounds for the Gaussian random matrix $W$.  This makes the second conclusion trivial, as the operator norm of the resolvent on the contour is bounded by $1$ (note that ).  The bound for the off-diagonal entries of the resolvent is nontrivial, but it follows the path of the usual self-consistent equation for the resolvent of a sample covariance matrix by the leave-one-out method.  See \cite{ElliotPaquetteNotes}*{Theorem 18} for a detailed proof.  The final point can be ensured by increasing the radius of the contour is sufficiently large, as the resolvent becomes close in norm to $-I/z$ (with a norm error $O(1/z^2)$).
\end{proof}

Building on this, we derive two less standard corollaries that we need. 
\begin{prop}\label{a:lessusual}
  Suppose the same setup as Proposition \ref{a:usual}, in particular with $\epsilon,$ $C$ and $\Gamma$ as before. Suppose further that $\tr(D^2) \geq \epsilon d$. Then with very high probability, 
  \begin{enumerate}
    \item The off-diagonal generalized resolvent entries of $H$ are small on the contour, that is,
    \begin{equation}\label{a:lessusual1}
      \max_{z \in \Gamma}\max_{i \neq j} |w_j^{\top}R(z ; H)w_i| \leq d^{\epsilon+1/2}.
    \end{equation}
    \item The off-diagonal generalized resolvent entries of $H(u)$ are close to those of $H$ provided we do not look in the $u$ direction, that is,
    \begin{equation}\label{a:lessusual2}
      \max_{z \in \Gamma}\max_{\substack{i \neq u \\ j}} |w_i^{\top}R(z ; H(u)) w_j-w_i^{\top}R(z ; H) w_j| \leq d^{\epsilon+1/2}.
    \end{equation}
    If neither $i$ nor $j$ is equal to $u$, the bound improves to $d^{\epsilon}$ with very high probability.
  \end{enumerate}
\end{prop}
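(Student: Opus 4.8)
The plan is to deduce both bounds from Proposition \ref{a:usual} by the leave-one-out method, reducing every generalized resolvent entry $w_i^\top R(z;\cdot)w_j$ --- via the Sherman--Morrison--Woodbury identity \eqref{a:smw} --- to a resolvent that is \emph{independent} of the $w$'s being tested against. Throughout I would work on the very-high-probability event of Proposition \ref{a:usual}, so that on $\Gamma$ every $R(z;H^{[S]})$ has operator norm at most $1$, off-diagonal entries at most $d^{\epsilon-1/2}$, and $|1+\tfrac{\alpha_\phi^2}{d}\tr(R(z;H^{[S]})D^2)|\ge\epsilon$; taking $S=\emptyset$ also shows $\|H\|\le C$, hence $\Gamma$ encloses the spectra of $H$ and of each $H(u)=H+w_u\otimes w_u$ at distance $\ge 1$, so $\|R(z;H)\|_{\mathrm{op}},\|R(z;H(u))\|_{\mathrm{op}}\le 1$ there. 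As usual I would fix $z\in\Gamma$, prove the bounds pointwise, and then upgrade to uniformity over $\Gamma$ with a polynomially fine net together with the polynomial Lipschitz dependence of all quantities on $z$; I will not spell this out.

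For \eqref{a:lessusual1} fix $i\ne j$. Writing $H=H^{[j]}+\tfrac{\alpha_\phi^2}{d}w_j\otimes w_j$ and sandwiching \eqref{a:smw} between $w_j^\top$ and $w_i$, the identity collapses to
\[
w_j^\top R(z;H)w_i=\frac{w_j^\top R(z;H^{[j]})w_i}{1+\tfrac{\alpha_\phi^2}{d}\,w_j^\top R(z;H^{[j]})w_j}.
\]
The denominator I would keep bounded away from $0$ by combining Hanson--Wright (so that $\tfrac{\alpha_\phi^2}{d}w_j^\top R(z;H^{[j]})w_j$ equals $\tfrac{\alpha_\phi^2}{d}\tr(R(z;H^{[j]})D^2)$ up to $\O{d^{\epsilon-1/2}}$) with the third part of Proposition \ref{a:usual}. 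The numerator is the crux: although $w_i$ sits inside $H^{[j]}$, conditioning on all rows $w_k$ with $k\ne j$ freezes both $R(z;H^{[j]})$ and $w_i$, so $w_j^\top\big(R(z;H^{[j]})w_i\big)$ is a centred Gaussian in $w_j\sim\mathcal N(0,D^2)$ with conditional variance $\le\|D\|_{\mathrm{op}}^2\|R(z;H^{[j]})\|_{\mathrm{op}}^2\|w_i\|^2$, which is $\O{d}$ with very high probability; hence the numerator is $\le d^{\epsilon+1/2}$ with very high probability, and dividing gives \eqref{a:lessusual1} after a union bound over $i\ne j$.

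For \eqref{a:lessusual2} fix $u$ and $i\ne u$, with $j$ arbitrary. Since $H(u)=H+w_u\otimes w_u$, identity \eqref{a:smw} gives
\[
w_i^\top R(z;H(u))w_j-w_i^\top R(z;H)w_j=-\frac{\big(w_i^\top R(z;H)w_u\big)\big(w_u^\top R(z;H)w_j\big)}{1+w_u^\top R(z;H)w_u}.
\]
The step I expect to require the most care is controlling the diagonal generalized entry $w_u^\top R(z;H)w_u$ from \emph{both} sides: an upper bound $\O{d}$ is immediate from $\|w_u\|^2$, but I also need a lower bound $\gtrsim d$ so that the denominator above does not degenerate. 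For this I would peel off $w_u$ once more, writing $w_u^\top R(z;H)w_u=\big(w_u^\top R(z;H^{[u]})w_u\big)/\big(1+\tfrac{\alpha_\phi^2}{d}w_u^\top R(z;H^{[u]})w_u\big)$, apply Hanson--Wright to replace the quadratic forms by $\tr(R(z;H^{[u]})D^2)$ up to $\O{d^{\epsilon+1/2}}$, and then invoke the hypothesis $\tr(D^2)\ge\epsilon d$ together with Proposition \ref{a:usual} to force $|\tr(R(z;H^{[u]})D^2)|\gtrsim d$; this makes $|1+w_u^\top R(z;H)w_u|\gtrsim d$ with very high probability.

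With that in hand, the numerator is handled by \eqref{a:lessusual1}: if $j\ne u$ both factors are off-diagonal and $\le d^{\epsilon+1/2}$, so the difference is $\O{d^{2\epsilon+1}/d}=\O{d^{2\epsilon}}$; if $j=u$ the second factor is the diagonal entry, of size $\O{d}$, so the difference is $\O{d^{\epsilon+1/2}\cdot d/d}=\O{d^{\epsilon+1/2}}$. Relabelling $\epsilon$ and union bounding over $i,j,u$ completes the argument. Apart from the lower-bound step on $w_u^\top R(z;H)w_u$, everything is standard: bookkeeping of which vector is independent of which resolvent, Gaussian and Hanson--Wright tail bounds, the net argument in $z$, and union bounds over indices.
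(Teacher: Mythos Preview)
Your argument is correct and follows the same leave-one-out/Sherman--Morrison strategy as the paper, with two minor tactical differences worth noting. For \eqref{a:lessusual1} the paper peels off \emph{both} $w_i$ and $w_j$ so that Hanson--Wright applies to the bilinear form $w_j^\top R(z;H^{[\{i,j\}]})w_i$ with both test vectors independent of the resolvent, then adds each row back; you peel off only $w_j$ and exploit Gaussianity of $w_j$ conditionally on the remaining rows, which is a shade more direct. For \eqref{a:lessusual2} the paper routes through $H^{[u]}$ (comparing $H(u)$ to $H^{[u]}$ and then $H^{[u]}$ to $H$), whereas you compare $H(u)$ to $H$ in one Sherman--Morrison step; the price is that your denominator $1+w_u^\top R(z;H)w_u$ must be shown to be $\gtrsim d$, which you correctly obtain by one further peel and the identity $1+w_u^\top R(z;H)w_u=\bigl(1+(1+\tfrac{\alpha_\phi^2}{d})w_u^\top R(z;H^{[u]})w_u\bigr)/\bigl(1+\tfrac{\alpha_\phi^2}{d}w_u^\top R(z;H^{[u]})w_u\bigr)$ --- the numerator here is exactly the denominator the paper handles, and your extra denominator is bounded away from zero by Proposition~\ref{a:usual}. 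The resulting bounds and the net-in-$z$ argument are identical.
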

\begin{proof}
  We begin with the first claim.
  
  We let $S' = \{i,j\}$ and $S = \{i\}$.  We claim that for the case of $H^{[S']}$ we have that with very high probability,
  \[
  |w_j^{\top} R(z; H^{[S']})w_i| \leq d^{\epsilon+1/2}
  \]
  from the Hanson-Wright inequality and the independence of the matrix $H^{[S']}$ (which has bounded operator norm with very high probability) from the vectors $w_i$ and $w_j$.  

  Now we can use \eqref{a:leaveoneout} to extend the claim to the case of $H^{[S]}=H^{[i]}$ and to $H$. In particular, 
  \[
    w_j^{\top} R(z; H^{[S]})w_i = w_j^{\top} R(z; H^{[S']})w_i + \frac{\alpha_\phi^2}{d}\frac{\left(w_j^{\top} R(z; H^{[S']})w_j\right)\left(w_j^{\top} R(z; H^{[S']})w_i\right)}{1+\frac{\alpha_\phi^2}{d}w_j^{\top} R(z; H^{[S']})w_j}.
  \]
  With very high probability, the denominator is bounded away from $0$ (from part 3 of Proposition \ref{a:usual} and Hanson-Wright), and the $(w_j,w_j)$ inner product is bounded above.  The cross term, is $d^{\epsilon+1/2}$ with very high probability, from the independence of $w_i$, and so we have extended the claim to $H^{[i]}$ (indeed the extra contribution from the leave-one-out factor is smaller by a factor of $d$ than we need).  

  We can now repeat this argument to bound the off-diagonal generalized entries of $H$.  Using \eqref{a:leaveoneout} again, we have that
  \[
  w_j^{\top} R(z; H)w_i = w_j^{\top} R(z; H^{[S]})w_i + \frac{\alpha_\phi^2}{d}\frac{\left(w_j^{\top} R(z; H^{[S]})w_i\right)\left(w_i^{\top} R(z; H^{[S]})w_i\right)}{1+\frac{\alpha_\phi^2}{d}w_i^{\top} R(z; H^{[S]})w_i}.
  \]
  The same argument now applies, which establishes the first claim at a single $z$ with very high probability.  As the operator norm of the resolvent is bounded by $1$ with very high probability on the contour, we have also that the $z$-derivative is bounded similarly (using the $z$-derivative of a resolvent is its square).  Hence by applying Hanson-Wright pointwise over a mesh of the contour of cardinality $d^{100}$, we conclude the claim uniformly on the contour with very high probability.

  We now turn to the second claim.  Using \eqref{a:leaveoneout},
  \[
  w_i^{\top} R(z; H(u)) w_j = w_i^{\top} R(z; H^{[u]}) w_j + \left(\frac{\alpha_\phi^2}{d}+1\right)\frac{\left(w_i^{\top} R(z; H^{[u]}) w_u\right)\left(w_j^{\top} R(z; H^{[u]}) w_u\right)}{1+\left(\frac{\alpha_\phi^2}{d}+1\right)w_u^\top R(z; H^{[u]}) w_u}.
  \]
  Hence for whichever of $i$ or $j$ is not equal to $u$, we have that its inner product term in the numerator is $O(d^{\epsilon+1/2})$ with very high probability.  The denominator owing to the scaling is at least $\epsilon d/2$ (in norm) with very high probability, and hence combining everything we conclude that with very high probability,
  \[
    |w_i^{\top} R(z; H(u)) w_j - w_i^{\top} R(z; H^{[u]}) w_j| \leq d^{\epsilon+1/2}.
  \]
  Note if neither is equal to $u$, the bound improves to $d^{\epsilon}$ with very high probability.  Using the same leave-one-out bound, we can compare $w_i^{\top} R(z; H^{[u]}) w_j$ to $w_i^{\top} R(z; H) w_j$, which produces the same error.
\end{proof}

These bounds now directly imply Theorem \ref{a:main}, from the representations
\[
\begin{aligned}
M(n)_{ij} &= \frac{-1}{2\pi i} \frac{\alpha_\phi^2}{d}\int_{\Gamma} w_i^{\top} R(z;H) w_j z^{n}\dif z \\
M(u,n)_{ij} &= \frac{-1}{2\pi i} \frac{\alpha_\phi^2}{d}\int_{\Gamma} w_i^{\top} R(z;H(u)) w_j z^{n}\dif z.
\end{aligned}
\]
Theorem \ref{a:main} now follows immediately from Proposition \ref{a:lessusual}.

\bibliographystyle{abbrv}
\bibliography{ntk}

\end{document}